\newtheorem{theorem}{Theorem}[section]
\newtheorem{proposition}[theorem]{Proposition}
\newtheorem{lemma}[theorem]{Lemma}
\newtheorem{corollary}[theorem]{Corollary}
\newtheorem{definition}[theorem]{Definition}
\newtheorem{example}[theorem]{Example}
\newtheorem{remark}[theorem]{Remark}
\renewcommand{\theequation}{\thesection.\arabic{equation}}
\newenvironment{notation}{\smallskip{\sc Notation.}\rm}{\smallskip}
\DeclareMathOperator*{\osc}{Osc}
\DeclareMathOperator*{\diam}{diam}
\DeclareMathOperator*{\supp}{supp}
\numberwithin{equation}{section}
\newcounter{counterConstant}
\newcommand*{\dif}{\mathop{}\!\mathrm{d}}
\let\pdfoutput=\undefined\fi
\chardef\@x10\chardef\@xv60
\def\tcitime{
\def\@time{%
  \@minute\time\@hour\@minute\divide\@hour\@xv
  \ifnum\@hour<\@x 0\fi\the\@hour:%
  \multiply\@hour\@xv\advance\@minute-\@hour
  \ifnum\@minute<\@x 0\fi\the\@minute
  }}%
\def\x@hyperref#1#2#3{%
   \catcode`\~ = 12
   \catcode`\$ = 12
   \catcode`\_ = 12
   \catcode`\# = 12
   \catcode`\& = 12
   \y@hyperref{#1}{#2}{#3}%
}
\def\y@hyperref#1#2#3#4{%
   #2\ref{#4}#3
   \catcode`\~ = 13
   \catcode`\$ = 3
   \catcode`\_ = 8
   \catcode`\# = 6
   \catcode`\& = 4
}
\def\QCTOpt[#1]#2{%
  \def\QCTOptB{#1}
  \def\QCTOptA{#2}
}
\def\QCTNOpt#1{%
  \def\QCTOptA{#1}
  \let\QCTOptB\empty
}
\def\Qct{%
  \@ifnextchar[{%
    \QCTOpt}{\QCTNOpt}
}
\def\QCBOpt[#1]#2{%
  \def\QCBOptB{#1}%
  \def\QCBOptA{#2}%
}
\def\QCBNOpt#1{%
  \def\QCBOptA{#1}%
  \let\QCBOptB\empty
}
\def\Qcb{%
  \@ifnextchar[{%
    \QCBOpt}{\QCBNOpt}%
}
\def\PrepCapArgs{%
  \ifx\QCBOptA\empty
    \ifx\QCTOptA\empty
      {}%
    \else
      \ifx\QCTOptB\empty
        {\QCTOptA}%
      \else
        [\QCTOptB]{\QCTOptA}%
      \fi
    \fi
  \else
    \ifx\QCBOptA\empty
      {}%
    \else
      \ifx\QCBOptB\empty
        {\QCBOptA}%
      \else
        [\QCBOptB]{\QCBOptA}%
      \fi
    \fi
  \fi
}
\def\GRAPHICSPS#1{%
 \ifcase\GRAPHICSTYPE
   \special{ps: #1}%
 \or
   \special{language "PS", include "#1"}%
 \fi
}%
\def\graffile#1#2#3#4{%
    \bgroup
	   \@inlabelfalse
       \leavevmode
       \@ifundefined{bbl@deactivate}{\def~{\string~}}{\activesoff}%
        \raise -#4 \BOXTHEFRAME{%
           \hbox to #2{\raise #3\hbox to #2{\null #1\hfil}}}%
    \egroup
}%
\def\draftbox#1#2#3#4{%
 \leavevmode\raise -#4 \hbox{%
  \frame{\rlap{\protect\tiny #1}\hbox to #2%
   {\vrule height#3 width\z@ depth\z@\hfil}%
  }%
 }%
}%
\let\nographics=\@msidraft
\newif\ifwasdraft
\def\GRAPHIC#1#2#3#4#5{%
   \ifnum\@msidraft=\@ne\draftbox{#2}{#3}{#4}{#5}%
   \else\graffile{#1}{#3}{#4}{#5}%
   \fi
}
\def\addtoLaTeXparams#1{%
    \edef\LaTeXparams{\LaTeXparams #1}}%
\newif\ifBoxFrame \BoxFramefalse
\newif\ifOverFrame \OverFramefalse
\newif\ifUnderFrame \UnderFramefalse
\def\BOXTHEFRAME#1{%
   \hbox{%
      \ifBoxFrame
         \frame{#1}%
      \else
         {#1}%
      \fi
   }%
}
\def\doFRAMEparams#1{\BoxFramefalse\OverFramefalse\UnderFramefalse\readFRAMEparams#1\end}%
\def\readFRAMEparams#1{%
 \ifx#1\end%
  \let\next=\relax
  \else
  \ifx#1i\dispkind=\z@\fi
  \ifx#1d\dispkind=\@ne\fi
  \ifx#1f\dispkind=\tw@\fi
  \ifx#1t\addtoLaTeXparams{t}\fi
  \ifx#1b\addtoLaTeXparams{b}\fi
  \ifx#1p\addtoLaTeXparams{p}\fi
  \ifx#1h\addtoLaTeXparams{h}\fi
  \ifx#1X\BoxFrametrue\fi
  \ifx#1O\OverFrametrue\fi
  \ifx#1U\UnderFrametrue\fi
  \ifx#1w
    \ifnum\@msidraft=1\wasdrafttrue\else\wasdraftfalse\fi
    \@msidraft=\@ne
  \fi
  \let\next=\readFRAMEparams
  \fi
 \next
 }%
\def\IFRAME#1#2#3#4#5#6{%
      \bgroup
      \let\QCTOptA\empty
      \let\QCTOptB\empty
      \let\QCBOptA\empty
      \let\QCBOptB\empty
      #6%
      \parindent=0pt
      \leftskip=0pt
      \rightskip=0pt
      \setbox0=\hbox{\QCBOptA}%
      \@tempdima=#1\relax
      \ifOverFrame
          \typeout{This is not implemented yet}%
          \show\HELP
      \else
         \ifdim\wd0>\@tempdima
            \advance\@tempdima by \@tempdima
            \ifdim\wd0 >\@tempdima
               \setbox1 =\vbox{%
                  \unskip\hbox to \@tempdima{\hfill\GRAPHIC{#5}{#4}{#1}{#2}{#3}\hfill}%
                  \unskip\hbox to \@tempdima{\parbox[b]{\@tempdima}{\QCBOptA}}%
               }%
               \wd1=\@tempdima
            \else
               \textwidth=\wd0
               \setbox1 =\vbox{%
                 \noindent\hbox to \wd0{\hfill\GRAPHIC{#5}{#4}{#1}{#2}{#3}\hfill}\\%
                 \noindent\hbox{\QCBOptA}%
               }%
               \wd1=\wd0
            \fi
         \else
            \ifdim\wd0>0pt
              \hsize=\@tempdima
              \setbox1=\vbox{%
                \unskip\GRAPHIC{#5}{#4}{#1}{#2}{0pt}%
                \break
                \unskip\hbox to \@tempdima{\hfill \QCBOptA\hfill}%
              }%
              \wd1=\@tempdima
           \else
              \hsize=\@tempdima
              \setbox1=\vbox{%
                \unskip\GRAPHIC{#5}{#4}{#1}{#2}{0pt}%
              }%
              \wd1=\@tempdima
           \fi
         \fi
         \@tempdimb=\ht1
         \advance\@tempdimb by -#2
         \advance\@tempdimb by #3
         \leavevmode
         \raise -\@tempdimb \hbox{\box1}%
      \fi
      \egroup%
}%
\def\DFRAME#1#2#3#4#5{%
  \vspace\topsep
  \hfil\break
  \bgroup
     \leftskip\@flushglue
	 \rightskip\@flushglue
	 \parindent\z@
	 \parfillskip\z@skip
     \let\QCTOptA\empty
     \let\QCTOptB\empty
     \let\QCBOptA\empty
     \let\QCBOptB\empty
	 \vbox\bgroup
        \ifOverFrame 
           #5\QCTOptA\par
        \fi
        \GRAPHIC{#4}{#3}{#1}{#2}{\z@}%
        \ifUnderFrame 
           \break#5\QCBOptA
        \fi
	 \egroup
  \egroup
  \vspace\topsep
  \break
}%
\def\FFRAME#1#2#3#4#5#6#7{%
  \@ifundefined{floatstyle}
    {
     \begin{figure}[#1]%
    }
    {
	 \ifx#1h
      \begin{figure}[H]%
	 \else
      \begin{figure}[#1]%
	 \fi
	}
  \let\QCTOptA\empty
  \let\QCTOptB\empty
  \let\QCBOptA\empty
  \let\QCBOptB\empty
  \ifOverFrame
    #4
    \ifx\QCTOptA\empty
    \else
      \ifx\QCTOptB\empty
        \caption{\QCTOptA}%
      \else
        \caption[\QCTOptB]{\QCTOptA}%
      \fi
    \fi
    \ifUnderFrame\else
      \label{#5}%
    \fi
  \else
    \UnderFrametrue%
  \fi
  \begin{center}\GRAPHIC{#7}{#6}{#2}{#3}{\z@}\end{center}%
  \ifUnderFrame
    #4
    \ifx\QCBOptA\empty
      \caption{}%
    \else
      \ifx\QCBOptB\empty
        \caption{\QCBOptA}%
      \else
        \caption[\QCBOptB]{\QCBOptA}%
      \fi
    \fi
    \label{#5}%
  \fi
  \end{figure}%
 }%
\def\makeactives{
  \catcode`\"=\active
  \catcode`\;=\active
  \catcode`\:=\active
  \catcode`\'=\active
  \catcode`\~=\active
}
   \gdef\activesoff{%
      \def"{\string"}%
      \def;{\string;}%
      \def:{\string:}%
      \def'{\string'}%
      \def~{\string~}%
    }
\def\FRAME#1#2#3#4#5#6#7#8{%
 \bgroup
 \ifnum\@msidraft=\@ne
   \wasdrafttrue
 \else
   \wasdraftfalse%
 \fi
 \def\LaTeXparams{}%
 \dispkind=\z@
 \def\LaTeXparams{}%
 \doFRAMEparams{#1}%
 \ifnum\dispkind=\z@\IFRAME{#2}{#3}{#4}{#7}{#8}{#5}\else
  \ifnum\dispkind=\@ne\DFRAME{#2}{#3}{#7}{#8}{#5}\else
   \ifnum\dispkind=\tw@
    \edef\@tempa{\noexpand\FFRAME{\LaTeXparams}}%
    \@tempa{#2}{#3}{#5}{#6}{#7}{#8}%
    \fi
   \fi
  \fi
  \ifwasdraft\@msidraft=1\else\@msidraft=0\fi{}%
  \egroup
 }%
\def\TEXUX#1{"texux"}
\def\func#1{\mathop{\rm #1}\nolimits}%
\long\def\QQQ#1#2{%
     \long\expandafter\def\csname#1\endcsname{#2}}%
\long\def\QQA#1#2{}%
\def\QTR#1#2{{\csname#1\endcsname {#2}}}%
\def\EXPAND#1[#2]#3{}%
\def\NOEXPAND#1[#2]#3{}%
\def\LaTeXparent#1{}%
\def\ChildStyles#1{}%
\def\ChildDefaults#1{}%
\def\QTagDef#1#2#3{}%
  \providecommand{\UNICODE}[2][]{\protect\rule{.1in}{.1in}}
  \providecommand{\U}[1]{\protect\rule{.1in}{.1in}}
\def\QQfnmark#1{\footnotemark}
 \def\abstract{%
  \if@twocolumn
   \section*{Abstract (Not appropriate in this style!)}%
   \else \small 
   \begin{center}{\bf Abstract\vspace{-.5em}\vspace{\z@}}\end{center}%
   \quotation 
   \fi
  }%
   \def\registered{\relax\ifmmode{}\r@gistered
                    \else$\m@th\r@gistered$\fi}%
 \def\r@gistered{^{\ooalign
  {\hfil\raise.07ex\hbox{$\scriptstyle\rm\text{R}$}\hfil\crcr
  \mathhexbox20D}}}}{}%
\newdimen\theight
\def\newfmtname{LaTeX2e}
  \DeclareOldFontCommand{\rm}{\normalfont\rmfamily}{\mathrm}
  \DeclareOldFontCommand{\sf}{\normalfont\sffamily}{\mathsf}
  \DeclareOldFontCommand{\tt}{\normalfont\ttfamily}{\mathtt}
  \DeclareOldFontCommand{\bf}{\normalfont\bfseries}{\mathbf}
  \DeclareOldFontCommand{\it}{\normalfont\itshape}{\mathit}
  \DeclareOldFontCommand{\sl}{\normalfont\slshape}{\@nomath\sl}
  \DeclareOldFontCommand{\sc}{\normalfont\scshape}{\@nomath\sc}
\def\alpha{{\Greekmath 010B}}%
\def\beta{{\Greekmath 010C}}%
\def\gamma{{\Greekmath 010D}}%
\def\delta{{\Greekmath 010E}}%
\def\epsilon{{\Greekmath 010F}}%
\def\zeta{{\Greekmath 0110}}%
\def\eta{{\Greekmath 0111}}%
\def\theta{{\Greekmath 0112}}%
\def\iota{{\Greekmath 0113}}%
\def\kappa{{\Greekmath 0114}}%
\def\lambda{{\Greekmath 0115}}%
\def\mu{{\Greekmath 0116}}%
\def\nu{{\Greekmath 0117}}%
\def\xi{{\Greekmath 0118}}%
\def\pi{{\Greekmath 0119}}%
\def\rho{{\Greekmath 011A}}%
\def\sigma{{\Greekmath 011B}}%
\def\tau{{\Greekmath 011C}}%
\def\upsilon{{\Greekmath 011D}}%
\def\phi{{\Greekmath 011E}}%
\def\chi{{\Greekmath 011F}}%
\def\psi{{\Greekmath 0120}}%
\def\omega{{\Greekmath 0121}}%
\def\varepsilon{{\Greekmath 0122}}%
\def\vartheta{{\Greekmath 0123}}%
\def\varpi{{\Greekmath 0124}}%
\def\varrho{{\Greekmath 0125}}%
\def\varsigma{{\Greekmath 0126}}%
\def\varphi{{\Greekmath 0127}}%
\def\nabla{{\Greekmath 0272}}
\def\FindBoldGroup{%
   {\setbox0=\hbox{$\mathbf{x\global\edef\theboldgroup{\the\mathgroup}}$}}%
}
\def\Greekmath#1#2#3#4{%
    \if@compatibility
        \ifnum\mathgroup=\symbold
           \mathchoice{\mbox{\boldmath$\displaystyle\mathchar"#1#2#3#4$}}%
                      {\mbox{\boldmath$\textstyle\mathchar"#1#2#3#4$}}%
                      {\mbox{\boldmath$\scriptstyle\mathchar"#1#2#3#4$}}%
                      {\mbox{\boldmath$\scriptscriptstyle\mathchar"#1#2#3#4$}}%
        \else
           \mathchar"#1#2#3#4%
        \fi 
    \else 
        \FindBoldGroup
        \ifnum\mathgroup=\theboldgroup 
           \mathchoice{\mbox{\boldmath$\displaystyle\mathchar"#1#2#3#4$}}%
                      {\mbox{\boldmath$\textstyle\mathchar"#1#2#3#4$}}%
                      {\mbox{\boldmath$\scriptstyle\mathchar"#1#2#3#4$}}%
                      {\mbox{\boldmath$\scriptscriptstyle\mathchar"#1#2#3#4$}}%
        \else
           \mathchar"#1#2#3#4%
        \fi     	    
	  \fi}
\newif\ifGreekBold  \GreekBoldfalse
\let\SAVEPBF=\pbf
\def\pbf{\GreekBoldtrue\SAVEPBF}%
  \newcounter{equationnumber}  
  \def\mathletters{%
     \addtocounter{equation}{1}
     \edef\@currentlabel{\theequation}%
     \setcounter{equationnumber}{\c@equation}
     \setcounter{equation}{0}%
     \edef\theequation{\@currentlabel\noexpand\alph{equation}}%
  }
    \def\BibTeX{{\rm B\kern-.05em{\sc i\kern-.025em b}\kern-.08em
                 T\kern-.1667em\lower.7ex\hbox{E}\kern-.125emX}}}{}%
\def\AmS{{\protect\usefont{OMS}{cmsy}{m}{n}%
                A\kern-.1667em\lower.5ex\hbox{M}\kern-.125emS}}}{}%
\def\@@eqncr{\let\@tempa\relax
    \ifcase\@eqcnt \def\@tempa{& & &}\or \def\@tempa{& &}%
      \else \def\@tempa{&}\fi
     \@tempa
     \if@eqnsw
        \iftag@
           \@taggnum
        \else
           \@eqnnum\stepcounter{equation}%
        \fi
     \fi
     \global\tag@false
     \global\@eqnswtrue
     \global\@eqcnt\z@\cr}
\def\TCItag{\@ifnextchar*{\@TCItagstar}{\@TCItag}}
\def\@TCItag#1{%
    \global\tag@true
    \global\def\@taggnum{(#1)}%
    \global\def\@currentlabel{#1}}
\def\@TCItagstar*#1{%
    \global\tag@true
    \global\def\@taggnum{#1}%
    \global\def\@currentlabel{#1}}
\def\QATOP#1#2{{#1 \atop #2}}%
\def\ExitTCILatex{\makeatother }
\if@compatibility\message{amsmath already loaded}\fi\aftergroup\ExitTCILatex}
\if@compatibility\message{amstex already loaded}\fi\aftergroup\ExitTCILatex}
\if@compatibility\message{amsgen already loaded}\fi\aftergroup\ExitTCILatex}
\let\DOTSI\relax
\def\RIfM@{\relax\ifmmode}%
\def\FN@{\futurelet\next}%
\def\iint{\DOTSI\intno@\tw@\FN@\ints@}%
\def\iiint{\DOTSI\intno@\thr@@\FN@\ints@}%
\def\iiiint{\DOTSI\intno@4 \FN@\ints@}%
\def\idotsint{\DOTSI\intno@\z@\FN@\ints@}%
\def\ints@{\findlimits@\ints@@}%
\newif\iflimtoken@
\newif\iflimits@
\def\findlimits@{\limtoken@true\ifx\next\limits\limits@true
 \else\ifx\next\nolimits\limits@false\else
 \limtoken@false\ifx\ilimits@\nolimits\limits@false\else
 \ifinner\limits@false\else\limits@true\fi\fi\fi\fi}%
\def\multint@{\int\ifnum\intno@=\z@\intdots@                          
 \else\intkern@\fi                                                    
 \ifnum\intno@>\tw@\int\intkern@\fi                                   
 \ifnum\intno@>\thr@@\int\intkern@\fi                                 
 \int}
\def\multintlimits@{\intop\ifnum\intno@=\z@\intdots@\else\intkern@\fi
 \ifnum\intno@>\tw@\intop\intkern@\fi
 \ifnum\intno@>\thr@@\intop\intkern@\fi\intop}%
\def\intic@{%
    \mathchoice{\hskip.5em}{\hskip.4em}{\hskip.4em}{\hskip.4em}}%
\def\negintic@{\mathchoice
 {\hskip-.5em}{\hskip-.4em}{\hskip-.4em}{\hskip-.4em}}%
\def\ints@@{\iflimtoken@                                              
 \def\ints@@@{\iflimits@\negintic@
   \mathop{\intic@\multintlimits@}\limits                             
  \else\multint@\nolimits\fi                                          
  \eat@}
 \else                                                                
 \def\ints@@@{\iflimits@\negintic@
  \mathop{\intic@\multintlimits@}\limits\else
  \multint@\nolimits\fi}\fi\ints@@@}%
\def\intkern@{\mathchoice{\!\!\!}{\!\!}{\!\!}{\!\!}}%
\def\plaincdots@{\mathinner{\cdotp\cdotp\cdotp}}%
\def\intdots@{\mathchoice{\plaincdots@}%
 {{\cdotp}\mkern1.5mu{\cdotp}\mkern1.5mu{\cdotp}}%
 {{\cdotp}\mkern1mu{\cdotp}\mkern1mu{\cdotp}}%
 {{\cdotp}\mkern1mu{\cdotp}\mkern1mu{\cdotp}}}%
\def\RIfM@{\relax\protect\ifmmode}
\def\text{\RIfM@\expandafter\text@\else\expandafter\mbox\fi}
\let\nfss@text\text
\def\text@#1{\mathchoice
   {\textdef@\displaystyle\f@size{#1}}%
   {\textdef@\textstyle\tf@size{\firstchoice@false #1}}%
   {\textdef@\textstyle\sf@size{\firstchoice@false #1}}%
   {\textdef@\textstyle \ssf@size{\firstchoice@false #1}}%
   \glb@settings}
\def\textdef@#1#2#3{\hbox{{%
                    \everymath{#1}%
                    \let\f@size#2\selectfont
                    #3}}}
\newif\iffirstchoice@
\def\Let@{\relax\iffalse{\fi\let\\=\cr\iffalse}\fi}%
\def\vspace@{\def\vspace##1{\crcr\noalign{\vskip##1\relax}}}%
\def\multilimits@{\bgroup\vspace@\Let@
 \baselineskip\fontdimen10 \scriptfont\tw@
 \advance\baselineskip\fontdimen12 \scriptfont\tw@
 \lineskip\thr@@\fontdimen8 \scriptfont\thr@@
 \lineskiplimit\lineskip
 \vbox\bgroup\ialign\bgroup\hfil$\m@th\scriptstyle{##}$\hfil\crcr}%
\def\Sb{_\multilimits@}%
\def\endSb{\crcr\egroup\egroup\egroup}%
\def\Sp{^\multilimits@}%
\newdimen\ex@
\def\rightarrowfill@#1{$#1\m@th\mathord-\mkern-6mu\cleaders
 \hbox{$#1\mkern-2mu\mathord-\mkern-2mu$}\hfill
 \mkern-6mu\mathord\rightarrow$}%
\def\leftarrowfill@#1{$#1\m@th\mathord\leftarrow\mkern-6mu\cleaders
 \hbox{$#1\mkern-2mu\mathord-\mkern-2mu$}\hfill\mkern-6mu\mathord-$}%
\def\leftrightarrowfill@#1{$#1\m@th\mathord\leftarrow
\mkern-6mu\cleaders
 \hbox{$#1\mkern-2mu\mathord-\mkern-2mu$}\hfill
 \mkern-6mu\mathord\rightarrow$}%
\def\overrightarrow{\mathpalette\overrightarrow@}%
\def\overrightarrow@#1#2{\vbox{\ialign{##\crcr\rightarrowfill@#1\crcr
 \noalign{\kern-\ex@\nointerlineskip}$\m@th\hfil#1#2\hfil$\crcr}}}%
\def\overleftarrow{\mathpalette\overleftarrow@}%
\def\overleftarrow@#1#2{\vbox{\ialign{##\crcr\leftarrowfill@#1\crcr
 \noalign{\kern-\ex@\nointerlineskip}$\m@th\hfil#1#2\hfil$\crcr}}}%
\def\overleftrightarrow{\mathpalette\overleftrightarrow@}%
\def\overleftrightarrow@#1#2{\vbox{\ialign{##\crcr
   \leftrightarrowfill@#1\crcr
 \noalign{\kern-\ex@\nointerlineskip}$\m@th\hfil#1#2\hfil$\crcr}}}%
\def\underrightarrow{\mathpalette\underrightarrow@}%
\def\underrightarrow@#1#2{\vtop{\ialign{##\crcr$\m@th\hfil#1#2\hfil
  $\crcr\noalign{\nointerlineskip}\rightarrowfill@#1\crcr}}}%
\def\underleftarrow{\mathpalette\underleftarrow@}%
\def\underleftarrow@#1#2{\vtop{\ialign{##\crcr$\m@th\hfil#1#2\hfil
  $\crcr\noalign{\nointerlineskip}\leftarrowfill@#1\crcr}}}%
\def\underleftrightarrow{\mathpalette\underleftrightarrow@}%
\def\underleftrightarrow@#1#2{\vtop{\ialign{##\crcr$\m@th
  \hfil#1#2\hfil$\crcr
 \noalign{\nointerlineskip}\leftrightarrowfill@#1\crcr}}}%
\def\qopnamewl@#1{\mathop{\operator@font#1}\nlimits@}
\let\nlimits@\displaylimits
\def\setboxz@h{\setbox\z@\hbox}
\def\varlim@#1#2{\mathop{\vtop{\ialign{##\crcr
 \hfil$#1\m@th\operator@font lim$\hfil\crcr
 \noalign{\nointerlineskip}#2#1\crcr
 \noalign{\nointerlineskip\kern-\ex@}\crcr}}}}
 \def\rightarrowfill@#1{\m@th\setboxz@h{$#1-$}\ht\z@\z@
  $#1\copy\z@\mkern-6mu\cleaders
  \hbox{$#1\mkern-2mu\box\z@\mkern-2mu$}\hfill
  \mkern-6mu\mathord\rightarrow$}
\def\leftarrowfill@#1{\m@th\setboxz@h{$#1-$}\ht\z@\z@
  $#1\mathord\leftarrow\mkern-6mu\cleaders
  \hbox{$#1\mkern-2mu\copy\z@\mkern-2mu$}\hfill
  \mkern-6mu\box\z@$}
\def\projlim{\qopnamewl@{proj\,lim}}
\def\injlim{\qopnamewl@{inj\,lim}}
\def\varinjlim{\mathpalette\varlim@\rightarrowfill@}
\def\varprojlim{\mathpalette\varlim@\leftarrowfill@}
\def\varliminf{\mathpalette\varliminf@{}}
\def\varliminf@#1{\mathop{\underline{\vrule\@depth.2\ex@\@width\z@
   \hbox{$#1\m@th\operator@font lim$}}}}
\def\varlimsup{\mathpalette\varlimsup@{}}
\def\varlimsup@#1{\mathop{\overline
  {\hbox{$#1\m@th\operator@font lim$}}}}
\def\align{\@verbatim \frenchspacing\@vobeyspaces \@alignverbatim
You are using the "align" environment in a style in which it is not defined.}
\let\csname endalign*\endcsname =\endtrivlist
\def\alignat{\@verbatim \frenchspacing\@vobeyspaces \@alignatverbatim
You are using the "alignat" environment in a style in which it is not defined.}
\let\csname endalignat*\endcsname =\endtrivlist
\def\xalignat{\@verbatim \frenchspacing\@vobeyspaces \@xalignatverbatim
You are using the "xalignat" environment in a style in which it is not defined.}
\let\csname endxalignat*\endcsname =\endtrivlist
\def\gather{\@verbatim \frenchspacing\@vobeyspaces \@gatherverbatim
You are using the "gather" environment in a style in which it is not defined.}
\let\csname endgather*\endcsname =\endtrivlist
\def\multiline{\@verbatim \frenchspacing\@vobeyspaces \@multilineverbatim
You are using the "multiline" environment in a style in which it is not defined.}
\let\csname endmultiline*\endcsname =\endtrivlist
\def\arrax{\@verbatim \frenchspacing\@vobeyspaces \@arraxverbatim
You are using a type of "array" construct that is only allowed in AmS-LaTeX.}
\def\tabulax{\@verbatim \frenchspacing\@vobeyspaces \@tabulaxverbatim
You are using a type of "tabular" construct that is only allowed in AmS-LaTeX.}
\let\csname endarrax*\endcsname =\endtrivlist
\let\csname endtabulax*\endcsname =\endtrivlist
 \def\endequation{%
     \ifmmode\ifinner 
      \iftag@
        \addtocounter{equation}{-1} 
        $\hfil
           \displaywidth\linewidth\@taggnum\egroup \endtrivlist
        \global\tag@false
        \global\@ignoretrue   
      \else
        $\hfil
           \displaywidth\linewidth\@eqnnum\egroup \endtrivlist
        \global\tag@false
        \global\@ignoretrue 
      \fi
     \else   
      \iftag@
        \addtocounter{equation}{-1} 
        \eqno \hbox{\@taggnum}
        \global\tag@false%
        $$\global\@ignoretrue
      \else
        \eqno \hbox{\@eqnnum}
        $$\global\@ignoretrue
      \fi
     \fi\fi
 } 
 \newif\iftag@ \tag@false
 \def\TCItag{\@ifnextchar*{\@TCItagstar}{\@TCItag}}
 \def\@TCItag#1{%
     \global\tag@true
     \global\def\@taggnum{(#1)}%
     \global\def\@currentlabel{#1}}
 \def\@TCItagstar*#1{%
     \global\tag@true
     \global\def\@taggnum{#1}%
     \global\def\@currentlabel{#1}}
     \def\tag{\@ifnextchar*{\@tagstar}{\@tag}}
     \def\@tag#1{%
         \global\tag@true
         \global\def\@taggnum{(#1)}}
     \def\@tagstar*#1{%
         \global\tag@true
         \global\def\@taggnum{#1}}
\def\dfrac#1#2{{\displaystyle {#1 \over #2}}}%
\def\qed{\hfill$\square$\par}
\def\func#1{\mathop{\mathrm{#1}}\nolimits}
\def\Xint#1{\mathchoice
{\XXint\displaystyle\textstyle{#1}}%
{\XXint\textstyle\scriptstyle{#1}}%
{\XXint\scriptstyle\scriptscriptstyle{#1}}%
{\XXint\scriptscriptstyle\scriptscriptstyle{#1}}%
\!\int}
\def\XXint#1#2#3{{\setbox0=\hbox{$#1{#2#3}{\int}$ }
\vcenter{\hbox{$#2#3$ }}\kern-.6\wd0}}
\def\fint{\Xint-}
\def\Qcb#1{#1}
\def\FRAME#1#2#3#4#5#6#7#8
\def\enddoc{

\begin{document}
\title[Article Title]{Besov-Lipschitz norm and $p$-energy measure on scale-irregular Vicsek sets}
\date{\today}

\author[1]{\fnm{Aobo} \sur{Chen}}\email{cab21@mails.tsinghua.edu.cn}

\author[2]{\fnm{Jin} \sur{Gao}}\email{gaojin@hznu.edu.cn}

\author*[3]{\fnm{Zhenyu} \sur{Yu}}\email{yuzy23@nudt.edu.cn}

\author[4]{\fnm{Junda} \sur{Zhang}}\email{summerfish@scut.edu.cn}

\affil[1]{\orgdiv{Department of Mathematical Sciences}, \orgname{Tsinghua University}, \orgaddress{\city{Beijing}, \postcode{100084}, \country{China}}}

\affil[2]{\orgdiv{Department of Mathematics}, \orgname{Hangzhou Normal University}, \orgaddress{\city{Hangzhou}, \postcode{310036},  \country{China}}}

\affil*[3]{\orgdiv{Department of Mathematics, College of Science}, \orgname{National University
of Defense Technology}, \orgaddress{\city{Changsha}, \postcode{410073}, \country{China}}}

\affil[4]{\orgdiv{School of Mathematics}, \orgname{South China University of Technology}, \orgaddress{\city{Guangzhou}, \postcode{510641}, \country{China}}}

\abstract{In this paper, we establish the existence of \textit{p}-energy norms and the corresponding \textit{p}-energy measures for scale-irregular Vicsek sets, which may lack self-similarity. We also investigate the characterization of \textit{p}-energy norms in terms of Besov-Lipschitz norms, with their weak monotonicity and the corresponding Bourgain-Brezis-Mironescu convergence.}

\date{\today}

\keywords{\textit{p}-energy, Besov-Lipschitz norm, scale-irregular Vicsek set, critical Besov exponent}
\pacs[Mathematics Subject Classification]{28A80, 46E30, 46E35}
\maketitle
\tableofcontents

\section{Introduction}\label{s.intro}

The study of \textit{non-linear potential theory} on metric measure spaces
has attracted significant attention in recent decades due to its important
role in classical analysis and differential equations. Many previous studies
have concentrated on the $p$-energy within specific classes of fractals. For
example, $p$-energy is constructed on self-similar p.c.f. sets by Herman, Peirone, and Strichartz \cite{HPS04}, Cao, Gu, and Qiu \cite{CGQ22}, Gao, Yu, and Zhang \cite{GYZ23}; on the
standard Sierpi\'nski carpet by Shimizu \cite{Shi24}, Murugan and
Shimizu \cite{MS25}; and on more general fractal spaces by Kigami \cite{Kig23}.
In these previous works, the self-similarity significantly influences the
construction of $p$-energy.

One aim of this work is trying to construct $p$-energy norm and $p$-energy measure without using the self-similar structure of the underlying fractal. The issue was first highlighted by Murugan and Shimizu in \cite[Problem 12.5]{MS25}, posing the challenge of constructing $p$-energy measure on the Sierpi\'nski carpet without using the self-similarity, and also establishing its basic properties. However, as far as the authors are concerned, many vital tools such as \textit{Fekete's lemma} used in \cite[Lemma 4.4]{CGQ22}, \textit{%
combinatorial ball Loewner condition} in \cite[Definition 3.1]{MS25} and
\textit{Knight Move argument} in \cite{Kig23,Shi24} are no longer
applicable without self-similarity. {Another aim is generalising the celebrated Bourgain-Brezis-Mironescu (BBM) convergence of $p$-energy semi-norms to non-self-similar fractals, where these semi-norms are supposed to have a different form from the self-similar case.}

For this reason, we concern ourselves here with the \textit{scale-irregular Vicsek sets}, which is a class of {homogeneous \textit{Moran sets} (see for example \cite{FWW97}) and $V$-variable fractals (with $V=1$, see for example \cite{BHS08})}. The advantage of Vicsek sets lies in their distinctive ``gradient structure", which allows us to construct $p$-energy norm even in the absence of self-similarity. It is worth mentioning that, when $p=2$, the $2$-energy (or Dirichlet form) may be constructed {and studied} by a probabilistic approach {as Hambly \cite{Ham92}}, Barlow and Hambly \cite{BH97} did on scale-irregular Sierpi\'nski gaskets.

Our approach is mainly motivated by the works of Baudoin and Chen \cite{BC23,BC24}. We will define $p$-energy norm as the limit of $p$-energy norms on discrete approximating graphs, since the underlying geometry structure ensures the monotonicity of discrete energy norms. To this end, for each scale-irregular Vicsek set $K^{\bm{l}}$ determined by contraction ratio sequence $\bm{l}$ (see Definition \ref{d.vicsek}), we always equip $K^{\bm{l}}$ with the Euclidean metric $d$ and the canonical Borel probability measure $\mu$ given by \eqref{e.defmu}. For a Borel set $B$, write $\fint_{B}f\dif\mu:=\mu(B)^{-1}\int_{B}f\dif\mu$. For a set $A\subset\mathbb{C}$ (the complex plane), denote its diameter by $\diam(A):=\sup_{x,y\in A}d(x,y)$.

\begin{theorem}
\label{t.energy} Let $(K^{\bm{l}}, d, \mu)$ be a scale-irregular Vicsek set.
For each $1<p<\infty$, there exists a normed vector space $(\mathcal{F}%
_{p},\lVert \cdot\rVert_{\mathcal{F}_{p}})$ and a semi-norm $\mathcal{E}_{p}$
on $\mathcal{F}_{p}$ with the following properties.

\begin{enumerate}[label=\textup{(\arabic*)}]

\item $(\mathcal{F}_{p},\lVert \cdot\rVert_{\mathcal{F}_{p}})$ is a
uniformly convex separable reflexive Banach space.

\item $\mathcal{F}_{p}$ forms an algebra under the pointwise product, that is, $%
uv\in\mathcal{F}_{p}$ whenever $u,v\in\mathcal{F}_{p}$. Moreover,
\begin{equation*}
\mathcal{E}_{p}(uv)\leq2^{p-1}\left(\lVert u\rVert_{C(K^{\bm{l}})}^{p}%
\mathcal{E}_{p}(v)+\lVert v\rVert_{C(K^{\bm{l}})}^{p}\mathcal{E}%
_{p}(u)\right)\ \text{for all }u,v\in\mathcal{F}_{p}.
\end{equation*}

\item {\normalfont{(Regularity)}} $\mathcal{F}_{p}\subset C(K^{\bm{l}})$ is a
dense subspace of $(C(K^{\bm{l}}),\lVert \cdot\rVert_{\infty})$.

\item {\normalfont{(Lipschitz contractivity)}} For every $u\in\mathcal{F}%
_{p} $ and $1$-Lipschitz function $\varphi:\mathbb{R}\rightarrow \mathbb{R}$%
, we have $\varphi\circ u\in\mathcal{F}_{p}$ and $\mathcal{E}%
_{p}(\varphi\circ u)\leq\mathcal{E}_{p}(u)$.

\item {\normalfont{(Spectral gap)}}{There is a constant $C\geq1$ such that,} for every $u\in\mathcal{F}_{p}$,%
\begin{equation*}
\int_{K^{\bm{l}}}\left|u(x)-\fint_{K^{\bm{l}}} u \dif \mu \right|^{p} \dif %
\mu(x)\leq {C}\func{diam}(K^{\bm{l}})^{p-1} \mathcal{E}_{p}(u).
\end{equation*}

\item {\normalfont{(Strong locality)}} If $u,v\in\mathcal{F}_{p}$ satisfy $%
\supp(u)\cap\supp(v-a\mathds{1}_{K^{\bm{l}}})=\emptyset$ for
some $a\in\mathbb{R}$, then
\begin{equation*}
\mathcal{E}(u+v)= \mathcal{E}(u)+\mathcal{E}(v).
\end{equation*}
\end{enumerate}
\end{theorem}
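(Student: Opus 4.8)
The plan is to realize $\mathcal{E}_p$ as a monotone limit of renormalized discrete $p$-energies along the natural sequence of approximating graphs, using the dendrite (``gradient'') structure of the Vicsek set in place of self-similarity. Let $V_n$ be the vertex set at level $n$ (the junction points of the level-$n$ cells), write $x\sim_n y$ when $x,y$ are the two extreme points of a common level-$n$ cell, and set the bare energy $E_{p,n}(u)=\sum_{x\sim_n y}|u(x)-u(y)|^p$. Because $K^{\bm l}$ is finitely ramified and each cell is crossed by a single series chain of subcells at the next level, refining one level replaces each edge by a chain of edges in series; minimizing the discrete $p$-energy over such a chain with fixed endpoints produces an explicit factor $\rho_{n+1}=\rho_{n+1}(p,l_{n+1})$ depending only on $l_{n+1}$ (equal to $l_{n+1}^{1-p}$ for the crossing of a single cell). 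I would then set $R_n=\prod_{k=1}^{n}\rho_k^{-1}$ and define $\mathcal{E}_{p,n}(u)=R_n E_{p,n}(u)$.

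The first genuine step is monotonicity: $\mathcal{E}_{p,n}(u)\le\mathcal{E}_{p,n+1}(u)$ for every $u$. This is exactly where the gradient structure does the work that Fekete's lemma and self-similarity do in the self-similar theory. On each level-$n$ cell the restriction of $u$ to $V_n$ prescribes a pair of boundary values, and the series computation shows that the minimal level-$(n+1)$ energy compatible with those values equals the level-$n$ cell energy; since the actual level-$(n+1)$ energy is at least this minimum, summing over cells yields the inequality (with equality when $u$ is $p$-harmonic). Hence $\mathcal{E}_p(u):=\lim_n\mathcal{E}_{p,n}(u)=\sup_n\mathcal{E}_{p,n}(u)\in[0,\infty]$ is well defined; I set $\mathcal{F}_p=\{u\in C(K^{\bm l}):\mathcal{E}_p(u)<\infty\}$ and $\|u\|_{\mathcal{F}_p}=\bigl(\|u\|_{L^p(\mu)}^p+\mathcal{E}_p(u)\bigr)^{1/p}$. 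A Morrey-type estimate — again from the one-dimensional path structure, bounding $|u(x)-u(y)|^p$ by $\dist(x,y)^{p-1}$ times the energy carried by the unique path joining $x$ and $y$ — shows finite energy forces Hölder continuity, justifying $\mathcal{F}_p\subset C(K^{\bm l})$.

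With this in place, most of the listed properties come from passing discrete inequalities to the limit. Lipschitz contractivity (4) is immediate edge-by-edge since $|\varphi(u(x))-\varphi(u(y))|\le|u(x)-u(y)|$. The Leibniz bound (2) follows from $u(x)v(x)-u(y)v(y)=u(x)(v(x)-v(y))+v(y)(u(x)-u(y))$ together with $|a+b|^p\le 2^{p-1}(|a|^p+|b|^p)$, summed over edges. Strong locality (6) holds because, for $n$ large, the edges on which $u$ varies and those on which $v-a\mathds{1}_{K^{\bm l}}$ varies are disjoint, so $E_{p,n}(u+v)=E_{p,n}(u)+E_{p,n}(v)$. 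The spectral gap (5) comes from Jensen's inequality, giving $\int_{K^{\bm l}}|u-\fint u\dif\mu|^p\dif\mu\le \mu(K^{\bm l})^{-1}\int_{K^{\bm l}}\int_{K^{\bm l}}|u(x)-u(y)|^p\dif\mu(x)\dif\mu(y)$, combined with the path estimate bounding $|u(x)-u(y)|^p$ by $\diam(K^{\bm l})^{p-1}$ times a localized energy. For the Banach-space assertions (1): density in $C(K^{\bm l})$ (the remaining part of (3)) follows from (2) plus the fact that $\mathcal{F}_p$ contains constants and separates points, via Stone--Weierstrass; separability from approximation by piecewise $p$-harmonic functions with rational boundary data; uniform convexity because each $\mathcal{E}_{p,n}^{1/p}$ is an $\ell^p$-norm and hence satisfies Clarkson's inequalities, which are preserved under the pointwise limit defining $\mathcal{E}_p$, so $\|\cdot\|_{\mathcal{F}_p}$ inherits them; reflexivity then follows from Milman--Pettis, and completeness from lower semicontinuity of $\mathcal{E}_p$ (a supremum of the continuous functionals $\mathcal{E}_{p,n}$) applied to $L^p$-Cauchy sequences.

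The crux — the only place where scale-irregularity genuinely bites — is the monotonicity step and the exact series renormalization underlying it. I expect the main work to be verifying that the minimal level-$(n+1)$ $p$-energy across a single level-$n$ cell is attained by the chain configuration and equals $\rho_{n+1}$ times the cell's boundary energy, \emph{uniformly} over the varying ratios $l_n$, so that the constants $R_n$ are forced and the limit exists with no subadditivity or Fekete argument. A secondary technical point is establishing the Morrey and path estimates with constants independent of $n$, so that $\mathcal{E}_p(u)<\infty$ truly yields a continuous representative and so that the quantitative spectral-gap constant $\mu(K^{\bm l})\diam(K^{\bm l})^{p-1}$ is sharp; this is precisely where the tree geometry must be exploited rather than merely invoked.
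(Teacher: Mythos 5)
Your core construction coincides with the paper's: the renormalized discrete energies $\big(\prod_{k\leq n}l_k^{p-1}\big)\sum_{x\sim y}|u(x)-u(y)|^p$ on the level-$n$ graphs, monotonicity in $n$, the definition $\mathcal{E}_p=\sup_n\mathcal{E}_{p,n}$ with domain $\mathcal{F}_p=\{u\in C(K^{\bm{l}}):\sup_n\mathcal{E}_{p,n}(u)<\infty\}$, and a Morrey estimate forcing continuity. Your variational "minimal chain energy" step is exactly the paper's Proposition \ref{VE-mono}: the minimum of $\sum_j|t_j-t_{j-1}|^p$ over a chain of $l_{n+1}$ edges with fixed endpoints is $l_{n+1}^{1-p}|u(x)-u(y)|^p$, which is the H\"older inequality the paper uses, so no extra uniformity issue arises. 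Properties (2), (4), (6) are proved identically, and your Jensen-plus-Morrey derivation of the spectral gap (5) is in substance the proof of the Poincar\'e inequality (Lemma \ref{l.pi}) that the paper cites. Where you genuinely diverge is in (1) and (3): the paper first builds the gradient representation $\partial u\in L^p(\mathcal{S},\nu)$ (Proposition \ref{p.grad}, via piecewise-affine approximants and Mazur's lemma), then gets uniform convexity by embedding $u\mapsto(u,\partial u)$ isometrically onto a closed subspace of $L^p(K,\mu)\times L^p(\mathcal{S},\nu)$, completeness from convergence of gradients, and density in $C(K^{\bm{l}})$ directly from $H_nu\to u$ uniformly. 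You stay gradient-free: Clarkson's inequalities for the discrete energies pass to the limit (this is legitimate, and monotonicity is what makes the passage clean, a point you should state explicitly, especially for the second Clarkson inequality when $1<p<2$), and Stone--Weierstrass replaces the direct approximation (this requires point separation, which your piecewise $p$-harmonic functions supply, so it should be said). Your route is a real simplification for this theorem alone; the paper's gradient machinery is, however, what later powers the energy measures of Theorem \ref{t.emc}.

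The one step that fails as written is completeness: "lower semicontinuity of $\mathcal{E}_p$ \ldots applied to $L^p$-Cauchy sequences" is not meaningful, because $\mathcal{E}_{p,n}$ depends on the values of $u$ on the vertex sets $V_n$, which are $\mu$-null; these functionals are neither continuous nor even well defined on $L^p$-equivalence classes, so the $L^p$ limit of a Cauchy sequence carries no information at vertices and lower semicontinuity cannot be invoked there. The repair is exactly the paper's first move in its proof of (1): for an $\mathcal{F}_p$-Cauchy sequence $(u_k)$, Morrey gives $|(u_k-u_m)(x)-(u_k-u_m)(y)|^p\leq d(x,y)^{p-1}\mathcal{E}_p(u_k-u_m)$, so after anchoring at a base point (or by combining the oscillation bound with the $L^p$ bound) the sequence is Cauchy in $C(K^{\bm{l}})$; its uniform limit is a continuous representative of the $L^p$ limit and is approached pointwise everywhere, and only then does lower semicontinuity of $\mathcal{E}_p=\sup_n\mathcal{E}_{p,n}$ under pointwise convergence yield $\mathcal{E}_p(u_k-u)\leq\liminf_m\mathcal{E}_p(u_k-u_m)\to 0$. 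You already have the Morrey estimate in hand, so this is a short fix, but without inserting it the proof of (1) has a genuine hole.
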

The explicit expressions of $\mathcal{E}_{p}$ and $\mathcal{F}_{p}$ are given in Definition \ref{penergy_1}. This helps to understand the dependence of the Sobolev spaces $\mathcal{F}_{p} $ on the exponent $p$. As we will see in Remark \ref{r.inter} that, for $1<p\neq q<\infty$, the intersection $\mathcal{F}_{p}\cap\mathcal{F}_{q}$ contains (many) non-constant functions.

Our next result shows the existence and some properties of the \textit{$p$%
-energy measure} corresponding to the $p$-energy norm in Theorem \ref{t.energy}.

\begin{theorem}
\label{t.emc} Let $(K^{\bm{l}}, d, \mu)$ be a scale-irregular Vicsek set and
$(\mathcal{E}_{p}, \mathcal{F}_{p})$ be the $p$-energy in Theorem \ref%
{t.energy}. Then there exists a family of Borel finite measures $%
\{\Gamma_{p}\langle u \rangle\}_{u\in\mathcal{F}_{p}}$ on $K^{\bm{l}}$
satisfying the following:

\begin{enumerate}[label=\textup{(\arabic*)}]
\item For $u\in\mathcal{F}_{p}$, $\Gamma_{p}\langle u \rangle(K^{\bm{l}%
})=\mathcal{E}_{p}(u)$. Moreover, $\Gamma_{p}\langle u \rangle =0$ if
and only if $u$ is constant.

\item For any two $u_{1},u_{2}\in\mathcal{F}_{p}$, and any non-negative
Borel measurable function $g$ on $K^{\bm{l}}$,
\begin{equation}  \label{e.em.tri0}
\left(\int_{K^{\bm{l}}}g \dif\Gamma_{p}\langle
u_{1}+u_{2}\rangle\right)^{1/p}\leq\left(\int_{K^{\bm{l}}}g\dif%
\Gamma_{p}\langle u_{1}\rangle\right)^{1/p}+\left(\int_{K^{\bm{l}}}g\dif%
\Gamma_{p}\langle u_{2}\rangle\right)^{1/p}.
\end{equation}

\item For any $f\in C^{1}(\mathbb{R})$ and any $u\in\mathcal{F}_{p}$,
\begin{equation}  \label{e.lip1}
\dif\Gamma_{p}\langle f\circ u
\rangle(x)=\left|f^{\prime}(u(x))\right|^{p}\dif\Gamma_{p}\langle u
\rangle( x)\ \text{for $\Gamma_{p}\langle u\rangle$-a.e. $x\in K^{\bm{l}}$}.
\end{equation}

\item {\normalfont{(Energy image density property)}} For any $u\in\mathcal{F}%
_{p}$, $u_{*}\left(\Gamma_{p}\langle u \rangle\right)\ll \mathscr{L}^{1}$.
Here $u_{*}\left(\Gamma_{p}\langle u \rangle\right)$ is the push-forward
measure defined by $u_{*}\left(\Gamma_{p}\langle u
\rangle\right)(A):=\Gamma_{p}\langle u \rangle(u^{-1}(A))$ for all Borel
subsets $A\subset\mathbb{R}$, and $\mathscr{L}^{1}$ is the Lebesgue measure
on $\mathbb{R}$.
\end{enumerate}
\end{theorem}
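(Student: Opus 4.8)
\emph{Overall strategy.} The plan is to build each measure $\Gamma_p\langle u\rangle$ out of the \emph{localized} discrete energies that already enter the construction of $\mathcal{E}_p$ in Theorem \ref{t.energy}, and then to read off the four properties from, respectively, the cell structure, the seminorm inequality, a cellwise affine approximation, and a Bouleau--Hirsch argument.

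\emph{Construction and property (1).} Recall from the construction of $\mathcal{E}_p$ that $\mathcal{E}_p(u)=\lim_n\mathcal{E}_p^{(n)}(u)$ is an increasing limit of discrete energies on the level-$n$ approximating graphs, and that $\mathcal{E}_p^{(n)}(u)$ is a sum of edge terms. For a level-$m$ word $w$ with cell $K_w$ and for $n\ge m$, let $\mathcal{E}_p^{(n)}\langle u;K_w\rangle$ be the sum of those edge terms whose edge lies inside $K_w$; monotonicity in $n$ yields a limit $\Gamma_p\langle u\rangle(K_w):=\lim_n\mathcal{E}_p^{(n)}\langle u;K_w\rangle$. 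Since the Vicsek cells are finitely ramified (adjacent cells meet only at single junction points, which carry no energy), for $n>m$ the level-$n$ edges partition according to the level-$m$ cell containing them, so $w\mapsto\Gamma_p\langle u\rangle(K_w)$ is additive over the children of each $w$ and $\Gamma_p\langle u\rangle(K^{\bm{l}})=\lim_n\mathcal{E}_p^{(n)}(u)=\mathcal{E}_p(u)$. The cells form a compact generating semiring for the Borel $\sigma$-algebra, so this consistent finitely additive premeasure extends, by the Kolmogorov extension theorem on the coding space (equivalently Carath\'eodory extension, with countable additivity automatic by compactness), to a unique finite Borel measure $\Gamma_p\langle u\rangle$. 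This proves (1): the total mass is $\mathcal{E}_p(u)$, and $\Gamma_p\langle u\rangle\equiv0$ iff $\mathcal{E}_p(u)=0$, which by the spectral gap (Theorem \ref{t.energy}(5)) forces $\int_{K^{\bm{l}}}\lvert u-\fint_{K^{\bm{l}}}u\,\dif\mu\rvert^{p}\,\dif\mu=0$, i.e.\ $u$ constant.

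\emph{Properties (2) and (3).} For each $w$, the map $(\mathcal{E}_p^{(n)}\langle\,\cdot\,;K_w\rangle)^{1/p}$ is a seminorm (triangle inequality of the discrete $\ell^p$ energy), so in the limit $\Gamma_p\langle u_1+u_2\rangle(K_w)^{1/p}\le\Gamma_p\langle u_1\rangle(K_w)^{1/p}+\Gamma_p\langle u_2\rangle(K_w)^{1/p}$. For a cell-simple $g=\sum_i c_i\mathds{1}_{K_{w_i}}$ with $c_i\ge0$, \eqref{e.em.tri0} then reduces to the weighted Minkowski inequality in $\ell^p$; monotone approximation of a general non-negative Borel $g$ by such simple functions together with monotone convergence gives (2). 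For (3), the affine case $\Gamma_p\langle au+b\rangle=\lvert a\rvert^p\Gamma_p\langle u\rangle$ is immediate from $\mathcal{E}_p^{(n)}\langle au+b;K_w\rangle=\lvert a\rvert^p\mathcal{E}_p^{(n)}\langle u;K_w\rangle$. For general $f\in C^1(\mathbb{R})$, fix $x_w\in K_w$; since $\osc_{K_w}u\to0$ uniformly as $m\to\infty$, on $K_w$ the function $f\circ u$ is uniformly close to the affine map $f(u(x_w))+f'(u(x_w))(u-u(x_w))$, so by the affine case and the triangle inequality (2) one gets $\Gamma_p\langle f\circ u\rangle(K_w)=\lvert f'(u(x_w))\rvert^{p}\,\Gamma_p\langle u\rangle(K_w)(1+o(1))$, with error controlled by the moduli of continuity of $f'$ and of $u$. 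Summing over $w\in W_m$ against a continuous test function and letting $m\to\infty$ identifies the two measures as a Riemann-sum limit, which is \eqref{e.lip1}.

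\emph{Property (4): the main obstacle.} Set $\nu:=u_*\Gamma_p\langle u\rangle$; taking $g\equiv1$ in the chain rule extended to Lipschitz $f$ gives $\mathcal{E}_p(f\circ u)=\int_{\mathbb{R}}\lvert f'\rvert^{p}\,\dif\nu$. Let $B$ be Borel with $\mathscr{L}^1(B)=0$, enclose it in a $G_\delta$ set $G=\bigcap_k G_k$ with $G_k$ open and decreasing, $\mathscr{L}^1(G_k)<1/k$, and put $f_k(t)=\int_0^t\mathds{1}_{G_k}(s)\,\dif s$. Then $f_k$ is $1$-Lipschitz and $f_k\to0$ uniformly (as $\lvert f_k\rvert\le\mathscr{L}^1(G_k)$), so by Lipschitz contractivity (Theorem \ref{t.energy}(4)) $v_k:=f_k\circ u\in\mathcal{F}_p$ with $\mathcal{E}_p(v_k)\le\mathcal{E}_p(u)$ and $v_k\to0$ in $C(K^{\bm{l}})$. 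The bounded sequence $(v_k)$ thus converges weakly to $0$ in the reflexive space $\mathcal{F}_p$; since $\mathcal{F}_p$ is uniformly convex it has the Banach--Saks property, so a subsequence has Ces\`aro means $w_N=\tfrac1N\sum_{j\le N}v_{k_j}\to0$ strongly, whence $\mathcal{E}_p(w_N)\to0$. But $w_N=g_N\circ u$ with $g_N=\tfrac1N\sum_{j\le N}f_{k_j}$ and $g_N'=\tfrac1N\sum_{j\le N}\mathds{1}_{G_{k_j}}\equiv1$ on $G\subseteq G_{k_j}$, so by the chain rule $\mathcal{E}_p(w_N)=\int\lvert g_N'\rvert^{p}\,\dif\nu\ge\nu(G)$. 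Letting $N\to\infty$ forces $\nu(G)=0$, hence $\nu(B)=0$ and $\nu\ll\mathscr{L}^1$. The delicate points — where the real work lies — are the validity of the chain rule for the non-$C^1$ functions $f_k,g_N$ (obtained by $C^1$-approximation together with a continuity lemma for $u\mapsto\Gamma_p\langle u\rangle$, which also circumvents the usual circularity in defining $f'\circ u$), and the invocation of the Banach--Saks property, precisely where the uniform convexity of Theorem \ref{t.energy}(1) is essential and replaces the Hilbert-space/polarization tools unavailable for $p\neq2$.
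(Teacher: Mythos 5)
Your construction is genuinely different from the paper's proof, and the difference matters. You build $\Gamma_p\langle u\rangle$ by localizing the discrete energies to cells and extending through the coding space; this is exactly the paper's \emph{alternative} ``word space'' construction (Propositions \ref{p.em} and \ref{p.coinc}), which the paper sets up but deliberately does not use to verify the four properties. The paper's actual proof of Theorem \ref{t.emc} defines the measure in one stroke through the gradient of Proposition \ref{p.grad}, $\dif\Gamma_p\langle u\rangle:=|\partial u|^p\dif\nu$ on the skeleton $\mathcal{S}$. With that representation, (1) is the identity \eqref{e.grad-1} plus Morrey, (2) is Minkowski's inequality in $L^p(\mathcal{S},\nu)$ applied to $\partial(u_1+u_2)=\partial u_1+\partial u_2$, and (3) is the chain rule $\partial(f\circ u)=f'(u)\,\partial u$ of Proposition \ref{p.grad.cal}. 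Your gradient-free substitutes (cellwise seminorm limits for (2), affine freezing plus a Riemann-sum limit for (3)) are plausible in outline, but they are also where your gaps sit.

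In (2), the step ``monotone approximation of a general non-negative Borel $g$ by cell-simple functions'' fails as stated: $\mathds{1}_A$ for a general Borel set $A$ is \emph{not} an increasing limit of cell-simple functions, and you cannot pass from cells to Borel sets by a $\pi$--$\lambda$ argument either, because inequalities of the form $a^{1/p}\le b^{1/p}+c^{1/p}$ are destroyed by the set subtraction a $\lambda$-system requires. The repair is to use regularity of the three finite Borel measures (approximate a Borel set from outside by open sets, and open sets from inside by finite unions of cells, using non-atomicity at corner points); this must actually be said, since it is the only bridge from your cell data to arbitrary Borel $g$.

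The substantive gap is in (4). As written, you apply the chain rule to $f_k$ and $g_N$, whose derivatives are indicator functions of open sets, hence Lipschitz but not $C^1$, while property (3) is available only for $C^1$ functions; you flag this yourself, but the proposed repair (``$C^1$-approximation together with a continuity lemma for $u\mapsto\Gamma_p\langle u\rangle$'') is precisely the missing work, and in your gradient-free framework such a continuity lemma is not easy --- in the paper's framework it is immediate, since $\mathcal{E}_p(v_n-v)\to0$ means $\partial v_n\to\partial v$ in $L^p(\mathcal{S},\nu)$ and hence total-variation convergence of the energy measures, whereas you have no such object. The clean fix, and essentially what the paper (following Chen--Fukushima and Shimizu) does, is never to leave the $C^1$ world: for compact $F$ with $\mathscr{L}^1(F)=0$, choose smooth $\phi_n$ with $|\phi_n|\le1$, $\phi_n\to\mathds{1}_F$ pointwise and normalized integrals, set $\Phi_n(x)=\int_0^x\phi_n(t)\dif t\in C^1$, note $\Phi_n\circ u\to0$ in $L^p(K,\mu)$ and $\mathcal{E}_p(\Phi_n\circ u)\le\mathcal{E}_p(u)$, and then run exactly your Kakutani/Ces\`aro compactness, concluding by the $C^1$ chain rule and Fatou that $u_*\Gamma_p\langle u\rangle(F)\le\liminf_j\mathcal{E}_p(\Psi_j\circ u)=0$; compact sets generate the Borel $\sigma$-algebra, so this suffices. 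Your skeleton (Lipschitz contractivity, uniform convexity, Banach--Saks means, Fatou) coincides with the paper's; the defect is only the choice of test functions, but as written that choice makes the argument circular at the decisive step.
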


Some previous works have also constructed $p$-energy measures and discussed their properties by utilizing the self-similarity, as Hino \cite[Lemma 4.1]{Hin05}
did for $p=2$, or as Murugan and Shimizu \cite[Section 9]{MS25} did
for the standard Sierpi\'nski carpet. Motivated by \cite{BC23}, we use the gradient
structure as an alternative approach for both construction and direct
validation on the properties of the $p$-energy measure.

Within our framework, word spaces are applicable to construct the $p$-energy measure, see Proposition \ref{p.em}; however, the validation of related properties is impeded by the absence of self-similarity. This gap is bridged in Proposition \ref{p.coinc} by showing the coincidence of energy measures constructed by these two different approaches, and this suggests a compatibility between the gradient structure and the fractal structure.

{We define more discrete $p$-energy norms, denoted by $\mathcal{E}_{p,\infty}^{\beta}$ and $\mathcal{E}_{p,p}^{\beta}$ for $1<p<\infty$ and $0<\beta<\infty$ in Definition \ref{defE}. Then the $p$-energy $\mathcal{E}_{p}$ constructed in Theorem \ref{t.energy} is equivalent to {$\mathcal{E}_{p,\infty}^{\beta^{\ast}}$ with fixed $\beta^{\ast}>0$}. Intuitively, when $p=2$, the norm $\mathcal{E}_{2}={\mathcal{E}_{2,\infty}^{\beta^{\ast}}}$ gives a strongly local Dirichlet form, while $\mathcal{E}_{2,2}^{\beta}$ gives a non-local Dirichlet form (see \eqref{219}). Moreover, under an additional condition that \emph{$\bm{l}$ consists of finitely many contraction ratios}, we can describe the above norms in terms of Besov-Lipschitz norms. Let $\phi$ be the increasing scale function given in \eqref{phi}.}

\begin{definition}\label{def[}
{Fix $\beta^{\ast}\in(0,\infty)$.} For every $1<p<\infty $, $0\leq\beta <\infty $ and $0<r<\diam(K^{\bm{l}})$, define\footnote{{Our definition of Besov-Lipschitz spaces in Definition \ref{def[} may initially seem confusing, as the spaces $B_{p,q}^{\beta^{\ast}}$ does not actually depend on the choice of $\beta^{\ast}$. We use this definition to ensure the consistency with existing literature (e.g. \cite{Bau24}) regarding the \emph{critical Besov exponent} in \eqref{e.critical} in the self-similar case. See Remark \ref{r.coinBau}.}
}
\begin{equation}\label{e.belip}
\Phi _{u}^{\beta}(r):=\int_{K^{\bm{l}}}\fint_{B(x,r)}\frac{|u(x)-u(y)|^{p}}{\phi (r)^{\beta/\beta^{\ast}}}\dif\mu (y)\dif\mu (x),\ 0<r\leq\diam(K^{\bm{l}}),\ u\in L^{p}(K^{\bm{l}},\mu).
\end{equation}
For $1<q<\infty$, let
\[\lbrack u]_{B_{p,q}^{\beta }}:=\left(\int_{0}^{\diam (K^{\bm{l}})}\left(\Phi _{u}^{\beta}(r)\right)^{q/p}\frac{\dif r}{r}\right)^{1/q}\ \text{and}\ B_{p,q}^{\beta }:=\left\{ u\in L^{p}(K^{\bm{l}},\mu):[u]_{B_{p,q}^{\beta }}<\infty \right\};\]
for $q=\infty$, let
\[\lbrack u]_{B_{p,\infty}^{\beta}}:=\sup_{r\in(0,\diam (K^{\bm{l}})]}\Phi _{u}^{\beta}(r)^{1/p}\ \text{and}\ B_{p,\infty}^{\beta}:=\left\{ u\in L^{p}(K^{\bm{l}},\mu):[u]_{B_{p,\infty}^{\beta }}<\infty \right\}.
\]
\end{definition}

The above Besov-Lipschitz spaces were introduced by Jonsson and Wallin \cite{JW84} for Euclidean spaces, by Korevaar and Schoen \cite{KS93} for Riemann domains and by Jonsson \cite{Jon96} for the Sierpi\'nski gasket. The main difference between Besov-Lipschitz spaces on scale-irregular fractals and those on manifolds or self-similar fractals is that, $\phi$ is not a power function, which makes the analysis more complicated.

Our last main result is that, for $\beta$ close enough to $\beta^{\ast}$, the discrete semi-norms $\mathcal{E}_{p}$, $\mathcal{E}_{p,\infty}^{\beta}$ and $\mathcal{E}_{p,p}^{\beta}$ in Definition \ref{defE} are comparable with the Besov-Lipschitz norms $[\cdot]_{B_{p,\infty}^{\beta^{\ast}}}$, $[\cdot]_{B_{p,\infty}^{\beta }}$ and $[\cdot]_{B_{p,p}^{\beta }}$ respectively. Furthermore, the weak monotonicity and BBM convergence of $p$-energy semi-norms are established on scale-irregular Vicsek sets.
\begin{theorem}\label{thm3}
{If $\bm{l}=(l_{n})_{n=1}^{\infty}$ satisfies $\sup\limits_{n \geq 1}l_{n}<\infty$,} then for each $1<p<\infty$:
\begin{enumerate}[label=\textup{(\arabic*)}]
\item There exists a constant $\epsilon_{p}\in (0,1)$ {depending only on $\bm{l}$ and $p$}, such that for every $\beta\in(\epsilon_{p}\beta^{\ast},\infty)$, we have $B_{p,p}^{\beta}\subset B_{p,\infty}^{\beta}\subset  C(K^{\bm{l}})$, and there exists $C\geq1$ such that for all $u\in C(K^{\bm{l}})$,  \[C^{-1}\lbrack u]_{B_{p,\infty}^{\beta}}^{p}\leq \mathcal{E}_{p,\infty }^{\beta}(u)\leq C \lbrack u]_{B_{p,\infty}^{\beta}}^{p},\]
\[C^{-1}\lbrack u]_{B_{p,p}^{\beta}}^{p}\leq\mathcal{E}_{p,p}^{\beta}(u)\leq C \lbrack u]_{B_{p,p}^{\beta}}^{p}.\]
In particular, when $\beta=\beta^{\ast}$, {we have $\mathcal{E}_{p}(u)\asymp [u]_{B_{p,\infty}^{\beta^{\ast}}}$ for $u\in \mathcal{F}_{p}=B_{p,\infty}^{\beta^{\ast}}$ (given in Theorem \ref{t.energy})}.
\item $\beta^{\ast}$ is the {\em critical Besov exponent}, that is
\begin{equation}\label{e.critical}
\beta^{\ast }=\max \left\{ \beta \in[0,\infty): B_{p,\infty
}^{\beta }(K^{\bm{l}})\ \text{contains non-constant functions}\right\} .
\end{equation}
\item {\normalfont{(Weak-monotonicity property)}} For all $u\in \mathcal{F}_{p}=B_{p,\infty }^{\beta ^{\ast }}$,
\begin{equation*}
\sup_{r\in (0,\diam (K^{\bm{l}})]}\Phi _{u}^{\beta^{\ast}}(r)\leq
C\liminf_{r\rightarrow 0}\Phi _{u}^{\beta^{\ast}}(r).
\end{equation*}
\item {\normalfont{(Bourgain-Brezis-Mironescu (BBM) convergence)}} There exists $C>1$ such that for all $u\in \mathcal{F}_{p}=B_{p,\infty }^{\beta^{\ast}}$,
\begin{equation}\label{e.bbmdis}
C^{-1}\mathcal{E}_{p}(u)\leq \liminf_{\beta \uparrow
\beta^*}(\beta^{\ast}-\beta)\mathcal{E}_{p,p}^\beta(u)\leq \limsup_{\beta
\uparrow \beta^{\ast}}(\beta^{\ast}-\beta)\mathcal{E}_{p,p}^\beta(u)\leq C\mathcal{%
E} _{p}(u),
\end{equation}
and
\begin{equation}\label{e.bbmbes}
C^{-1}[u]_{{B}_{p,\infty }^{\beta^{\ast}}}^{p}\leq \liminf_{\beta
\uparrow \beta^{\ast}}(\beta^{\ast}-\beta )[u]_{{B}_{p,p}^{\beta
}}^{p}\leq \limsup_{\beta \uparrow \beta^{\ast}}(\beta^{\ast
}-\beta )[u]_{{B}_{p,p}^{\beta }}^{p}\leq C[u]_{{B}_{p,\infty }^{\beta^{\ast }}}^{p}.
\end{equation}
\end{enumerate}
\end{theorem}

The structure of the paper is as follows. In Section \ref{s.geo}, we
introduce scale-irregular Vicsek set and its measure, and discuss their properties, {including the volume doubling property (Proposition \ref{l.meas}), the existence of the Hausdorff measure $\mathcal{H}^{\alpha}$ (Proposition \ref{HHH}), the Ahlfors regularity (Proposition \ref{af}) and the non-self-similarity (Theorem \ref{t.non-sml})}. In Section \ref{s.energy}, we construct $p$-energy norm and $p$-energy measure on scale-irregular Vicsek sets and prove Theorems \ref{t.energy} and \ref{t.emc}. In Section \ref{s.norm}, we study Besov-Lipschitz norms related to the $p$-energy, including the weak monotonicity and BBM convergence, and prove Theorem \ref{thm3}. {Some possible extensions of our results are discussed in Section \ref{s.Discuss}.}

\begin{notation} The letters $C$,$C^{\prime }$, $C_{i}$,$C_{i}^{\prime }$,
$C_{i}^{\prime \prime }$ and $c$ are universal positive constants which may vary at each occurrence. The sign $\asymp $ means that
both $\leq $ and $\geq $ are true with uniform values of $C$ depending only
on $K^{\bm{l}}$. For $a,b\in \mathbb{R},\ a\wedge b:=\min \{a,b\}$, $a\vee b:=\max \{a,b\}$. We use $%
\#A$ for the cardinality of a set $A$.
\end{notation}

\section{Geometry and measure of a scale-irregular Vicsek set}\label{s.geo}
The arrangement of this section is as follows. We state the definition and related notions of scale-irregular Vicsek sets in Section \ref{s.pre}. The volume doubling property of the measures on scale-irregular Vicsek sets are analyzed in Section \ref{s.meas}. {The criterion for the existence of the Hausdorff measure $\mathcal{H}^{\alpha}$ and Ahlfors regularity, and sufficient conditions for scale-irregular Vicsek sets to be non-self-similar are given in Section \ref{s.nss}.}
\subsection{Preliminaries}\label{s.pre}

Define five points in the complex plane $\mathbb{C}$ by
\begin{equation*}
q_0:=0,\ q_j:=\exp((2j-1)\pi i/4),\quad1\leq j\leq4.
\end{equation*}
Let $K_{0}$ be the closed unit square in $\mathbb{C}$ with vertices $%
\{q_{j}\}_{j=1}^{4}$. Given an odd number $l\geq3$, define
\begin{equation*}
S_{l}:=\left\{2nl^{-1}q_{j}:0\leq n\leq\frac{1}{2}(l-1),\ 1\leq
j\leq4\right\}
\end{equation*}
so that $\# S_{l}=2l-1$. We assign $S_{l}$ the discrete topology for each $l$%
. For convenience, let $l_{0}:=1$. For any infinite sequence $\bm{l}=(l_{k})_{k=1}^{\infty }$, where each $%
l_{k}\geq 3$ is an odd integer for $k\geq1$, define
\begin{equation*}
\rho_{n}:=2\prod_{k=0}^{n}l_{k}^{-1}\ \text{for $0\leq n<\infty$,}\ W_{n}^{\bm{l}}:=%
\prod_{k=1}^{n}S_{l_{k}}\ \text{for $1\leq n\leq\infty$ and}\ W_{*}^{\bm{l}%
}:=\bigcup_{1\leq n<\infty}W_{n}^{\bm{l}}.
\end{equation*}
We assign $W_{n}^{\bm{l}}$ and $W_{*}^{\bm{l}}$ the product topology. For
each $w=w_{1}w_{2}\cdots\in W_{\infty}^{\bm{l}}$, we define $%
[w]_{n}:=w_{1}w_{2}\cdots w_{n}\in W_{n}^{\bm{l}}$ and $[w]_{n}$
for $w\in W_{k}^{\bm{l}}$ when $k\geq n\geq1$ similarly. For $%
w=w_{1}\cdots w_{n}\in W_{*}^{\bm{l}}$, we write
\begin{equation*}
S(w):= \{v\in W_{n+1}^{\bm{l}}: [v]_{n}=w\}= \{w_{1}w_{2}\cdots
w_{n}w_{n+1}:w_{n+1}\in S_{l_{n+1}}\}.
\end{equation*}

For $\alpha\in(0,1)$, we define a function $\delta$ on $W_{\infty}^{\bm{l}%
}\times W_{\infty}^{\bm{l}}$ by
\begin{equation*}
\delta(w,\tau):=%
\begin{cases}
\alpha^{\min\{n:[w]_{n}\neq[\tau]_{n}\}-1}\quad & \text{if }w\neq\tau, \\
0 \quad & \text{if }w=\tau.%
\end{cases}%
\end{equation*}
Then $\delta$ is a metric on $W_{\infty}^{\bm{l}}$ and generates the same
topology on $W_{\infty}^{\bm{l}}$.

For each $w\in S_{l}$, we define a map
\begin{equation*}
F_{w}^{l}(z):=w+l^{-1}z,\ z\in \mathbb{C},
\end{equation*}%
and for each $w=w_{1}\ldots w_{n}\in W_{\ast }^{\bm{l}}$, define
\begin{equation*}
F_{w}^{\bm{l}}:=F_{w_{1}}^{l_{1}}\circ \cdots \circ F_{w_{n}}^{l_{n}}.
\end{equation*}%
Let $d$ be the Eucildean metric on $\mathbb{C}$. Note that for each $w=w_{1}\ldots w_{n}\in W_{\ast }^{\bm{l}}$, the set $%
F_{w}^{\bm{l}}(K_{0})$ is an isometric copy of $%
[0,2^{1/2}l_{1}^{-1}l_{2}^{-1}\cdots l_{n}^{-1}]^{2}$, i.e., $F_{w}^{\bm{l}%
}(K_{0})$ is a square with side length $2^{1/2}l_{1}^{-1}l_{2}^{-1}\cdots
l_{n}^{-1}$ and
\begin{equation*}
\diam(F_{w}^{\bm{l}}(K_{0}))=2l_{1}^{-1}l_{2}^{-1}\cdots l_{n}^{-1}\leq
2\cdot 3^{-|w|}.
\end{equation*}

\begin{definition}[The scale-irregular Vicsek set]
\label{d.vicsek} For any infinite sequence $\bm{l}=(l_{k})_{k=1}^{\infty }$,
where each $l_{k}\geq 3$ is an odd integer, define
$K^{\bm{l}}$ to be the non-empty compact subset of $K_{0}$ by
\begin{equation*}
K^{\bm{l}}:=\bigcap_{n=1}^{\infty }\bigcup_{w\in W_{n}^{\bm{l}}}F_{w}^{\bm{l}%
}(K_{0}).
\end{equation*}
The metric on $K^{\bm{l}}$ is given by the restriction of the Euclidean
metric $d$ on $\mathbb{C}$ to $K^{\bm{l}}$. We call the metric space $K^{\bm{l}}$ a \emph{%
scale-irregular Vicsek set} (see Fig. \ref{fig1} for illustration).
\end{definition}

\begin{figure}[tbph]
\centering
\subfloat[$K^{\bm{l}}$ at level $3$, with $\bm{l}=353\cdots$.]{
\begin{tikzpicture}[scale=0.13]
\draw[fill=black] (0,0)--(1,0)--(1,1)--(0,1)--cycle;
\draw[fill=black] (2,0)--(3,0)--(3,1)--(2,1)--cycle;
\draw[fill=black] (1,1)--(2,1)--(2,2)--(1,2)--cycle;
\draw[fill=black] (0,2)--(1,2)--(1,3)--(0,3)--cycle;
\draw[fill=black] (2,2)--(3,2)--(3,3)--(2,3)--cycle;

\draw[fill=black] (0+6,0+6)--(1+6,0+6)--(1+6,1+6)--(0+6,1+6)--cycle;
\draw[fill=black] (2+6,0+6)--(3+6,0+6)--(3+6,1+6)--(2+6,1+6)--cycle;
\draw[fill=black] (1+6,1+6)--(2+6,1+6)--(2+6,2+6)--(1+6,2+6)--cycle;
\draw[fill=black] (0+6,2+6)--(1+6,2+6)--(1+6,3+6)--(0+6,3+6)--cycle;
\draw[fill=black] (2+6,2+6)--(3+6,2+6)--(3+6,3+6)--(2+6,3+6)--cycle;

\draw[fill=black] (0+3,0+3)--(1+3,0+3)--(1+3,1+3)--(0+3,1+3)--cycle;
\draw[fill=black] (2+3,0+3)--(3+3,0+3)--(3+3,1+3)--(2+3,1+3)--cycle;
\draw[fill=black] (1+3,1+3)--(2+3,1+3)--(2+3,2+3)--(1+3,2+3)--cycle;
\draw[fill=black] (0+3,2+3)--(1+3,2+3)--(1+3,3+3)--(0+3,3+3)--cycle;
\draw[fill=black] (2+3,2+3)--(3+3,2+3)--(3+3,3+3)--(2+3,3+3)--cycle;

\draw[fill=black] (0+9,0+3)--(1+9,0+3)--(1+9,1+3)--(0+9,1+3)--cycle;
\draw[fill=black] (2+9,0+3)--(3+9,0+3)--(3+9,1+3)--(2+9,1+3)--cycle;
\draw[fill=black] (1+9,1+3)--(2+9,1+3)--(2+9,2+3)--(1+9,2+3)--cycle;
\draw[fill=black] (0+9,2+3)--(1+9,2+3)--(1+9,3+3)--(0+9,3+3)--cycle;
\draw[fill=black] (2+9,2+3)--(3+9,2+3)--(3+9,3+3)--(2+9,3+3)--cycle;

\draw[fill=black] (0+3,0+9)--(1+3,0+9)--(1+3,1+9)--(0+3,1+9)--cycle;
\draw[fill=black] (2+3,0+9)--(3+3,0+9)--(3+3,1+9)--(2+3,1+9)--cycle;
\draw[fill=black] (1+3,1+9)--(2+3,1+9)--(2+3,2+9)--(1+3,2+9)--cycle;
\draw[fill=black] (0+3,2+9)--(1+3,2+9)--(1+3,3+9)--(0+3,3+9)--cycle;
\draw[fill=black] (2+3,2+9)--(3+3,2+9)--(3+3,3+9)--(2+3,3+9)--cycle;

\draw[fill=black] (0+9,0+9)--(1+9,0+9)--(1+9,1+9)--(0+9,1+9)--cycle;
\draw[fill=black] (2+9,0+9)--(3+9,0+9)--(3+9,1+9)--(2+9,1+9)--cycle;
\draw[fill=black] (1+9,1+9)--(2+9,1+9)--(2+9,2+9)--(1+9,2+9)--cycle;
\draw[fill=black] (0+9,2+9)--(1+9,2+9)--(1+9,3+9)--(0+9,3+9)--cycle;
\draw[fill=black] (2+9,2+9)--(3+9,2+9)--(3+9,3+9)--(2+9,3+9)--cycle;

\draw[fill=black] (0,0+12)--(1,0+12)--(1,1+12)--(0,1+12)--cycle;
\draw[fill=black] (2,0+12)--(3,0+12)--(3,1+12)--(2,1+12)--cycle;
\draw[fill=black] (1,1+12)--(2,1+12)--(2,2+12)--(1,2+12)--cycle;
\draw[fill=black] (0,2+12)--(1,2+12)--(1,3+12)--(0,3+12)--cycle;
\draw[fill=black] (2,2+12)--(3,2+12)--(3,3+12)--(2,3+12)--cycle;

\draw[fill=black] (0+12,0)--(1+12,0)--(1+12,1)--(0+12,1)--cycle;
\draw[fill=black] (2+12,0)--(3+12,0)--(3+12,1)--(2+12,1)--cycle;
\draw[fill=black] (1+12,1)--(2+12,1)--(2+12,2)--(1+12,2)--cycle;
\draw[fill=black] (0+12,2)--(1+12,2)--(1+12,3)--(0+12,3)--cycle;
\draw[fill=black] (2+12,2)--(3+12,2)--(3+12,3)--(2+12,3)--cycle;

\draw[fill=black] (0+12,0+12)--(1+12,0+12)--(1+12,1+12)--(0+12,1+12)--cycle;
\draw[fill=black] (2+12,0+12)--(3+12,0+12)--(3+12,1+12)--(2+12,1+12)--cycle;
\draw[fill=black] (1+12,1+12)--(2+12,1+12)--(2+12,2+12)--(1+12,2+12)--cycle;
\draw[fill=black] (0+12,2+12)--(1+12,2+12)--(1+12,3+12)--(0+12,3+12)--cycle;
\draw[fill=black] (2+12,2+12)--(3+12,2+12)--(3+12,3+12)--(2+12,3+12)--cycle;

\draw[fill=black] (0+30,0)--(1+30,0)--(1+30,1)--(0+30,1)--cycle;
\draw[fill=black] (2+30,0)--(3+30,0)--(3+30,1)--(2+30,1)--cycle;
\draw[fill=black] (1+30,1)--(2+30,1)--(2+30,2)--(1+30,2)--cycle;
\draw[fill=black] (0+30,2)--(1+30,2)--(1+30,3)--(0+30,3)--cycle;
\draw[fill=black] (2+30,2)--(3+30,2)--(3+30,3)--(2+30,3)--cycle;

\draw[fill=black] (0+6+30,0+6)--(1+6+30,0+6)--(1+6+30,1+6)--(0+6+30,1+6)--cycle;
\draw[fill=black] (2+6+30,0+6)--(3+6+30,0+6)--(3+6+30,1+6)--(2+6+30,1+6)--cycle;
\draw[fill=black] (1+6+30,1+6)--(2+6+30,1+6)--(2+6+30,2+6)--(1+6+30,2+6)--cycle;
\draw[fill=black] (0+6+30,2+6)--(1+6+30,2+6)--(1+6+30,3+6)--(0+6+30,3+6)--cycle;
\draw[fill=black] (2+6+30,2+6)--(3+6+30,2+6)--(3+6+30,3+6)--(2+6+30,3+6)--cycle;

\draw[fill=black] (0+3+30,0+3)--(1+3+30,0+3)--(1+3+30,1+3)--(0+3+30,1+3)--cycle;
\draw[fill=black] (2+3+30,0+3)--(3+3+30,0+3)--(3+3+30,1+3)--(2+3+30,1+3)--cycle;
\draw[fill=black] (1+3+30,1+3)--(2+3+30,1+3)--(2+3+30,2+3)--(1+3+30,2+3)--cycle;
\draw[fill=black] (0+3+30,2+3)--(1+3+30,2+3)--(1+3+30,3+3)--(0+3+30,3+3)--cycle;
\draw[fill=black] (2+3+30,2+3)--(3+3+30,2+3)--(3+3+30,3+3)--(2+3+30,3+3)--cycle;

\draw[fill=black] (0+9+30,0+3)--(1+9+30,0+3)--(1+9+30,1+3)--(0+9+30,1+3)--cycle;
\draw[fill=black] (2+9+30,0+3)--(3+9+30,0+3)--(3+9+30,1+3)--(2+9+30,1+3)--cycle;
\draw[fill=black] (1+9+30,1+3)--(2+9+30,1+3)--(2+9+30,2+3)--(1+9+30,2+3)--cycle;
\draw[fill=black] (0+9+30,2+3)--(1+9+30,2+3)--(1+9+30,3+3)--(0+9+30,3+3)--cycle;
\draw[fill=black] (2+9+30,2+3)--(3+9+30,2+3)--(3+9+30,3+3)--(2+9+30,3+3)--cycle;

\draw[fill=black] (0+3+30,0+9)--(1+3+30,0+9)--(1+3+30,1+9)--(0+3+30,1+9)--cycle;
\draw[fill=black] (2+3+30,0+9)--(3+3+30,0+9)--(3+3+30,1+9)--(2+3+30,1+9)--cycle;
\draw[fill=black] (1+3+30,1+9)--(2+3+30,1+9)--(2+3+30,2+9)--(1+3+30,2+9)--cycle;
\draw[fill=black] (0+3+30,2+9)--(1+3+30,2+9)--(1+3+30,3+9)--(0+3+30,3+9)--cycle;
\draw[fill=black] (2+3+30,2+9)--(3+3+30,2+9)--(3+3+30,3+9)--(2+3+30,3+9)--cycle;

\draw[fill=black] (0+9+30,0+9)--(1+9+30,0+9)--(1+9+30,1+9)--(0+9+30,1+9)--cycle;
\draw[fill=black] (2+9+30,0+9)--(3+9+30,0+9)--(3+9+30,1+9)--(2+9+30,1+9)--cycle;
\draw[fill=black] (1+9+30,1+9)--(2+9+30,1+9)--(2+9+30,2+9)--(1+9+30,2+9)--cycle;
\draw[fill=black] (0+9+30,2+9)--(1+9+30,2+9)--(1+9+30,3+9)--(0+9+30,3+9)--cycle;
\draw[fill=black] (2+9+30,2+9)--(3+9+30,2+9)--(3+9+30,3+9)--(2+9+30,3+9)--cycle;

\draw[fill=black] (0+30,0+12)--(1+30,0+12)--(1+30,1+12)--(0+30,1+12)--cycle;
\draw[fill=black] (2+30,0+12)--(3+30,0+12)--(3+30,1+12)--(2+30,1+12)--cycle;
\draw[fill=black] (1+30,1+12)--(2+30,1+12)--(2+30,2+12)--(1+30,2+12)--cycle;
\draw[fill=black] (0+30,2+12)--(1+30,2+12)--(1+30,3+12)--(0+30,3+12)--cycle;
\draw[fill=black] (2+30,2+12)--(3+30,2+12)--(3+30,3+12)--(2+30,3+12)--cycle;

\draw[fill=black] (0+12+30,0)--(1+12+30,0)--(1+12+30,1)--(0+12+30,1)--cycle;
\draw[fill=black] (2+12+30,0)--(3+12+30,0)--(3+12+30,1)--(2+12+30,1)--cycle;
\draw[fill=black] (1+12+30,1)--(2+12+30,1)--(2+12+30,2)--(1+12+30,2)--cycle;
\draw[fill=black] (0+12+30,2)--(1+12+30,2)--(1+12+30,3)--(0+12+30,3)--cycle;
\draw[fill=black] (2+12+30,2)--(3+12+30,2)--(3+12+30,3)--(2+12+30,3)--cycle;

\draw[fill=black] (0+12+30,0+12)--(1+12+30,0+12)--(1+12+30,1+12)--(0+12+30,1+12)--cycle;
\draw[fill=black] (2+12+30,0+12)--(3+12+30,0+12)--(3+12+30,1+12)--(2+12+30,1+12)--cycle;
\draw[fill=black] (1+12+30,1+12)--(2+12+30,1+12)--(2+12+30,2+12)--(1+12+30,2+12)--cycle;
\draw[fill=black] (0+12+30,2+12)--(1+12+30,2+12)--(1+12+30,3+12)--(0+12+30,3+12)--cycle;
\draw[fill=black] (2+12+30,2+12)--(3+12+30,2+12)--(3+12+30,3+12)--(2+12+30,3+12)--cycle;

\draw[fill=black] (0,0+30)--(1,0+30)--(1,1+30)--(0,1+30)--cycle;
\draw[fill=black] (2,0+30)--(3,0+30)--(3,1+30)--(2,1+30)--cycle;
\draw[fill=black] (1,1+30)--(2,1+30)--(2,2+30)--(1,2+30)--cycle;
\draw[fill=black] (0,2+30)--(1,2+30)--(1,3+30)--(0,3+30)--cycle;
\draw[fill=black] (2,2+30)--(3,2+30)--(3,3+30)--(2,3+30)--cycle;

\draw[fill=black] (0+6,0+6+30)--(1+6,0+6+30)--(1+6,1+6+30)--(0+6,1+6+30)--cycle;
\draw[fill=black] (2+6,0+6+30)--(3+6,0+6+30)--(3+6,1+6+30)--(2+6,1+6+30)--cycle;
\draw[fill=black] (1+6,1+6+30)--(2+6,1+6+30)--(2+6,2+6+30)--(1+6,2+6+30)--cycle;
\draw[fill=black] (0+6,2+6+30)--(1+6,2+6+30)--(1+6,3+6+30)--(0+6,3+6+30)--cycle;
\draw[fill=black] (2+6,2+6+30)--(3+6,2+6+30)--(3+6,3+6+30)--(2+6,3+6+30)--cycle;

\draw[fill=black] (0+3,0+3+30)--(1+3,0+3+30)--(1+3,1+3+30)--(0+3,1+3+30)--cycle;
\draw[fill=black] (2+3,0+3+30)--(3+3,0+3+30)--(3+3,1+3+30)--(2+3,1+3+30)--cycle;
\draw[fill=black] (1+3,1+3+30)--(2+3,1+3+30)--(2+3,2+3+30)--(1+3,2+3+30)--cycle;
\draw[fill=black] (0+3,2+3+30)--(1+3,2+3+30)--(1+3,3+3+30)--(0+3,3+3+30)--cycle;
\draw[fill=black] (2+3,2+3+30)--(3+3,2+3+30)--(3+3,3+3+30)--(2+3,3+3+30)--cycle;

\draw[fill=black] (0+9,0+3+30)--(1+9,0+3+30)--(1+9,1+3+30)--(0+9,1+3+30)--cycle;
\draw[fill=black] (2+9,0+3+30)--(3+9,0+3+30)--(3+9,1+3+30)--(2+9,1+3+30)--cycle;
\draw[fill=black] (1+9,1+3+30)--(2+9,1+3+30)--(2+9,2+3+30)--(1+9,2+3+30)--cycle;
\draw[fill=black] (0+9,2+3+30)--(1+9,2+3+30)--(1+9,3+3+30)--(0+9,3+3+30)--cycle;
\draw[fill=black] (2+9,2+3+30)--(3+9,2+3+30)--(3+9,3+3+30)--(2+9,3+3+30)--cycle;

\draw[fill=black] (0+3,0+9+30)--(1+3,0+9+30)--(1+3,1+9+30)--(0+3,1+9+30)--cycle;
\draw[fill=black] (2+3,0+9+30)--(3+3,0+9+30)--(3+3,1+9+30)--(2+3,1+9+30)--cycle;
\draw[fill=black] (1+3,1+9+30)--(2+3,1+9+30)--(2+3,2+9+30)--(1+3,2+9+30)--cycle;
\draw[fill=black] (0+3,2+9+30)--(1+3,2+9+30)--(1+3,3+9+30)--(0+3,3+9+30)--cycle;
\draw[fill=black] (2+3,2+9+30)--(3+3,2+9+30)--(3+3,3+9+30)--(2+3,3+9+30)--cycle;

\draw[fill=black] (0+9,0+9+30)--(1+9,0+9+30)--(1+9,1+9+30)--(0+9,1+9+30)--cycle;
\draw[fill=black] (2+9,0+9+30)--(3+9,0+9+30)--(3+9,1+9+30)--(2+9,1+9+30)--cycle;
\draw[fill=black] (1+9,1+9+30)--(2+9,1+9+30)--(2+9,2+9+30)--(1+9,2+9+30)--cycle;
\draw[fill=black] (0+9,2+9+30)--(1+9,2+9+30)--(1+9,3+9+30)--(0+9,3+9+30)--cycle;
\draw[fill=black] (2+9,2+9+30)--(3+9,2+9+30)--(3+9,3+9+30)--(2+9,3+9+30)--cycle;

\draw[fill=black] (0,0+12+30)--(1,0+12+30)--(1,1+12+30)--(0,1+12+30)--cycle;
\draw[fill=black] (2,0+12+30)--(3,0+12+30)--(3,1+12+30)--(2,1+12+30)--cycle;
\draw[fill=black] (1,1+12+30)--(2,1+12+30)--(2,2+12+30)--(1,2+12+30)--cycle;
\draw[fill=black] (0,2+12+30)--(1,2+12+30)--(1,3+12+30)--(0,3+12+30)--cycle;
\draw[fill=black] (2,2+12+30)--(3,2+12+30)--(3,3+12+30)--(2,3+12+30)--cycle;

\draw[fill=black] (0+12,0+30)--(1+12,0+30)--(1+12,1+30)--(0+12,1+30)--cycle;
\draw[fill=black] (2+12,0+30)--(3+12,0+30)--(3+12,1+30)--(2+12,1+30)--cycle;
\draw[fill=black] (1+12,1+30)--(2+12,1+30)--(2+12,2+30)--(1+12,2+30)--cycle;
\draw[fill=black] (0+12,2+30)--(1+12,2+30)--(1+12,3+30)--(0+12,3+30)--cycle;
\draw[fill=black] (2+12,2+30)--(3+12,2+30)--(3+12,3+30)--(2+12,3+30)--cycle;

\draw[fill=black] (0+12,0+12+30)--(1+12,0+12+30)--(1+12,1+12+30)--(0+12,1+12+30)--cycle;
\draw[fill=black] (2+12,0+12+30)--(3+12,0+12+30)--(3+12,1+12+30)--(2+12,1+12+30)--cycle;
\draw[fill=black] (1+12,1+12+30)--(2+12,1+12+30)--(2+12,2+12+30)--(1+12,2+12+30)--cycle;
\draw[fill=black] (0+12,2+12+30)--(1+12,2+12+30)--(1+12,3+12+30)--(0+12,3+12+30)--cycle;
\draw[fill=black] (2+12,2+12+30)--(3+12,2+12+30)--(3+12,3+12+30)--(2+12,3+12+30)--cycle;

\draw[fill=black] (0+30,0+30)--(1+30,0+30)--(1+30,1+30)--(0+30,1+30)--cycle;
\draw[fill=black] (2+30,0+30)--(3+30,0+30)--(3+30,1+30)--(2+30,1+30)--cycle;
\draw[fill=black] (1+30,1+30)--(2+30,1+30)--(2+30,2+30)--(1+30,2+30)--cycle;
\draw[fill=black] (0+30,2+30)--(1+30,2+30)--(1+30,3+30)--(0+30,3+30)--cycle;
\draw[fill=black] (2+30,2+30)--(3+30,2+30)--(3+30,3+30)--(2+30,3+30)--cycle;

\draw[fill=black] (0+6+30,0+6+30)--(1+6+30,0+6+30)--(1+6+30,1+6+30)--(0+6+30,1+6+30)--cycle;
\draw[fill=black] (2+6+30,0+6+30)--(3+6+30,0+6+30)--(3+6+30,1+6+30)--(2+6+30,1+6+30)--cycle;
\draw[fill=black] (1+6+30,1+6+30)--(2+6+30,1+6+30)--(2+6+30,2+6+30)--(1+6+30,2+6+30)--cycle;
\draw[fill=black] (0+6+30,2+6+30)--(1+6+30,2+6+30)--(1+6+30,3+6+30)--(0+6+30,3+6+30)--cycle;
\draw[fill=black] (2+6+30,2+6+30)--(3+6+30,2+6+30)--(3+6+30,3+6+30)--(2+6+30,3+6+30)--cycle;

\draw[fill=black] (0+3+30,0+3+30)--(1+3+30,0+3+30)--(1+3+30,1+3+30)--(0+3+30,1+3+30)--cycle;
\draw[fill=black] (2+3+30,0+3+30)--(3+3+30,0+3+30)--(3+3+30,1+3+30)--(2+3+30,1+3+30)--cycle;
\draw[fill=black] (1+3+30,1+3+30)--(2+3+30,1+3+30)--(2+3+30,2+3+30)--(1+3+30,2+3+30)--cycle;
\draw[fill=black] (0+3+30,2+3+30)--(1+3+30,2+3+30)--(1+3+30,3+3+30)--(0+3+30,3+3+30)--cycle;
\draw[fill=black] (2+3+30,2+3+30)--(3+3+30,2+3+30)--(3+3+30,3+3+30)--(2+3+30,3+3+30)--cycle;

\draw[fill=black] (0+9+30,0+3+30)--(1+9+30,0+3+30)--(1+9+30,1+3+30)--(0+9+30,1+3+30)--cycle;
\draw[fill=black] (2+9+30,0+3+30)--(3+9+30,0+3+30)--(3+9+30,1+3+30)--(2+9+30,1+3+30)--cycle;
\draw[fill=black] (1+9+30,1+3+30)--(2+9+30,1+3+30)--(2+9+30,2+3+30)--(1+9+30,2+3+30)--cycle;
\draw[fill=black] (0+9+30,2+3+30)--(1+9+30,2+3+30)--(1+9+30,3+3+30)--(0+9+30,3+3+30)--cycle;
\draw[fill=black] (2+9+30,2+3+30)--(3+9+30,2+3+30)--(3+9+30,3+3+30)--(2+9+30,3+3+30)--cycle;

\draw[fill=black] (0+3+30,0+9+30)--(1+3+30,0+9+30)--(1+3+30,1+9+30)--(0+3+30,1+9+30)--cycle;
\draw[fill=black] (2+3+30,0+9+30)--(3+3+30,0+9+30)--(3+3+30,1+9+30)--(2+3+30,1+9+30)--cycle;
\draw[fill=black] (1+3+30,1+9+30)--(2+3+30,1+9+30)--(2+3+30,2+9+30)--(1+3+30,2+9+30)--cycle;
\draw[fill=black] (0+3+30,2+9+30)--(1+3+30,2+9+30)--(1+3+30,3+9+30)--(0+3+30,3+9+30)--cycle;
\draw[fill=black] (2+3+30,2+9+30)--(3+3+30,2+9+30)--(3+3+30,3+9+30)--(2+3+30,3+9+30)--cycle;

\draw[fill=black] (0+9+30,0+9+30)--(1+9+30,0+9+30)--(1+9+30,1+9+30)--(0+9+30,1+9+30)--cycle;
\draw[fill=black] (2+9+30,0+9+30)--(3+9+30,0+9+30)--(3+9+30,1+9+30)--(2+9+30,1+9+30)--cycle;
\draw[fill=black] (1+9+30,1+9+30)--(2+9+30,1+9+30)--(2+9+30,2+9+30)--(1+9+30,2+9+30)--cycle;
\draw[fill=black] (0+9+30,2+9+30)--(1+9+30,2+9+30)--(1+9+30,3+9+30)--(0+9+30,3+9+30)--cycle;
\draw[fill=black] (2+9+30,2+9+30)--(3+9+30,2+9+30)--(3+9+30,3+9+30)--(2+9+30,3+9+30)--cycle;

\draw[fill=black] (0+30,0+12+30)--(1+30,0+12+30)--(1+30,1+12+30)--(0+30,1+12+30)--cycle;
\draw[fill=black] (2+30,0+12+30)--(3+30,0+12+30)--(3+30,1+12+30)--(2+30,1+12+30)--cycle;
\draw[fill=black] (1+30,1+12+30)--(2+30,1+12+30)--(2+30,2+12+30)--(1+30,2+12+30)--cycle;
\draw[fill=black] (0+30,2+12+30)--(1+30,2+12+30)--(1+30,3+12+30)--(0+30,3+12+30)--cycle;
\draw[fill=black] (2+30,2+12+30)--(3+30,2+12+30)--(3+30,3+12+30)--(2+30,3+12+30)--cycle;

\draw[fill=black] (0+12+30,0+30)--(1+12+30,0+30)--(1+12+30,1+30)--(0+12+30,1+30)--cycle;
\draw[fill=black] (2+12+30,0+30)--(3+12+30,0+30)--(3+12+30,1+30)--(2+12+30,1+30)--cycle;
\draw[fill=black] (1+12+30,1+30)--(2+12+30,1+30)--(2+12+30,2+30)--(1+12+30,2+30)--cycle;
\draw[fill=black] (0+12+30,2+30)--(1+12+30,2+30)--(1+12+30,3+30)--(0+12+30,3+30)--cycle;
\draw[fill=black] (2+12+30,2+30)--(3+12+30,2+30)--(3+12+30,3+30)--(2+12+30,3+30)--cycle;

\draw[fill=black] (0+12+30,0+12+30)--(1+12+30,0+12+30)--(1+12+30,1+12+30)--(0+12+30,1+12+30)--cycle;
\draw[fill=black] (2+12+30,0+12+30)--(3+12+30,0+12+30)--(3+12+30,1+12+30)--(2+12+30,1+12+30)--cycle;
\draw[fill=black] (1+12+30,1+12+30)--(2+12+30,1+12+30)--(2+12+30,2+12+30)--(1+12+30,2+12+30)--cycle;
\draw[fill=black] (0+12+30,2+12+30)--(1+12+30,2+12+30)--(1+12+30,3+12+30)--(0+12+30,3+12+30)--cycle;
\draw[fill=black] (2+12+30,2+12+30)--(3+12+30,2+12+30)--(3+12+30,3+12+30)--(2+12+30,3+12+30)--cycle;

\draw[fill=black] (0+15,0+15)--(1+15,0+15)--(1+15,1+15)--(0+15,1+15)--cycle;
\draw[fill=black] (2+15,0+15)--(3+15,0+15)--(3+15,1+15)--(2+15,1+15)--cycle;
\draw[fill=black] (1+15,1+15)--(2+15,1+15)--(2+15,2+15)--(1+15,2+15)--cycle;
\draw[fill=black] (0+15,2+15)--(1+15,2+15)--(1+15,3+15)--(0+15,3+15)--cycle;
\draw[fill=black] (2+15,2+15)--(3+15,2+15)--(3+15,3+15)--(2+15,3+15)--cycle;

\draw[fill=black] (0+6+15,0+6+15)--(1+6+15,0+6+15)--(1+6+15,1+6+15)--(0+6+15,1+6+15)--cycle;
\draw[fill=black] (2+6+15,0+6+15)--(3+6+15,0+6+15)--(3+6+15,1+6+15)--(2+6+15,1+6+15)--cycle;
\draw[fill=black] (1+6+15,1+6+15)--(2+6+15,1+6+15)--(2+6+15,2+6+15)--(1+6+15,2+6+15)--cycle;
\draw[fill=black] (0+6+15,2+6+15)--(1+6+15,2+6+15)--(1+6+15,3+6+15)--(0+6+15,3+6+15)--cycle;
\draw[fill=black] (2+6+15,2+6+15)--(3+6+15,2+6+15)--(3+6+15,3+6+15)--(2+6+15,3+6+15)--cycle;

\draw[fill=black] (0+3+15,0+3+15)--(1+3+15,0+3+15)--(1+3+15,1+3+15)--(0+3+15,1+3+15)--cycle;
\draw[fill=black] (2+3+15,0+3+15)--(3+3+15,0+3+15)--(3+3+15,1+3+15)--(2+3+15,1+3+15)--cycle;
\draw[fill=black] (1+3+15,1+3+15)--(2+3+15,1+3+15)--(2+3+15,2+3+15)--(1+3+15,2+3+15)--cycle;
\draw[fill=black] (0+3+15,2+3+15)--(1+3+15,2+3+15)--(1+3+15,3+3+15)--(0+3+15,3+3+15)--cycle;
\draw[fill=black] (2+3+15,2+3+15)--(3+3+15,2+3+15)--(3+3+15,3+3+15)--(2+3+15,3+3+15)--cycle;

\draw[fill=black] (0+9+15,0+3+15)--(1+9+15,0+3+15)--(1+9+15,1+3+15)--(0+9+15,1+3+15)--cycle;
\draw[fill=black] (2+9+15,0+3+15)--(3+9+15,0+3+15)--(3+9+15,1+3+15)--(2+9+15,1+3+15)--cycle;
\draw[fill=black] (1+9+15,1+3+15)--(2+9+15,1+3+15)--(2+9+15,2+3+15)--(1+9+15,2+3+15)--cycle;
\draw[fill=black] (0+9+15,2+3+15)--(1+9+15,2+3+15)--(1+9+15,3+3+15)--(0+9+15,3+3+15)--cycle;
\draw[fill=black] (2+9+15,2+3+15)--(3+9+15,2+3+15)--(3+9+15,3+3+15)--(2+9+15,3+3+15)--cycle;

\draw[fill=black] (0+3+15,0+9+15)--(1+3+15,0+9+15)--(1+3+15,1+9+15)--(0+3+15,1+9+15)--cycle;
\draw[fill=black] (2+3+15,0+9+15)--(3+3+15,0+9+15)--(3+3+15,1+9+15)--(2+3+15,1+9+15)--cycle;
\draw[fill=black] (1+3+15,1+9+15)--(2+3+15,1+9+15)--(2+3+15,2+9+15)--(1+3+15,2+9+15)--cycle;
\draw[fill=black] (0+3+15,2+9+15)--(1+3+15,2+9+15)--(1+3+15,3+9+15)--(0+3+15,3+9+15)--cycle;
\draw[fill=black] (2+3+15,2+9+15)--(3+3+15,2+9+15)--(3+3+15,3+9+15)--(2+3+15,3+9+15)--cycle;

\draw[fill=black] (0+9+15,0+9+15)--(1+9+15,0+9+15)--(1+9+15,1+9+15)--(0+9+15,1+9+15)--cycle;
\draw[fill=black] (2+9+15,0+9+15)--(3+9+15,0+9+15)--(3+9+15,1+9+15)--(2+9+15,1+9+15)--cycle;
\draw[fill=black] (1+9+15,1+9+15)--(2+9+15,1+9+15)--(2+9+15,2+9+15)--(1+9+15,2+9+15)--cycle;
\draw[fill=black] (0+9+15,2+9+15)--(1+9+15,2+9+15)--(1+9+15,3+9+15)--(0+9+15,3+9+15)--cycle;
\draw[fill=black] (2+9+15,2+9+15)--(3+9+15,2+9+15)--(3+9+15,3+9+15)--(2+9+15,3+9+15)--cycle;

\draw[fill=black] (0+15,0+12+15)--(1+15,0+12+15)--(1+15,1+12+15)--(0+15,1+12+15)--cycle;
\draw[fill=black] (2+15,0+12+15)--(3+15,0+12+15)--(3+15,1+12+15)--(2+15,1+12+15)--cycle;
\draw[fill=black] (1+15,1+12+15)--(2+15,1+12+15)--(2+15,2+12+15)--(1+15,2+12+15)--cycle;
\draw[fill=black] (0+15,2+12+15)--(1+15,2+12+15)--(1+15,3+12+15)--(0+15,3+12+15)--cycle;
\draw[fill=black] (2+15,2+12+15)--(3+15,2+12+15)--(3+15,3+12+15)--(2+15,3+12+15)--cycle;

\draw[fill=black] (0+12+15,0+15)--(1+12+15,0+15)--(1+12+15,1+15)--(0+12+15,1+15)--cycle;
\draw[fill=black] (2+12+15,0+15)--(3+12+15,0+15)--(3+12+15,1+15)--(2+12+15,1+15)--cycle;
\draw[fill=black] (1+12+15,1+15)--(2+12+15,1+15)--(2+12+15,2+15)--(1+12+15,2+15)--cycle;
\draw[fill=black] (0+12+15,2+15)--(1+12+15,2+15)--(1+12+15,3+15)--(0+12+15,3+15)--cycle;
\draw[fill=black] (2+12+15,2+15)--(3+12+15,2+15)--(3+12+15,3+15)--(2+12+15,3+15)--cycle;

\draw[fill=black] (0+12+15,0+12+15)--(1+12+15,0+12+15)--(1+12+15,1+12+15)--(0+12+15,1+12+15)--cycle;
\draw[fill=black] (2+12+15,0+12+15)--(3+12+15,0+12+15)--(3+12+15,1+12+15)--(2+12+15,1+12+15)--cycle;
\draw[fill=black] (1+12+15,1+12+15)--(2+12+15,1+12+15)--(2+12+15,2+12+15)--(1+12+15,2+12+15)--cycle;
\draw[fill=black] (0+12+15,2+12+15)--(1+12+15,2+12+15)--(1+12+15,3+12+15)--(0+12+15,3+12+15)--cycle;
\draw[fill=black] (2+12+15,2+12+15)--(3+12+15,2+12+15)--(3+12+15,3+12+15)--(2+12+15,3+12+15)--cycle;
\end{tikzpicture}	

}\hspace{40pt}
\subfloat[$K^{\bm{l}}$ at level $3$, with $\bm{l}=533\cdots$.]{
\begin{tikzpicture}[scale=0.13]
\draw[fill=black] (0,0)--(1,0)--(1,1)--(0,1)--cycle;
\draw[fill=black] (2,0)--(3,0)--(3,1)--(2,1)--cycle;
\draw[fill=black] (1,1)--(2,1)--(2,2)--(1,2)--cycle;
\draw[fill=black] (0,2)--(1,2)--(1,3)--(0,3)--cycle;
\draw[fill=black] (2,2)--(3,2)--(3,3)--(2,3)--cycle;

\draw[fill=black] (0,0+6)--(1,0+6)--(1,1+6)--(0,1+6)--cycle;
\draw[fill=black] (2,0+6)--(3,0+6)--(3,1+6)--(2,1+6)--cycle;
\draw[fill=black] (1,1+6)--(2,1+6)--(2,2+6)--(1,2+6)--cycle;
\draw[fill=black] (0,2+6)--(1,2+6)--(1,3+6)--(0,3+6)--cycle;
\draw[fill=black] (2,2+6)--(3,2+6)--(3,3+6)--(2,3+6)--cycle;

\draw[fill=black] (0+6,0+6)--(1+6,0+6)--(1+6,1+6)--(0+6,1+6)--cycle;
\draw[fill=black] (2+6,0+6)--(3+6,0+6)--(3+6,1+6)--(2+6,1+6)--cycle;
\draw[fill=black] (1+6,1+6)--(2+6,1+6)--(2+6,2+6)--(1+6,2+6)--cycle;
\draw[fill=black] (0+6,2+6)--(1+6,2+6)--(1+6,3+6)--(0+6,3+6)--cycle;
\draw[fill=black] (2+6,2+6)--(3+6,2+6)--(3+6,3+6)--(2+6,3+6)--cycle;

\draw[fill=black] (0+3,0+3)--(1+3,0+3)--(1+3,1+3)--(0+3,1+3)--cycle;
\draw[fill=black] (2+3,0+3)--(3+3,0+3)--(3+3,1+3)--(2+3,1+3)--cycle;
\draw[fill=black] (1+3,1+3)--(2+3,1+3)--(2+3,2+3)--(1+3,2+3)--cycle;
\draw[fill=black] (0+3,2+3)--(1+3,2+3)--(1+3,3+3)--(0+3,3+3)--cycle;
\draw[fill=black] (2+3,2+3)--(3+3,2+3)--(3+3,3+3)--(2+3,3+3)--cycle;

\draw[fill=black] (6,0)--(7,0)--(7,1)--(6,1)--cycle;
\draw[fill=black] (8,0)--(9,0)--(9,1)--(8,1)--cycle;
\draw[fill=black] (7,1)--(8,1)--(8,2)--(7,2)--cycle;
\draw[fill=black] (6,2)--(7,2)--(7,3)--(6,3)--cycle;
\draw[fill=black] (8,2)--(9,2)--(9,3)--(8,3)--cycle;

\draw[fill=black] (0+36,0)--(1+36,0)--(1+36,1)--(0+36,1)--cycle;
\draw[fill=black] (2+36,0)--(3+36,0)--(3+36,1)--(2+36,1)--cycle;
\draw[fill=black] (1+36,1)--(2+36,1)--(2+36,2)--(1+36,2)--cycle;
\draw[fill=black] (0+36,2)--(1+36,2)--(1+36,3)--(0+36,3)--cycle;
\draw[fill=black] (2+36,2)--(3+36,2)--(3+36,3)--(2+36,3)--cycle;

\draw[fill=black] (0+36,0+6)--(1+36,0+6)--(1+36,1+6)--(0+36,1+6)--cycle;
\draw[fill=black] (2+36,0+6)--(3+36,0+6)--(3+36,1+6)--(2+36,1+6)--cycle;
\draw[fill=black] (1+36,1+6)--(2+36,1+6)--(2+36,2+6)--(1+36,2+6)--cycle;
\draw[fill=black] (0+36,2+6)--(1+36,2+6)--(1+36,3+6)--(0+36,3+6)--cycle;
\draw[fill=black] (2+36,2+6)--(3+36,2+6)--(3+36,3+6)--(2+36,3+6)--cycle;

\draw[fill=black] (0+6+36,0+6)--(1+6+36,0+6)--(1+6+36,1+6)--(0+6+36,1+6)--cycle;
\draw[fill=black] (2+6+36,0+6)--(3+6+36,0+6)--(3+6+36,1+6)--(2+6+36,1+6)--cycle;
\draw[fill=black] (1+6+36,1+6)--(2+6+36,1+6)--(2+6+36,2+6)--(1+6+36,2+6)--cycle;
\draw[fill=black] (0+6+36,2+6)--(1+6+36,2+6)--(1+6+36,3+6)--(0+6+36,3+6)--cycle;
\draw[fill=black] (2+6+36,2+6)--(3+6+36,2+6)--(3+6+36,3+6)--(2+6+36,3+6)--cycle;

\draw[fill=black] (0+3+36,0+3)--(1+3+36,0+3)--(1+3+36,1+3)--(0+3+36,1+3)--cycle;
\draw[fill=black] (2+3+36,0+3)--(3+3+36,0+3)--(3+3+36,1+3)--(2+3+36,1+3)--cycle;
\draw[fill=black] (1+3+36,1+3)--(2+3+36,1+3)--(2+3+36,2+3)--(1+3+36,2+3)--cycle;
\draw[fill=black] (0+3+36,2+3)--(1+3+36,2+3)--(1+3+36,3+3)--(0+3+36,3+3)--cycle;
\draw[fill=black] (2+3+36,2+3)--(3+3+36,2+3)--(3+3+36,3+3)--(2+3+36,3+3)--cycle;

\draw[fill=black] (6+36,0)--(7+36,0)--(7+36,1)--(6+36,1)--cycle;
\draw[fill=black] (8+36,0)--(9+36,0)--(9+36,1)--(8+36,1)--cycle;
\draw[fill=black] (7+36,1)--(8+36,1)--(8+36,2)--(7+36,2)--cycle;
\draw[fill=black] (6+36,2)--(7+36,2)--(7+36,3)--(6+36,3)--cycle;
\draw[fill=black] (8+36,2)--(9+36,2)--(9+36,3)--(8+36,3)--cycle;

\draw[fill=black] (0+36,0+36)--(1+36,0+36)--(1+36,1+36)--(0+36,1+36)--cycle;
\draw[fill=black] (2+36,0+36)--(3+36,0+36)--(3+36,1+36)--(2+36,1+36)--cycle;
\draw[fill=black] (1+36,1+36)--(2+36,1+36)--(2+36,2+36)--(1+36,2+36)--cycle;
\draw[fill=black] (0+36,2+36)--(1+36,2+36)--(1+36,3+36)--(0+36,3+36)--cycle;
\draw[fill=black] (2+36,2+36)--(3+36,2+36)--(3+36,3+36)--(2+36,3+36)--cycle;

\draw[fill=black] (0+36,0+6+36)--(1+36,0+6+36)--(1+36,1+6+36)--(0+36,1+6+36)--cycle;
\draw[fill=black] (2+36,0+6+36)--(3+36,0+6+36)--(3+36,1+6+36)--(2+36,1+6+36)--cycle;
\draw[fill=black] (1+36,1+6+36)--(2+36,1+6+36)--(2+36,2+6+36)--(1+36,2+6+36)--cycle;
\draw[fill=black] (0+36,2+6+36)--(1+36,2+6+36)--(1+36,3+6+36)--(0+36,3+6+36)--cycle;
\draw[fill=black] (2+36,2+6+36)--(3+36,2+6+36)--(3+36,3+6+36)--(2+36,3+6+36)--cycle;

\draw[fill=black] (0+6+36,0+6+36)--(1+6+36,0+6+36)--(1+6+36,1+6+36)--(0+6+36,1+6+36)--cycle;
\draw[fill=black] (2+6+36,0+6+36)--(3+6+36,0+6+36)--(3+6+36,1+6+36)--(2+6+36,1+6+36)--cycle;
\draw[fill=black] (1+6+36,1+6+36)--(2+6+36,1+6+36)--(2+6+36,2+6+36)--(1+6+36,2+6+36)--cycle;
\draw[fill=black] (0+6+36,2+6+36)--(1+6+36,2+6+36)--(1+6+36,3+6+36)--(0+6+36,3+6+36)--cycle;
\draw[fill=black] (2+6+36,2+6+36)--(3+6+36,2+6+36)--(3+6+36,3+6+36)--(2+6+36,3+6+36)--cycle;

\draw[fill=black] (0+3+36,0+3+36)--(1+3+36,0+3+36)--(1+3+36,1+3+36)--(0+3+36,1+3+36)--cycle;
\draw[fill=black] (2+3+36,0+3+36)--(3+3+36,0+3+36)--(3+3+36,1+3+36)--(2+3+36,1+3+36)--cycle;
\draw[fill=black] (1+3+36,1+3+36)--(2+3+36,1+3+36)--(2+3+36,2+3+36)--(1+3+36,2+3+36)--cycle;
\draw[fill=black] (0+3+36,2+3+36)--(1+3+36,2+3+36)--(1+3+36,3+3+36)--(0+3+36,3+3+36)--cycle;
\draw[fill=black] (2+3+36,2+3+36)--(3+3+36,2+3+36)--(3+3+36,3+3+36)--(2+3+36,3+3+36)--cycle;

\draw[fill=black] (6+36,0+36)--(7+36,0+36)--(7+36,1+36)--(6+36,1+36)--cycle;
\draw[fill=black] (8+36,0+36)--(9+36,0+36)--(9+36,1+36)--(8+36,1+36)--cycle;
\draw[fill=black] (7+36,1+36)--(8+36,1+36)--(8+36,2+36)--(7+36,2+36)--cycle;
\draw[fill=black] (6+36,2+36)--(7+36,2+36)--(7+36,3+36)--(6+36,3+36)--cycle;
\draw[fill=black] (8+36,2+36)--(9+36,2+36)--(9+36,3+36)--(8+36,3+36)--cycle;

\draw[fill=black] (0,0+36)--(1,0+36)--(1,1+36)--(0,1+36)--cycle;
\draw[fill=black] (2,0+36)--(3,0+36)--(3,1+36)--(2,1+36)--cycle;
\draw[fill=black] (1,1+36)--(2,1+36)--(2,2+36)--(1,2+36)--cycle;
\draw[fill=black] (0,2+36)--(1,2+36)--(1,3+36)--(0,3+36)--cycle;
\draw[fill=black] (2,2+36)--(3,2+36)--(3,3+36)--(2,3+36)--cycle;

\draw[fill=black] (0,0+6+36)--(1,0+6+36)--(1,1+6+36)--(0,1+6+36)--cycle;
\draw[fill=black] (2,0+6+36)--(3,0+6+36)--(3,1+6+36)--(2,1+6+36)--cycle;
\draw[fill=black] (1,1+6+36)--(2,1+6+36)--(2,2+6+36)--(1,2+6+36)--cycle;
\draw[fill=black] (0,2+6+36)--(1,2+6+36)--(1,3+6+36)--(0,3+6+36)--cycle;
\draw[fill=black] (2,2+6+36)--(3,2+6+36)--(3,3+6+36)--(2,3+6+36)--cycle;

\draw[fill=black] (0+6,0+6+36)--(1+6,0+6+36)--(1+6,1+6+36)--(0+6,1+6+36)--cycle;
\draw[fill=black] (2+6,0+6+36)--(3+6,0+6+36)--(3+6,1+6+36)--(2+6,1+6+36)--cycle;
\draw[fill=black] (1+6,1+6+36)--(2+6,1+6+36)--(2+6,2+6+36)--(1+6,2+6+36)--cycle;
\draw[fill=black] (0+6,2+6+36)--(1+6,2+6+36)--(1+6,3+6+36)--(0+6,3+6+36)--cycle;
\draw[fill=black] (2+6,2+6+36)--(3+6,2+6+36)--(3+6,3+6+36)--(2+6,3+6+36)--cycle;

\draw[fill=black] (0+3,0+3+36)--(1+3,0+3+36)--(1+3,1+3+36)--(0+3,1+3+36)--cycle;
\draw[fill=black] (2+3,0+3+36)--(3+3,0+3+36)--(3+3,1+3+36)--(2+3,1+3+36)--cycle;
\draw[fill=black] (1+3,1+3+36)--(2+3,1+3+36)--(2+3,2+3+36)--(1+3,2+3+36)--cycle;
\draw[fill=black] (0+3,2+3+36)--(1+3,2+3+36)--(1+3,3+3+36)--(0+3,3+3+36)--cycle;
\draw[fill=black] (2+3,2+3+36)--(3+3,2+3+36)--(3+3,3+3+36)--(2+3,3+3+36)--cycle;

\draw[fill=black] (6,0+36)--(7,0+36)--(7,1+36)--(6,1+36)--cycle;
\draw[fill=black] (8,0+36)--(9,0+36)--(9,1+36)--(8,1+36)--cycle;
\draw[fill=black] (7,1+36)--(8,1+36)--(8,2+36)--(7,2+36)--cycle;
\draw[fill=black] (6,2+36)--(7,2+36)--(7,3+36)--(6,3+36)--cycle;
\draw[fill=black] (8,2+36)--(9,2+36)--(9,3+36)--(8,3+36)--cycle;

\draw[fill=black] (0+9,0+9)--(1+9,0+9)--(1+9,1+9)--(0+9,1+9)--cycle;
\draw[fill=black] (2+9,0+9)--(3+9,0+9)--(3+9,1+9)--(2+9,1+9)--cycle;
\draw[fill=black] (1+9,1+9)--(2+9,1+9)--(2+9,2+9)--(1+9,2+9)--cycle;
\draw[fill=black] (0+9,2+9)--(1+9,2+9)--(1+9,3+9)--(0+9,3+9)--cycle;
\draw[fill=black] (2+9,2+9)--(3+9,2+9)--(3+9,3+9)--(2+9,3+9)--cycle;

\draw[fill=black] (0+9,0+6+9)--(1+9,0+6+9)--(1+9,1+6+9)--(0+9,1+6+9)--cycle;
\draw[fill=black] (2+9,0+6+9)--(3+9,0+6+9)--(3+9,1+6+9)--(2+9,1+6+9)--cycle;
\draw[fill=black] (1+9,1+6+9)--(2+9,1+6+9)--(2+9,2+6+9)--(1+9,2+6+9)--cycle;
\draw[fill=black] (0+9,2+6+9)--(1+9,2+6+9)--(1+9,3+6+9)--(0+9,3+6+9)--cycle;
\draw[fill=black] (2+9,2+6+9)--(3+9,2+6+9)--(3+9,3+6+9)--(2+9,3+6+9)--cycle;

\draw[fill=black] (0+6+9,0+6+9)--(1+6+9,0+6+9)--(1+6+9,1+6+9)--(0+6+9,1+6+9)--cycle;
\draw[fill=black] (2+6+9,0+6+9)--(3+6+9,0+6+9)--(3+6+9,1+6+9)--(2+6+9,1+6+9)--cycle;
\draw[fill=black] (1+6+9,1+6+9)--(2+6+9,1+6+9)--(2+6+9,2+6+9)--(1+6+9,2+6+9)--cycle;
\draw[fill=black] (0+6+9,2+6+9)--(1+6+9,2+6+9)--(1+6+9,3+6+9)--(0+6+9,3+6+9)--cycle;
\draw[fill=black] (2+6+9,2+6+9)--(3+6+9,2+6+9)--(3+6+9,3+6+9)--(2+6+9,3+6+9)--cycle;

\draw[fill=black] (0+3+9,0+3+9)--(1+3+9,0+3+9)--(1+3+9,1+3+9)--(0+3+9,1+3+9)--cycle;
\draw[fill=black] (2+3+9,0+3+9)--(3+3+9,0+3+9)--(3+3+9,1+3+9)--(2+3+9,1+3+9)--cycle;
\draw[fill=black] (1+3+9,1+3+9)--(2+3+9,1+3+9)--(2+3+9,2+3+9)--(1+3+9,2+3+9)--cycle;
\draw[fill=black] (0+3+9,2+3+9)--(1+3+9,2+3+9)--(1+3+9,3+3+9)--(0+3+9,3+3+9)--cycle;
\draw[fill=black] (2+3+9,2+3+9)--(3+3+9,2+3+9)--(3+3+9,3+3+9)--(2+3+9,3+3+9)--cycle;

\draw[fill=black] (6+9,0+9)--(7+9,0+9)--(7+9,1+9)--(6+9,1+9)--cycle;
\draw[fill=black] (8+9,0+9)--(9+9,0+9)--(9+9,1+9)--(8+9,1+9)--cycle;
\draw[fill=black] (7+9,1+9)--(8+9,1+9)--(8+9,2+9)--(7+9,2+9)--cycle;
\draw[fill=black] (6+9,2+9)--(7+9,2+9)--(7+9,3+9)--(6+9,3+9)--cycle;
\draw[fill=black] (8+9,2+9)--(9+9,2+9)--(9+9,3+9)--(8+9,3+9)--cycle;

\draw[fill=black] (0+9,0+27)--(1+9,0+27)--(1+9,1+27)--(0+9,1+27)--cycle;
\draw[fill=black] (2+9,0+27)--(3+9,0+27)--(3+9,1+27)--(2+9,1+27)--cycle;
\draw[fill=black] (1+9,1+27)--(2+9,1+27)--(2+9,2+27)--(1+9,2+27)--cycle;
\draw[fill=black] (0+9,2+27)--(1+9,2+27)--(1+9,3+27)--(0+9,3+27)--cycle;
\draw[fill=black] (2+9,2+27)--(3+9,2+27)--(3+9,3+27)--(2+9,3+27)--cycle;

\draw[fill=black] (0+9,0+6+27)--(1+9,0+6+27)--(1+9,1+6+27)--(0+9,1+6+27)--cycle;
\draw[fill=black] (2+9,0+6+27)--(3+9,0+6+27)--(3+9,1+6+27)--(2+9,1+6+27)--cycle;
\draw[fill=black] (1+9,1+6+27)--(2+9,1+6+27)--(2+9,2+6+27)--(1+9,2+6+27)--cycle;
\draw[fill=black] (0+9,2+6+27)--(1+9,2+6+27)--(1+9,3+6+27)--(0+9,3+6+27)--cycle;
\draw[fill=black] (2+9,2+6+27)--(3+9,2+6+27)--(3+9,3+6+27)--(2+9,3+6+27)--cycle;

\draw[fill=black] (0+6+9,0+6+27)--(1+6+9,0+6+27)--(1+6+9,1+6+27)--(0+6+9,1+6+27)--cycle;
\draw[fill=black] (2+6+9,0+6+27)--(3+6+9,0+6+27)--(3+6+9,1+6+27)--(2+6+9,1+6+27)--cycle;
\draw[fill=black] (1+6+9,1+6+27)--(2+6+9,1+6+27)--(2+6+9,2+6+27)--(1+6+9,2+6+27)--cycle;
\draw[fill=black] (0+6+9,2+6+27)--(1+6+9,2+6+27)--(1+6+9,3+6+27)--(0+6+9,3+6+27)--cycle;
\draw[fill=black] (2+6+9,2+6+27)--(3+6+9,2+6+27)--(3+6+9,3+6+27)--(2+6+9,3+6+27)--cycle;

\draw[fill=black] (0+3+9,0+3+27)--(1+3+9,0+3+27)--(1+3+9,1+3+27)--(0+3+9,1+3+27)--cycle;
\draw[fill=black] (2+3+9,0+3+27)--(3+3+9,0+3+27)--(3+3+9,1+3+27)--(2+3+9,1+3+27)--cycle;
\draw[fill=black] (1+3+9,1+3+27)--(2+3+9,1+3+27)--(2+3+9,2+3+27)--(1+3+9,2+3+27)--cycle;
\draw[fill=black] (0+3+9,2+3+27)--(1+3+9,2+3+27)--(1+3+9,3+3+27)--(0+3+9,3+3+27)--cycle;
\draw[fill=black] (2+3+9,2+3+27)--(3+3+9,2+3+27)--(3+3+9,3+3+27)--(2+3+9,3+3+27)--cycle;

\draw[fill=black] (6+9,0+27)--(7+9,0+27)--(7+9,1+27)--(6+9,1+27)--cycle;
\draw[fill=black] (8+9,0+27)--(9+9,0+27)--(9+9,1+27)--(8+9,1+27)--cycle;
\draw[fill=black] (7+9,1+27)--(8+9,1+27)--(8+9,2+27)--(7+9,2+27)--cycle;
\draw[fill=black] (6+9,2+27)--(7+9,2+27)--(7+9,3+27)--(6+9,3+27)--cycle;
\draw[fill=black] (8+9,2+27)--(9+9,2+27)--(9+9,3+27)--(8+9,3+27)--cycle;

\draw[fill=black] (0+27,0+9)--(1+27,0+9)--(1+27,1+9)--(0+27,1+9)--cycle;
\draw[fill=black] (2+27,0+9)--(3+27,0+9)--(3+27,1+9)--(2+27,1+9)--cycle;
\draw[fill=black] (1+27,1+9)--(2+27,1+9)--(2+27,2+9)--(1+27,2+9)--cycle;
\draw[fill=black] (0+27,2+9)--(1+27,2+9)--(1+27,3+9)--(0+27,3+9)--cycle;
\draw[fill=black] (2+27,2+9)--(3+27,2+9)--(3+27,3+9)--(2+27,3+9)--cycle;

\draw[fill=black] (0+27,0+6+9)--(1+27,0+6+9)--(1+27,1+6+9)--(0+27,1+6+9)--cycle;
\draw[fill=black] (2+27,0+6+9)--(3+27,0+6+9)--(3+27,1+6+9)--(2+27,1+6+9)--cycle;
\draw[fill=black] (1+27,1+6+9)--(2+27,1+6+9)--(2+27,2+6+9)--(1+27,2+6+9)--cycle;
\draw[fill=black] (0+27,2+6+9)--(1+27,2+6+9)--(1+27,3+6+9)--(0+27,3+6+9)--cycle;
\draw[fill=black] (2+27,2+6+9)--(3+27,2+6+9)--(3+27,3+6+9)--(2+27,3+6+9)--cycle;

\draw[fill=black] (0+6+27,0+6+9)--(1+6+27,0+6+9)--(1+6+27,1+6+9)--(0+6+27,1+6+9)--cycle;
\draw[fill=black] (2+6+27,0+6+9)--(3+6+27,0+6+9)--(3+6+27,1+6+9)--(2+6+27,1+6+9)--cycle;
\draw[fill=black] (1+6+27,1+6+9)--(2+6+27,1+6+9)--(2+6+27,2+6+9)--(1+6+27,2+6+9)--cycle;
\draw[fill=black] (0+6+27,2+6+9)--(1+6+27,2+6+9)--(1+6+27,3+6+9)--(0+6+27,3+6+9)--cycle;
\draw[fill=black] (2+6+27,2+6+9)--(3+6+27,2+6+9)--(3+6+27,3+6+9)--(2+6+27,3+6+9)--cycle;

\draw[fill=black] (0+3+27,0+3+9)--(1+3+27,0+3+9)--(1+3+27,1+3+9)--(0+3+27,1+3+9)--cycle;
\draw[fill=black] (2+3+27,0+3+9)--(3+3+27,0+3+9)--(3+3+27,1+3+9)--(2+3+27,1+3+9)--cycle;
\draw[fill=black] (1+3+27,1+3+9)--(2+3+27,1+3+9)--(2+3+27,2+3+9)--(1+3+27,2+3+9)--cycle;
\draw[fill=black] (0+3+27,2+3+9)--(1+3+27,2+3+9)--(1+3+27,3+3+9)--(0+3+27,3+3+9)--cycle;
\draw[fill=black] (2+3+27,2+3+9)--(3+3+27,2+3+9)--(3+3+27,3+3+9)--(2+3+27,3+3+9)--cycle;

\draw[fill=black] (6+27,0+9)--(7+27,0+9)--(7+27,1+9)--(6+27,1+9)--cycle;
\draw[fill=black] (8+27,0+9)--(9+27,0+9)--(9+27,1+9)--(8+27,1+9)--cycle;
\draw[fill=black] (7+27,1+9)--(8+27,1+9)--(8+27,2+9)--(7+27,2+9)--cycle;
\draw[fill=black] (6+27,2+9)--(7+27,2+9)--(7+27,3+9)--(6+27,3+9)--cycle;
\draw[fill=black] (8+27,2+9)--(9+27,2+9)--(9+27,3+9)--(8+27,3+9)--cycle;

\draw[fill=black] (0+18,0+18)--(1+18,0+18)--(1+18,1+18)--(0+18,1+18)--cycle;
\draw[fill=black] (2+18,0+18)--(3+18,0+18)--(3+18,1+18)--(2+18,1+18)--cycle;
\draw[fill=black] (1+18,1+18)--(2+18,1+18)--(2+18,2+18)--(1+18,2+18)--cycle;
\draw[fill=black] (0+18,2+18)--(1+18,2+18)--(1+18,3+18)--(0+18,3+18)--cycle;
\draw[fill=black] (2+18,2+18)--(3+18,2+18)--(3+18,3+18)--(2+18,3+18)--cycle;

\draw[fill=black] (0+18,0+6+18)--(1+18,0+6+18)--(1+18,1+6+18)--(0+18,1+6+18)--cycle;
\draw[fill=black] (2+18,0+6+18)--(3+18,0+6+18)--(3+18,1+6+18)--(2+18,1+6+18)--cycle;
\draw[fill=black] (1+18,1+6+18)--(2+18,1+6+18)--(2+18,2+6+18)--(1+18,2+6+18)--cycle;
\draw[fill=black] (0+18,2+6+18)--(1+18,2+6+18)--(1+18,3+6+18)--(0+18,3+6+18)--cycle;
\draw[fill=black] (2+18,2+6+18)--(3+18,2+6+18)--(3+18,3+6+18)--(2+18,3+6+18)--cycle;

\draw[fill=black] (0+6+18,0+6+18)--(1+6+18,0+6+18)--(1+6+18,1+6+18)--(0+6+18,1+6+18)--cycle;
\draw[fill=black] (2+6+18,0+6+18)--(3+6+18,0+6+18)--(3+6+18,1+6+18)--(2+6+18,1+6+18)--cycle;
\draw[fill=black] (1+6+18,1+6+18)--(2+6+18,1+6+18)--(2+6+18,2+6+18)--(1+6+18,2+6+18)--cycle;
\draw[fill=black] (0+6+18,2+6+18)--(1+6+18,2+6+18)--(1+6+18,3+6+18)--(0+6+18,3+6+18)--cycle;
\draw[fill=black] (2+6+18,2+6+18)--(3+6+18,2+6+18)--(3+6+18,3+6+18)--(2+6+18,3+6+18)--cycle;

\draw[fill=black] (0+3+18,0+3+18)--(1+3+18,0+3+18)--(1+3+18,1+3+18)--(0+3+18,1+3+18)--cycle;
\draw[fill=black] (2+3+18,0+3+18)--(3+3+18,0+3+18)--(3+3+18,1+3+18)--(2+3+18,1+3+18)--cycle;
\draw[fill=black] (1+3+18,1+3+18)--(2+3+18,1+3+18)--(2+3+18,2+3+18)--(1+3+18,2+3+18)--cycle;
\draw[fill=black] (0+3+18,2+3+18)--(1+3+18,2+3+18)--(1+3+18,3+3+18)--(0+3+18,3+3+18)--cycle;
\draw[fill=black] (2+3+18,2+3+18)--(3+3+18,2+3+18)--(3+3+18,3+3+18)--(2+3+18,3+3+18)--cycle;

\draw[fill=black] (6+18,0+18)--(7+18,0+18)--(7+18,1+18)--(6+18,1+18)--cycle;
\draw[fill=black] (8+18,0+18)--(9+18,0+18)--(9+18,1+18)--(8+18,1+18)--cycle;
\draw[fill=black] (7+18,1+18)--(8+18,1+18)--(8+18,2+18)--(7+18,2+18)--cycle;
\draw[fill=black] (6+18,2+18)--(7+18,2+18)--(7+18,3+18)--(6+18,3+18)--cycle;
\draw[fill=black] (8+18,2+18)--(9+18,2+18)--(9+18,3+18)--(8+18,3+18)--cycle;

\draw[fill=black] (0+27,0+27)--(1+27,0+27)--(1+27,1+27)--(0+27,1+27)--cycle;
\draw[fill=black] (2+27,0+27)--(3+27,0+27)--(3+27,1+27)--(2+27,1+27)--cycle;
\draw[fill=black] (1+27,1+27)--(2+27,1+27)--(2+27,2+27)--(1+27,2+27)--cycle;
\draw[fill=black] (0+27,2+27)--(1+27,2+27)--(1+27,3+27)--(0+27,3+27)--cycle;
\draw[fill=black] (2+27,2+27)--(3+27,2+27)--(3+27,3+27)--(2+27,3+27)--cycle;

\draw[fill=black] (0+27,0+6+27)--(1+27,0+6+27)--(1+27,1+6+27)--(0+27,1+6+27)--cycle;
\draw[fill=black] (2+27,0+6+27)--(3+27,0+6+27)--(3+27,1+6+27)--(2+27,1+6+27)--cycle;
\draw[fill=black] (1+27,1+6+27)--(2+27,1+6+27)--(2+27,2+6+27)--(1+27,2+6+27)--cycle;
\draw[fill=black] (0+27,2+6+27)--(1+27,2+6+27)--(1+27,3+6+27)--(0+27,3+6+27)--cycle;
\draw[fill=black] (2+27,2+6+27)--(3+27,2+6+27)--(3+27,3+6+27)--(2+27,3+6+27)--cycle;

\draw[fill=black] (0+6+27,0+6+27)--(1+6+27,0+6+27)--(1+6+27,1+6+27)--(0+6+27,1+6+27)--cycle;
\draw[fill=black] (2+6+27,0+6+27)--(3+6+27,0+6+27)--(3+6+27,1+6+27)--(2+6+27,1+6+27)--cycle;
\draw[fill=black] (1+6+27,1+6+27)--(2+6+27,1+6+27)--(2+6+27,2+6+27)--(1+6+27,2+6+27)--cycle;
\draw[fill=black] (0+6+27,2+6+27)--(1+6+27,2+6+27)--(1+6+27,3+6+27)--(0+6+27,3+6+27)--cycle;
\draw[fill=black] (2+6+27,2+6+27)--(3+6+27,2+6+27)--(3+6+27,3+6+27)--(2+6+27,3+6+27)--cycle;

\draw[fill=black] (0+3+27,0+3+27)--(1+3+27,0+3+27)--(1+3+27,1+3+27)--(0+3+27,1+3+27)--cycle;
\draw[fill=black] (2+3+27,0+3+27)--(3+3+27,0+3+27)--(3+3+27,1+3+27)--(2+3+27,1+3+27)--cycle;
\draw[fill=black] (1+3+27,1+3+27)--(2+3+27,1+3+27)--(2+3+27,2+3+27)--(1+3+27,2+3+27)--cycle;
\draw[fill=black] (0+3+27,2+3+27)--(1+3+27,2+3+27)--(1+3+27,3+3+27)--(0+3+27,3+3+27)--cycle;
\draw[fill=black] (2+3+27,2+3+27)--(3+3+27,2+3+27)--(3+3+27,3+3+27)--(2+3+27,3+3+27)--cycle;

\draw[fill=black] (6+27,0+27)--(7+27,0+27)--(7+27,1+27)--(6+27,1+27)--cycle;
\draw[fill=black] (8+27,0+27)--(9+27,0+27)--(9+27,1+27)--(8+27,1+27)--cycle;
\draw[fill=black] (7+27,1+27)--(8+27,1+27)--(8+27,2+27)--(7+27,2+27)--cycle;
\draw[fill=black] (6+27,2+27)--(7+27,2+27)--(7+27,3+27)--(6+27,3+27)--cycle;
\draw[fill=black] (8+27,2+27)--(9+27,2+27)--(9+27,3+27)--(8+27,3+27)--cycle;
\end{tikzpicture}	
}\newline
\caption{Two scale-irregular Vicsek sets $K^{\bm{l}}$.}
\label{fig1}
\end{figure}
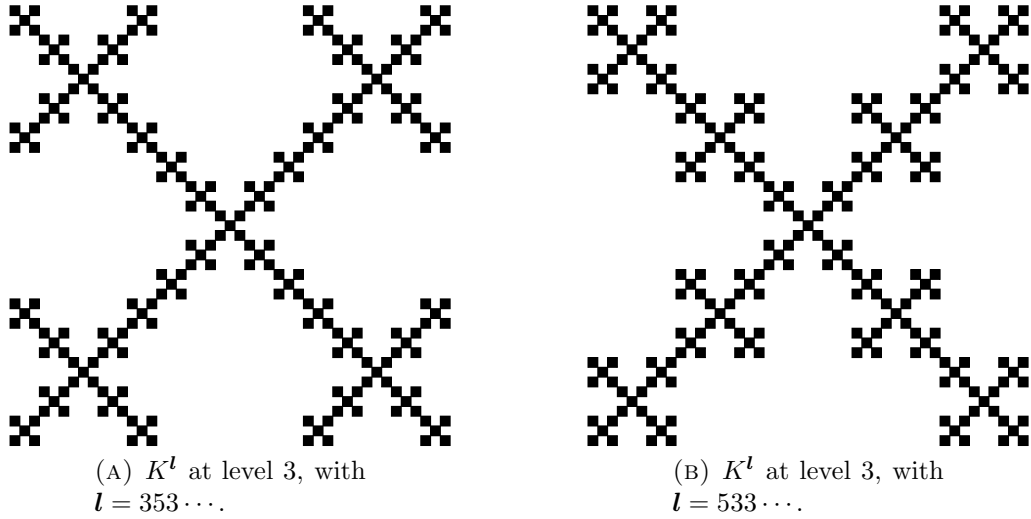

For $w\in W_{*}^{\bm{l}}$, we write $K_{w}^{\bm{l}}:=F_{w}^{\bm{l}}(K_{0})\cap K^{\bm{l}}$. We call $K_{w}^{\bm{l}}$ a {\em level-$n$ cell} if $w\in W_{n}^{\bm{l}}$.

\begin{proposition}
\label{l.inters} For $w\in W_{*}^{\bm{l}}$, we have $K_{w}^{\bm{l}%
}=\bigcup_{v\in S(w)} K_{v}^{\bm{l}}$, namely,
\begin{equation*}
F_{w}^{\bm{l}}(K_{0})\cap K^{\bm{l}}=\bigcup_{v\in S(w)}F_{v}^{\bm{l}%
}(K_{0})\cap K^{\bm{l}}.
\end{equation*}
\end{proposition}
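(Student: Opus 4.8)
The plan is to prove the two inclusions separately: the inclusion $\supseteq$ is immediate, while the reverse inclusion rests on a geometric lemma describing how two distinct cells of the same level can meet. Throughout, write $n:=|w|$ and $E_m:=\bigcup_{\tau\in W_m^{\bm{l}}}F_\tau^{\bm{l}}(K_0)$, so that $K^{\bm{l}}=\bigcap_{m\ge 1}E_m$. First I would record the single-scale nesting $\bigcup_{u\in S_l}F_u^l(K_0)\subseteq K_0$, valid for every odd $l\ge 3$; applying the contraction $F_\tau^{\bm{l}}$ to it shows $E_{m+1}\subseteq E_m$, so the defining intersection is a decreasing intersection of compacta. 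For the easy inclusion, if $v\in S(w)$ then $v=wu$ with $u\in S_{l_{n+1}}$ and $F_v^{\bm{l}}(K_0)=F_w^{\bm{l}}(F_u^{l_{n+1}}(K_0))\subseteq F_w^{\bm{l}}(K_0)$, whence $K_v^{\bm{l}}\subseteq K_w^{\bm{l}}$, and taking the union over $v\in S(w)$ gives $\supseteq$.

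The core of the argument is the following lemma, which I would prove by induction on the level $m$: for distinct $\tau,\tau'\in W_m^{\bm{l}}$ the intersection $F_\tau^{\bm{l}}(K_0)\cap F_{\tau'}^{\bm{l}}(K_0)$ is either empty or a single point that is a common vertex (a point of the form $F_\tau^{\bm{l}}(q_j)$) of both cells. The base case $m=1$ is a finite elementary check on the diagonal ``X''-arrangement of the $2l-1$ squares $\{F_u^l(K_0)\}_{u\in S_l}$: consecutive squares along a common diagonal arm, and the central square together with each innermost arm square, touch at exactly one shared corner, while every remaining pair is disjoint. The computation $F_{u_j}^l(q_j)=q_j$ with $u_j:=(l-1)l^{-1}q_j\in S_l$ shows moreover that each vertex $q_j$ of $K_0$ is a corner of the child cell $F_{u_j}^l(K_0)$, and a short magnitude estimate shows it is a corner of \emph{only} that child.

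For the inductive step I would split a pair $\tau=wu$, $\tau'=w'u'$ at level $m+1$ into two cases. If $w=w'$, then $u\ne u'$ and the base case applied through the injective map $F_w^{\bm{l}}$ shows the intersection is at most a common vertex. If $w\ne w'$, then the two cells lie inside $F_w^{\bm{l}}(K_0)$ and $F_{w'}^{\bm{l}}(K_0)$, whose intersection by the inductive hypothesis is at most a common vertex $c=F_w^{\bm{l}}(q_j)$; one then checks that $c$ lies in $F_\tau^{\bm{l}}(K_0)$ precisely when $u=u_j$ (since a vertex of $K_0$ belongs to a child cell only if that child is the extreme arm cell), so the intersection is again empty or the common vertex $c$. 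Granting the lemma, the reverse inclusion follows quickly: take $x\in K_w^{\bm{l}}=F_w^{\bm{l}}(K_0)\cap K^{\bm{l}}$; since $x\in K^{\bm{l}}\subseteq E_{n+1}$ there is $v'\in W_{n+1}^{\bm{l}}$ with $x\in F_{v'}^{\bm{l}}(K_0)$. If $[v']_n=w$ then $v'\in S(w)$ and we are done; otherwise $x\in F_w^{\bm{l}}(K_0)\cap F_{[v']_n}^{\bm{l}}(K_0)$ with $[v']_n\ne w$, so by the lemma $x=F_w^{\bm{l}}(q_j)$ for some $j$, and since $q_j\in F_{u_j}^{l_{n+1}}(K_0)$ we get $x\in F_{wu_j}^{\bm{l}}(K_0)$ with $wu_j\in S(w)$. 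Either way $x\in\bigcup_{v\in S(w)}F_v^{\bm{l}}(K_0)$, and as $x\in K^{\bm{l}}$ this gives $x\in\bigcup_{v\in S(w)}K_v^{\bm{l}}$.

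I expect the main obstacle to be the geometric intersection lemma, and specifically the bookkeeping in its inductive step: in the case $w\ne w'$ one must track whether the shared parent-vertex $c$ actually survives into the two chosen child cells, which is exactly where the uniqueness of the extreme arm cell carrying a given vertex of $K_0$ is used. Once this lemma is in place, the rest of the proof is purely formal, relying only on the nesting $E_{m+1}\subseteq E_m$ and on the fact that every vertex of a cell lies in one of its children.
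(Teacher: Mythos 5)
Your proof is correct. It is worth comparing it with the paper's own proof, which is a one-liner: the authors simply assert the set identity
\begin{equation*}
F_{w}^{\bm{l}}(K_{0})\cap\Bigl(\bigcup_{v\in W_{|w|+1}^{\bm{l}}}F_{v}^{\bm{l}}(K_{0})\Bigr)=\bigcup_{v\in S(w)}F_{v}^{\bm{l}}(K_{0})
\end{equation*}
and intersect both sides with $K^{\bm{l}}$, using $K^{\bm{l}}\subseteq\bigcup_{v\in W_{|w|+1}^{\bm{l}}}F_{v}^{\bm{l}}(K_{0})$ to identify the left-hand side with $K_{w}^{\bm{l}}$. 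The nontrivial inclusion ``$\subseteq$'' of this identity is exactly what your argument establishes: a point of $F_{w}^{\bm{l}}(K_{0})$ lying in a level-$(|w|+1)$ cell whose level-$|w|$ ancestor is not $w$ must be a common corner $F_{w}^{\bm{l}}(q_{j})$ of two distinct level-$|w|$ cells, and such a corner lies in the extreme-arm child $F_{wu_{j}}^{\bm{l}}(K_{0})\subseteq\bigcup_{v\in S(w)}F_{v}^{\bm{l}}(K_{0})$. So the two proofs follow the same route; the difference is that the paper treats the displayed identity as geometrically self-evident, whereas you actually prove it, via the induction showing that two distinct cells of the same level meet in at most one point which is a common vertex of both, together with the uniqueness of the child cell containing each corner $q_{j}$. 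That lemma is not wasted effort: essentially the same corner-intersection facts are invoked again later in the paper (in the proof of Proposition \ref{p.coding}, where only the common-parent case is computed algebraically, and in Lemma \ref{lem6.2}, where a common vertex $z\in K_{w}\cap K_{\widetilde{w}}\cap V_{n}$ of two touching level-$n$ cells with possibly different parents is used), so your induction supplies the rigorous justification for the general same-level statement that the paper leaves implicit.
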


\begin{proof}
Since%
\begin{equation*}
F_{w}^{\bm{l}}(K_{0})\cap\left(\bigcup_{v\in W_{|w|+1}^{\bm{l}}}F_{v}^{\bm{l}%
}(K_{0})\right)=\bigcup_{v\in S(w)}F_{v}^{\bm{l}}(K_{0})\text{ for any $w\in
W_{*}^{\bm{l}}$},
\end{equation*}
the conclusion follows by taking the intersection with $K^{\bm{l}}$ on both sides.
\end{proof}

\begin{proposition}
\label{p.coding} The map $\chi:W_{\infty}^{\bm{l}}\rightarrow K^{\bm{l}}$
defined by
\begin{equation*}
\{\chi(w)\}=\bigcap_{n\geq1}F_{[w]_{n}}^{\bm{l}}(K_{0})\text{ for all }w\in
W_{\infty}^{\bm{l}}
\end{equation*}%
is a continuous surjective map from $\left(W_{\infty}^{\bm{l}},\delta\right)$ to $\left(K^{\bm{l}},d\right)$. {%
Moreover, $\#\chi^{-1}(x)\leq 2$ for all $x\in K^{\bm{l}}$ and $%
\#\chi^{-1}(x)=2$ only if $x\in \bigcup_{n=1}^{\infty}\bigcup_{w\in W_{n}^{%
\bm{l}}}F_{w}^{\bm{l}}(\{q_{j}\}_{j=1}^{4})$.}
\end{proposition}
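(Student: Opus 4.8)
The plan is to derive all four assertions from the nested-cell relation of Proposition~\ref{l.inters} together with one geometric fact about the $(2l-1)$-square Vicsek configuration carried by $S_{l}$.

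\emph{Well-definedness, continuity, surjectivity.} For fixed $w$ the sets $F_{[w]_{n}}^{\bm{l}}(K_{0})$ are nested nonempty compacta with $\diam(F_{[w]_{n}}^{\bm{l}}(K_{0}))\leq 2\cdot3^{-n}\to0$, so Cantor's intersection theorem produces the single point $\chi(w)$; it lies in $K^{\bm{l}}$ since $F_{[w]_{n}}^{\bm{l}}(K_{0})\subset\bigcup_{v\in W_{n}^{\bm{l}}}F_{v}^{\bm{l}}(K_{0})$ for every $n$. The same diameter bound gives continuity (indeed Hölder continuity): if $[w]_{n}=[\tau]_{n}$, then $\chi(w),\chi(\tau)\in F_{[w]_{n}}^{\bm{l}}(K_{0})$, so $d(\chi(w),\chi(\tau))\leq2\cdot3^{-n}$, while $\delta(w,\tau)\to0$ forces the length of the common prefix to tend to infinity. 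For surjectivity I would use $K^{\bm{l}}=\bigcup_{s\in S_{l_{1}}}K_{s}^{\bm{l}}$ (directly from the definition) and then Proposition~\ref{l.inters} repeatedly: given $x\in K^{\bm{l}}$ pick $w_{1}$ with $x\in K_{w_{1}}^{\bm{l}}$, then $w_{2}$ with $x\in K_{w_{1}w_{2}}^{\bm{l}}$, and so on, obtaining $w$ with $x\in K_{[w]_{n}}^{\bm{l}}$ for all $n$, hence $\chi(w)=x$. This also identifies the fibre: $\chi^{-1}(x)=\{w:x\in K_{[w]_{n}}^{\bm{l}}\text{ for all }n\}$, the set of \emph{addresses} of $x$.

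\emph{The sibling lemma (main obstacle).} The crux is to show that for $v\in W_{m}^{\bm{l}}$ and distinct $s,s'\in S_{l_{m+1}}$ the children $K_{vs}^{\bm{l}},K_{vs'}^{\bm{l}}$ meet in at most one point, which when nonempty is a common corner $F_{vs}^{\bm{l}}(q_{a})=F_{vs'}^{\bm{l}}(q_{b})$, and that no point lies in three distinct children. Applying $F_{v}^{\bm{l}}$ reduces this to the single configuration $\{F_{s}^{l}(K_{0}):s\in S_{l}\}$, which I would verify in coordinates: writing $q_{j}=e^{(2j-1)\pi i/4}$, the $F_{s}^{l}(K_{0})$ are the axis-parallel squares of side $\sqrt2/l$ centred at $2nl^{-1}q_{j}$, forming four diagonal chains (one per corner $q_{j}$) issuing from a common central square. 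A direct inspection shows that two squares of the same chain overlap only when consecutive, meeting then in one shared corner; that squares from different arms (with $n,n'\geq1$) lie on opposite sides of a coordinate axis and are disjoint; and that the central square touches each arm only in its first square and only at the four distinct corners $l^{-1}q_{j}$. Hence every nonempty intersection is a single corner, there are no triple points, and each contact point is simultaneously $F_{s}^{l}(q_{a})$ and $F_{s'}^{l}(q_{b})$ for suitable corners.

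\emph{Corner propagation and the fibre bound.} I would complement this with the observation that a corner $q_{a}$ of $K_{0}$ belongs to exactly one $s\in S_{l}$, namely the arm-$a$ tip $s=\tfrac{l-1}{l}q_{a}$, and that $F_{s}^{l}(q_{a})=q_{a}$; consequently, if $x$ sits at a corner of a cell $K_{u}^{\bm{l}}$ then $x$ lies in exactly one child of $K_{u}^{\bm{l}}$, again as a corner. Setting $A_{n}(x):=\{u\in W_{n}^{\bm{l}}:x\in K_{u}^{\bm{l}}\}$, I would prove $\#A_{n}(x)\leq2$ by induction: if $\#A_{n}(x)=2$, the two distinct level-$n$ cells meet—after reducing to siblings via their longest common prefix and invoking the sibling lemma—in a single common corner, so $x$ is a corner of each and by corner propagation lies in exactly one child of each, giving $\#A_{n+1}(x)\leq2$; if $\#A_{n}(x)=1$, the sibling lemma alone gives at most two children containing $x$. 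Since the restriction maps $A_{n+1}(x)\to A_{n}(x)$ are surjective (by Proposition~\ref{l.inters}), three distinct addresses would force some $A_{n}(x)$ to contain three elements, which is impossible; hence $\#\chi^{-1}(x)\leq2$. Finally, two distinct addresses $w\neq\tau$ must first differ at some level $m$, where $x$ lies in the two distinct sibling cells $K_{[w]_{m}}^{\bm{l}},K_{[\tau]_{m}}^{\bm{l}}$ and is therefore their common corner $F_{[w]_{m}}^{\bm{l}}(q_{a})$; this yields exactly the stated characterisation that $\#\chi^{-1}(x)=2$ only at points of $\bigcup_{n}\bigcup_{w\in W_{n}^{\bm{l}}}F_{w}^{\bm{l}}(\{q_{j}\}_{j=1}^{4})$.
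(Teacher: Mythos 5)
Your proposal is correct, but it reaches the fibre statement by a genuinely different route than the paper. The paper handles two distinct addresses directly: it locates the first level $k$ at which they disagree, applies the inverses $\left(F_{w_{j}}^{l_{j}}\right)^{-1}$ to reduce to two distinct sibling squares at a single level, and then runs a short algebraic computation in complex coordinates (using $\func{Re}(q\cdot\overline{q^{\prime}})\in\{0,-1\}$) to conclude that the two preimage points $z_{1},z_{2}\in K_{0}$ satisfy $d(z_{1},z_{2})=2$, hence are diagonally opposite corners of $K_{0}$; this immediately gives the corner characterisation, while the bound $\#\chi^{-1}(x)\leq 2$ is left essentially implicit in that analysis. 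You instead isolate two geometric facts about one level of the Vicsek configuration --- the sibling lemma (pairwise intersections of sibling cells are single common corners, and no triple points) and corner propagation (a corner of a cell lies in exactly one child, again as a corner) --- and then run an induction on $\#A_{n}(x)$ over levels. Your coordinate inspection of the four diagonal chains plays exactly the role of the paper's distance computation, so the two proofs rest on the same one-level geometry; what your route buys is a fully explicit proof that no point has three addresses (via the no-triple-point clause and the surjectivity of the restriction maps $A_{n+1}(x)\to A_{n}(x)$), making the $\leq 2$ bound rigorous rather than implicit, at the cost of somewhat more case analysis and the extra corner-propagation lemma. Both arguments use the same first half (Cantor intersection, diameter decay for continuity, and repeated use of Proposition \ref{l.inters} for surjectivity), so the divergence is only in the fibre analysis.
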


We will call $\chi$ the \textit{coding map}.

\begin{proof}
We first show that $\bigcap_{n\geq1}F_{[w]_{n}}^{\bm{l}}(K_{0})$ is a singleton
in $K^{\bm{l}}$ so that $\chi$ is well-defined. Indeed, since $%
F_{[w]_{n}}^{\bm{l}}(K_{0})$ are compact and form a decreasing sequence (with respect to inclusion) as $n$ increases, we
conclude by Cantor's Intersection Theorem that $\bigcap_{n\geq1}F_{[w]_{n}}^{%
\bm{l}}(K_{0})$ is non-empty. Since $\diam(F_{[w]_{n}}^{\bm{l}%
}(K_{0}))\rightarrow0$ as $n\rightarrow \infty$, the set $%
\bigcap_{n\geq1}F_{[w]_{n}}^{\bm{l}}(K_{0})$ cannot contain more than two
points, showing $\#\bigcap_{n\geq1}F_{[w]_{n}}^{\bm{l}}(K_{0})=1$.

To see that $\chi$ is surjective, note that for any $x\in K^{\bm{l}}=\bigcap_{n=1}^{\infty }\bigcup_{w\in W_{n}^{\bm{l}%
}}F_{w}^{\bm{l}}(K_{0})$, there exists $w_{1}\in W_{1}^{\bm{l}}$ for each
such that $x\in F_{w_{1}}^{\bm{l}}(K_{0})$. Whenever we find $w_{n}$, we can
apply proposition \ref{l.inters} and find $w_{n+1}\in S_{l_{n+1}}$ such that $x\in
F_{w_{1}w_{2}\cdots w_{n}w_{n+1}}^{\bm{l}}(K_{0})$. Let $w=w_{1}w_{2}\cdots$%
, then $x\in \bigcap_{n\geq1}F_{[w]_{n}}^{\bm{l}}(K_{0})=\{\chi(w)\}$ by
definition, showing the desired.

To see that $\chi$ is continuous, note that when $\min\{n:[w]_{n}\neq[\tau]%
_{n}\}-1=k$, i.e. $\delta(w,\tau)=\alpha^{k}$, both $\chi(w)$ and $%
\chi(\tau) $ belong to $F_{[w]_{k}}^{\bm{l}}(K_{0})$. Thus $d(\chi(w),\chi(%
\tau))\leq\diam F_{[w]_{k}}^{\bm{l}}(K_{0})$. As $\delta(w,\tau)\rightarrow0$%
, we have $k\rightarrow \infty$ and thus $d(\chi(w),\chi(\tau))\leq\diam %
F_{[w]_{k}}^{\bm{l}}(K_{0})\rightarrow0$, showing the desired.

To show the final assertion, suppose that $x=\chi(w)=\chi(w^{\prime })$ for
two distinct infinite words $w,w^{\prime }\in W_{\infty}^{\bm{l}}$. We can
write $w=[w]_{k}\tau,w^{\prime }=[w]_{k}\tau^{\prime}$ where $%
[w]_{k}=w_{1}w_{2}\cdots w_{k}$ denotes the longest common initial word of $%
w,w^{\prime}$ with length $k$ (when $k=0$, $[w]_{k}$ is the empty word), so
that $[\tau]_1\neq[\tau^{\prime}]_1$. Clearly, we have $x\in
F_{[w]_{k}[\tau]_1}^{\bm{l}}(K_{0}) \cap F_{[w]_{k}[\tau^{\prime}]_{1}}^{%
\bm{l}}(K_{0})$, which means that there exist two points $z_{1},z_{2}\in
K_{0}$ such that
\begin{equation}  \label{e.2.41}
x=F_{w_{1}}^{l_{1}}\circ \cdots \circ F_{w_{k}}^{l_{k}}\circ
F_{[\tau]_{1}}^{l_{k+1}}(z_{1})=F_{w_{1}}^{l_{1}}\circ \cdots \circ
F_{w_{k}}^{l_{k}}\circ F_{[\tau^{\prime}]_{1}}^{l_{k+1}}(z_{2}).
\end{equation}
Since each $F_{w_{j}}^{l_{j}}$, $1\leq j\leq k$ is invertible, we can apply
$\left(F_{w_{j}}^{l_{j}}\right)^{-1}\ (1\leq j\leq k)$ successively on both
sides of \eqref{e.2.41} and obtain
\begin{equation}  \label{e.2.42}
F_{[\tau]_{1}}^{l_{k+1}}(z_{1})=F_{[\tau^{\prime}]_{1}}^{l_{k+1}}(z_{2}).
\end{equation}
Let $[\tau]_{1}=2nl_{k+1}^{-1}q$ and $[\tau^{\prime}]_{1}=2n^{\prime}
l_{k+1}^{-1}q^{\prime}$ where $0\leq n\neq n^{\prime}\leq(l_{k+1}-1)/2 $ and
$q,q^{\prime}\in\{q_{j}\}_{j=1}^{4}$. Then \eqref{e.2.42} can be interpreted
as
\begin{equation*}
2nl_{k+1}^{-1}q+l_{k+1}^{-1}z_{1}=2n^{\prime}l_{k+1}^{-1}q^{%
\prime}+l_{k+1}^{-1}z_{2},
\end{equation*}
i.e., $z_{1}-z_{2}=2(n^{\prime}q^{\prime}-nq)$ and thus $d(z_{1},z_{2})=2d(n^{\prime}q^{\prime},nq)$. We show $d(z_{1},z_{2}) =2$.
\begin{enumerate}[label=\textup{(\arabic*)}]
\item If $q\neq q^{\prime}$, then
\begin{align}
4\geq d(z_{1},z_{2})^{2}&=4d(n^{\prime}q^{\prime},nq)^{2}=4\left({n^{2}+{n^{\prime}}^{2}-2n^{\prime}n\func{Re}(q\cdot \overline{%
q^{\prime}})} \right)\geq 4\left({n^{2}+{n^{\prime}}^{2}}\right)\geq 4
\end{align}
since $\func{Re}(q\cdot
\overline{q^{\prime}})\in\{0,-1\}$ and $n\neq n^{\prime}$, thus $d(z_{1},z_{2})=2$.

\item If $q=q^{\prime }$, then $2\geq \d(z_{1},z_{2}) =2\lvert
n^{\prime }-n\rvert \geq 2$, so $d(z_{1},z_{2}) =2$.
\end{enumerate}

Therefore, $z_{1}$ and $z_{2}$ must be the endpoints of the diagonal of $K_{0}$,
i.e., either $\{z_{1},z_{2}\}=\{q_{1},q_{3}\}$ or $\{z_{1},z_{2}\}=%
\{q_{2},q_{4}\}$ and
\begin{align}
\text{either }&x=F_{[w]_{k}[\tau]_1}^{\bm{l}}(q_{1}) \cap
F_{[w]_{k}[\tau^{\prime}]_{1}}^{\bm{l}}(q_{3}), \\
&x=F_{[w]_{k}[\tau]_1}^{\bm{l}}(q_{3}) \cap F_{[w]_{k}[\tau^{\prime}]_{1}}^{%
\bm{l}}(q_{1}), \\
&x=F_{[w]_{k}[\tau]_1}^{\bm{l}}(q_{2}) \cap F_{[w]_{k}[\tau^{\prime}]_{1}}^{%
\bm{l}}(q_{4}), \\
\text{or }&x=F_{[w]_{k}[\tau]_1}^{\bm{l}}(q_{4}) \cap
F_{[w]_{k}[\tau^{\prime}]_{1}}^{\bm{l}}(q_{2}).
\end{align}
The proof is complete.
\end{proof}

We state some terminologies in graph theory for scale-irregular Vicsek sets collecting from \cite{BC23, Mur19, DRY23}.

\begin{definition}[Graph and Cable system]
\label{d.graph} Fix $\bm{l}=(l_{k})_{k=1}^{\infty}$.

\begin{enumerate}[label=\textup{(\arabic*)}]
\item For each $n\geq0$, define $V_{0}:=\{q_{j}\}_{j=0}^{4}$ for $n=0$, and $%
V_{n}:=\bigcup_{w\in W_{n}^{\bm{l}}}F_{w}^{\bm{l}}(V_{0})$ for $n\geq1$.
Define
\begin{equation*}
E_{n}:=\left\{(x,y)\in V_{n}\times V_{n}: d(x,y)=l_{0}^{-1}l_{1}^{-1}l_{2}^{-1}\cdots
l_{n}^{-1}\right\} \text{ for }n\geq 0,
\end{equation*}
so that $(V_{n},E_{n})$ is a finite connected planer graph. We write $x\sim y $ whenever $(x,y)\in E_{n}$ and say that $x$ and $y$ are \textit{adjacent}.

\item For each $n\geq0$, by replacing each edge in $E_{n}$ by
an isometric copy of the line segment $[0,l_{0}^{-1}l_{1}^{-1}l_{2}^{-1}\cdots l_{n}^{-1}]$ and
gluing them in an obvious way at the vertices, we obtain a set $\overline{V}_{n}
$, called the corresponding \textit{cable system of $(V_{n},E_{n})$}. With an
abuse of notation, we regard $\overline{V}_{n}$ as a subset of $K^{\bm{l}}$
(see Fig. \ref{fig2}).

\item Define the \textit{skeleton} $\mathcal{S}:=\bigcup_{n=0}^{\infty}%
\overline{V}_{n}$. Let $\nu$ be the Lebesgue measure on $\mathcal{S}$, that is, $%
\nu$ assign $l_{0}^{-1}l_{1}^{-1}l_{2}^{-1}\cdots l_{n}^{-1}$ for each isometric copy
of $[0,l_{0}^{-1}l_{1}^{-1}l_{2}^{-1}\cdots l_{n}^{-1}]$. We extend $\nu$ to $K^{\bm{l}%
}$ by letting $\nu(K^{\bm{l}}\setminus \mathcal{S})=0$.

\item For every $n \geq 0$ and every adjacent $x, y \in V_{n}$, write $%
e(x,y)$ the geodesic in $\mathcal{S}$ connecting $x$ and $y$, namely, the
linear map from $[0,1]$ to the isometric copy of $[0,l_{0}^{-1}l_{1}^{-1}l_{2}^{-1}%
\cdots l_{n}^{-1}]$ connecting $u$ and $v$ such that $e(x,y)(0)=x$ and $%
e(x,y)(1)=y$. Then $\bigcup_{\substack{ x,y\in V_{n}  \\ x\sim y}}%
e(x,y)([0,1])=\overline{V}_{n}$. With an abuse of notation, we
sometimes regard $e(x,y)$ as a subset of $\mathcal{S}$.

\item A subset $A \subset K^{\bm{l}}$ is called \textit{convex} if for
any two points $x,y \in A\cap\mathcal{S}$, the geodesic path connecting $x$
to $y$ is included in $A\cap\mathcal{S}$.

\item For two adjacent $x$ and $y$ in $V_{n}$, we say that $x\prec y$ when
the geodesic distance from $0$ to $x$ in $\overline{V}_{n}$ is less than the
geodesic distance from $0$ to $y$.
\end{enumerate}
\end{definition}

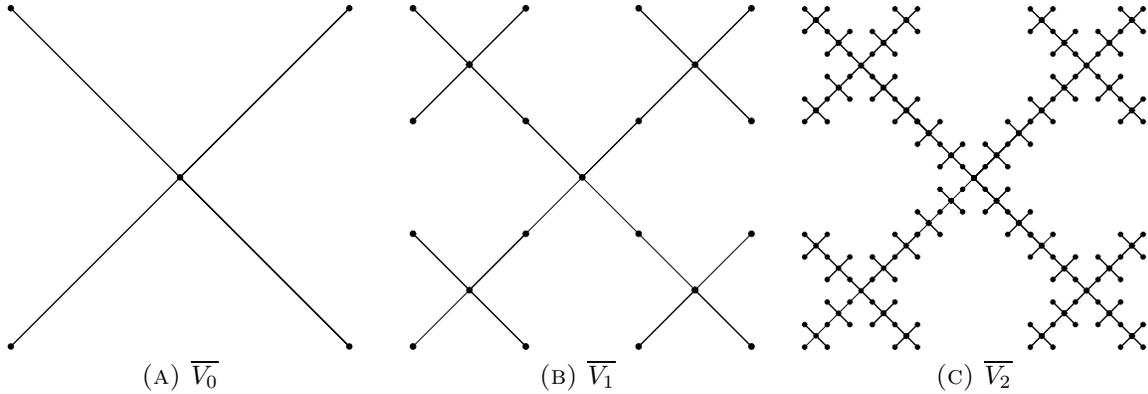
\begin{figure}[htbp]
\centering
\subfloat[][$\overline{V_0}$]{
	\begin{tikzpicture}[line width=0.5pt,scale=2.25]
	\draw (0,0)--(1,1)--(2,2)--(1,1)--(2,0)--(1,1)--(0,2);
	\filldraw (0,0) circle (.015)
	(1,1)circle (.015)
	(2,0)circle (.015)
	(0,2)circle (.015)
	(2,2)circle (.015);
	\end{tikzpicture}
   } \hspace{10pt}
\subfloat[$\overline{V_1}$]{
	\begin{tikzpicture}[line width=0.3pt,scale=0.375]
	\draw (0,0)--(1,1)--(2,2)--(3,3)--(4,4)--(2,2)--(3,1)--(4,0)--(2,2)--(1,3)--(0,4)--(2,2)--(4,4)--
	(5,5)--(6,6)--(7,7)--(8,8)--(9,9)--(10,10)--(11,11)--(12,12)--(10,10)--(9,11)--(8,12)--(10,10)--(11,9)--(12,8)--(10,10)
	--(6,6)--(5,7)--(4,8)--(3,9)--(2,10)--(1,11)--(0,12)--(2,10)--(1,9)--(0,8)--(2,10)--(3,11)--(4,12)--(2,10)--
	(6,6)--(7,5)--(8,4)--(9,3)--(10,2)--(11,1)--(12,0)--(10,2)--(9,1)--(8,0)--(10,2)--(11,3)--(12,4);
	\filldraw
	(0,0) circle (.1)
	(2,2)circle (.1)
	(0,4)circle (.1)
	(4,0)circle (.1)
	(4,4)circle (.1)
	(6,6)circle (.1)
	(8,8)circle (.1)
	(10,10)circle (.1)
	(12,12)circle (.1)
	(8,12)circle(.1)
	(12,8)circle (.1)
	(4,8)circle(.1)
	(2,10)circle(.1)
	(0,12)circle(.1)
	(2,10)circle(.1)
	(0,8)circle (.1)
	(4,12)circle(.1)
	(8,4)circle(.1)
	(10,2)circle(.1)
	(12,0)circle(.1)
	(10,2)circle(.1)
	(8,0)circle(.1)
	(12,4)circle(.1);
	\end{tikzpicture}} \hspace{10pt}
\subfloat[][$\overline{V_2}$]{
	\begin{tikzpicture}[line width=0.3pt,scale=0.15]
	\draw (0,0)--(1,1)--(2,0)--(1,1)--(0,2)--(1,1)--(2,2)--(3,3)--(4,2)--(3,3)--(2,4)--(3,3)--(4,4)--(5,5)--(6,6)--(7,7)--(8,6)--(7,7)--(6,8)--(7,7)--
	(8,8)--(9,9)--(10,10)--(9,9)--(10,8)--(9,9)--(8,10)--(9,9)--(5,5)--(6,4)--(7,3)--(6,2)--(7,3)--(8,4)--(7,3)--(8,2)--(9,1)--(10,0)--(9,1)--(8,0)--
	(9,1)--(10,2)--(9,1)--(4,6)--(3,7)--(2,6)--(3,7)--(4,8)--(3,7)--(2,8)--(1,9)--(0,10)--(1,9)--(0,8)--(1,9)--(2,10)--(1,9)--(5,5)
	(10,10)--(11,11)--(12,10)--(11,11)--(10,12)--(11,11)--(12,12)--(13,13)--(14,12)--(13,13)--(12,14)--(13,13)--(14,14)--(15,15)--(16,16)
	--(17,17)--(18,16)--(17,17)--(16,18)--(17,17)--
	(18,18)--(19,19)--(20,20)--(19,19)--(20,18)--(19,19)--(18,20)--(19,19)--(15,15)--(16,14)--(17,13)--(16,12)--(17,13)--(18,14)--(17,13)--(18,12)--(19,11)--(20,10)
	--(19,11)--(18,10)--
	(19,11)--(20,12)--(19,11)--(14,16)--(13,17)--(12,16)--(13,17)--(14,18)--(13,17)--(12,18)--(11,19)--(10,20)--(11,19)--(10,18)--(11,19)--(12,20)
	--(11,19)--(10,20)--(5,25)--
	(0,20)--(1,21)--(2,20)--(1,21)--(0,22)--(1,21)--(2,22)--(3,23)--(4,22)--(3,23)--(2,24)--(3,23)--(4,24)--(5,25)--(6,26)--(7,27)--(8,26)--(7,27)--(6,28)--(7,27)--
	(8,28)--(9,29)--(10,30)--(9,29)--(10,28)--(9,29)--(8,30)--(9,29)--(5,25)--(6,24)--(7,23)--(6,22)--(7,23)--(8,24)--(7,23)--(8,22)--(9,21)--(10,20)--(9,21)--(8,20)--
	(9,21)--(10,22)--(9,21)--(4,26)--(3,27)--(2,26)--(3,27)--(4,28)--(3,27)--(2,28)--(1,29)--(0,30)--(1,29)--(0,28)--(1,29)--(2,30)--(1,29)--(15,15)--
	(20,20)--(21,21)--(22,20)--(21,21)--(20,22)--(21,21)--(22,22)--(23,23)--(24,22)--(23,23)--(22,24)--(23,23)--(24,24)--(25,25)--(26,26)
	--(27,27)--(28,26)--(27,27)--(26,28)--(27,27)--
	(28,28)--(29,29)--(30,30)--(29,29)--(30,28)--(29,29)--(28,30)--(29,29)--(25,25)--(26,24)--(27,23)--(26,22)--(27,23)--(28,24)
	--(27,23)--(28,22)--(29,21)--(30,20)--(29,21)--(28,20)--
	(29,21)--(30,22)--(29,21)--(24,26)--(23,27)--(22,26)--(23,27)--(24,28)--(23,27)--(22,28)--(21,29)--(20,30)--(21,29)--(20,28)--(21,29)--(22,30)--
	(21,29)--(25,25)--(15,15)--(20,10)--(25,5)--
	(20,0)--(21,1)--(22,0)--(21,1)--(20,2)--(21,1)--(22,2)--(23,3)--(24,2)--(23,3)--(22,4)--(23,3)--(24,4)--(25,5)--(26,6)--(27,7)--(28,6)--(27,7)--(26,8)--(27,7)--
	(28,8)--(29,9)--(30,10)--(29,9)--(30,8)--(29,9)--(28,10)--(29,9)--(25,5)--(26,4)--(27,3)--(26,2)--(27,3)--(28,4)--(27,3)--(28,2)--(29,1)--(30,0)--(29,1)--(28,0)--
	(29,1)--(30,2)--(29,1)--(24,6)--(23,7)--(22,6)--(23,7)--(24,8)--(23,7)--(22,8)--(21,9)--(20,10)--(21,9)--(20,8)--(21,9)--(22,10);
	\filldraw (0,0) circle (.2)
	(1,1)circle (.2)
	(2,0)circle (.2)
	(0,2)circle (.2)
	(2,2)circle (.2)
	(3,3)circle (.2)(4,2)circle (.2)(2,4)circle (.2)(4,4)circle (.2)(5,5)circle (.2)(6,6)circle (.2)(7,7)circle (.2)(8,6)circle (.2)(6,8)circle (.2)
	(8,8)circle (.2)(9,9)circle (.2)(10,10)circle (.2)(10,8)circle (.2)(8,10)circle (.2)(6,4)circle (.2)(7,3)circle (.2)(6,2)circle (.2)(8,4)circle (.2)(7,3)circle (.2)(8,2)circle (.2)(9,1)circle (.2)(10,0)circle (.2)(9,1)circle (.2)(8,0)circle (.2)
	(9,1)circle (.2)(10,2)circle (.2)(9,1)circle (.2)(4,6)circle (.2)(3,7)circle (.2)(2,6)circle (.2)(4,8)circle (.2)(2,8)circle (.2)(1,9)circle (.2)(0,10)circle (.2)(0,8)circle (.2)(2,10)circle (.2)
	(10,10) circle (.2)(11,11) circle (.2)(12,10) circle (.2)(11,11) circle (.2)(10,12) circle (.2)(11,11) circle (.2)(12,12) circle (.2)(13,13) circle (.2)(14,12) circle (.2)(13,13) circle (.2)(12,14) circle (.2)(13,13) circle (.2)(14,14) circle (.2)(15,15) circle (.2)(16,16) circle (.2)
	(17,17) circle (.2)(18,16) circle (.2)(17,17) circle (.2)(16,18) circle (.2)(17,17) circle (.2)
	(18,18) circle (.2)(19,19) circle (.2)(20,20) circle (.2)(19,19) circle (.2)(20,18) circle (.2)(19,19) circle (.2)(18,20) circle (.2)(19,19) circle (.2)(15,15) circle (.2)(16,14) circle (.2)(17,13) circle (.2)(16,12) circle (.2)(17,13) circle (.2)(18,14) circle (.2)(17,13) circle (.2)(18,12) circle (.2)(19,11) circle (.2)(20,10)
	circle (.2)(19,11) circle (.2)(18,10) circle (.2)
	(19,11) circle (.2)(20,12) circle (.2)(19,11) circle (.2)(14,16) circle (.2)(13,17) circle (.2)(12,16) circle (.2)(13,17) circle (.2)(14,18) circle (.2)(13,17) circle (.2)(12,18) circle (.2)(11,19) circle (.2)(10,20) circle (.2)(11,19) circle (.2)(10,18) circle (.2)(11,19) circle (.2)(12,20) circle (.2)
	(11,19) circle (.2)(10,20) circle (.2)(5,25) circle (.2)
	(0,20) circle (.2)(1,21) circle (.2)(2,20) circle (.2)(1,21) circle (.2)(0,22) circle (.2)(1,21) circle (.2)(2,22) circle (.2)(3,23) circle (.2)(4,22) circle (.2)(3,23) circle (.2)(2,24)--(3,23)circle(.2)(4,24)circle(.2)(5,25)circle(.2)(6,26)circle(.2)(7,27)circle(.2)(8,26)circle(.2)(7,27)circle(.2)(6,28)circle(.2)(7,27)circle(.2)
	(8,28) circle (.2)(9,29)circle(.2)(10,30)circle(.2)(9,29)circle(.2)(10,28)circle(.2)(9,29)circle(.2)(8,30)circle(.2)(9,29)circle(.2)(5,25)circle(.2)(6,24)circle(.2)(7,23)circle(.2)(6,22)circle(.2)(7,23)circle(.2)(8,24)circle(.2)(7,23)circle(.2)(8,22)circle(.2)(9,21)circle(.2)(10,20)circle(.2)(9,21)circle(.2)(8,20)circle(.2)
	(9,21)circle(.2)(10,22)circle(.2)(9,21)circle(.2)(4,26)circle(.2)(3,27)circle(.2)(2,26)circle(.2)(3,27)circle(.2)(4,28)circle(.2)(3,27)circle(.2)(2,28)circle(.2)(1,29)circle(.2)(0,30)circle(.2)(1,29)circle(.2)(0,28)circle(.2)(1,29)circle(.2)(2,30)circle(.2)(1,29)circle(.2)(15,15)circle(.2)
	(20,20)circle(.2)(21,21)circle(.2)(22,20)circle(.2)(21,21)circle(.2)(20,22)circle(.2)(21,21)circle(.2)(22,22)circle(.2)(23,23)circle(.2)(24,22)circle(.2)(23,23)circle(.2)(22,24)circle(.2)(23,23)circle(.2)(24,24)circle(.2)(25,25)circle(.2)(26,26)
	circle(.2)(27,27)circle(.2)(28,26)circle(.2)(27,27)circle(.2)(26,28)circle(.2)(27,27)circle(.2)
	(28,28)circle(.2)(29,29)circle(.2)(30,30)circle(.2)(29,29)circle(.2)(30,28)circle(.2)(29,29)circle(.2)(28,30)circle(.2)(29,29)circle(.2)(25,25)circle(.2)(26,24)circle(.2)(27,23)circle(.2)(26,22)circle(.2)(27,23)circle(.2)(28,24)
	circle(.2)(27,23)circle(.2)(28,22)circle(.2)(29,21)circle(.2)(30,20)circle(.2)(29,21)circle(.2)(28,20)circle(.2)
	(29,21)circle(.2)(30,22)circle(.2)(29,21)circle(.2)(24,26)circle(.2)(23,27)circle(.2)(22,26)circle(.2)(23,27)circle(.2)(24,28)circle(.2)(23,27)circle(.2)(22,28)circle(.2)(21,29)circle(.2)(20,30)circle(.2)(21,29)circle(.2)(20,28)circle(.2)(21,29)circle(.2)(22,30)circle(.2)
	(21,29)circle(.2)(25,25)circle(.2)(15,15)circle(.2)(20,10)circle(.2)(25,5)circle(.2)
	(20,0)circle(.2)(21,1)circle(.2)(22,0)circle(.2)(21,1)circle(.2)(20,2)circle(.2)(21,1)circle(.2)(22,2)circle(.2)(23,3)circle(.2)(24,2)circle(.2)(23,3)circle(.2)(22,4)circle(.2)(23,3)circle(.2)(24,4)circle(.2)(25,5)circle(.2)(26,6)circle(.2)(27,7)circle(.2)(28,6)circle(.2)(27,7)circle(.2)(26,8)circle(.2)(27,7)circle(.2)
	(28,8)circle(.2)(29,9)circle(.2)(30,10)circle(.2)(29,9)circle(.2)(30,8)circle(.2)(29,9)circle(.2)(28,10)circle(.2)(29,9)circle(.2)(25,5)circle(.2)(26,4)circle(.2)(27,3)circle(.2)(26,2)circle(.2)(27,3)circle(.2)(28,4)circle(.2)(27,3)circle(.2)(28,2)circle(.2)(29,1)circle(.2)(30,0)circle(.2)(29,1)circle(.2)(28,0)circle(.2)
	(29,1)circle(.2)(30,2)circle(.2)(29,1)circle(.2)(24,6)circle(.2)(23,7)circle(.2)(22,6)circle(.2)(23,7)circle(.2)(24,8)circle(.2)(23,7)circle(.2)(22,8)circle(.2)(21,9)circle(.2)(20,10)circle(.2)(21,9)circle(.2)(20,8)circle(.2)(21,9)circle(.2)(22,10)circle(.2)(2,24)circle(.2);
	\end{tikzpicture}	
}
\caption{Cable systems $\overline{V_0}$, $\overline{V_1}$ and $\overline{V_2}$ for $K^{\bm{l}}$ with $\bm{l}=35\cdots$.}
\label{fig2}
\end{figure}

\begin{remark}
\begin{enumerate}[label=\textup{(\arabic*)}]
\item The measure $\nu$ is not a Radon measure, since the measure of
any ball with positive radius is infinite.

\item Although the image of $e(x,y)$ and $e(y,x)$ coincide for adjacent vertices $%
x,y$, we distinguish them when we take integration (along this
edge) by assigning $e(x,y)$ the positive orientation when $%
x\prec y$. So for an integrable non-negative function $g$, we have $0\leq
\int_{e(x,y)}g\dif \nu=-\int_{e(y,x)}g\dif \nu$ when $e(x,y)$ is positively
oriented.
\end{enumerate}
\end{remark}

\subsection{Measure on scale-irregular Vicsek set: doubling property}\label{s.meas}

Given an odd integer $l\geq3$, let $K^{l}$ be the self-similar Vicsek set with
common contraction ration $l^{-1}$. Then by solving Moran's equation \cite[%
Theorem 2.7]{Fal97} directly, we see that $K^{l}$ has Hausdorff
dimension
\begin{equation}  \label{e.df}
\alpha_{l}:=\mathrm{dim}_{H}(K^{l})=\frac{\log(2l-1)}{\log{l}}.
\end{equation}

Let
\begin{equation}
\psi(r):=\begin{dcases} \prod_{k=0}^{n}(2l_{k}-1)^{-1} &\text{ \ for \ }
\rho_{n+1}< r\leq \rho_n\ (n\geq0),\\ 1 &\text{ \ for \ } r \geq2.
\end{dcases}\label{ksai}
\end{equation}
For each word $w\in W_{m}^{\bm{l}}$, we define
\begin{align}\label{e.defmu}
\mu(K_{w}^{\bm{l}}):=(\#W_{m}^{\bm{l}})^{-1}=\psi (\rho _{m}).
\end{align}
We extend $\mu$ to be a Borel measure on $K^{\bm{l}}$ by
Kolmogorov's extension theorem.

{If the sequence $\bm{l}$ consists of finitely many contraction ratios, that is, $\sup\limits_{n\geq 1}l_n<\infty$, the \emph{volume doubling property} and \emph{reverse volume doubling property} hold, as the following states.}
\begin{proposition}
\label{l.meas} Let $K^{\bm{l}}$ be a scale-irregular Vicsek set with $\bm{l}$
satisfying {$\sup\limits_{n\geq 1}l_n<\infty$}.

\begin{enumerate}[label=\textup{(\arabic*)}]
\item There exist constants $c_{1},c_{2}>0$ such that for all $0<r<R\leq 2$,
\begin{equation}
c_1\left(\frac{R}{r}\right)^{{\inf\limits_{n\geq 1}\alpha_{l_n}}}\leq\frac{\psi(R)}{%
\psi(r)}\leq c_2\left(\frac{R}{r}\right)^{{\sup\limits_{n\geq 1}\alpha_{l_n}}}.
\label{scaling1}
\end{equation}

\item There exist constants $c_{3},c_{4}>0$ such that for all $x\in K^{\bm{l}%
}$ and $0<r\leq2$,
\begin{equation}
c_{3}\psi (r)\leq \mu(B(x,r)) \leq c_{4}\psi (r).
\end{equation}
\end{enumerate}
\end{proposition}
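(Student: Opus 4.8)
The plan is to reduce everything to the single algebraic identity $2l-1=l^{\alpha_l}$ from \eqref{e.df}, together with the two exact formulas $\rho_n=2a^{-[n]_a}b^{-[n]_b}$ and $\psi(\rho_n)=(2a-1)^{-[n]_a}(2b-1)^{-[n]_b}$, which are immediate from the definitions of $\rho_n$ and of $\psi$ once condition \eqref{e.con.a} is imposed, and from the constancy of $\psi$ on each interval $(\rho_{n+1},\rho_n]$.

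For (1), I would first prove the estimate at the distinguished scales $\rho_n$. For $m\le n$, put $p:=[n]_a-[m]_a\ge0$ and $q:=[n]_b-[m]_b\ge0$; the formulas above give $\rho_m/\rho_n=a^p b^q$ and $\psi(\rho_m)/\psi(\rho_n)=(2a-1)^p(2b-1)^q=a^{p\alpha_a}b^{q\alpha_b}$, using $2a-1=a^{\alpha_a}$ and $2b-1=b^{\alpha_b}$. Since $a,b>1$ and $p,q\ge0$, replacing each of $\alpha_a,\alpha_b$ by $\alpha_a\wedge\alpha_b$ (resp.\ by $\alpha_a\vee\alpha_b$) only decreases (resp.\ increases) this product, so that $(\rho_m/\rho_n)^{\alpha_a\wedge\alpha_b}\le\psi(\rho_m)/\psi(\rho_n)\le(\rho_m/\rho_n)^{\alpha_a\vee\alpha_b}$. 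To pass to general $0<r<R\le2$, choose indices $n\ge m$ with $\rho_{n+1}<r\le\rho_n$ and $\rho_{m+1}<R\le\rho_m$, so that $\psi(r)=\psi(\rho_n)$ and $\psi(R)=\psi(\rho_m)$. From $\rho_m\ge R>\rho_{m+1}=\rho_m/l_{m+1}$ and $\rho_n\ge r>\rho_{n+1}=\rho_n/l_{n+1}$ one gets $(\max\{a,b\})^{-1}(R/r)<\rho_m/\rho_n<\max\{a,b\}\,(R/r)$; feeding this into the $\rho$-scale bound and absorbing the factor $\max\{a,b\}$ yields (1) with $c_1=(\max\{a,b\})^{-(\alpha_a\wedge\alpha_b)}$ and $c_2=(\max\{a,b\})^{\alpha_a\vee\alpha_b}$.

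For (2), fix $x$ and $r$ and let $n$ be determined by $\rho_{n+1}<r\le\rho_n$, so $\psi(r)=\psi(\rho_n)=\mu(K_w^{\bm{l}})$ for every $w\in W_n^{\bm{l}}$ by \eqref{e.defmu}. For the lower bound, $x$ lies in some level-$(n+1)$ cell $K_v^{\bm{l}}$ (surjectivity of the coding map, Proposition \ref{p.coding}); since $\diam(K_v^{\bm{l}})\le\rho_{n+1}<r$ we have $K_v^{\bm{l}}\subset B(x,r)$, whence $\mu(B(x,r))\ge\mu(K_v^{\bm{l}})=\psi(\rho_{n+1})=(2l_{n+1}-1)^{-1}\psi(\rho_n)\ge(2\max\{a,b\}-1)^{-1}\psi(r)$. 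For the upper bound I would run a planar area–packing argument: the key structural feature of Vicsek sets is that, although $K^{\bm{l}}$ is scale-irregular, all level-$n$ squares $F_w^{\bm{l}}(K_0)$, $w\in W_n^{\bm{l}}$, are isometric with common side $\rho_n/\sqrt2$ and pairwise disjoint interiors (the pieces in $S_l$ meet only at corners, and this persists under the composition defining $F_w^{\bm{l}}$). Each cell meeting $B(x,r)$ is contained in such a square of diameter $\rho_n\ge r$, which therefore lies in $B(x,2\rho_n)$; comparing the total area $\ge N\rho_n^2/2$ of $N$ disjoint such squares with the area $4\pi\rho_n^2$ of $B(x,2\rho_n)$ forces $N\le8\pi$. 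Since $K^{\bm{l}}=\bigcup_{w\in W_n^{\bm{l}}}K_w^{\bm{l}}$, this gives $\mu(B(x,r))\le N\,\psi(\rho_n)\le8\pi\,\psi(r)$.

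I expect the only genuinely non-mechanical point to be the uniform cardinality bound $N\le8\pi$ in the upper half of (2): one must argue that only boundedly many cells can meet a ball, and the clean route is the area comparison above, which succeeds precisely because at each fixed level all cells share the same size (the contraction ratio $l_k$ at step $k$ being common to every cell). Everything else is bookkeeping built on $2l-1=l^{\alpha_l}$ and the constancy of $\psi$ on the intervals $(\rho_{n+1},\rho_n]$, and in particular neither bound uses condition \eqref{e.con.a} beyond the boundedness of the ratios $l_k\in\{a,b\}$.
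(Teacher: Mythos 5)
Your proof is correct, and it is worth separating its two halves. Part (1) and the lower bound in (2) are essentially the paper's own argument: the paper dismisses (1) as ``a direct calculation'' (your computation via $2l-1=l^{\alpha_l}$, the exact formulas for $\rho_n$ and $\psi(\rho_n)$, and the one-level slack factor $\max\{a,b\}$ is precisely that calculation, written out), and for the lower bound the paper likewise observes that $B(x,r)$ contains a whole cell of slightly deeper level (it uses level $m+3$; your level $n+1$ with the strict inequality $\rho_{n+1}<r$ works just as well). Where you genuinely diverge is the upper bound in (2). The paper's one-line argument is a \emph{containment} claim: $B(x,r)$ is contained in a single level-$((m-2)\vee 0)$ cell, whence $\mu(B(x,r))\leq\psi(\rho_{(m-2)\vee 0})\leq (2(a\vee b)-1)^{2}\psi(r)$. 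Read literally this containment fails when $x$ is a corner point shared by two level-$((m-2)\vee 0)$ cells (any ball about such a point meets both); the honest version is that $B(x,r)\cap K^{\bm{l}}$ is covered by the union of at most two such cells, which still yields the bound. Your route instead bounds the \emph{number} $N$ of level-$n$ cells meeting $B(x,r)$ by the planar packing estimate $N\rho_n^2/2\leq \pi(2\rho_n)^2$, i.e.\ $N\leq 8\pi$, using that all level-$n$ squares $F_w^{\bm{l}}(K_0)$ are isometric with pairwise disjoint interiors. This is immune to the corner-point subtlety, produces explicit constants, and, as you correctly note, uses nothing about $\bm{l}$ beyond $l_k\in\{a,b\}$ (the existence of the limit $\theta$ in condition \eqref{e.con.a} plays no role in this proposition). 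Its cost is that it invokes the ambient Lebesgue measure of $\mathbb{C}$, so it would not transfer verbatim to a Moran construction that is not embedded with uniformly comparable congruent pieces at each level, whereas the paper's (repaired) argument relies only on the combinatorial intersection pattern of cells.
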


\begin{proof}
{(1) Note that $1<\inf\limits_{n\geq 1}\alpha_{l_n} \leq \sup\limits_{n\geq 1}\alpha_{l_n}\leq \frac{\log 5}{\log 3}$.
For $0<r<R\leq 2$, assume that $R\in(\rho_{m+1},\rho_{m}]$ and $r\in(\rho_{n+1},\rho_n]$ for some $0\leq m\leq n$, then
\begin{align*}
\frac{\psi(R)}{\psi(r)}&=\frac{\prod_{k=0}^{m}(2l_{k}-1)^{-1} }{\prod_{k=0}^{n}(2l_{k}-1)^{-1}}=\prod_{k=m+1}^{n}(2l_{k}-1)= \prod_{k=m+1}^{n} l_k^{\alpha_{l_k}}\leq \left(\prod_{k=m+1}^{n} l_k\right)^{\sup\limits_{k\geq 1}\alpha_{l_k}} \\
& =\left(\frac{\rho_m}{\rho_{n}}
\right)^{\sup\limits_{k\geq 1}\alpha_{l_k}}\leq (\sup_{k\geq 1}l_k)^{\sup_{k\geq 1}\alpha_{l_k}}\left(\frac{R}{r}\right)^{\sup\limits_{k\geq 1}\alpha_{l_k}}.
\end{align*}
Similarly, we also have
\begin{align*}
\frac{\psi(R)}{\psi(r)}\geq \left(\frac{\rho_m}{\rho_{n}}\right)^{\inf\limits_{k\geq 1}\alpha_{l_k}}\geq C (\sup\limits_{k\geq 1}l_k)^{-\inf\limits_{k\geq 1}\alpha_{l_k}}\left(\frac{R}{r}\right)^{\inf\limits_{k\geq 1}\alpha_{l_k}}.
\end{align*}
Since $\sup\limits_{n\geq 1}l_n<\infty$, we see that \eqref{scaling1} holds.}

{(2) For this, we only need
to observe that when $r\in(\rho_{m+1},\rho_{m}]$ for some $m$, any metric
ball $B(x,r)$ in $K^{\bm{l}}$ (where $x\in K^{\bm{l}}$) contains a level-$%
(m+2)$ cell and can be covered by 5 level-$m$ cells. Indeed, a level-$%
(m+2)$ cell has diameter smaller than $\rho_{m+1}$, and each level-$%
n$ cell has at most 4 neighboring level-$%
n$ cells, while level-$%
n$ cells are separated by distance $\rho_n$ when they are not neighbors. Thus $$\psi (\rho_{m+2})\leq \mu(B(x,r))\leq 5\psi (\rho_{m}),$$ our assertion then follows by using (1) and the fact that $\frac{\rho_{m+2}}{\rho_{m}}$ is bounded (as $\sup\limits_{n\geq 1}l_n<\infty$).}
\end{proof}

\subsection{Lack of Ahlfors-regularity and self-similarity}\label{s.nss}
{In this subsection, we show that scale-irregular Vicsek sets may lack Ahlfors-regularity and self-similarity. For simplicity, we make the following assumption in this subsection:\footnote{We adopt the convention $1/0 = \infty$ and the limit is considered in the extended real line $\overline{\mathbb{R}}=[-\infty,\infty]$.}
\begin{equation}
\text{$\bm{l}$ consists of two distinct odd numbers $a, b\geq3$, the limit } \theta:=\lim _{n \rightarrow \infty} \theta_n:=\lim_{n \rightarrow \infty}\frac{[n]_a}{[n]_b} \in[0,\infty) \text{ exists}, \label{e.con.a}
\end{equation}
where $[n]_{a}:=\sum_{j=1}^{n}\mathds{1}_{\{l_{j}=a\}}$ and $[n]_{b}:=\sum_{j=1}^{n}\mathds{1}_{\{l_{j}=b\}}$ are the numbers of $a$ and $b$ appeared in the first $n$-digits of $\bm{l}$}. We first analyse the behaviour of the
Hausdorff measure by using \cite[Theorem 3.1]{HRWW00}, since
scale-irregular Vicsek sets are \textit{Moran sets} (see \cite[Section 1.2]{HRWW00} for definitions).

\begin{proposition}
\label{HHH} {Assume \eqref{e.con.a} and define $\eta_n:=n(\theta_n-\theta)$. Then the following holds.}
\begin{enumerate}[label=\textup{(\arabic*)}]
\item The Hausdorff dimension of $K^{\bm{l}}$, denoted by $\alpha$, is given
by
\begin{equation}  \label{e.hdim}
\alpha=\frac{\theta \log (2a-1)+\log (2b-1)}{\theta \log a+\log b}.
\end{equation}

\item When $3\leq a< b$, we have the following equivalences:
\begin{align*}
0<\mathcal{H}^{\alpha}(K^{\bm{l}})<\infty\ & \text{if and only if}\
\liminf_{k\rightarrow\infty} \eta_k \in \mathbb{R}, \\
\mathcal{H}^{\alpha}(K^{\bm{l}})=0 \ & \text{if and only if}\
\liminf_{k\rightarrow\infty} \eta_k=-\infty, \\
\mathcal{H}^{\alpha}(K^{\bm{l}})=\infty \ & \text{if and only if}\
\liminf_{k\rightarrow\infty} \eta_k=+\infty.
\end{align*}

\item When $3\leq b<a$, we have the following equivalences:
\begin{align*}
0<\mathcal{H}^{\alpha }(K^{\bm{l}})<\infty \ & \text{if and only if}\
\limsup_{k\rightarrow \infty }\eta _{k}\in \mathbb{R}, \\
\mathcal{H}^{\alpha }(K^{\bm{l}})=0\ & \text{if and only if}\
\limsup_{k\rightarrow \infty }\eta _{k}=+\infty , \\
\mathcal{H}^{\alpha }(K^{\bm{l}})=\infty \ & \text{if and only if}\
\limsup_{k\rightarrow \infty }\eta _{k}=-\infty .
\end{align*}
\end{enumerate}
\end{proposition}

\begin{proof}
\begin{enumerate}[label=\textup{(\arabic*)}]
\item This is a direct application of \cite[Theorem 3.1]{HRWW00}.

\item Define
\begin{align}
\xi_n:=\sum_{w \in W^{\bm{l}}_n} (\mathrm{diam}(K_{w}^{\bm{l}}))^{\alpha}=\rho_n^\alpha(\psi(\rho_n))^{-1}=(a^{-[n]_a}b^{-[n]_b})^\alpha
(2a-1)^{[n]_a}(2b-1)^{[n]_b}.
\end{align}
By \cite[Theorem 3.1]{HRWW00}, $\mathcal{H}^{\alpha}(K^{\bm{l}})$
and $\liminf_{k\rightarrow\infty} \xi_k$ are simultaneously zero, finite and
positive, or infinite. So by taking the logarithm, we only need to show that $%
\liminf_{k\rightarrow\infty} \log\xi_k$ and $\liminf_{k\rightarrow\infty}
\eta_k$ are simultaneously $-\infty$, finite, or $\infty$, respectively.
Let
\begin{equation*}
f(x):=\frac{x \log (2a-1)+\log (2b-1)}{x \log a+\log b},\quad x\in \mathbb{R}
\end{equation*}
Then for sufficiently large $n$,
\begin{align}
\log \xi_n & =-\alpha\left([n]_a \log a+[n]_b \log b\right)+\left([n]_a \log
(2a-1)+[n]_b \log (2b-1)\right)  \notag \\
& =\left([n]_a \log a+[n]_b \log b\right)\left(\frac{[n]_a \log (2a-1)+[n]_b
\log (2b-1)}{[n]_a \log a+[n]_b \log b}-\alpha\right)  \notag \\
& \asymp n\left(f(\theta_n)-f(\theta)\right)  \notag \\
& \asymp f^{\prime }(\theta)\eta_n,  \label{alfr}
\end{align}
where we used the conclusion $\alpha=f(\theta)$ of (1) in the third line,
and in the fourth line we use $3\leq a< b$ to obtain
\begin{align*}
f^{\prime}(x)&=\frac{\log (2a-1) \log b-\log a \log (2b-1)}{(x \log a+\log
b)^2} \\
&=\frac{\log a\log b}{(x \log a+\log
b)^2}\left(\frac{\log (2a-1)}{\log a}-\frac{\log (2b-1)}{\log b}%
\right)> 0
\end{align*}
by noting that $\log(2l-1)/\log l$ strictly decreases in $l$. Therefore $%
\liminf_{k\rightarrow\infty} \log\xi_k$ and $\liminf_{k\rightarrow\infty}
\eta_k$ are simultaneously $-\infty$, finite, or $+\infty$.

\item Note that in the case of $3\leq b<a$, $\liminf_{n\rightarrow \infty
}\log \xi _{n}\asymp f^{\prime }(\theta )\limsup_{n\rightarrow \infty }\eta
_{n}$ as $f^{\prime }(\theta )<0$. The rest of the proof is the same as (2).
\end{enumerate}
\end{proof}
{
\begin{remark}\label{eqv}
Assume \eqref{e.con.a}. If the Hausdorff measure $\mathcal{H}^{\alpha}$ of $K^{\bm{l}}$ exists, that is, $%
0<\mathcal{H}^{\alpha}(K^{\bm{l}})<\infty$, then $\mathcal{H}^{\alpha}$ is equivalent
to $\mu$. The reason is that $\mathcal{H}^{\alpha}(K_{w}^{\bm{l}})=\mathcal{H}%
^{\alpha}(K^{\bm{l}})\mu (K_{w}^{\bm{l}})$ as level-$k$ cells are only translations of each
other for all $w\in W_{k}^{\bm{l}}\ (k\geq 1)$, and any measurable set of $K^{\bm{l}}$ can be
approximated by some unions of cells of level $k$ (as $k\rightarrow \infty $),
which means that $\mathcal{H}^{\alpha}=\mathcal{H}^{\alpha}(K^{\bm{l}})\mu$.
\end{remark}
}

In our setting $\mu$ is not necessarily Ahlfors regular, as we will see in
the following. {A compact set \( F \subseteq \mathbb{R}^d \) is called \textit{Ahlfors regular} if there exists a constant \( c > 0 \) such that
\[\frac{1}{c} r^{\dim_H F} \leq \mathcal{H}^{\dim_H F} \left(B(x, r) \cap F\right) \leq c \, r^{\dim_H F}\]
for all \( x \in F \) and all \( 0 < r < \diam(F) \) (see for example \cite[Section 6.4]{Fra21}).}
{
\begin{proposition}
\label{af} Assume \eqref{e.con.a}. Let $\alpha $ be given by \eqref{e.hdim}%
. Then the followings are equivalent.
\begin{enumerate}[label=\textup{({\arabic*})},align=left,leftmargin=*,topsep=5pt,parsep=0pt,itemsep=2pt]
\item $\{\eta _{n}\}_{n}$ is bounded.
\item $K^{\bm{l}}$ is Ahlfors regular.
\item The measure $\mu $ on $K^{\bm{l}}$ is $\alpha $-Ahlfors regular.
\end{enumerate}
\end{proposition}
\begin{proof}
\begin{enumerate}[align=left,leftmargin=*,topsep=5pt,parsep=0pt,itemsep=2pt]
\item[$(1)\Leftrightarrow(3).$]  By
Proposition \ref{l.meas}, $\mu $ is $\alpha $-Ahlfors regular if and only if $\psi (\rho _{n})\asymp \rho
_{n}^{\alpha }$, i.e., the sequence $\{(\psi (\rho _{n}))^{-1}\rho
_{n}^{\alpha }\}_{n}$ is bounded below from $0$ and away from $\infty $,
which, by taking the logarithm and using the definition of $\eta _{n}$ and $\theta
_{n}$, is equivalent to that the sequence
\begin{equation*}
\frac{\eta _{n}}{1+\theta _{n}}\log \left( {\frac{2a-1}{a^{\alpha }}}\right)
+\frac{n}{1+\theta _{n}}\left( {\theta \log \left( {\frac{2a-1}{a^{\alpha }}}%
\right) +\log \left( {\frac{2b-1}{b^{\alpha }}}\right) }\right)
\end{equation*}%
is bounded. By \eqref{e.hdim}, we have
\begin{equation*}
{\theta \log \left( {\frac{2a-1}{a^{\alpha }}}\right) +\log \left( {\frac{%
2b-1}{b^{\alpha }}}\right) }=0
\end{equation*}%
and $\log \left( {\frac{2a-1}{a^{\alpha }}}\right) \neq 0$ by $\theta\in [0,\infty)$. The conclusion immediately follows by \eqref{e.con.a}.
\item[$(1)\Rightarrow(2).$] The existence of the Hausdorff measure $\mathcal{H}^{\alpha}$ immediately follows from Proposition \ref{HHH}. Then by Remark \ref{eqv}, we just need to check the $\alpha $-Ahlfors regularity of $\mu $, which is guaranteed by $(1)\Leftrightarrow(3)$.
\item[$(2)\Rightarrow(3).$] This immediately follows from Remark \ref{eqv}.
\end{enumerate}
 \end{proof}
}
Now we can present our sufficient conditions for $K^{%
\bm{l}}$ to be non-self-similar. {For Moran sets, we can use a different and simpler approach from that in \cite{FHZ24} for graph-directed attractors.}
\begin{theorem}
\label{t.non-sml} {Assume \eqref{e.con.a} with $a<b$.}
If $\{\eta_n\}_n$ satisfies one of the following two conditions:
\begin{enumerate}[label=\textup{(\arabic*)}]
\item $\liminf_{n\rightarrow\infty}\eta_n=\infty$,
\item $\liminf_{n\rightarrow\infty} \eta_n \in \mathbb{R}$ but $%
\limsup_{n\rightarrow\infty} \eta_n=\infty$,
\end{enumerate}
then $K^{\bm{l}}$ is not self-similar, that is, it is not the attractor of any standard iterated function system.
\end{theorem}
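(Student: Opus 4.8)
My plan is to argue by contradiction, feeding the dimension and measure computations of Proposition \ref{HHH} and Proposition \ref{af} into the basic theory of self-similar sets. Suppose $K^{\bm{l}}$ is the attractor of a finite family of contractive similarities $\{S_i\}_{i=1}^{N}$ of $\mathbb{C}$ with ratios $r_i\in(0,1)$, and let $s_0$ be the similarity dimension, the unique solution of $\sum_{i=1}^{N}r_i^{s_0}=1$. Two standard facts come for free: covering $K^{\bm{l}}$ by the level-$k$ cylinders $S_{i_1}\circ\cdots\circ S_{i_k}(K^{\bm{l}})$ and letting $k\to\infty$ gives $\mathcal{H}^{s_0}(K^{\bm{l}})\leq \diam(K^{\bm{l}})^{s_0}<\infty$, and one always has $\dim_{H}K^{\bm{l}}\leq s_0$. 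Since Proposition \ref{HHH}(1) identifies $\dim_{H}K^{\bm{l}}=\alpha$, everything hinges on establishing the absence of \emph{dimension drop}, namely $s_0=\alpha$.

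Granting $s_0=\alpha$, both cases close. In case (1), $\liminf_n\eta_n=\infty$ gives $\mathcal{H}^{\alpha}(K^{\bm{l}})=\infty$ by Proposition \ref{HHH}(2); but $s_0=\alpha$ forces $\mathcal{H}^{\alpha}(K^{\bm{l}})=\mathcal{H}^{s_0}(K^{\bm{l}})<\infty$, a contradiction (here only $s_0=\alpha$ is needed, not the full open set condition). In case (2), $\liminf_n\eta_n\in\mathbb{R}$ gives $0<\mathcal{H}^{\alpha}(K^{\bm{l}})<\infty$; since $s_0=\alpha$ this means $\mathcal{H}^{s_0}(K^{\bm{l}})>0$, so by Schief's separation theorem the IFS obeys the \emph{open set condition} and $K^{\bm{l}}$ is Ahlfors $\alpha$-regular, i.e.\ $\mathcal{H}^{\alpha}(B(x,r))\asymp r^{\alpha}$. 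On the other hand, the remark following Proposition \ref{l.meas} gives $\mathcal{H}^{\alpha}=\mathcal{H}^{\alpha}(K^{\bm{l}})\,\mu$, so with Proposition \ref{l.meas}(2) we get $r^{\alpha}\asymp\mathcal{H}^{\alpha}(B(x,r))\asymp\mu(B(x,r))\asymp\psi(r)$, i.e.\ $\psi(\rho_n)\asymp\rho_n^{\alpha}$. By the computation in the proof of Proposition \ref{af} this is equivalent to $\{\eta_n\}_n$ being bounded, contradicting $\limsup_n\eta_n=\infty$.

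The main obstacle is precisely the exclusion of dimension drop, $s_0=\alpha$: the soft measure estimates are too weak here, since an overlapping IFS only yields $\psi(r)\lesssim\sum_i r_i^{\alpha}\psi(r/r_i)$, which cannot pin down $\psi$. I plan to remove this obstacle using the rigidity of the branch structure of $K^{\bm{l}}$. Each $K^{\bm{l}}$ is a dendrite whose cut points and local branching numbers at scale $\rho_n$ are dictated by the digit $l_{n+1}$; a similarity $S_i$ is a homeomorphism onto its image and must preserve branching, which should force $S_i(K^{\bm{l}})$ to coincide with a finite union of cells $K_w^{\bm{l}}$. A single cell $K_w^{\bm{l}}=F_w^{\bm{l}}(K_0)\cap K^{\bm{l}}$ is, by construction, a rescaled copy of the shifted Vicsek set $K^{\sigma^{|w|}\bm{l}}$, where $\sigma^m\bm{l}:=(l_{m+1},l_{m+2},\dots)$; hence self-similarity forces $K^{\sigma^{|w|}\bm{l}}$ to be similar to $K^{\bm{l}}$. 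A finite IFS can involve only finitely many such tails, and since distinct digit sequences produce non-similar Vicsek sets (the leading digit is recovered from the central branching, then one induces on the tails), $\bm{l}$ must be eventually periodic. Eventual periodicity gives $\theta_n-\theta=O(1/n)$, whence $\{\eta_n\}_n$ is bounded, already contradicting $\limsup_n\eta_n=\infty$ in both cases. This rigidity route is self-contained and could replace the measure-theoretic argument entirely; verifying the alignment of $S_i(K^{\bm{l}})$ with cells and the non-similarity of distinct Vicsek sets is the one genuinely nontrivial step.
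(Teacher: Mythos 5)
Your overall strategy---derive a contradiction from the Hausdorff-measure behaviour of $K^{\bm{l}}$ established in Propositions \ref{HHH} and \ref{af}---is the same as the paper's, and your treatment of the two cases would be correct if you could actually establish $s_0=\alpha$. But your proposal has a genuine gap exactly where you yourself flag it: everything reduces to excluding dimension drop, or, in your alternative route, to the rigidity claims that each $S_i(K^{\bm{l}})$ coincides with a finite union of cells and that distinct contraction-ratio sequences yield non-similar Vicsek sets, and none of this is proved. These claims are not routine: the cell-alignment step requires a careful analysis of branch points and their orders under a similarity (essentially the graph-directed rigidity machinery of \cite{FHZJFG}, which the paper explicitly set out to avoid because it is much longer), and even granting it, the reduction ``finitely many tails, hence $\bm{l}$ eventually periodic'' needs work, since a union of cells is not obviously similar to a single cell, so the similarity $K^{\bm{l}}\sim K^{\sigma^{m}\bm{l}}$ does not fall out immediately. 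As written, the hard half of your argument is a plan, not a proof.

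The paper bypasses the dimension-drop issue entirely by invoking two facts valid for \emph{arbitrary} self-similar sets, with no separation condition and no reference to the similarity dimension: (i) Falconer's implicit theorem \cite[Corollary 3.3]{FalconerTechniques}, which gives $\mathcal{H}^{\dim_H K}(K)<\infty$ for every self-similar $K$---note this is at the Hausdorff dimension, unlike your covering bound, which only controls $\mathcal{H}^{s_0}$ and says nothing about $\mathcal{H}^{\alpha}$ when $s_0>\alpha$; and (ii) \cite[Theorem 2.1]{AngelevskaKaenmakiTroscheit2020}, which says that if $\mathcal{H}^{\dim_H K}(K)>0$ then $\mathcal{H}^{\dim_H K}$ is Ahlfors regular. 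With (i), case (1) is immediate, since $\mathcal{H}^{\alpha}(K^{\bm{l}})=\infty$ is then impossible; with (ii), the remark after Proposition \ref{l.meas} (giving $\mathcal{H}^{\alpha}=\mathcal{H}^{\alpha}(K^{\bm{l}})\mu$) and Proposition \ref{af}, case (2) is immediate, since Ahlfors regularity of $\mu$ forces $\{\eta_n\}_n$ to be bounded. Your Schief-based step suffers from the same defect as case (1): Schief's theorem requires positivity of $\mathcal{H}^{s_0}$ at the similarity dimension, which you only have after proving $s_0=\alpha$. If you replace your covering bound by citation (i) and your Schief step by citation (ii), your argument closes without any rigidity input, and it then coincides with the paper's proof.
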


\begin{proof}
\begin{enumerate}[label=\textup{(\arabic*)}]
\item By Proposition \ref{HHH}, $\liminf_{n\rightarrow\infty} \eta_n=\infty$
implies $\mathcal{H}^{\alpha}(K^{\bm{l}})=\infty $. But it is known that $\mathcal{H}^{\dim_H(K)}(K)<\infty$ for any self-similar set $K$
by \cite[Corollary 3.3]{Fal97}, thus $K^{\bm{l}}$ is not
self-similar.

\item By Proposition \ref{HHH}, $\liminf_{n\rightarrow\infty} \eta_n \in
\mathbb{R} $ implies $0<\mathcal{H}^{\alpha}(K^{\bm{l}})<\infty$. Then by  Remark \ref{eqv}, $\mathcal{H}^{\alpha}$ is equivalent to $\mu$ on $%
K^{\bm{l}}$. By Proposition \ref{af}, $\limsup_{n\rightarrow\infty}
\eta_n=\infty$ implies that $\mu$ is not Ahlfors regular, so that $\mathcal{H}
^{\alpha}$ is not Ahlfors regular. But \cite[Theorem 2.1]%
{AKT20} states that for any self-similar set $K$,
the Hausdorff measure $\mathcal{H}^{\dim_H(K)}$ is Ahlfors regular, should
it exist. This shows that $K^{\bm{l}}$ cannot be self-similar.
\end{enumerate}
\end{proof}

\begin{example}
We give some examples of scale-irregular Vicsek set with $a=3$ and $b=5$ satisfying the conditions in Theorem \ref{t.non-sml}. The sequence $\bm{l}=335333553333555333335555\cdots$ satisfies Theorem \ref{t.non-sml} (1).  That is, ``$3$'' appears consecutively $k+1$ times and then ``$5$'' appears consecutively $k$ times ($k\geq 1$), as the following shows.
	\[\bm{l}=\underbrace{\overbrace{33}^{2}\overbrace{5}^{1}\overbrace{333}^{3}\overbrace{55}^{2}\overbrace{3333}^{4}\overbrace{555}^{3}\cdots\overbrace{3\cdots3}^{k+1}\overbrace{5\cdots 5}^{k}}_{\substack{\text{``$3$'' appears $(k^{2}+3k)/2$ times}\\\text{``$5$'' appears $(k^{2}+k)/2$ times} }}\overbrace{33\cdots3}^{k+2}\overbrace{55\cdots 5}^{k+1}\cdots \]
We claim that $\{\theta_n\}_n$ has limit $\theta=1$ and $\liminf_{n\rightarrow\infty}\eta_n=\infty$. Indeed, when $n=(k^{2}+2k)+t $ for some $k\geq1$ and $1\leq t\leq k+2$,``$3$'' occurs $t+(k^{2}+3k)/2$ times and ``$5$'' occurs $(k^{2}+k)/2$ times in the first $n$-digits of $\bm{l}$, thus for $k\geq1$ and $1\leq t\leq k+2$,
\begin{align}
\theta_{k^2+2k+t}=\frac{k^{2}+3k+2t}{k^{2}+k} \text{ and } \eta_{k^2+2k+t}=\frac{k^2+2k+t}{k(k+1)}(2k+2t).
\end{align}
When $n=(k^{2}+3k+2)+t $ for some $k\geq1$ and $1\leq t\leq k+1$, ``$3$'' occurs $(k^{2}+5k+4)/2$ times and ``$5$'' occurs $t+(k^{2}+k)/2$ times {in the first $n$-digits of $\bm{l}$}, thus for $k\geq1$ and $1\leq t\leq k+1$,
\[ \theta_{k^2+3k+2+t}=\frac{k^{2}+5k+4}{k^{2}+k+2t} \text{ and }\eta_{k^2+3k+2+t}=\frac{k^2+3k+2+t}{k^2+k+2t}(4k+4-2t).\]	
For both cases, the claim follows by noting that
\[\theta_n=1+O(n^{-1/2}) \text{ and } \eta_{n}\geq 2\cdot 3^{-1} n^{1/2}.\]

In the same way, one can verify that the sequence $\bm{l}=35335533355533335555\cdots$ satisfies Theorem \ref{t.non-sml} (2). For both sequences, their corresponding Vicsek sets
cannot be self-similar.

\end{example}

\section{$p$-energy and its associated energy measure}

\label{s.energy} In this section, we prove Theorem \ref{t.energy} and Theorem \ref{t.emc}%
. Throughout this section, we fix $\bm{l}=(l_{k})_{k=1}^{\infty}$ where each $l_{k}\geq 3$ is an odd integer ($l_{0}:=1$). We omit
the superscript $\bm{l}$ and write $K^{\bm{l}}$ as $K$.

\subsection{Construction of $p$-energy norm}
We construct $p$-energy norms on scale-irregular Vicsek sets by extending the methods in \cite{BC23}.
\begin{definition}
\label{penergy_1} For each convex $A\subset K$, $1<p<\infty$, $n\geq0$ and $%
u\in C(K)$, define
\begin{equation*}
\mathcal{E}_{p,n;A}(u):=\frac{1}{2}\left(\prod_{j=0}^{n}
l_{j}^{p-1}\right)\sum_{\substack{ x,y\in A\cap V_{n}  \\ x\sim y}}%
\left|u(x)-u(y)\right|^{p}
\end{equation*}
and we denote $\mathcal{E}_{p,n;K}$ as $\mathcal{E}_{p,n}$. Define the
function space $\mathcal{F}_{p}$ by
\begin{equation}  \label{e.domain}
\mathcal{F}_{p}:=\left\{u\in C(K):\sup_{n\geq0}\mathcal{E}%
_{p,n}(u)<\infty\right\};
\end{equation}
and for each $u\in\mathcal{F}_{p}$, define
\begin{equation}  \label{e.energy}
\mathcal{E}_{p;A}(u):=\sup_{n\geq0}\mathcal{E}_{p,n;A}(u),\ \mathcal{E}%
_{p}(u):=\mathcal{E}_{p;K}(u)\ \text{and } \lVert u\rVert_{\mathcal{F}%
_{p}}:=\left(\lVert u\rVert_{L^{p}(K,\mu)}^{p}+\mathcal{E}_{p}(u)\right)^{1/p}.
\end{equation}
\end{definition}

It is then easy to verify that $\left(\mathcal{F}_{p}, \lVert \cdot\rVert_{%
\mathcal{F}_{p}}\right)$ is a normed real vector space. The following
proposition shows that these discrete energies $\{\mathcal{E}_{p,n;A}(u)\}$
increase in $n$.

\begin{proposition}
\label{VE-mono} For each $0\leq m\leq n$ and each $u\in C(K)$, we have
\begin{align}
\mathcal{E}_{p,m;A}(u)\leq \mathcal{E}_{p,n;A}(u) .  \label{penergy_dec}
\end{align}
In particular,
\begin{equation}
\mathcal{E}_{p}(u)=\sup_{n\geq0}\mathcal{E}_{p,n}(u)=\lim_{n\rightarrow
\infty}\mathcal{E}_{p,n}(u)\ \text{for all } u\in C(K).  \label{ve1}
\end{equation}
\end{proposition}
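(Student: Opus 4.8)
The plan is to prove the one-step inequality $\mathcal{E}_{p,n;A}(u)\le \mathcal{E}_{p,n+1;A}(u)$ for every $n\ge 0$; the general case $\mathcal{E}_{p,m;A}(u)\le\mathcal{E}_{p,n;A}(u)$ for $m\le n$ then follows by iteration, and \eqref{ve1} is immediate because a nondecreasing sequence converges to its supremum. Everything therefore reduces to understanding how a single level-$n$ edge is refined at level $n+1$.

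The geometric input I would isolate first is the ``gradient structure'' of the Vicsek graphs. Each level-$n$ edge $(x,y)\in E_{n}$ joins the centre $F_{w}^{\bm{l}}(q_{0})$ of a unique level-$n$ cell $K_{w}^{\bm{l}}$ to one of its corners $F_{w}^{\bm{l}}(q_{j})$, since the only pairs of points of $V_{n}$ at distance $\prod_{k=0}^{n}l_{k}^{-1}$ are centre--corner pairs inside one cell; thus $(x,y)=F_{w}^{\bm{l}}(e)$ for a level-$0$ centre--corner edge $e$. Refining this cell by one level amounts to applying $F_{w}^{\bm{l}}$ to the level-$1$ picture with ratio $l_{n+1}$, and a direct computation in $K_{0}$ shows that the centre-to-corner segment of $e$ is subdivided into a straight chain $x=z_{0}\sim z_{1}\sim\cdots\sim z_{l_{n+1}}=y$ of exactly $l_{n+1}$ level-$(n+1)$ edges, each of length $\prod_{k=0}^{n+1}l_{k}^{-1}$; as the Vicsek graphs are trees, this is the unique path from $x$ to $y$ in $(V_{n+1},E_{n+1})$. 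I would also record that distinct level-$n$ edges produce edge-disjoint chains (each level-$(n+1)$ edge lies in a single level-$n$ cell, and inside a cell the four centre--corner chains meet only at the centre before branching), so the chains together use each level-$(n+1)$ edge at most once.

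With this in hand the estimate is bookkeeping. For a fixed edge $(x,y)\in E_{n}$ with endpoints in $A$, convexity of $A$ (Definition \ref{d.graph}) forces the geodesic, hence all of $z_{0},\dots,z_{l_{n+1}}$ and every edge $(z_{i},z_{i+1})$, to lie in $A$. Writing $u(x)-u(y)=\sum_{i=0}^{l_{n+1}-1}\bigl(u(z_{i})-u(z_{i+1})\bigr)$ and applying the power-mean (Jensen) inequality $|\sum_{i=1}^{N}a_{i}|^{p}\le N^{p-1}\sum_{i=1}^{N}|a_{i}|^{p}$ with $N=l_{n+1}$ yields
\[
\Bigl(\prod_{j=0}^{n}l_{j}^{p-1}\Bigr)|u(x)-u(y)|^{p}
\le \Bigl(\prod_{j=0}^{n+1}l_{j}^{p-1}\Bigr)\sum_{i=0}^{l_{n+1}-1}|u(z_{i})-u(z_{i+1})|^{p}.
\]
Summing over all level-$n$ edges $(x,y)$ with endpoints in $A$, the right-hand sides involve pairwise distinct level-$(n+1)$ edges contained in $A$, so their total is bounded above by the full sum defining $\mathcal{E}_{p,n+1;A}(u)$; after the factor $\tfrac12$ this is precisely $\mathcal{E}_{p,n;A}(u)\le\mathcal{E}_{p,n+1;A}(u)$.

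The main obstacle is the geometric claim of the second paragraph: that each level-$n$ edge refines into a \emph{unique} chain of \emph{precisely} $l_{n+1}$ level-$(n+1)$ edges and that these chains are edge-disjoint. This is exactly where the dendrite (tree-like) geometry special to Vicsek sets is used, and I would establish it by reducing, via the similarities $F_{w}^{\bm{l}}$, to the single explicit computation inside $K_{0}$ sketched above, with Proposition \ref{l.inters} controlling how cells subdivide.
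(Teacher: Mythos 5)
Your proposal is correct and follows essentially the same route as the paper: decompose each level-$n$ edge into the unique chain of $l_{n+1}$ level-$(n+1)$ edges given by the tree structure, apply the power-mean inequality $\lvert\sum_{i}a_{i}\rvert^{p}\le N^{p-1}\sum_{i}\lvert a_{i}\rvert^{p}$, and sum over edges. The only difference is that you spell out details the paper compresses into ``by the tree structure of $K$'' --- the centre--corner description of edges, the edge-disjointness of the refined chains, and the role of convexity of $A$ in keeping the chains inside $A$ --- all of which are correct and worth making explicit.
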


\begin{proof}
For two adjacent vertices $x$ and $y$ in $V_{n}$, by the tree structure of $%
K $, there are unique $(l_{n+1}+1)$ vertices $\{x_{j}\}_{j=0}^{l_{n+1}}\subset
V_{n+1}$ such that $x_{j}\sim x_{j+1}$, $1\leq j\leq l_{n+1}$ and $x_{0}=x$,
$x_{l_{n+1}}=y$. Thus
\begin{equation*}
\left|u(x_{0})-u(x_{l_{n+1}})
\right|^{p}=\left|\sum_{j=1}^{l_{n+1}}\left(u(x_{j-1})-u(x_{j})\right)%
\right|^{p}\leq
l_{n+1}^{p-1}\sum_{j=1}^{l_{n+1}}\left|u(x_{j-1})-u(x_{j})\right|^{p}.
\end{equation*}%
By adding all adjacent vertices in $V_{n}$ on both sides, we conclude that $%
\mathcal{E}_{p,n;A}(u)\leq \mathcal{E}_{p,n+1;A}(u)$ for each convex $%
A\subset K$, which completes the proof.
\end{proof}

Next, we analyse the ``core'' functions in the domain $\mathcal{F}_{p}$.
\begin{definition}[Piecewise affine functions]
A continuous function $\Psi: K \rightarrow \mathbb{R}$ is called \textit{$n$%
-piecewise affine}, if $\Psi$ is linear between the vertices of $\overline{V}%
_{n}$ and constant on any connected component of $\overline{V}_{m}\setminus
\overline{V}_{n}$ for every $m > n$. A continuous function $\Psi: K
\rightarrow \mathbb{R}$ is called \textit{piecewise affine}, if there exists
$n\geq0$ such that $\Psi$ is $n$-piecewise affine.
\end{definition}

\begin{proposition}
\label{l.affine} For each $u\in C(K)$ and $n\geq0$, define $H_{n} u$ to be
the unique $n$-piecewise affine function on $K$ that coincides with $u$ on $%
V_{n}$. Then $H_{n}u\rightarrow u$ uniformly on $K$ as $n\rightarrow\infty$. Moreover, $H_{n}u\in \mathcal{F}_{p}$ for each $u\in C(K)$ and $%
n\geq0$.
\end{proposition}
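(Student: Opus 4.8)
The plan is to construct $H_{n}u$ explicitly and then verify the three assertions (well-posedness, uniform convergence, and membership in $\mathcal{F}_{p}$) in turn, relying throughout on the tree structure of $(V_{m},E_{m})$ and on the inclusions $\overline{V}_{n}\subset\overline{V}_{n+1}$ (each level-$n$ edge being subdivided into $l_{n+1}$ level-$(n+1)$ edges, exactly as in the proof of Proposition \ref{VE-mono}). First I would define $H_{n}u$ by linearly interpolating the values $u|_{V_{n}}$ along every edge $e(x,y)$ of $\overline{V}_{n}$, and by setting $H_{n}u$ equal to the constant $H_{n}u(p_{C})$ on each connected component $C$ of $K\setminus\overline{V}_{n}$, where $p_{C}\in\overline{V}_{n}$ is the unique cut point at which $C$ is attached (a single such attachment point exists precisely because $K$ is a tree). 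The defining properties of an $n$-piecewise affine function force exactly these values, so $H_{n}u$ is the unique such function; continuity holds because the interpolation is continuous on the compact complex $\overline{V}_{n}$, the function is locally constant on the open set $K\setminus\overline{V}_{n}$, and at a point of $\overline{V}_{n}$ the attaching cut points of the nearby branches converge to it while those branches shrink in diameter.

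For the uniform convergence I would estimate $\lVert H_{n}u-u\rVert_{\infty}$ by the oscillation of $u$ on level-$n$ cells. The key geometric observation is that each connected component of $K\setminus\overline{V}_{n}$ is contained in a single level-$n$ cell $K_{w}$ ($w\in W_{n}^{\bm{l}}$): two distinct level-$n$ cells meet only at shared corners, which lie in $V_{n}\subset\overline{V}_{n}$, so no branch can pass from one cell to another without crossing $\overline{V}_{n}$. Consequently, for any $x\in K$ the value $H_{n}u(x)$ is a convex combination of values $u(v)$ with $v\in V_{n}$ in the same level-$n$ cell as $x$; since $\diam(K_{w})\leq 2\cdot 3^{-n}$, we obtain
\begin{equation*}
\lVert H_{n}u-u\rVert_{\infty}\leq\sup\left\{|u(a)-u(b)|:a,b\in K,\ d(a,b)\leq 2\cdot 3^{-n}\right\}.
\end{equation*}
The right-hand side tends to $0$ as $n\to\infty$ by the uniform continuity of $u$ on the compact set $K$, which gives the claim.

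To show $H_{n}u\in\mathcal{F}_{p}$ it suffices, by the monotonicity $\mathcal{E}_{p,m}\leq\mathcal{E}_{p,m+1}$ and formula \eqref{ve1}, to prove that $\mathcal{E}_{p,m}(H_{n}u)$ stabilizes for $m\geq n$; in fact I would show $\mathcal{E}_{p,m}(H_{n}u)=\mathcal{E}_{p,n}(H_{n}u)$ for all $m\geq n$, a finite quantity bounding the supremum. By induction it is enough to treat the step from $m$ to $m+1$ for $m\geq n$. Every level-$(m+1)$ edge either subdivides a level-$m$ edge lying on $\overline{V}_{n}$, or lies in a branch of $K\setminus\overline{V}_{n}$ on which $H_{n}u$ is constant and therefore contributes $0$. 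On a level-$m$ edge $(x,y)\subset\overline{V}_{n}$ the function $H_{n}u$ is affine, so each of its $l_{m+1}$ sub-edges carries increment $l_{m+1}^{-1}\bigl(H_{n}u(y)-H_{n}u(x)\bigr)$, and summing the $p$-th powers over these $l_{m+1}$ sub-edges yields $l_{m+1}^{1-p}|H_{n}u(x)-H_{n}u(y)|^{p}$. Since the normalizing factor satisfies $\tfrac12\prod_{j=0}^{m+1}l_{j}^{p-1}=l_{m+1}^{p-1}\cdot\tfrac12\prod_{j=0}^{m}l_{j}^{p-1}$, the factors $l_{m+1}^{1-p}$ and $l_{m+1}^{p-1}$ cancel, giving $\mathcal{E}_{p,m+1}(H_{n}u)=\mathcal{E}_{p,m}(H_{n}u)$, as desired.

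I expect the only genuinely delicate point to be the verification of continuity of $H_{n}u$ at the points of $\overline{V}_{n}$, together with the clean bookkeeping that each level-$(m+1)$ edge falls into exactly one of the two categories above (subdividing edge on $\overline{V}_{n}$, or branch edge). Both hinge on the inclusion $\overline{V}_{n}\subset\overline{V}_{m+1}$ and on $\overline{V}_{n}$ being a full subcomplex of $\overline{V}_{m+1}$, so that no edge straddles the boundary of the skeleton; the remaining computations are routine.
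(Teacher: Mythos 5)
Your proposal is correct, and for the uniform-convergence assertion it follows essentially the same route as the paper: the paper bounds $|H_{n}u-u|$ on each cell $K_{w}$ by $\osc_{K_{w}}u$ (after noting $\min_{K_{w}}u\leq H_{n}u\leq\max_{K_{w}}u$ on $K_{w}$) and invokes uniform continuity, and your convex-combination/branch-containment argument is precisely the justification of that min/max bound, which the paper states without proof. The real difference is in the second assertion: the paper dismisses $H_{n}u\in\mathcal{F}_{p}$ as obvious, whereas you prove it via the exact stabilization $\mathcal{E}_{p,m+1}(H_{n}u)=\mathcal{E}_{p,m}(H_{n}u)$ for $m\geq n$, with the factor $l_{m+1}^{1-p}$ from affine subdivision cancelling the normalization $l_{m+1}^{p-1}$ and branch edges contributing zero. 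This is the right argument (it relies, as you note, on each level-$(m+1)$ edge either subdividing a level-$m$ edge or hanging off $\overline{V}_{m}$, which uses the oddness of the $l_{k}$), and it is the same cancellation the paper itself exploits later when computing $\int_{A\cap\mathcal{S}}|\partial\Psi|^{p}\dif\nu$ for piecewise affine $\Psi$ in the proof of Proposition \ref{p.grad}; so your write-up is a correct and more complete version of the paper's proof.
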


\begin{proof}
For each word $w\in W_{n}$, we have
\begin{equation*}
\min_{K_{w}}u\leq H_{n}u(x)\leq\max_{K_{w}}u,\quad\forall x\in K_{w}.
\end{equation*}
It follows that
\begin{equation*}
|H_{n}u(x)-u(x)|\leq \osc _{K_{w}}u, \quad\forall x\in K_{w},
\end{equation*}
where $\osc _{K_{w}}u:=\max_{x\in K_{w}}u(x)-\min_{x\in K_{w}}u(x)$. The first assertion then follows from
\begin{equation*}
\sup_{x\in K}|H_{n}u(x)-u(x)|\leq \sup_{w\in W_{n}}\osc _{K_{w}}u\rightarrow
0\quad(n\rightarrow\infty)
\end{equation*}
as $u$ is uniformly continuous. The second assertion is obvious.
\end{proof}

As in the work of Baudoin and Chen \cite[Theorem 3.1]{BC23}, a notable characteristic of Vicsek sets lies in their distinctive geometric structure, which permits the existence of gradients. The following proposition says that {\cite[Theorem 3.1]{BC23}} also holds on scale-irregular Vicsek sets.

\begin{proposition}[{\cite[Theorem 3.1]{BC23}}]
\label{p.grad} Let $1<p<\infty$ and $u \in C(K)$. The followings are
equivalent:

\begin{enumerate}[label=\textup{(\arabic*)}]
\item $u \in \mathcal{F}_{p}$;

\item There exists $g \in L^{p}(\mathcal{S}, \nu)$ such that, for every $n
\geq 0$ and every adjacent $x, y \in V_{n}$,%
\begin{equation}  \label{e.grad}
u(y)-u(x)=\int_{e(x, y)} g \dif \nu.
\end{equation}
\end{enumerate}

In this case, $g$ is unique in $L^{p}(\mathcal{S}, \nu)$. Moreover, for
every $u \in \mathcal{F}_{p}$ and every convex $A\subset K$,
\begin{equation}
 \mathcal{E}_{p;A}(u)=\int_{A\cap\mathcal{S}} |g|^{p}\dif\nu. \label{e.grad-1}
\end{equation}
\end{proposition}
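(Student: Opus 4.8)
The strategy is to prove the two implications together with the quantitative identity \eqref{e.grad-1}, following \cite[Theorem 3.1]{BauChe23} but adapting the martingale argument to the varying branching numbers $l_{n}$ and to the fact that the skeleta increase, $\overline{V}_{n}\uparrow\mathcal{S}$. The starting point is an exact discrete identity. For $u\in C(K)$ and an edge $e=e(x,y)$ at level $n$ (so $\nu(e)=\prod_{j=0}^{n}l_{j}^{-1}$) with $x\prec y$, set $\bar g_{e}:=(u(y)-u(x))/\nu(e)$ and let $g_{n}$ be the function on $\overline{V}_{n}$ equal to $\bar g_{e}$ on the interior of each level-$n$ edge. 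Since $\big(\prod_{j=0}^{n}l_{j}^{p-1}\big)\nu(e)^{p}=\nu(e)$, a direct rearrangement of Definition \ref{penergy_1} gives, for every convex $A$,
\[
\mathcal{E}_{p,n;A}(u)=\int_{\overline{V}_{n}\cap A}|g_{n}|^{p}\dif\nu ,
\]
convexity of $A$ ensuring each level-$n$ edge lies entirely inside or outside $A$. For the implication (2)$\Rightarrow$(1), apply Hölder's inequality on each edge, $|u(y)-u(x)|^{p}=\big|\int_{e}g\dif\nu\big|^{p}\le\nu(e)^{p-1}\int_{e}|g|^{p}\dif\nu$; summing over level-$n$ edges in $A$ and using $\big(\prod_{j=0}^{n}l_{j}^{p-1}\big)\nu(e)^{p-1}=1$ yields $\mathcal{E}_{p,n;A}(u)\le\int_{\overline{V}_{n}\cap A}|g|^{p}\dif\nu\le\int_{A\cap\mathcal{S}}|g|^{p}\dif\nu$, so taking the supremum over $n$ shows $u\in\mathcal{F}_{p}$ and $\mathcal{E}_{p;A}(u)\le\int_{A\cap\mathcal{S}}|g|^{p}\dif\nu$.

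The substance is (1)$\Rightarrow$(2). The key structural fact, already used in Proposition \ref{VE-mono}, is that by the tree structure a level-$n$ edge is subdivided into $l_{n+1}$ equal level-$(n+1)$ sub-edges lying along it, whence $\bar g_{e}$ is the $\nu$-average of the $\bar g_{e'}$ over its sub-edges. Consequently, restricted to any single edge $e_{0}$ that first appears at some level $n_{0}$ — the level-$0$ edges together with the ``new'' edges of $\overline{V}_{k}\setminus\overline{V}_{k-1}$ partition $\mathcal{S}$ up to the countable vertex set — the sequence $(g_{n})_{n\ge n_{0}}$ is a martingale for the filtration generated by the nested sub-edges of $e_{0}$. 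The displayed identity bounds it in $L^{p}(e_{0},\nu)$ by $\mathcal{E}_{p}(u)<\infty$, so since $p>1$ the $L^{p}$ martingale convergence theorem produces a limit $g$, with $g_{n}\to g$ both $\nu$-a.e.\ and in $L^{p}(e_{0})$. Pasting over all such $e_{0}$ defines $g$ $\nu$-a.e.\ on $\mathcal{S}$, and Fatou's lemma applied on each fixed $\overline{V}_{N}$ gives $\int_{\mathcal{S}}|g|^{p}\dif\nu\le\mathcal{E}_{p}(u)$, so $g\in L^{p}(\mathcal{S},\nu)$. The gradient relation \eqref{e.grad} then follows because for each fixed edge $e(x,y)$ one has $\int_{e(x,y)}g_{m}\dif\nu=u(y)-u(x)$ for all $m\ge n$ by telescoping, while $g_{m}\to g$ in $L^{1}(e(x,y))$.

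Uniqueness holds since any $L^{1}_{\mathrm{loc}}$ function with vanishing $\nu$-integral over every edge of every level is zero $\nu$-a.e.: the nested edges shrink to points and generate the Borel $\sigma$-algebra, so this is Lebesgue differentiation along the edges. For the energy identity \eqref{e.grad-1}, the Hölder computation above gives ``$\le$''. For the reverse, apply Fatou to $\mathcal{E}_{p,n;A}(u)=\int_{\overline{V}_{n}\cap A}|g_{n}|^{p}\dif\nu$ on each fixed $\overline{V}_{N}\cap A$: using $g_{n}\to g$ a.e.\ and $\int_{\overline{V}_{N}\cap A}|g_{n}|^{p}\dif\nu\le\mathcal{E}_{p,n;A}(u)\le\mathcal{E}_{p;A}(u)$ for $n\ge N$, one gets $\int_{\overline{V}_{N}\cap A}|g|^{p}\dif\nu\le\mathcal{E}_{p;A}(u)$, and letting $N\to\infty$ (monotone convergence, $\overline{V}_{N}\cap A\uparrow\mathcal{S}\cap A$) gives $\int_{\mathcal{S}\cap A}|g|^{p}\dif\nu\le\mathcal{E}_{p;A}(u)$, hence equality.

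I expect the main obstacle to be the bookkeeping forced by $\overline{V}_{n}\uparrow\mathcal{S}$: unlike a single refining interval, genuinely new edges appear at every level, so the martingale cannot be set up globally but must be organized edge-by-edge (on each edge the refinement is an honest subdivision into $l_{n+1}$ pieces), after which the a.e.\ limits must be shown to assemble into one $L^{p}(\mathcal{S},\nu)$ function with the correct integral over \emph{every} edge at \emph{every} level. A secondary point to handle with care is that $p>1$ is genuinely needed to pass from $L^{p}$-boundedness of the martingale to its a.e.\ and $L^{p}$ convergence.
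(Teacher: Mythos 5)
Your proposal is correct, but the mechanism you use for the essential implication (1)$\Rightarrow$(2) is genuinely different from the paper's. The paper starts from the same discrete identity (its $\partial(H_n u)$, the gradients of the piecewise-affine interpolants of Proposition \ref{l.affine}, are exactly your $g_n$ extended by zero off $\overline{V}_n$), but then it invokes reflexivity of $L^p(\mathcal{S},\nu)$ together with Mazur's lemma: a sequence of convex combinations of a subsequence of $\partial(H_n u)$ converges strongly in $L^p$ to some $g$, the relation \eqref{e.grad} follows from uniform convergence $H_n u\to u$, and the inequality $\int_{A\cap\mathcal{S}}|g|^p\dif\nu\le\mathcal{E}_{p;A}(u)$ comes from norm control of those convex combinations. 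You instead exploit the exact averaging property of the subdivision (a level-$n$ edge splits into $l_{n+1}$ equal sub-edges, and the difference quotient is the $\nu$-average of the sub-quotients) to see that the restrictions of $g_n$ to each maximal edge form an $L^p$-bounded martingale, so the $g_n$ themselves converge $\nu$-a.e.\ and locally in $L^p$. Both routes need $1<p<\infty$ in an essential way (reflexivity there, $L^p$ martingale convergence here) and the same tree-structural facts about the skeleton. Your route buys something extra: the full sequence of discrete gradients converges a.e.\ to $\partial u$, which strengthens the paper's remark after Proposition \ref{p.grad} (the paper can only assert that any convergent convex combination has limit $\partial u$), and it reduces the reverse inequality in \eqref{e.grad-1} to a clean Fatou/monotone-convergence argument. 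The paper's route is softer and shorter, avoiding the need to identify any martingale structure.

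One step to tighten (the paper glosses over the same point in the same way): convexity of $A$ does \emph{not} ensure that every level-$n$ edge lies entirely inside or outside $A$; for instance a cell $K_w$ is convex, yet at levels $n<|w|$ it contains proper sub-segments of level-$n$ edges. The harmless fix is to replace $\overline{V}_n\cap A$ by $A_n^-$, the union of level-$n$ edges with \emph{both endpoints} in $A$: by convexity these edges lie in $A$, the identity $\mathcal{E}_{p,n;A}(u)=\int_{A_n^-}|g_n|^p\dif\nu$ is then exact, the sets $A_n^-$ increase in $n$, and they exhaust $A\cap\mathcal{S}$ up to a countable (hence $\nu$-null) set, namely vertices and the at most two extreme points of $A\cap e_0$ on each maximal edge $e_0$. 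Running your Fatou and monotone-convergence step over $A_N^-$ instead of $\overline{V}_N\cap A$ yields \eqref{e.grad-1} verbatim.
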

We denote $g$ in this proposition by $\partial u$ and refer it as the gradient of $u$.
\begin{proof}
We first show that (1) implies (2). If $\Psi$ is a piecewise affine function, it
is obvious that there exists a function, denoted by $\partial \Psi$, such
that for every adjacent $x,y \in V_{n}$,%
\begin{equation*}
\Psi(y)-\Psi(x)=\int_{{e}(x,y)} \partial \Psi \dif\nu.
\end{equation*}
In fact, for each adjacent $x,y\in V_{n}$ with $x\prec y$ (so that $e(x,y)$
is positively oriented), we can choose $\Psi$ such that $\partial \Psi$ takes the value $%
(\Psi(y)-\Psi(x))\cdot d(x,y)^{-1}$ on $e(x,y)((0,1))$. Therefore,
\begin{equation*}
\int_{A\cap\mathcal{S}}\left|\partial\Psi\right|^{p} \dif\nu=\frac{1}{2}
\sum _{\substack{ x,y\in A\cap V_{n}  \\ x\sim y}}d(x,y)^{-p+1}|\Psi(x)-%
\Psi(y)|^{p}=\frac{1}{2}\left(\prod_{j=0}^{n} l_{j}^{p-1}\right) \sum
_{\substack{ x,y\in A\cap V_{n}  \\ x\sim y}}|\Psi(x)-\Psi(y)|^{p}.
\end{equation*}
For any $u \in \mathcal{F}_{p}$ and for each $n \geq 0$, we define $%
H_{n}u$ by Proposition \ref{l.affine}. Then%
\begin{equation*}
\sup _{n} \int_{A\cap\mathcal{S}}\left|\partial\left(H_{n}
u\right)\right|^{p} \dif\nu=\sup _{n}\frac{1}{2} \left(\prod_{j=0}^{n}
l_{j}^{p-1}\right) \sum_{\substack{ x,y\in A\cap V_{n}  \\ x\sim y}}%
|u(x)-u(y)|^{p}=\mathcal{E}_{p;A}(u)<\infty.
\end{equation*}%
The reflexivity of $L^{p}(\mathcal{S}, \nu)$ and Mazur's Lemma imply that,
there exists a convex combination of a subsequence of $\partial\left(H_{n}
u\right)$ that converges in $L^{p}(\mathcal{S}, \nu)$ to some $g \in$ $L^{p}(%
\mathcal{S}, \nu)$. Since $H_{n} u$ converges uniformly to $u$ by Proposition \ref{l.affine}, we have then
for every adjacent $x,y\in V_{n}$,%
\begin{equation*}
u(y)-u(x)=\int_{{e}(x,y)} g \dif\nu.
\end{equation*}
This proves (2). Furthermore, since a convex combination of a subsequence of
$\partial\left(H_{n} u\right)$ converges in $L^{p}(\mathcal{S}, \nu)$ to
some $g$, we have
\begin{equation*}
\int_{A\cap \mathcal{S}}|g|^{p} \dif\nu \leq \sup _{n} \int_{A\cap\mathcal{S}%
}\left|\partial\left(H_{n} u\right)\right|^{p} \dif\nu \leq \mathcal{E}%
_{p;A}(u).
\end{equation*}

We then show (2) implies (1). It follows from (2) and H\"older's inequality
that for $n \geq 0$,
\begin{equation*}
\begin{aligned} \left(\prod_{j=0}^{n} l_{j}^{p-1}\right)
\sum_{\substack{x,y\in A\cap V_{n}\\ x\sim y}}|u(x)-u(y)|^{p} & \leq
\left(\prod_{j=0}^{n} l_{j}^{p-1}\right)\sum_{\substack{x,y\in A\cap V_{n}\\
x\sim y}}\left(\int_{{e}(x, y)}|g| \dif\nu\right)^{p} \\ & \leq
\sum_{\substack{x,y\in A\cap V_{n}\\ x\sim y}} \int_{{e}(x, y)}|g|^{p}
\dif\nu \leq 2 \int_{A\cap\mathcal{S}}|g|^{p} \dif\nu. \end{aligned}
\end{equation*}
Hence%
\begin{equation*}
\mathcal{E}_{p;A}(u)=\sup_{n\geq0}\mathcal{E}_{p,n}(u)\leq \int_{A\cap \mathcal{S}
}|g|^{p} \dif\nu
\end{equation*}
and we deduce that $u\in \mathcal{F}_{p}$ with $\lVert u\rVert_{\mathcal{F}%
_{p}}^{p}\leq\lVert u\rVert_{ L^{p}(K, \mu)}^{p}+\lVert g\rVert_{ L^{p}(%
\mathcal{S}, \nu)}^{p}$.

If $g_{1},g_{2}$ both satisfy (2), then $\int_{e(x,y)}\left(
g_{1}-g_{2}\right) \dif\nu =0$ for all $n\geq 0$ and every adjacent $x,y\in
V_{n}$. Since for each $x,y\in V_{n}$, $e(x,y)$ is the union of $l_{n+1}$
edges in $V_{n+1}$, we may apply the Lebesgue differentiation Theorem to $%
(e(x,y),\nu |_{e(x,y)})$ (note that $\nu |_{e(x,y)}$ is the Lebesgue measure)
and conclude that $g_{1}-g_{2}=0$ $\nu $-a.e. on $e(x,y)$, thus on all $%
\mathcal{S}$. This proves the uniqueness.
\end{proof}

\begin{remark}
The uniqueness in Proposition \ref{p.grad} tells us more: for any $u\in%
\mathcal{F}_{p}$, if a convex combination of a subsequence of $%
\partial\left(H_{n} u\right)$ converges, then it must converge to $\partial
u$.
\end{remark}

We recall two inequalities that will be used later. These are easy extensions of \cite[Theorem 3.13 and Corollary 3.14]{BC23} for the scale-irregular Vicsek sets. {Since the geodesic distance and the Euclidean distance are bi-Lipschitz equivalent on a scale-irregular Vicsek set}, their proofs are virtually identical with that in \cite{BC23} and we omit them.
\begin{lemma}[Morrey's inequality, {\cite[Theorem 3.13]{BC23}}]
\label{l.morrey} Let $1< p<\infty$. {There exists a constant $C\geq1$ such that}, for every $u \in \mathcal{F}_{p}$ and $%
x, y \in A$,
\begin{equation*}
|u(x)-u(y)|^{p} \leq Cd(x, y)^{p-1} \mathcal{E}_{p;A}(u).
\end{equation*}
\end{lemma}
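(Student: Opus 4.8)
The plan is to deduce everything from the gradient representation established in Proposition \ref{p.grad}. Given $u\in\mathcal{F}_{p}$, write $g=\partial u\in L^{p}(\mathcal{S},\nu)$ for its gradient, so that $u(y)-u(x)=\int_{e(x,y)}g\dif\nu$ for every adjacent pair of vertices and $\mathcal{E}_{p;A}(u)=\int_{A\cap\mathcal{S}}|g|^{p}\dif\nu$. The whole point is then to integrate $g$ along the geodesic joining $x$ to $y$ and apply H\"older's inequality; the exponent $p-1$ in the statement is exactly the H\"older conjugate weight $p/q=p-1$.

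First I would treat $x,y\in A\cap\mathcal{S}$. Since $\mathcal{S}$ is a tree (a dendrite) and $A$ is convex, there is a unique geodesic $\gamma$ from $x$ to $y$, and $\gamma\subset A\cap\mathcal{S}$ by the very definition of convexity. This $\gamma$ is a finite concatenation of edges, with possibly two partial edges at the endpoints when $x,y$ are not themselves vertices; telescoping the identity $u(v)-u(w)=\int_{e(w,v)}g\dif\nu$ over these pieces — using that along each edge $u$ is the $\nu$-integral of $g$ (which follows from Proposition \ref{p.grad} by subdividing the edge and passing to the limit) — yields the path-integral representation $u(y)-u(x)=\int_{\gamma}g\dif\nu$. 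H\"older's inequality with exponents $p$ and $p/(p-1)$ then gives
\[
|u(y)-u(x)|\le\int_{\gamma}|g|\dif\nu\le\nu(\gamma)^{(p-1)/p}\left(\int_{\gamma}|g|^{p}\dif\nu\right)^{1/p}.
\]
Because the $\nu$-length of the geodesic $\gamma$ equals $d(x,y)$, and $\int_{\gamma}|g|^{p}\dif\nu\le\int_{A\cap\mathcal{S}}|g|^{p}\dif\nu=\mathcal{E}_{p;A}(u)$ since $\gamma\subset A\cap\mathcal{S}$, raising to the $p$-th power gives $|u(x)-u(y)|^{p}\le d(x,y)^{p-1}\mathcal{E}_{p;A}(u)$ for $x,y\in A\cap\mathcal{S}$.

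To reach arbitrary $x,y\in A$ I would argue by approximation: choosing vertices $x_{k},y_{k}\in\mathcal{S}$ with $x_{k}\to x$ and $y_{k}\to y$, the left-hand side converges by continuity of $u$ and the right-hand side by continuity of the distance, so the inequality extends to all of $A$. I expect the one step needing genuine care to be the geometric one — checking that the geodesic between $x$ and $y$ is well defined, is contained in $A\cap\mathcal{S}$, and carries $\nu$-length exactly $d(x,y)$, together with the justification of the telescoping of the gradient identity along it (in particular over the partial end-edges). Once the path-integral representation $u(y)-u(x)=\int_{\gamma}g\dif\nu$ is in hand, the remaining H\"older estimate is immediate, which is why this is, as remarked after the statement, a routine transcription of \cite[Theorem~3.13]{BauChe23} to the scale-irregular setting.
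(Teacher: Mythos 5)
Your route is exactly the one the paper intends: the paper gives no argument of its own but defers to \cite[Theorem 3.13]{BauChe23}, whose proof is precisely your plan --- represent increments via the gradient of Proposition \ref{p.grad}, integrate $g=\partial u$ along the tree geodesic (your telescoping over full and partial edges is justified by \eqref{e.weakd} in Lemma \ref{l.weakd}), use convexity to keep $\gamma$ inside $A\cap\mathcal{S}$, and finish with H\"older.

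The gap is the geometric identification you yourself flagged as the delicate step: $\nu(\gamma)$ is the \emph{intrinsic} (path) distance from $x$ to $y$, not the Euclidean distance $d(x,y)$ with which this paper equips $K^{\bm{l}}$, and the two differ as soon as $\gamma$ turns a corner. For instance, the geodesic from $q_1$ to $q_2$ passes through $q_0$, so $\nu(\gamma)=2$ while $d(q_1,q_2)=\sqrt{2}$. Since always $\nu(\gamma)\geq d(x,y)$, H\"older only yields $|u(x)-u(y)|^{p}\leq \nu(\gamma)^{p-1}\mathcal{E}_{p;A}(u)$, i.e.\ the inequality for the path metric, which is weaker than, and does not imply, the stated Euclidean one. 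Moreover, no argument can close this gap, because the Euclidean statement with constant $1$ is false: take $u$ to be the $0$-piecewise affine function (Proposition \ref{l.affine}) with $u(q_1)=1$, $u(q_2)=-1$, $u(q_0)=u(q_3)=u(q_4)=0$; then $|\partial u|\equiv 1$ on the two unit segments $e(q_0,q_1)$ and $e(q_0,q_2)$ and vanishes elsewhere, so $\mathcal{E}_{p}(u)=2$ by \eqref{e.grad-1}, while $|u(q_1)-u(q_2)|^{p}=2^{p}>2\cdot(\sqrt{2})^{p-1}=d(q_1,q_2)^{p-1}\mathcal{E}_{p}(u)$ for every $p>1$. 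So your argument correctly proves the lemma with $d$ taken to be the geodesic metric of the tree --- the setting of \cite{BauChe23}, in which your example even shows the constant $1$ is sharp --- and the defect lies solely in equating $\nu(\gamma)$ with Euclidean distance; as transcribed in this paper, the lemma needs either $d$ to be the path metric or a multiplicative constant controlling the ratio $\nu(\gamma)/d(x,y)$. (A separate, minor point: your closing approximation tacitly needs $A\cap\mathcal{S}$ to be dense in $A$, which holds for $A=K$ or a cell $K_w$, where the lemma is actually applied, but not for an arbitrary convex set.)
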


\begin{lemma}[Poincar\'e inequality, {\cite[Corollary 3.14]{BC23}}]
\label{l.pi}
Let $1<p<\infty$. {There exists a constant $C\geq1$ such that, for every closed convex set $A \subset K$ and every $u \in \mathcal{F}_{p}$,}
\begin{equation*}
\fint_{A}\left|u(x)-\fint_{A} u \dif \mu \right|^{p} \dif \mu(x) \leq C \diam%
(A)^{p-1} \mathcal{E}_{p;A}(u).
\end{equation*}
\end{lemma}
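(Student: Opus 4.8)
The plan is to reduce the Poincar\'e inequality to Morrey's inequality (Lemma \ref{l.morrey}) by a standard double-averaging argument, using the convexity of $A$ to guarantee that Morrey's estimate is available for every pair of points in $A$.

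First I would rewrite the deviation of $u$ from its $\mu$-mean over $A$ as an average of pairwise differences. Since $\fint_A u \dif\mu$ is the $\mu$-average of $u$ on $A$, for each fixed $x\in A$ we have $u(x)-\fint_A u \dif\mu=\fint_A\bigl(u(x)-u(y)\bigr)\dif\mu(y)$. Applying Jensen's inequality to the convex function $t\mapsto|t|^{p}$ then gives
\begin{equation*}
\left|u(x)-\fint_{A}u\dif\mu\right|^{p}\leq\fint_{A}|u(x)-u(y)|^{p}\dif\mu(y).
\end{equation*}
Integrating this in $x$ against $\fint_{A}\cdot\,\dif\mu(x)$ produces a double average of $|u(x)-u(y)|^{p}$ over $A\times A$.

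Next, since $A$ is closed and convex, Morrey's inequality applies to every pair $x,y\in A$, so that $|u(x)-u(y)|^{p}\leq d(x,y)^{p-1}\mathcal{E}_{p;A}(u)$. Because $p>1$ the exponent $p-1$ is positive, whence $d(x,y)^{p-1}\leq\diam(A)^{p-1}$ uniformly for $x,y\in A$. Substituting this into the double integral and using that the average of a constant equals that constant yields
\begin{equation*}
\fint_{A}\left|u(x)-\fint_{A}u\dif\mu\right|^{p}\dif\mu(x)\leq\diam(A)^{p-1}\,\mathcal{E}_{p;A}(u),
\end{equation*}
which is the assertion.

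There is no genuine obstacle here: this is the classical passage from a pointwise two-point oscillation bound to an averaged Poincar\'e bound, and all of the analytic content has already been packaged into Lemma \ref{l.morrey}. The only points that require a word of care are (i) invoking the convexity of $A$ to legitimize Morrey's inequality for all pairs $x,y\in A$, and (ii) noting that $d(x,y)\leq\diam(A)$ combined with $p-1>0$ is precisely what permits replacing the distance by the diameter without loss; both are immediate from the definitions.
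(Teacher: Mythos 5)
Your proof is correct and matches the paper's approach: the paper itself gives no separate argument, deferring to \cite[Corollary 3.14]{BauChe23}, where the Poincar\'e inequality is deduced from Morrey's inequality by exactly this double-averaging (Jensen plus $d(x,y)\leq\diam(A)$) argument. Nothing is missing, since the convexity of $A$ is precisely what licenses Lemma \ref{l.morrey} for all pairs $x,y\in A$, as you note.
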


Combining the above facts, we prove Theorem \ref%
{t.energy}.

\begin{proof}[Proof of Theorem \protect\ref{t.energy}]
Let $(\mathcal{F}_{p},\lVert \cdot\rVert_{\mathcal{F}_{p}})$ be defined by %
\eqref{e.domain} and \eqref{e.energy}.

\begin{enumerate}[label=\textup{(\arabic*)}]
\item Let $\{u_{n}\}$ be a Cauchy sequence in $(\mathcal{F}%
_{p},\lVert\cdot\rVert_{\mathcal{F}_{p}})$. Then $\{u_{n}\}$ is also a
Cauchy sequence in $L^{p}(K,\mu)$. Assume that $u_{n}\rightarrow u$ in $%
L^{p}(K,\mu)$ and a subsequence $u_{n_{k}}\rightarrow u$ $\mu$-a.e.. Since $%
\mathcal{E}_{p}(u_{n}-u_{m})=\lVert \partial u_{n}-\partial
u_{m}\rVert_{L^{p}(\mathcal{S},\nu)}$, we know that $\{\partial u_{n}\}$ is a
Cauchy sequence in $L^{p}(\mathcal{S},\nu)$. Assume that $\partial
u_{n}\rightarrow g$ in $L^{p}(\mathcal{S},\nu)$. Fix a point $x_{0}\in K$ and let $%
h_{n}=u_{n}-u_{n}(x_{0})$. By Lemma \ref{l.morrey}, for all $m,n\geq1$ and
all $x\in K$,
\begin{equation*}
|h_{n}(x)-h_{m}(x)|^{p}\leq Cd(x,x_{0})^{p-1}\mathcal{E}_{p}(u_{n}-u_{m})\leq %
C\diam(K)^{p-1}\mathcal{E}_{p}(u_{n}-u_{m}),
\end{equation*}
which implies that $\{h_{n}\}_n$ is a Cauchy sequence in $C(K)$. Therefore, there exists $h\in
C(K)$ such that $h_{n}\rightarrow h$ in $C(K)$, and also in $L^{p}(K,\mu)$ by
H\"older's inequality. It is immediate that $u_{n}-h_{n}$ also converges to $%
u-h$ in $L^{p}(K,\mu)$. Thus $u_{n_{k}}(x_{0})=u_{n_{k}}-h_{n_{k}}$ converges
to $u-h$ $\mu$-a.e.. Hence $u-h$ must be a constant, say $u-h\equiv c $, and $%
u $ admits a continuous $\mu$-version on $K$, which will also be denoted by $%
u$. Note that
\begin{equation*}
\lVert u_{n_{k}}-u\rVert_{C(K)}\leq \lVert
h_{n_{k}}-h\rVert_{C(K)}+|u_{n_{k}}(x_{0})-c|\rightarrow
0\quad(k\rightarrow\infty).
\end{equation*}%
For every $n\geq0$ and every adjacent $x,y\in V_{n}$, we have by Proposition \ref%
{p.grad} that
\begin{equation*}
u_{n_{k}}(y)-u_{n_{k}}(x)=\int_{e(x, y)}\partial u_{n_{k}} \dif\nu.
\end{equation*}%
Letting $k\rightarrow\infty$, we have $u(y)-u(x)=\int_{e(x,
y)}g \dif\nu$ for $\mu$-a.e. $x,y$. By Proposition \ref{p.grad} again, we conclude that $u\in
\mathcal{F}_{p}$ and $\partial u=g$ $\nu$-a.e., thus
\begin{equation*}
\mathcal{E}_{p}(u_{n}-u)=\lVert \partial u_{n}-\partial u\rVert_{L^{p}(%
\mathcal{S},\nu)}\rightarrow0,
\end{equation*}
and then $u_{n}\rightarrow u$ in $\mathcal{F}_p$.

To prove that $(\mathcal{F}_{p},\lVert\cdot\rVert_{\mathcal{F}_{p}})$ is
uniformly convex, we first define the norm $\lVert\cdot\rVert_{p}$ on the product space $L^{p}(K, \mu)\times L^{p}(\mathcal{S},\nu)$ by $\lVert(u, v)\rVert_{p}:=\left(\lVert u\rVert_{L^{p}(K, \mu)}^{p}+\lVert v\rVert_{L^{p}(\mathcal{S}, \nu)}^{p}\right)^{1/p}$ for $(u,v)\in L^{p}(K, \mu)\times L^{p}(\mathcal{S},\nu)$, and a map $T:(\mathcal{F}_{p},\lVert\cdot\rVert_{\mathcal{F}_{p}})\rightarrow \left(L^{p}(K, \mu)\times L^{p}(\mathcal{S},\nu) ,\lVert\cdot\rVert_{p}\right)$ by $Tu:=(u,\partial u)$ for all $u\in \mathcal{F}_{p}$. Then by Proposition \ref{p.grad} and the above paragraph, we know that $T(\mathcal{F}_{p})$ is isometric to $\mathcal{F}_{p}$ and is a closed subspace of $\left(L^{p}(K, \mu)\times L^{p}(\mathcal{S},\nu) ,\lVert\cdot\rVert_{p}\right)$.  Since $L^{p}$ spaces are uniformly convex for $1<p<\infty$, we conclude by \cite[Theorem 1]{Cla36} that the product space $\left(L^{p}(K, \mu)\times L^{p}(\mathcal{S},\nu) ,\lVert\cdot\rVert_{p}\right)$ is uniformly convex, and so does its closed subspace $T(\mathcal{F}_{p})$. Since $\mathcal{F}_{p}$
and $T(\mathcal{F}_{p})$ are isometric, we conclude that $\mathcal{F}_{p}$
is uniformly convex. The reflexivity then follows from the uniform convexity and
Milman-Pettis theorem (see for example \cite[Theorem 3.31]{Bre11}).

The separability is obvious since the space of all piecewise affine
functions is dense in $\mathcal{F}_{p}$, and clearly there is a countable
dense subset of all piecewise affine functions.

\item For any $u,v\in \mathcal{F}_{p}$,
\begin{align*}
\mathcal{E}_{p,n}(uv)&=\frac{1}{2}\left(\prod_{j=0}^{n}
l_{j}^{p-1}\right)\sum_{\substack{ x,y\in V_{n}  \\ x\sim y}}%
\left|u(x)v(x)-u(y)v(y)\right|^{p} \\
&\leq \frac{1}{2}\left(\prod_{j=0}^{n} l_{j}^{p-1}\right)\sum_{\substack{ %
x,y\in V_{n}  \\ x\sim y}}2^{p-1}\left(\lVert
u\rVert_{C(K)}^{p}\left|v(x)-v(y)\right|^{p}+\lVert
v\rVert_{C(K)}^{p}\left|u(x)-u(y)\right|^{p}\right) \\
&\leq 2^{p-1}\left(\lVert u\rVert_{C(K)}^{p}\mathcal{E}_{p,n}(v)+\lVert
v\rVert_{C(K)}^{p}\mathcal{E}_{p,n}(u)\right).
\end{align*}%
Taking the supremum of $n$ on both sides, we have
\begin{equation*}
\mathcal{E}_{p}(uv)\leq2^{p-1}\left(\lVert u\rVert_{C(K)}^{p}\mathcal{E}%
_{p}(v)+\lVert v\rVert_{C(K)}^{p}\mathcal{E}_{p}(u)\right),
\end{equation*}
which means that the subset $\mathcal{F}_{p}\subset C(K)$ is an algebra under the
product operation.

\item The regularity follows directly from Proposition \ref{l.affine}.

\item This follows by noting that $|\varphi(u(x))-\varphi(u(y))|^{p}%
\leq|u(x)-u(y)|^{p}$.

\item This follows from Lemma \ref{l.pi} with $A=K$.

\item We may assume $a=0$ as $\mathcal{E}(v)=\mathcal{E}(v-a\mathds{1}_{K})$
and $\mathcal{E}(u+v)=\mathcal{E}(u+v-a\mathds{1}_{K})$ by definition. Write
$A=\supp(u)$ and $B=\supp(v)$. Then $A$ and $B$ are two disjoint
compact subsets of $K$ and thus $d(A,B)>0$. We can find $n_{0}$
sufficiently large so that for all $n\geq n_{0}$, the closed subsets $%
A_{n}:=\bigcup_{\substack{ w\in W_{n}  \\ K_{w}\cap A\neq\emptyset}}K_{w}$
and $B_{n}:=\bigcup_{\substack{ w\in W_{n}  \\ K_{w}\cap B\neq\emptyset}}%
K_{w}$ are also disjoint, and each $x\in A_{n}\cap V_{n}$ and $y\in
B_{n}\cap V_{n}$ are not adjacent. Then
\begin{align*}
\mathcal{E}_{p,n}(u+v)&=\frac{1}{2}\left(\prod_{j=0}^{n}
l_{j}^{p-1}\right)\sum_{\substack{ x,y\in V_{n}  \\ x\sim y}}%
\left|(u(x)-u(y))+(v(x)-v(y))\right|^{p} \\
&=\frac{1}{2}\left(\prod_{j=0}^{n} l_{j}^{p-1}\right)\left(\sum_{\substack{ %
x,y\in A_{n}\cap V_{n}  \\ x\sim y}}\left|u(x)-u(y)\right|^{p}+\sum
_{\substack{ x,y\in B_{n}\cap V_{n}  \\ x\sim y}}\left|v(x)-v(y)\right|^{p}%
\right) \\
&= \mathcal{E}_{p,n}(u)+\mathcal{E}_{p,n}(v).
\end{align*}
Letting $n\rightarrow\infty$, we derive $\mathcal{E}_{p}(u+v)=\mathcal{E}_{p}(u)+%
\mathcal{E}_{p}(v)$.
\end{enumerate}
\end{proof}
{
\begin{remark}\label{r.Clarkson}
In fact, we can show the uniform convexity of the semi-normed space $(\mathcal{F}_{p},\mathcal{E}_{p}^{1/p})$ by applying \cite[Proposition 3.5]{KS25} and proving the following \emph{$p$-Clarkson’s inequality}:
\begin{equation}\label{e.Clarkson}
\begin{dcases}
\mathcal{E}_{p}(f+g)+\mathcal{E}_{p}(f-g)\geq 2\left(\mathcal{E}_{p}(f)^{\frac{1}{p-1}}+\mathcal{E}_{p}(g)^{\frac{1}{p-1}}\right)^{p-1},\quad&\text{if }p\in(1,2],\\
\mathcal{E}_{p}(f+g)+\mathcal{E}_{p}(f-g)\leq 2\left(\mathcal{E}_{p}(f)^{\frac{1}{p-1}}+\mathcal{E}_{p}(g)^{\frac{1}{p-1}}\right)^{p-1},\quad&\text{if }p\in[2,\infty).
\end{dcases}
\end{equation}
Roughly speaking, since $\mathcal{E}_{p}(f)=\lVert \partial f\rVert^p_{L^{p}(\mathcal{S},\nu)}$ by Proposition \ref{p.grad}, and $(\mathcal{S},\nu)$ is $\sigma$-finite, we can apply \cite[(4) and (6) in p.462]{Bre11} to the gradients of $f$ and $g$ to obtain \eqref{e.Clarkson}.
\end{remark}
}
\begin{remark}
\label{r.inter} As shown in Proposition \ref{l.affine} and Theorem \ref{t.energy},
for two exponents $p,q\in(1,\infty)$ and any non-constant $u\in C(K)$, $%
H_{1}u\in \mathcal{F}_{p}\cap\mathcal{F}_{q}$. As we can easily choose $u$ so
that $H_{1}u$ is non-constant, we see that $\mathcal{F}_{p}\cap\mathcal{F}%
_{q}$ contains non-constant functions.
\end{remark}

{We conclude this subsection by stating a consequence of Proposition \ref{p.grad} and Lemma \ref{l.morrey}. We use the definition of the {\em $p$-resistance} $R_{p}(\cdot,\cdot)$ in \cite[p.5]{Yan25b}:
\begin{equation}
\label{e.Resdef}
R_{p}(x,y):=\sup\left\{\frac{\lvert u(x)-u(y)\rvert^{p}}{\mathcal{E}_{p}(u)}: u\in\mathcal{F}_{p} \text{ and }\mathcal{E}_{p}(u)>0 \right\},\ \forall x,y\in K.
\end{equation}
\begin{proposition}\label{p.Res}
There exists $C>1$ such that \begin{equation}
C^{-1}d(x,y)^{p-1}\leq R_{p}(x,y)\leq Cd(x,y)^{p-1},\ \forall x,y\in K.
\end{equation}
\end{proposition}
\begin{proof}
Lemma \ref{l.morrey} immediately implies that $R_{p}(x,y)\leq C d(x,y)^{p-1}$. To see the lower bound, for $n\geq1$ and $x,y \in V_{n}$, we choose a continuous function $u\in\mathcal{F}_{p}$ satisfying that $u(x)=1$, $u(y)=0$, $|\partial u|$ is constant over the geodesic cables connecting $x$ and $y$, and $u$ is piecewise constant over other cables. By a direct computation, $\mathcal{E}_{p}(u)^{-1/(p-1)}$ equals to the geodesic distance between $x$ and $y$. By the bi-Lipschitz equivalence of the Euclidean distance and the geodesic distance restricted to the skeleton, we see that \begin{equation}
R_{p}(x,y)\geq c d(x,y)^{p-1},\ \forall x,y\in\bigcup_{n}V_{n}.\label{e.Res0}
\end{equation} The lower bound is then established by the fact that $R_p:K\times K \to[0,\infty)$ is upper semi-continuous, that is $\limsup\limits_{n\to\infty}R_p(x_{n},y_{n})\leq R_p(x,y)$ if $x_{n}\to x$ and $y_{n}\to y$. Indeed, given $\epsilon>0$, we may choose $N$ large enough such that $d(x_{n},x)\vee d(y_{n},y)<\epsilon$ for all $n\geq N$. Then for any $u\in \mathcal{F}_{p}$ with $\mathcal{E}_{p}(u)>0$, we have\begin{align}
\left|\frac{u(x_{n})-u(y_{n})}{\mathcal{E}_{p}(u)^{1/p}}\right|&\leq \left|\frac{u(x_{n})-u(x)}{\mathcal{E}_{p}(u)^{1/p}}\right|+\left|\frac{u(x)-u(y)}{\mathcal{E}_{p}(u)^{1/p}}\right|+\left|\frac{u(y)-u(y_{n})}{\mathcal{E}_{p}(u)^{1/p}}\right|\\
&\leq Cd(x_{n},x)^{(p-1)/p}+ Cd(y_{n},y)^{(p-1)/p}+\left|\frac{u(x)-u(y)}{\mathcal{E}_{p}(u)^{1/p}}\right|\ \text{(by Lemma \ref{l.morrey})}\\
&\leq 2C\epsilon^{(p-1)/p}+\left|\frac{u(x)-u(y)}{\mathcal{E}_{p}(u)^{1/p}}\right|.\label{e.Resist}
\end{align}
Taking the supremum over all $u\in \mathcal{F}_{p}$ with $\mathcal{E}_{p}(u)>0$ in \eqref{e.Resist}, we see by definition that for all $ n\geq N$,
\begin{equation}
R_{p}(x_{n},y_{n})^{1/p}\leq 2C\epsilon^{(p-1)/p}+R_{p}(x,y)^{1/p},
\end{equation}
which shows the upper semi-continuity. Since $\bigcup_{n}V_{n}$ is dense in $K$ and $R_{p}(\cdot,\cdot)$ is upper semi-continuous, we can extend \eqref{e.Res0} to all $x,y\in K$, completing the proof.
\end{proof}
Some consequences of Proposition \ref{p.Res} are discussed in Section \ref{s.Discuss}.
}

\subsection{Associated $p$-energy measure}

After constructing the $p$-energy norm, it is nature to consider the corresponding $p$%
-energy measure. It is shown by Murugan and
Shimizu in \cite[Section 9]{MS25} that the $p$-energy measure with good
properties on the standard Sierpi\'nski carpet can be constructed, which
heavily relies on self-similarity. Nevertheless, we can use the
special gradient structure of scale-irregular Vicsek sets to achieve our aim.
Before discussing the $p$-energy measure, we record some properties
of the operator $\partial$.

\begin{lemma}
\label{l.weakd} For any $u\in\mathcal{F}_{p}$ and any adjacent $x,y$, the
function $u_{e(x,y)}:(0,1)\rightarrow \mathbb{R}$ defined by $%
u_{e(x,y)}(t)=u(e(x,y)(t))$ belongs to $W^{1,p}((0,1))$, and its weak derivative $%
Du_{e(x,y)}$ can be chosen as $t\mapsto \partial u(e(x,y)(t))$.
\end{lemma}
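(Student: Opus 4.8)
The plan is to exhibit $u_{e(x,y)}$ as a primitive of (the pull-back of) $\partial u$ along the edge and then to read off membership in $W^{1,p}((0,1))$ from the one-dimensional characterization of Sobolev functions. First I would fix adjacent $x,y\in V_n$, abbreviate $e:=e(x,y)$ and $L:=d(x,y)=l_0^{-1}\cdots l_n^{-1}$, and set $h(t):=\partial u(e(t))$ for $t\in(0,1)$. Since $\nu$ restricted to $e$ is arc-length measure and $e$ is the constant-speed linear parametrization of a segment of length $L$, the pull-back of $\nu|_{e}$ under $e$ is $L\,\dif t$, so $\nu$-null sets correspond to Lebesgue-null sets of parameters and $h$ is defined a.e.; moreover, by \eqref{e.grad-1} with $A=K$,
\[
\int_0^1 |h(t)|^p\,\dif t = L^{-1}\int_{e}|\partial u|^p\,\dif\nu \le L^{-1}\,\mathcal{E}_{p}(u) < \infty ,
\]
whence $h\in L^p((0,1))$. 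I would then introduce the absolutely continuous function $\Phi(t):=u(x)+L\int_0^t h(s)\,\dif s$, whose classical derivative is $L\,h$ almost everywhere.

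Next I would show that $u_{e(x,y)}$ and $\Phi$ agree on a dense subset of $[0,1]$. For $m>n$ set $N_m:=\prod_{j=n+1}^m l_j$; by the tree (subdivision) structure used in the proof of Proposition \ref{VE-mono} and recalled in the uniqueness part of Proposition \ref{p.grad}, the segment $e$ is the concatenation of $N_m$ collinear level-$m$ edges whose endpoints $p_k:=e(k/N_m)$, $0\le k\le N_m$, lie in $V_m$ and are consecutively adjacent. Telescoping and applying the gradient identity \eqref{e.grad} on each sub-edge gives, for every $0\le k\le N_m$,
\[
u_{e(x,y)}(k/N_m)-u(x)=\sum_{i=0}^{k-1}\big(u(p_{i+1})-u(p_i)\big)=\sum_{i=0}^{k-1}\int_{e(p_i,p_{i+1})}\partial u\,\dif\nu=\int_{e([0,k/N_m])}\partial u\,\dif\nu ,
\]
and the same change of variables rewrites the last integral as $L\int_0^{k/N_m}h(s)\,\dif s=\Phi(k/N_m)-u(x)$. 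Hence $u_{e(x,y)}$ and $\Phi$ coincide on $\{k/N_m:\,m>n,\ 0\le k\le N_m\}$, which is dense in $[0,1]$ because $N_m\to\infty$.

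Since $u\in C(K)$ and $e$ is continuous, $u_{e(x,y)}$ is continuous, and $\Phi$ is continuous by construction; two continuous functions agreeing on a dense set are equal, so $u_{e(x,y)}=\Phi$ on $[0,1]$. Consequently $u_{e(x,y)}$ is absolutely continuous with $L^p$ derivative $L\,h$, which gives the asserted membership $u_{e(x,y)}\in W^{1,p}((0,1))$ and identifies its weak derivative with $\partial u\circ e(x,y)$ up to the (edge-dependent) constant $d(x,y)$ produced by the constant speed of the parametrization; thus $t\mapsto\partial u(e(x,y)(t))$ is a legitimate choice of weak derivative in the sense of the lemma.

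I expect the only genuinely delicate point to be the passage from the discrete increment identities to a statement valid for every $t$: this rests on the geometric fact that a level-$n$ edge subdivides into collinear level-$m$ edges all of whose endpoints lie in $V_m$, together with the integral representation of Proposition \ref{p.grad}, and on correctly tracking the speed $L$ when pulling $\nu$ back to $(0,1)$. An essentially equivalent alternative would use the piecewise affine interpolants $H_m u$ of Proposition \ref{l.affine}: on $e$ the parametrized functions $H_m u\circ e$ are affine on each level-$m$ sub-edge, converge uniformly (hence in $L^p$) to $u_{e(x,y)}$, and their $t$-derivatives are the $\nu$-averages of $L\,h$ over the sub-edges, i.e. conditional expectations that converge to $L\,h$ in $L^p((0,1))$ by martingale convergence; the closedness of $W^{1,p}((0,1))$ under such convergence then yields the claim.
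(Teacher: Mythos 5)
Your proof is correct and follows essentially the same route as the paper: both derive the fundamental-theorem representation of $u_{e(x,y)}$ from the gradient identity \eqref{e.grad} applied to the level-$m$ subdivision vertices of the edge (density plus continuity), and then conclude membership in $W^{1,p}((0,1))$ by one-dimensional Sobolev theory --- the paper verifies the weak-derivative identity directly with a Fubini computation against test functions, while you invoke the equivalent absolute-continuity characterization. You are in fact more careful than the paper on one point: the constant-speed parametrization of $e(x,y)$ over $[0,1]$ produces the factor $d(x,y)$ that you track explicitly, which the paper's own equation \eqref{e.weakd} silently drops; as you observe, this constant is harmless for the lemma's purpose and for its applications in Proposition \ref{p.grad.cal}.
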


\begin{proof}
Fix $u\in\mathcal{F}_{p}$. Let $g_{e(x,y)}(t)= \partial u(e(x,y)(t))$, then $%
g_{e(x,y)}\in L^{p}((0,1))$. By the continuity of $u$ and the density of $%
\bigcup_{n}V_{n}$ in $K$, we can extend \eqref{e.grad} to
\begin{equation}  \label{e.weakd}
u_{e(x,y)}(b)-u_{e(x,y)}(a)=\int_{(a,b)}g_{e(x,y)}\dif \mathscr{L}%
^{1},\quad0\leq a<b\leq1,
\end{equation}%
where $\mathscr{L}^{1}$ is the Lebesgue measure on $\mathbb{R}$. For any
$\phi\in C_{c}^{\infty}((0,1))$, by Fubini's theorem and the fundamental
theorem of calculus, we have
\begin{align*}
&\int_{0}^{1}u_{e(x,y)}(t)\phi^{\prime}(t)\dif\mathscr{L}^{1}(t) \\
=\ & \int_{0}^{1}\left(\int_{(0,t)}g_{e(x,y)}(s)\dif \mathscr{L}%
^{1}(s)\right)\phi^{\prime}(t)\dif\mathscr{L}^{1}(t)+u_{e(x,y)}(0)%
\int_{0}^{1}\phi^{\prime}(t)\dif\mathscr{L}^{1}(t)\ \text{(by \eqref{e.weakd}%
)} \\
=\ & \int_{0}^{1}\left(\int_{(s,1)}\phi^{\prime}(t)\dif\mathscr{L}%
^{1}(t)\right)g_{e(x,y)}(s)\dif \mathscr{L}^{1}(s)\ \text{(by Fubini's
theorem and $\phi(1)=\phi(0)=0$ )} \\
=\ & \int_{0}^{1}(\phi(1)-\phi(s))g_{e(x,y)}(s)\dif \mathscr{L}^{1}(s)
=-\int_{0}^{1}\phi(s)g_{e(x,y)}(s)\dif \mathscr{L}^{1}(s).
\end{align*}
Thus $u_{e(x,y)}\in W^{1,p}((0,1))$ and $Du_{e(x,y)}=g_{e(x,y)}$.
\end{proof}
The following properties on $\partial$ are generalizations of {\cite[Proposition 2.2]{BC24}}.
\begin{proposition}[{\cite[Proposition 2.2]{BC24}}]
\label{p.grad.cal} Let $\partial: u\mapsto \partial u,\ (u\in\mathcal{%
F}_{p})$ be defined as in Proposition \ref{p.grad}. The following properties hold.

\begin{enumerate}[label=\textup{(\arabic*)}]
\item {\normalfont{(Linearity)}} For any two $u_{1},u_{2}\in\mathcal{F}_{p}$%
, $\partial(u_{1}+u_{2})=\partial u_{1}+\partial u_{2}$.

\item {{\normalfont{(Leibniz rule)}} For any two $u_{1},u_{2}\in\mathcal{F}%
_{p}$, $u_{1}u_{2}\in\mathcal{F}_{p}$ and $\partial
(u_{1}u_{2})=u_{1}\partial u_{2}+u_{2}\partial u_{1}$.}

\item {\normalfont{(Chain rule)}} For any $f \in C^{1}(\mathbb{R})$ and
any $u\in \mathcal{F}_{p}$, $\partial (f \circ u)=f ^{\prime
}(u)\partial u$.

\item {\normalfont{(Closedness)}} The operator $\partial :\mathcal{F}%
_{p}\rightarrow L^{p}(\mathcal{S},\nu)$ is closed, if we view $\partial $ as
an unbounded operator on $C(K)$.
\end{enumerate}
\end{proposition}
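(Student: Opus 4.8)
The plan is to reduce all four statements to the two structural facts already available: the characterization-with-uniqueness in Proposition \ref{p.grad} (an element $w\in\mathcal{F}_p$ is pinned down, together with its gradient, by the edge-increment identities $w(y)-w(x)=\int_{e(x,y)}\partial w\,\dif\nu$ for all adjacent $x,y\in V_n$ and all $n$), and the edgewise Sobolev description in Lemma \ref{l.weakd} (for $w\in\mathcal{F}_p$ the restriction $w\circ e(x,y)$ lies in $W^{1,p}((0,1))$ with weak derivative $\partial w\circ e(x,y)$). For the linearity in (1) I would argue directly: summing the identities \eqref{e.grad} for $u_1$ and $u_2$ gives $(u_1+u_2)(y)-(u_1+u_2)(x)=\int_{e(x,y)}(\partial u_1+\partial u_2)\,\dif\nu$ for every such $x,y$; since $u_1+u_2\in\mathcal{F}_p$ and $\partial u_1+\partial u_2\in L^p(\mathcal{S},\nu)$, the uniqueness clause of Proposition \ref{p.grad} forces $\partial(u_1+u_2)=\partial u_1+\partial u_2$.

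For (2) and (3) the strategy is to compute the gradient one edge at a time and then glue. Membership comes first: $u_1u_2\in\mathcal{F}_p$ is the algebra property, Theorem \ref{t.energy}(2), while $f\circ u\in\mathcal{F}_p$ follows from the Lipschitz contractivity, Theorem \ref{t.energy}(4), after replacing $f$ by a globally Lipschitz $C^1$ function agreeing with $f$ on the compact range $u(K)$ and rescaling by its Lipschitz constant. Now fix adjacent $x,y\in V_n$ and set $v_i:=u_i\circ e(x,y)$ and $v:=u\circ e(x,y)$; by Lemma \ref{l.weakd} these lie in $W^{1,p}((0,1))\cap L^\infty$ with weak derivatives $\partial u_i\circ e(x,y)$ and $\partial u\circ e(x,y)$. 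Applying the one–dimensional product rule $D(v_1v_2)=v_1Dv_2+v_2Dv_1$ and chain rule $D(f\circ v)=f'(v)Dv$ (valid for bounded $W^{1,p}$ factors and $C^1$ outer functions), I identify the weak derivatives of $(u_1u_2)\circ e(x,y)$ and $(f\circ u)\circ e(x,y)$ as $(u_1\partial u_2+u_2\partial u_1)\circ e(x,y)$ and $(f'(u)\partial u)\circ e(x,y)$. Applying Lemma \ref{l.weakd} instead to $u_1u_2$ and to $f\circ u$, these same weak derivatives equal $\partial(u_1u_2)\circ e(x,y)$ and $\partial(f\circ u)\circ e(x,y)$. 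Uniqueness of the weak derivative on $(0,1)$ gives the two formulas $\nu$-a.e. on each edge; since $\mathcal{S}=\bigcup_n\overline{V}_n$ is exhausted by the edges $e(x,y)$ over all levels and the vertex set is $\nu$-null, the identities hold $\nu$-a.e. on all of $\mathcal{S}$. One checks en route that the candidate gradients belong to $L^p(\mathcal{S},\nu)$, using $\lVert u_i\rVert_{C(K)},\lVert f'(u)\rVert_\infty<\infty$ together with $\partial u_i,\partial u\in L^p(\mathcal{S},\nu)$.

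For the closeness in (4) I would once more invoke Proposition \ref{p.grad}. Suppose $u_m\to u$ in $C(K)$ and $\partial u_m\to g$ in $L^p(\mathcal{S},\nu)$. Fixing $n$ and adjacent $x,y\in V_n$, write $u_m(y)-u_m(x)=\int_{e(x,y)}\partial u_m\,\dif\nu$ and let $m\to\infty$: the left side tends to $u(y)-u(x)$ by (pointwise) uniform convergence, while the right side tends to $\int_{e(x,y)}g\,\dif\nu$ because each edge has finite $\nu$-measure, so $\mathds{1}_{e(x,y)}\in L^{p'}(\mathcal{S},\nu)$, and $\partial u_m\to g$ in $L^p$, by H\"older's inequality. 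Hence $u(y)-u(x)=\int_{e(x,y)}g\,\dif\nu$ for all such $x,y$ and all levels, so Proposition \ref{p.grad} yields $u\in\mathcal{F}_p$ and $\partial u=g$, which is exactly closedness of $\partial$ from $C(K)$ to $L^p(\mathcal{S},\nu)$.

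The main obstacle is the bookkeeping in (2)–(3): one must ensure the one–dimensional product and chain rules are applied to the correct edge restrictions (all factors simultaneously $W^{1,p}$ and bounded on $(0,1)$), that the resulting candidate gradients genuinely lie in $L^p(\mathcal{S},\nu)$, and that the edge-by-edge identities legitimately patch into a single $\nu$-a.e. identity on $\mathcal{S}$ — the last point resting on the exhaustion of $\mathcal{S}$ by the level-$n$ edges and on uniqueness of the weak derivative on each interval. Once this is in place, linearity and closedness are immediate from the uniqueness in Proposition \ref{p.grad}.
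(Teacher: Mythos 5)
Your proposal is correct and follows essentially the same route as the paper's (very terse) proof: parts (1)--(3) via Lemma \ref{l.weakd} together with the linearity, Leibniz, and chain rules for one-dimensional weak derivatives, and part (4) as a direct consequence of Proposition \ref{p.grad}. Your only deviation is proving (1) by summing the edge identities \eqref{e.grad} and invoking the uniqueness clause of Proposition \ref{p.grad} rather than weak-derivative linearity, which is an equivalent (if anything, slightly more economical) argument, and your added care about membership ($u_1u_2\in\mathcal{F}_p$, $f\circ u\in\mathcal{F}_p$) and the edge-by-edge patching fills in exactly the details the paper leaves implicit.
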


\begin{proof}
The assertions (1), (2) and (3) are obtained by Lemma \ref{l.weakd}, the linearity,
Leibniz rule, and chain rule of the weak derivatives, respectively. The assertion (4) is an
immediate consequence of Proposition \ref{p.grad}.
\end{proof}

After these preparations, we can prove Theorem \ref{t.emc}.

\begin{proof}[Proof of Theorem \protect\ref{t.emc}]
Define
\begin{equation}  \label{e.emea}
\Gamma_{p}\langle u\rangle(A):=\int_{A\cap\mathcal{S}}\left|\partial
u\right|^{p}\dif\nu,\text{ i.e. } \dif \Gamma_{p}\langle
u\rangle:=\left|\partial u\right|^{p}\dif\nu.
\end{equation}
We will prove that $\{\Gamma_{p}\langle u\rangle\}_{u\in\mathcal{F}_{p}}$ on
$K $ is a family of Borel finite measures having the properties stated in Theorem \ref%
{t.emc}.

\begin{enumerate}[label=\textup{(\arabic*)}]
\item That $\Gamma_{p}\langle u\rangle(K)=\mathcal{E}_{p}(u)$ follows from
Proposition \ref{p.grad}. The ``if" part in the second assertion is
trivial by definition. For the converse, if $\Gamma_{p}\langle u\rangle=0$,
then $\mathcal{E}_{p}\left(u\right)=0$. By Morrey's inequality in Lemma \ref%
{l.morrey}, $u$ must be constant.

\item Note that for any non-negative Borel measurable function $g$ on $K$,
\begin{equation*}
\left(\int_{K}g\dif\Gamma_{p}\langle u\rangle\right)^{1/p}=\left(\int_{%
\mathcal{S}}\left(|g|^{1/p}\left|\partial u\right|\right)^{p}\dif%
\nu\right)^{1/p}=\lVert |g|^{1/p}\left|\partial u\right|\rVert_{L^{p}(%
\mathcal{S},\nu)}.
\end{equation*}%
So \eqref{e.em.tri0} holds by the Minkowski inequality of $L^{p}(\mathcal{S%
},\nu)$.

\item The identity \eqref{e.lip1} holds by the definition of $\Gamma_{p}\langle
u\rangle$ and the Leibniz rule in Proposition \ref{p.grad.cal}.

\item The proof is essentially the same as in \cite[Theorem 4.3.8]%
{CF12} and \cite[Proposition 7.6]{Shi24}. Since all
compact subsets generate the Borel $\sigma$-algebra of $\mathbb{R}$, we only
need to prove that $u_{*} \left(\Gamma_{p}\langle u\rangle\right)(F)=0$
whenever $u \in \mathcal{F}_p$ and $F$ is a compact subset of $\mathbb{R}$
with $\mathscr{L}^1(F)=0$. We can choose a sequence $\left\{\phi_n\right%
\}_{n \geq 1}\subset C^{\infty}_{c}(\mathbb{R})$ such that $%
\left|\phi_n\right| \leq 1, \lim _{n \rightarrow \infty} \phi_n(x)=\mathds{1}%
_{F}(x)$ for each $x \in \mathbb{R}$, and
\begin{equation*}
\int_0^{\infty} \phi_n(t) \dif t=\int_{-\infty}^0 \phi_n(t) \dif t=0
\end{equation*}
for each $n \in \mathbb{N}$. Let $\Phi_n(x):=\int_0^x \phi_n(t) \dif t$ for
each $x \in \mathbb{R}$ and $n \in \mathbb{N}$. Then we see that $\Phi_n \in
C^1(\mathbb{R})$ with compact support, $\Phi_n(0)=0$, and $%
\left|\Phi_n^{\prime}(x)\right| \leq 1$ for each $x \in \mathbb{R}$ and $n \in \mathbb{N}$. By the
dominated convergence theorem, we know that $\lim _{n \rightarrow
\infty} \Phi_n(x)=0$ for each $x \in \mathbb{R}$ and $\Phi_n \circ u$
converges to 0 in $L^p(K, \mu)$. Since $\mathcal{E}_p\left(\Phi_n \circ
u\right) \leq \mathcal{E}_p(u)$ by the Lipschitz contractivity of $\mathcal{E%
}_p$ in Theorem \ref{t.energy}, we know that $\left\{\Phi_n\circ u\right\}_{n
\geq 1}$ is bounded in $\mathcal{F}_{p}$. By the uniform convexity of $\mathcal{F%
}_{p}$ and \cite{Kak39}, there exists a subsequence $\{n_{k}\}_{k\geq 1}$ of $\mathbb{N}$ such that
the arithmetic mean $\Psi_{j} \circ u:=\frac{1}{j}\sum_{k=1}^{j}%
\Phi_{n_k}\circ u\rightarrow 0$ in $\mathcal{F}_p$ as $j\rightarrow \infty$%
. For each $x\in\mathbb{R}$, since
\begin{equation*}
\left|\mathds{1}_{F}(x)-\frac{1}{j}\sum_{k=1}^{j}\phi_{n_{k}}(x)%
\right|=\left|\frac{1}{j}\sum_{k=1}^{j}\left(\mathds{1}_{F}(x)-{%
\phi_{n_{k}}(x)}\right)\right|\rightarrow 0 \text{ as $j\rightarrow \infty$},
\end{equation*}
we conclude by Fatou's lemma that
\begin{equation*}
\begin{aligned} \left(u_{*}\Gamma_{p}\langle u\rangle\right)(F)&
=\int_{\mathbb{R}} \lim_{j \rightarrow
\infty}\left|\frac{1}{j}\sum_{k=1}^{j}
\Phi_{n_k}^{\prime}(t)\right|^p  \dif  \left(u_{*}\Gamma_{p}\langle
u\rangle\right)( t) \\ & \leq \liminf_{j \rightarrow \infty}
\int_K\left|\Psi_{j}^{\prime}(u(x))\right|^p \dif \Gamma_{p}\langle
u\rangle(x) \\ & =\liminf_{j \rightarrow \infty}
\Gamma_{p}\left\langle\Psi_{j} \circ u\right\rangle(K)=\liminf_{l
\rightarrow \infty} \mathcal{E}_p\left(\Psi_{j} \circ u\right)=0 .
\end{aligned}
\end{equation*}
\end{enumerate}
\end{proof}

\begin{remark}
\label{r.medm}

\begin{enumerate}[label=\textup{(\arabic*)}]
\item Theorem \ref{t.emc} shows that it is possible to define $p$-energy measures
on some fractals without self-similarity. However, it is still an open problem to give a general procedure to define $p$-energy and associated energy measure on general Moran fractals, such as scale-irregular Sierpi\'nski gaskets.

\item By definition, $\Gamma_{p}\langle u\rangle\ll \nu$ for all $u\in%
\mathcal{F}_{p}$. So $\nu$ is a \textit{minimal energy-dominant measure} in
the sense of \cite{Hin10}. Since $\nu$ is independent of $p$, this
gives an example of $p$-energy on fractals whose minimal
energy-dominant measure for different exponents can be absolutely continuous
(with or without self-similarity).
\end{enumerate}
\end{remark}

The approach in \cite[Section 9]{MS25}, using the word space of a fractal, provides another way to construct energy measures, which also works in the scale-irregular Vicsek fractal setting with minor modification. Proposition \ref{p.coinc} shows that the
construction using the word space agree with that using the
gradient structure in Theorem \ref{t.emc}.

\begin{proposition}
\label{p.em} For any $u\in\mathcal{F}_{p}$ and $n\geq1 $, the measure $m_{p}^{(n)}\langle u\rangle$ defined by
\begin{equation*}
E\mapsto \sum_{w\in E}\mathcal{E}%
_{p;K_{w}}\left(u\right)=:m_{p}^{(n)}\langle u\rangle(E), \quad\forall
E\subset W_{n}
\end{equation*}
satisfies
\begin{equation}  \label{e.em1}
\sum_{v\in S(w)}m_{p}^{(n+1)}\langle u\rangle(\{v\}) =m_{p}^{(n)}\langle
u\rangle(\{w\}).
\end{equation}
\end{proposition}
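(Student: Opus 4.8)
The plan is to reduce the claimed additivity to the gradient representation of the energy from Proposition~\ref{p.grad}, which rewrites the energy over a cell as an integral of $|\partial u|^{p}$ over the skeleton; with this in hand, \eqref{e.em1} becomes just the additivity of an integral over an essentially disjoint decomposition of $K_{w}^{\bm{l}}$.

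First I would record that each cell $K_{w}^{\bm{l}}$ is convex in the sense of Definition~\ref{d.graph} (this is implicit in $\mathcal{E}_{p;K_{w}^{\bm{l}}}$ being defined at all, and follows from the tree structure of $K$: the geodesic between two points of a cell never leaves the cell). Hence \eqref{e.grad-1} applies with $A=K_{w}^{\bm{l}}$ and with $A=K_{v}^{\bm{l}}$ for every $v\in S(w)$, giving
\begin{equation*}
m_{p}^{(n)}\langle u\rangle(\{w\})=\mathcal{E}_{p;K_{w}^{\bm{l}}}(u)=\int_{K_{w}^{\bm{l}}\cap\mathcal{S}}|\partial u|^{p}\dif\nu,\qquad m_{p}^{(n+1)}\langle u\rangle(\{v\})=\int_{K_{v}^{\bm{l}}\cap\mathcal{S}}|\partial u|^{p}\dif\nu.
\end{equation*}

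Next, Proposition~\ref{l.inters} gives $K_{w}^{\bm{l}}=\bigcup_{v\in S(w)}K_{v}^{\bm{l}}$, so that $K_{w}^{\bm{l}}\cap\mathcal{S}=\bigcup_{v\in S(w)}\bigl(K_{v}^{\bm{l}}\cap\mathcal{S}\bigr)$. The key point is that this union is $\nu$-essentially disjoint: for distinct $v,v'\in S(w)$, any $x\in K_{v}^{\bm{l}}\cap K_{v'}^{\bm{l}}$ has two distinct codings under the coding map $\chi$, so by Proposition~\ref{p.coding} it is one of the finitely many corner vertices at which the children of $K_{w}^{\bm{l}}$ are glued; thus $K_{v}^{\bm{l}}\cap K_{v'}^{\bm{l}}$ is finite, and since $\nu$ is the non-atomic Lebesgue measure on the cables of $\mathcal{S}$ it assigns no mass to points, whence $\nu(K_{v}^{\bm{l}}\cap K_{v'}^{\bm{l}})=0$. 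Finite additivity of the integral of the non-negative function $|\partial u|^{p}$ over this essentially disjoint union then yields
\begin{equation*}
\sum_{v\in S(w)}\int_{K_{v}^{\bm{l}}\cap\mathcal{S}}|\partial u|^{p}\dif\nu=\int_{K_{w}^{\bm{l}}\cap\mathcal{S}}|\partial u|^{p}\dif\nu,
\end{equation*}
which is precisely \eqref{e.em1}.

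The only genuinely delicate step is this essential disjointness, i.e.\ that distinct children cells meet in a $\nu$-null set, which I would isolate cleanly through Proposition~\ref{p.coding}. I note a purely combinatorial alternative that bypasses the gradient: for every level $m\geq n+1$, each level-$m$ edge contained in $K_{w}^{\bm{l}}$ lies in exactly one child $K_{v}^{\bm{l}}$ (the children touch only at single corner vertices, so no level-$m$ edge crosses between two of them), whence $\mathcal{E}_{p,m;K_{w}^{\bm{l}}}(u)=\sum_{v\in S(w)}\mathcal{E}_{p,m;K_{v}^{\bm{l}}}(u)$; letting $m\to\infty$ and using $\mathcal{E}_{p;A}=\lim_{m}\mathcal{E}_{p,m;A}$ (monotone in $m$ by Proposition~\ref{VE-mono}, and the sum over $S(w)$ is finite) gives the same identity.
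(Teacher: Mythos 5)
Your proof is correct and follows essentially the same route as the paper's: both convert $\mathcal{E}_{p;K_{w}}(u)$ into $\int_{K_{w}\cap\mathcal{S}}|\partial u|^{p}\dif\nu$ via \eqref{e.grad-1}, decompose $K_{w}=\bigcup_{v\in S(w)}K_{v}$ by Proposition \ref{l.inters}, and kill the overlaps using that distinct children meet in finitely many points and $\nu$ is non-atomic. Your explicit justification of the overlap-nullity through Proposition \ref{p.coding}, and the purely combinatorial alternative via Proposition \ref{VE-mono}, are fine additions but not substantively different arguments.
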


\begin{proof}
This follows directly from \eqref{e.grad-1} that
\begin{align*}
\sum_{v\in S(w)}\mathcal{E}_{p;K_{v}}\left(u\right)&=\sum_{v\in S(w)}\int_{%
\mathcal{S}\cap K_{v}}|\partial u|^p\dif\nu \\
&=\int_{\mathcal{S}\cap K_{w}}|\partial u|^p\dif\nu-\frac{1}{2}%
\sum_{v,v^{\prime}\in S(w)}\int_{\mathcal{S}\cap K_{v}\cap
K_{v^{\prime}}}|\partial u|^p\dif\nu \\
&=\int_{\mathcal{S}\cap K_{w}}|\partial u|^p\dif\nu=\mathcal{E}%
_{p;K_{w}}\left(u\right)
\end{align*}
since $\bigcup_{v\in S(w)}K_{v}=K_{w}$ and $\nu$ has no atom.
\end{proof}

By the Kolmogorov's extension theorem and Proposition \ref{p.em}, we obtain
a finite Borel measure $m_{p}\langle u\rangle $ on $W_{\infty }$ such that
\begin{equation*}
m_{p}\langle u\rangle \left( \left\{ \tau \in W_{\infty }:[\tau
]_{n}=w\right\} \right) =\mathcal{E}_{p;K_{w}}\left( u\right) ,\quad \forall
n\geq 1,\ w\in W_{n}.
\end{equation*}%
Clearly, $m_{p}\langle u\rangle (W_{\infty })=\mathcal{E}_{p}\left( u\right)
$. Also $m_{p}\langle u\rangle $ is non-atomic, since $m_{p}\langle u\rangle \left( w\right) \leq \mathcal{E}%
_{p;K_{[w]_{n}}}\left( u\right) \rightarrow 0$ as $n\rightarrow \infty $ for any
$w\in W_{\infty }$. Now we show that, these two different ways give the same $p$-energy measure.

Recall that $\chi$ is the coding map in Proposition \ref{p.coding}.
\begin{proposition}
\label{p.coinc} For any $u\in\mathcal{F}_{p}$, the push-forward of $m_{p}\langle u\rangle$ under $\chi$ coincides with $\Gamma_{p}\langle u\rangle$, namely, $\Gamma_{p}\langle u\rangle=\chi_{*}m_{p}\langle u\rangle$, where $\chi_{*}m_{p}\langle u\rangle(\cdot):=m_{p}\langle u\rangle(\chi^{-1}(\cdot))$.
\end{proposition}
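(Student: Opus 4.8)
The plan is to show that the two finite Borel measures $\Gamma_{p}\langle u\rangle$ and $\chi_{*}m_{p}\langle u\rangle$ agree on every cell $K_{w}^{\bm{l}}$, $w\in W_{*}^{\bm{l}}$, and then to promote this to agreement on all Borel sets by a standard extension argument. First I would record that both measures are finite with total mass $\mathcal{E}_{p}(u)$: for $\Gamma_{p}\langle u\rangle$ this is \eqref{e.grad-1} with $A=K$, and for $\chi_{*}m_{p}\langle u\rangle$ it is the already-noted identity $m_{p}\langle u\rangle(W_{\infty}^{\bm{l}})=\mathcal{E}_{p}(u)$.

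Next, fix $w\in W_{n}^{\bm{l}}$ and compute each side on $K_{w}^{\bm{l}}$. On one hand, since $K_{w}^{\bm{l}}$ is convex, \eqref{e.grad-1} gives $\Gamma_{p}\langle u\rangle(K_{w}^{\bm{l}})=\int_{K_{w}^{\bm{l}}\cap\mathcal{S}}|\partial u|^{p}\dif\nu=\mathcal{E}_{p;K_{w}^{\bm{l}}}(u)$. On the other hand, writing $[w]:=\{\tau\in W_{\infty}^{\bm{l}}:[\tau]_{n}=w\}$ for the associated cylinder, the defining property of $m_{p}\langle u\rangle$ gives $m_{p}\langle u\rangle([w])=\mathcal{E}_{p;K_{w}^{\bm{l}}}(u)$. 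It therefore suffices to show that $\chi^{-1}(K_{w}^{\bm{l}})=[w]$ up to an $m_{p}\langle u\rangle$-null set. The inclusion $[w]\subseteq\chi^{-1}(K_{w}^{\bm{l}})$ is immediate from the definition of $\chi$. For the reverse, if $\chi(\tau)\in K_{w}^{\bm{l}}$ but $[\tau]_{n}\neq w$, then $\chi(\tau)$ admits two distinct codings, so by Proposition \ref{p.coding} it lies in the countable set $B:=\bigcup_{n\geq1}\bigcup_{v\in W_{n}^{\bm{l}}}F_{v}^{\bm{l}}(\{q_{j}\}_{j=1}^{4})$. Hence $\chi^{-1}(K_{w}^{\bm{l}})\setminus[w]\subseteq\chi^{-1}(B)$, which is countable because $\#\chi^{-1}(x)\leq2$ for all $x$; since $m_{p}\langle u\rangle$ is non-atomic, $m_{p}\langle u\rangle(\chi^{-1}(B))=0$. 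Thus $\chi_{*}m_{p}\langle u\rangle(K_{w}^{\bm{l}})=m_{p}\langle u\rangle(\chi^{-1}(K_{w}^{\bm{l}}))=m_{p}\langle u\rangle([w])=\mathcal{E}_{p;K_{w}^{\bm{l}}}(u)$, matching $\Gamma_{p}\langle u\rangle(K_{w}^{\bm{l}})$.

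To pass from cells to arbitrary Borel sets, let $\mathcal{A}_{n}$ be the collection of finite unions of level-$n$ cells and set $\mathcal{A}:=\bigcup_{n\geq0}\mathcal{A}_{n}$. By Proposition \ref{l.inters} each level-$n$ cell is a finite union of level-$(n+1)$ cells, so $\mathcal{A}_{n}\subseteq\mathcal{A}_{n+1}$ and $\mathcal{A}$ is an algebra of subsets of $K^{\bm{l}}$. Distinct level-$n$ cells meet only at corner points, which lie in $B$; since $B$ is $\nu$-null (hence $\Gamma_{p}\langle u\rangle$-null) and $\chi^{-1}(B)$ is $m_{p}\langle u\rangle$-null, the measure of any finite union of distinct level-$n$ cells is, for both measures, the sum of the measures of the individual cells. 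Combined with the single-cell identity, this shows that the two measures coincide on all of $\mathcal{A}$. Because the cells have diameters tending to $0$, $\mathcal{A}$ generates the Borel $\sigma$-algebra of $K^{\bm{l}}$, and the uniqueness of extension of a finite measure from a generating algebra then yields $\Gamma_{p}\langle u\rangle=\chi_{*}m_{p}\langle u\rangle$.

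I expect the main obstacle to be the careful bookkeeping of the overlap set $B$: one must verify both that the two codings forced on a point of $K_{w}^{\bm{l}}$ lying outside $[w]$ genuinely place it in $B$ (this is precisely the content of the second half of Proposition \ref{p.coding}) and that $B$ is simultaneously null for both measures, so that neither the single-cell identity nor the finite additivity over overlaps on $\mathcal{A}$ is spoiled by double counting. Once these null-set issues are settled, the remaining measure-theoretic extension is routine.
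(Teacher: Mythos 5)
Your first step---agreement of the two measures on every cell---is exactly the paper's argument and is correct: the inclusion of the cylinder $[w]$ in $\chi^{-1}(K_{w})$, the observation that any $\tau\in\chi^{-1}(K_{w})$ with $[\tau]_{n}\neq w$ forces $\chi(\tau)$ to have two codings and hence to lie in the countable corner set $B$, and the use of non-atomicity of $m_{p}\langle u\rangle$ all match the paper. The flaw is in your extension step. The collection $\mathcal{A}$ of finite unions of cells is \emph{not} an algebra: the complement of a level-$n$ cell $K_{w}$ is not a finite union of cells, because $K\setminus K_{w}$ contains points of the neighbouring cells arbitrarily close to the corners shared with $K_{w}$ but not those corners themselves, while any finite union of (compact) cells is closed. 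For the same reason $\mathcal{A}$ is not even a $\pi$-system: the intersection of two distinct level-$n$ cells can be a corner singleton, which is not a finite union of cells. Consequently the theorem you invoke (uniqueness of extension of a finite measure from a generating \emph{algebra}) does not apply to $\mathcal{A}$ as you have defined it, and the last paragraph of your proof does not go through as written.

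The repair is small, and it is precisely what the paper does: enlarge the family of cells by adjoining all singletons of $K$. The resulting class $\mathcal{P}$ is a genuine $\pi$-system (cell $\cap$ cell is a cell, a singleton, or empty), both measures vanish on singletons ($\Gamma_{p}\langle u\rangle$ because countable sets are $\nu$-null, and $\chi_{*}m_{p}\langle u\rangle$ because $\#\chi^{-1}(x)\leq 2$ and $m_{p}\langle u\rangle$ is non-atomic), and the class $\mathcal{L}$ of Borel sets on which the two finite measures of equal total mass agree is a $\lambda$-system. The $\pi$--$\lambda$ theorem then gives agreement on $\sigma(\mathcal{P})$, which is the Borel $\sigma$-algebra by your (correct) closing observation that every closed set is a decreasing intersection of finite unions of cells. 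With this one modification your proof coincides with the paper's; everything else you wrote, including the bookkeeping of the null overlap set $B$, is sound.
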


\begin{proof}
We first prove that for any $w\in W_{*}$, $\Gamma_{p}\langle
u\rangle(K_{w})=\chi_{*}m_{p}\langle u\rangle(K_{w})$, namely,
\begin{equation*}
m_{p}\langle u\rangle(\chi^{-1}(K_{w}))=\mathcal{E}_{p;K_{w}}\left(u\right).
\end{equation*}
Assume that $w\in W_{n}$. Since $\left\{\tau\in W_{\infty}:[\tau]_{n}=w
\right\}\subset\chi^{-1}(K_{w}) $, we first have
\begin{equation*}
m_{p}\langle u\rangle(\chi^{-1}(K_{w}))\geq\mathcal{E}_{p;K_{w}}\left(u%
\right).
\end{equation*}
If $\tau\in \chi^{-1}(K_{w})$ but $[\tau]_{n}\neq w$, then by the last
assertion in Proposition \ref{p.coding}, $\chi(\tau)$ must belong to $%
K_w \cap V_n$. So the set $\chi^{-1}(K_{w})\setminus \left\{\tau\in
W_{\infty}:[\tau]_{n}=w \right\}$ has countably many elements. Since $%
m_{p}\langle u\rangle$ is non-atomic, we must have $m_{p}\langle
u\rangle(\chi^{-1}(K_{w}))=\mathcal{E}_{p;K_{w}}\left(u\right)$. We write
$\mathcal{P}$ as the collection of all $K_{w}(w\in W_{*})$ and all singletons in $K$, and $\mathcal{L}$ as the collection of all Borel
subsets $B$ of $K$ such that $\Gamma_{p}\langle u\rangle
(B)=\chi_{*}m_{p}\langle u\rangle(B)$. It is easy to verify that $\mathcal{P}
$ forms a $\pi$-system and $\mathcal{L}$ forms a $\lambda$-system. A
standard application of $\pi-\lambda$ theorem shows that $\mathcal{L}$
contains $\sigma(\mathcal{P})$, the $\sigma$-algebra generated by $\mathcal{P%
}$. It suffices to show that $\sigma(\mathcal{P})$ is the Borel $\sigma$%
-algebra of $K$. To show this, we only need to show that every closed subset
$F$ of $K$ is in $\sigma(\mathcal{P})$. Let $F_{m}=\bigcup_{\substack{ w\in
W_{m}  \\ K_{w}\cap F\neq\emptyset  \\ }}K_{w}$ so that $F_{m}\in \sigma(%
\mathcal{P})$. Since $\max_{w\in W_{m}}\diam( K_{w})\rightarrow 0$ as $%
m\rightarrow \infty$, we have $\bigcap_{m\geq1}F_{m}=F$. Therefore $F\in
\sigma(\mathcal{P})$ and we complete the proof.
\end{proof}

We conclude this section by comparing the energy measure constructed here
with that in \cite{Kuw24}. In the case $p=2$, Theorem \ref{t.energy} gives a
regular Dirichlet form $(\mathcal{E}_{2},\mathcal{F}_{2})$ on a Vicsek set in
the definition of \cite[Chapter 1]{FOT11}, and Theorem \ref{t.emc} gives the
energy measure with respect to $(\mathcal{E}_{2},\mathcal{F}_{2})$ in the
definition of \cite[Chapter 3]{FOT11}. As we said in Remark \ref{r.medm}, $%
\nu$ is a minimal energy-dominant measure for the Dirichlet form $(\mathcal{E%
}_{2},\mathcal{F}_{2})$. For every $u\in\mathcal{F}_{2}$, we see from %
\eqref{e.emea} that the Radon-Nikodym derivative is
\begin{equation*}
\frac{\dif\Gamma_{2}\langle u\rangle}{\dif\nu}=|\partial u|^{2}
\end{equation*}
and therefore, at least formally,
\begin{equation*}
\mathcal{E}_{p}(u)=\int_{K}\left|\frac{\dif\Gamma_{2}\langle u\rangle}{\dif%
\nu}\right|^{p/2}\dif\nu\text{ and }\dif\Gamma_{p}\langle u\rangle=\left|%
\frac{\dif\Gamma_{2}\langle u\rangle}{\dif\nu}\right|^{p/2}\dif\nu.
\end{equation*}%
It turns out that the $p$-energy norm and $p$-energy measure in this paper
are also equivalent to those in \cite{Kuw24}.

\section{Besov-Lipschitz norms and their properties}

\label{s.norm}
Throughout this section, we fix a contraction ratio sequence $\bm{l}$ satisfying {$\sup\limits_{n\geq 1}l_n<\infty$, and fix a $\beta^{*}\in(0,\infty)$}.

We write $K^{\bm{l}}=K,\ K_{w}^{\bm{l}}=K_{w}$ and omit the
index $p$ when no confusion is caused. We first investigate some basic properties of Besov-Lipschitz spaces in Section \ref{subs.bls}. The norm equivalences and the critical exponent in Theorem \ref{thm3} will be proved in Section \ref{subs.noeq}. The weak monotonicity property and BBM convergence in Theorem \ref{thm3} will be proved in Section \ref{subs.wm}.
\subsection{Besov-Lipschitz spaces related to the $p$-energy}\label{subs.bls}

Recall \eqref{ksai} and define a function
\begin{equation}
\phi (r)=\begin{dcases} \rho_{n}^{p-1}\psi(\rho_{n}) &\text{ for
}\rho_{n+1}< r\leq \rho_n\ (n\geq0),\\ 2^{p-1} &\text{ for } r \geq2. \end{dcases}
\label{phi}
\end{equation}

{
\begin{remark}\label{r.coinBau}
If we assume that $\bm{l}$ consists of only one odd number $l\geq3$ and choose $\beta^{*}:=1+(\alpha_{l}-1)/p$ in Definition \ref{def[}, then $B_{p,\infty}^{\beta}$ (defined with respect to this particular $\beta^{*}$) is the same as $\mathcal{B}^{\beta,p}$ in \cite{Bau24}.
\end{remark}
}

Recall Definition \ref{def[}. Using \eqref{scaling1}, the definition of $\rho_{n}$ and $\sup\limits_{n\geq 1}l_n<\infty$, there exists a constant $C>0$ such that, for all $n\geq0$, $\rho_{n+1}< r\leq \rho_n$ and all $u\in L^{p}(K,\mu)$,
\begin{equation}\label{e.comphi}
•C^{-1}\Phi_{u}^{\beta}(\rho_{n+1})\leq \Phi_{u}^{\beta}(r)\leq C\Phi_{u}^{\beta}(\rho_n).
\end{equation}
With these notions, we generalize the $p$-energy norm given in Definition \ref{penergy_1}.
\begin{definition}
\label{defE} For every $1<p<\infty $, $0\leq\beta <\infty $, $%
n\in \mathbb{N}$  and every $u\in C(K)$, define
\begin{equation}
\mathcal{E}_{n}^{\beta }(u):=\frac{1}{2}\phi (\rho _{n})^{-\frac{\beta }{
\beta^{\ast }}}\psi (\rho _{n})\sum_{\substack{ x,y\in V_{n}  \\ x\sim
y }}\left\vert u(x)-u(y)\right\vert ^{p},  \label{En}
\end{equation}
and
\begin{equation}
\mathcal{E}_{p,\infty }^{\beta }(u):=\sup_{n\geq 0}\mathcal{E}_{n}^{\beta
}(u),\quad \mathcal{E}_{p,p}^{\beta }(u):=\sum_{n=0}^{\infty }\mathcal{E}
_{n}^{\beta }(u).  \label{penergy1}
\end{equation}
\end{definition}

\begin{remark}
\begin{enumerate}[label=\textup{(\arabic*)}]
\item In view of \eqref{e.energy} and \eqref{ve1}, we have for each $n\in\mathbb{N}$ and all $u\in C(K)$ that $\mathcal{E}_{p,n;K}(u)=\mathcal{E}_{n}^{\beta^{\ast }}(u)$. Therefore,
\begin{equation} \label{E5}
\mathcal{E}_{p}(u)=\mathcal{E}_{p,\infty}^{\beta^{\ast }}(u)=\sup_{n\geq 0}\mathcal{E}%
_{n}^{\beta^{\ast }}(u)=\limsup_{n\rightarrow \infty }\mathcal{E}%
_{n}^{\beta^{\ast }}(u).
\end{equation}
\item We may write $\mathcal{E}_{p,p }^{\beta }(u)$ in a ``non-local $p$-energy'' manner:\begin{align}
\mathcal{E}_{p,p}^\beta(u)=\frac{1}{2}\sum_{n = 0}^{\infty} \phi (\rho _{n})^{-\frac{\beta }{
\beta^{\ast }}}\psi (\rho _{n}) \sum_{\substack{ x,y\in V_{n}  \\ x\sim
y }} |u (x) - u (y) |^p  =\iint_{(K\times K)\setminus \mathrm{diag}}|u(x)-u(y)|^p\dif J_{\beta}(x,y)
\label{219}
\end{align}
with the symmetric positive measure
\begin{equation}
\dif J_{\beta}(x,y)=\frac{1}{2}\sum_{n = 0}^{\infty} \sum_{\substack{ v,w\in V_{n}  \\ v\sim
w }}\phi (\rho _{n})^{-\frac{\beta }{
\beta^{\ast }}}\psi (\rho _{n}) \dif\delta_v(x)\dif\delta_w(y)  \label{218}
\end{equation}
defined on $(K\times K)\setminus \mathrm{diag}$, where $\mathrm{diag}:=\{(x,x):x\in K\}$ and $\delta _{v}$ is the Dirac measure at point $v$.
\end{enumerate}
\end{remark}

For any $1<p<\infty$, we define a real number $\epsilon_{p}$ by
\begin{equation}
 \epsilon_{p}:=\left( 1+\frac{p-1}{{\sup\limits_{n\geq 1}\alpha_{l_n}}}\right) ^{-1}{\in (0,1)},\label{e.epsilonp}
\end{equation}
{where $\alpha_{l}$ is given in \eqref{e.df}. For each odd integer $l\geq 3$, let
\begin{equation}
 \beta_{l}^{(p)}:=p-1+\alpha_{l}.\label{bln}
\end{equation}
}
{For the non-self-similar case, we need the following estimates.}
\begin{proposition}\label{condB_p1}
Let $1<p<\infty$.
\begin{enumerate}[label=\textup{(\arabic*)}]
\item\label{lb.est1} For any $\beta\in (\epsilon_{p}\beta^{\ast},\infty)$, we have {$\inf\limits_{n\geq 1}\left( \frac{\beta }{\beta^{\ast }}\beta_{l_n}^{(p)}-\alpha _{l_n}\right) >0$}.     For every $\delta $ satisfying
\begin{equation}
0\leq \delta <{\inf\limits_{n\geq 1}\left( \frac{\beta }{\beta^{\ast }}\beta_{l_n}^{(p)}-\alpha _{l_n}\right)},  \label{delta}
\end{equation}
there exists $C=C(\beta ,\delta)$ such that for any integer $n\geq 0$,
\begin{align}
\sum_{k=n}^{\infty }\phi(\rho _{k})^{\frac{\beta }{\beta^{\ast }}}\psi(\rho _{k})^{-1}\rho _{k}^{-\delta }&\leq C\phi(\rho _{n})^{\frac{\beta }{\beta^{\ast}}}\psi (\rho _{n})^{-1}\rho _{n}^{-\delta },\label{e.est1}\\
\sum_{k=0}^{n} \phi(\rho_k)^{-\frac{\beta}{\beta^{\ast}}}\psi(\rho_k)\rho_k^{\delta}&\leq C \phi(\rho_n)^{-\frac{\beta}{\beta^{\ast}}}\psi(\rho_n)\rho_n^{\delta}.\label{e.est2}
\end{align}
Moreover, the sequence
\begin{equation}
\left\{\phi(\rho _{k})^{\frac{\beta }{\beta^{\ast }}}\psi (\rho
	_{k})^{-1}\rho _{k}^{-\delta }\right\}_{k\geq0}\  \text{decreases to }\ 0 \ \text{as}\ k\rightarrow\infty.\label{d1}
\end{equation}

\item\label{lb.est2} For any integer $n\geq 0$ and any $\delta >0$, we have
\begin{equation}\label{condB_p3}
\sum_{k=n}^{\infty }\phi(\rho _{k})^{\delta }\in \left[ \frac{\phi(\rho _{n})^{\delta
}}{1-\left({\sup\limits_{n\geq 1}t_{l_n}}\right)^{-\delta }},\frac{\phi(\rho
_{n})^{\delta }}{1-\left({\inf\limits_{n\geq 1}t_{l_n}}\right)^{-\delta }}\right] ,
\end{equation}
where $t_{l}:=(2l-1)l^{p-1}$.
\end{enumerate}
\end{proposition}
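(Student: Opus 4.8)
The plan is to reduce every estimate to a geometric-series comparison by computing the ratio of consecutive summands, using that one step from level $k$ to level $k+1$ multiplies $\rho_k$ by $l^{-1}$ and $\psi(\rho_k)$ by $(2l-1)^{-1}$, where $l\in\{a,b\}$ is the $(k+1)$-th digit of $\bm{l}$. The identities I will lean on are $\phi(\rho_k)=\rho_k^{p-1}\psi(\rho_k)$ (immediate from \eqref{phi}), together with $2l-1=l^{\alpha_l}$ and $t_l=l^{p-1}(2l-1)=l^{\beta_l^{(p)}}$. First I would settle the positivity assertion: since $\alpha_l/\beta_l^{(p)}=\alpha_l/(p-1+\alpha_l)=(1+(p-1)/\alpha_l)^{-1}$ is increasing in $\alpha_l$, its maximum over $l\in\{a,b\}$ equals $(1+(p-1)/(\alpha_a\vee\alpha_b))^{-1}=\epsilon_p$ by \eqref{e.epsilonp}. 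Hence for $\beta>\epsilon_p\beta_p^*$ one has $\beta/\beta_p^*>\epsilon_p\geq\alpha_l/\beta_l^{(p)}$, i.e. $\frac{\beta}{\beta_p^*}\beta_l^{(p)}-\alpha_l>0$ for both $l=a,b$, which is the claimed positivity and makes the admissible range \eqref{delta} non-empty.

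For \eqref{e.est1} I set $a_k:=\phi(\rho_k)^{\beta/\beta_p^*}\psi(\rho_k)^{-1}\rho_k^{-\delta}$. Using the substitution rules above, a direct computation gives, when the $(k+1)$-th digit is $l$,
\[
\frac{a_{k+1}}{a_k}=l^{\delta}\,l^{-(p-1)\beta/\beta_p^*}(2l-1)^{1-\beta/\beta_p^*}=l^{-\left(\frac{\beta}{\beta_p^*}\beta_l^{(p)}-\alpha_l-\delta\right)}.
\]
By \eqref{delta} the exponent is strictly positive for both digits, so this ratio is bounded above by $r:=\max_{l\in\{a,b\}}l^{-(\frac{\beta}{\beta_p^*}\beta_l^{(p)}-\alpha_l-\delta)}<1$ uniformly in $k$. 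Then \eqref{d1} is immediate (the sequence is decreasing and $a_k\leq a_0 r^k\to 0$), and summing the geometric majorant yields $\sum_{k\geq n}a_k\leq a_n/(1-r)$, which is \eqref{e.est1}. For \eqref{e.est2} I observe that the summand there is exactly $a_k^{-1}$, so the same computation gives $a_k^{-1}/a_{k+1}^{-1}=a_{k+1}/a_k\leq r<1$; hence $a_k^{-1}\leq r^{\,n-k}a_n^{-1}$ for $k\leq n$, and $\sum_{k=0}^n a_k^{-1}\leq a_n^{-1}/(1-r)$.

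Part (2) applies the same mechanism to $\phi(\rho_k)^\delta$ alone: the one-step ratio is $\phi(\rho_{k+1})^\delta/\phi(\rho_k)^\delta=(l^{p-1}(2l-1))^{-\delta}=t_l^{-\delta}$, which lies in $[(t_a\vee t_b)^{-\delta},(t_a\wedge t_b)^{-\delta}]$ because $t_a,t_b>1$ and $\delta>0$. Iterating gives $\phi(\rho_n)^\delta (t_a\vee t_b)^{-\delta(k-n)}\leq\phi(\rho_k)^\delta\leq\phi(\rho_n)^\delta(t_a\wedge t_b)^{-\delta(k-n)}$, and summing the two resulting geometric series over $k\geq n$ produces exactly the two-sided bound \eqref{condB_p3}.

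The computations are entirely elementary; the only points requiring care are the bookkeeping of exponents so that the per-step ratio collapses to the clean form $l^{-(\frac{\beta}{\beta_p^*}\beta_l^{(p)}-\alpha_l-\delta)}$, and keeping straight which of $t_a,t_b$ supplies the upper and which the lower geometric rate in Part (2). I expect no genuine obstacle beyond verifying that the admissibility threshold for $\delta$ in \eqref{delta} is precisely the condition under which both per-step ratios are $<1$, since that is exactly what legitimizes the geometric comparison.
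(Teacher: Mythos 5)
Your proposal is correct and follows essentially the same route as the paper's proof: the same sequence $a_k=\phi(\rho_k)^{\beta/\beta_p^*}\psi(\rho_k)^{-1}\rho_k^{-\delta}$, the same one-step ratio bound $c_0<1$ yielding geometric-series comparison for \eqref{e.est1}, \eqref{e.est2} and \eqref{d1}, and the same observation that part (2) is the identical mechanism applied to $\phi(\rho_k)^\delta$ with ratios $t_l^{-\delta}$. Your rewriting of the ratio as $l^{-(\frac{\beta}{\beta_p^*}\beta_l^{(p)}-\alpha_l-\delta)}$ and your derivation of the positivity claim via $\epsilon_p=\max_{l\in\{a,b\}}\alpha_l/\beta_l^{(p)}$ merely make explicit what the paper dismisses as ``simple calculation,'' which is a welcome addition but not a different argument.
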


\begin{proof}
\begin{enumerate}[label=\textup{(\arabic*)}]
\item  That {$\inf\limits_{n\geq 1}\left( \frac{\beta }{\beta^{\ast }}\beta_{l_n}^{(p)}-\alpha _{l_n}\right) >0$} follows by a simple calculation {and the assumption that $\{l_n\}_{n\geq 1}$ is a finite set of odd numbers}. To show the rest, let $a_{k}:=\phi(\rho _{k})^{\frac{\beta }{\beta^{\ast}}}\psi(\rho _{k})^{-1}\rho _{k}^{-\delta}=\rho_{k}^{(p-1)\frac{\beta}{\beta^{\ast}}-\delta}\psi(\rho_{k})^{\frac{\beta }{\beta^{\ast }}-1}$, we have for each $j\geq n$,
\begin{align}\label{e.quotient}
\frac{a_{j+1}}{a_{j}}\leq \max\left\{l^{\delta-(p-1)\frac{\beta}{\beta^{\ast}}}(2l-1)^{1-\frac{\beta}{\beta^{\ast}}}:\ {l\in\{l_n\}_{n\geq 1}}\right\}:=c_{0}<1.
\end{align}
Therefore,
\begin{align}\label{e.quotient+}
\sum_{k=n}^{\infty }\phi(\rho _{k})^{\frac{\beta }{\beta^{\ast }}}\psi(\rho _{k})^{-1}\rho _{k}^{-\delta }&=a_{n}+\sum_{k=n+1}^{\infty}a_{n}\prod_{j=n}^{k-1}\frac{a_{j+1}}{a_{j}}\leq a_{n}\sum_{k=n}^{\infty}c_{0}^{k-n}\notag\\ &\leq Ca_{n} =C\phi(\rho _{n})^{\frac{\beta }{\beta^{\ast }}}\psi (\rho _{n})^{-1}\rho _{n}^{-\delta },
\end{align}
which implies \eqref{e.est1}. For $0\leq k \leq n$, let  $b_{k}:=a_{n-k}^{-1}=\phi(\rho_{n-k})^{-\frac{\beta}{\beta^*}}\psi(\rho_{n-k})\rho_{n-k}^{\delta}$. Then  \[\frac{b_{n-i}}{b_{n-i-1}}=\frac{a_{i+1}}{a_{i}}\leq c_{0}.
\]
Therefore \begin{align}
\sum_{k=0}^{n} \phi(\rho_k)^{-\frac{\beta}{\beta^{\ast }}}\psi(\rho_k)\rho_k^{\delta}&=\sum_{k=0}^{n}b_{n-k}=b_{0}+\sum_{k=0}^{n-1}b_{0}\prod_{j=0}^{n-k-1}\frac{b_{n-k-j}}{b_{n-k-j-1}}\\
&\leq b_{0}\sum_{k=0}^{n}c_{0}^{n-k}\leq Cb_{0}=C\phi(\rho_n)^{-\frac{\beta}{\beta^{\ast }}}\psi(\rho_n)\rho_n^{\delta},
\end{align}
which implies \eqref{e.est2}.
By \eqref{e.quotient} and the convergence of the series in \eqref{e.quotient+}, we see that the sequence $\left\{\phi(\rho _{k})^{\frac{\beta }{\beta^{\ast }}}\psi (\rho_{k})^{-1}\rho _{k}^{-\delta }\right\}_{k\geq1}$ decreases to $0$ as $k\rightarrow\infty$.
\item Since { $\phi(\rho _{k+1})^{\delta }\phi(\rho _{k})^{-\delta }$ belong to a finite set $\left\{ t_{l_n}^{-\delta }\right\}_{n\geq 1}$} for all $k\geq0$, the proof is virtually identical with that of (1) and we omit it.\end{enumerate}
\end{proof}
In Lemma \ref{l.morrey}, we study the Morrey-Sobolev inequality for the space $\mathcal{F}_{p}$. Such inequality still
holds for the Besov spaces $B_{p,\infty }^{\beta }$ and $B_{p,p}^{\beta }$ when $\beta\in (\epsilon_{p}\beta^{\ast},\infty)$, {where $\epsilon_{p}$ is given in \eqref{e.epsilonp}.} The proof is adapted from \cite[Theorem 4.11(iii)]%
{GHL03} ($p=2$ therein) and does not use the gradient structure as in Lemma \ref{l.morrey}.

\begin{lemma}[Morrey-Sobolev inequality]
\label{lemmaMR}
For any $\beta\in (\epsilon_{p}\beta^{\ast},\infty)$ and any $u\in B_{p,\infty }^{\beta }$, there exists a continuous
version $\widetilde{u}\in C(K)$ satisfying $\widetilde{u}=u$ $\mu $-almost
everywhere on $K$ and
\begin{equation}
|\widetilde{u}(x)-\widetilde{u}(y)|^{p}\leq C\phi (d(x,y))^{\frac{\beta }{%
\beta^{\ast }}}\psi (d(x,y))^{-1}[u]_{B_{p,\infty }^{\beta }}^{p},
\label{MS_eq}
\end{equation}%
for all $x,$ $y\in K$, where $C$ is a positive constant.
\end{lemma}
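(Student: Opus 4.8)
The plan is to realize the continuous version $\widetilde{u}$ as the pointwise limit of the ball averages $u_{z,r}:=\fint_{B(z,r)}u\dif\mu$ along the scales $r=\rho_{n}$, and to control its oscillation by a telescoping (chaining) argument. The cornerstone is a center-uniform oscillation estimate: for every $z\in K$ and every $0<r\leq\diam(K)$,
\[
\fint_{B(z,r)}\bigl|u(y)-u_{z,r}\bigr|^{p}\dif\mu(y)\leq C\,\frac{\phi(r)^{\beta/\beta_{p}^{\ast}}}{\psi(r)}\,[u]_{B_{p,\infty}^{\beta}}^{p}.\tag{$\star$}
\]
To obtain $(\star)$ I would first bound the left side by the double average $\fint_{B(z,r)}\fint_{B(z,r)}|u(y_{1})-u(y_{2})|^{p}$ via Jensen's inequality, then use $B(z,r)\subset B(y_{1},2r)$ for $y_{1}\in B(z,r)$ together with the doubling estimate $\mu(B(x,r))\asymp\psi(r)$ of Proposition \ref{l.meas} to pass to an integral of $\fint_{B(y_{1},2r)}|u(y_{1})-u(y_{2})|^{p}$ over $y_{1}\in B(z,r)$. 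The decisive move is to enlarge this \emph{local} integral over $B(z,r)$ to the \emph{global} one over all of $K$, which is exactly $\phi(2r)^{\beta/\beta_{p}^{\ast}}\Phi_{u}^{\beta}(2r)\leq\phi(2r)^{\beta/\beta_{p}^{\ast}}[u]_{B_{p,\infty}^{\beta}}^{p}$; then $\phi(2r)\asymp\phi(r)$ (from the definition of $\phi$ and \eqref{scaling1}) and $\mu(B(z,r))\asymp\psi(r)$ deliver $(\star)$.

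Next I would run the chaining. Since $B(z,\rho_{k+1})\subset B(z,\rho_{k})$, Jensen and $(\star)$ give $|u_{z,\rho_{k+1}}-u_{z,\rho_{k}}|^{p}\lesssim\tfrac{\psi(\rho_{k})}{\psi(\rho_{k+1})}\fint_{B(z,\rho_{k})}|u-u_{z,\rho_{k}}|^{p}\lesssim\phi(\rho_{k})^{\beta/\beta_{p}^{\ast}}\psi(\rho_{k})^{-1}[u]_{B_{p,\infty}^{\beta}}^{p}$, the ratio $\psi(\rho_{k})/\psi(\rho_{k+1})$ being uniformly bounded. Summing the $p$-th roots over $k\geq n$, Proposition \ref{condB_p1}\ref{lb.est1} with $\delta=0$ (admissible because $\beta>\epsilon_{p}\beta_{p}^{\ast}$ makes the relevant exponent strictly positive, so the summand decays geometrically) yields $\sum_{k\geq n}\bigl(\phi(\rho_{k})^{\beta/\beta_{p}^{\ast}}\psi(\rho_{k})^{-1}\bigr)^{1/p}\leq C\bigl(\phi(\rho_{n})^{\beta/\beta_{p}^{\ast}}\psi(\rho_{n})^{-1}\bigr)^{1/p}$. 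Hence $\{u_{z,\rho_{k}}\}_{k}$ is Cauchy; I set $\widetilde{u}(z):=\lim_{k}u_{z,\rho_{k}}$, and the tail estimate $|\widetilde{u}(z)-u_{z,\rho_{n}}|\lesssim\bigl(\phi(\rho_{n})^{\beta/\beta_{p}^{\ast}}\psi(\rho_{n})^{-1}\bigr)^{1/p}[u]_{B_{p,\infty}^{\beta}}$ follows, tending to $0$ by \eqref{d1}. As $\mu$ is doubling, the Lebesgue differentiation theorem gives $u_{z,\rho_{k}}\to u(z)$ for $\mu$-a.e.\ $z$, so $\widetilde{u}=u$ $\mu$-a.e.

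Finally I would derive the modulus of continuity \eqref{MS_eq}. Given $x,y$ with $r:=d(x,y)$, choose $n$ with $\rho_{n+1}<r\leq\rho_{n}$ and split $\widetilde{u}(x)-\widetilde{u}(y)$ through $u_{x,\rho_{n}}$ and $u_{y,\rho_{n}}$. The two tail terms are the estimate just proved; for the middle term, $d(x,y)\leq\rho_{n}$ forces $B(x,\rho_{n}),B(y,\rho_{n})\subset B(x,2\rho_{n})$, so comparing each average to $u_{x,2\rho_{n}}$ and invoking $(\star)$ at radius $2\rho_{n}$ with doubling gives $|u_{x,\rho_{n}}-u_{y,\rho_{n}}|\lesssim\bigl(\phi(\rho_{n})^{\beta/\beta_{p}^{\ast}}\psi(\rho_{n})^{-1}\bigr)^{1/p}[u]_{B_{p,\infty}^{\beta}}$. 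Combining and using $\phi(\rho_{n})^{\beta/\beta_{p}^{\ast}}\psi(\rho_{n})^{-1}\asymp\phi(r)^{\beta/\beta_{p}^{\ast}}\psi(r)^{-1}$ on $\rho_{n+1}<r\leq\rho_{n}$ (again via \eqref{scaling1} and the definition of $\phi$) yields \eqref{MS_eq}; continuity of $\widetilde{u}$ is then immediate since the right-hand side vanishes as $r\to0$. I expect the main obstacle to be $(\star)$ itself, i.e.\ converting the globally averaged Besov functional $\Phi_{u}^{\beta}$ into a bound uniform in the center $z$; this forces the crude ``local $\leq$ global'' passage and leans entirely on the measure comparison of Proposition \ref{l.meas}, while the convergence of the scale sums is precisely where the hypothesis $\beta>\epsilon_{p}\beta_{p}^{\ast}$ (through Proposition \ref{condB_p1}) is indispensable.
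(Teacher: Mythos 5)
Your proposal is correct and follows essentially the same route as the paper's proof: ball averages at geometric scales, a Jensen/H\"older double-average estimate whose decisive step is the same ``local $\leq$ global'' passage using $\mu(B(x,r))\asymp\psi(r)$ (the paper's \eqref{20-3} is your $(\star)$ in two-center form), geometric-series summation of $\phi(\rho_k)^{\beta/(p\beta_p^\ast)}\psi(\rho_k)^{-1/p}$ exactly as in \eqref{206}, and the same three-term triangle-inequality split. The only difference is organizational: you define $\widetilde{u}$ everywhere as the limit of the averages and identify it with $u$ via the Lebesgue differentiation theorem, which makes the final extension step self-contained, whereas the paper establishes \eqref{205} at Lebesgue points and then invokes the standard extension procedure of \cite{GYZ2022PA}; both are valid.
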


\begin{proof}
For any $x \in K$ and $0<r\leq 2/3$, define
\begin{equation*}
u_{r}(x):=\frac{1}{\mu(B(x, r))} \int_{B(x, r)} u(\xi) \dif \mu(\xi).
\end{equation*}

We claim that for any $u \in B_{p,\infty }^{\beta }$, and all $x, y
\in K$ with $r=d(x, y) <2/3$, the following inequality holds:
\begin{equation}
\left|u_{r}(x)-u_{r}(y)\right| \leq C \psi (r)^{-1/p}\phi (r)^{\frac{\beta }{p\beta^{\ast }}%
}\sup_{r\in (0,3d(x,y)]}\left(\Phi _{u}^{\beta }(r)\right)^{1/p}.  \label{20-3}
\end{equation}

Indeed, letting $B_{1}=B(x,r)$, $B_{2}=B(y,r)$, we have
\begin{equation*}
u_{r}(x)=\frac{1}{\mu \left( B_{1}\right) }\int_{B_{1}}u(\xi )\dif\mu (\xi )=%
\frac{1}{\mu \left( B_{1}\right) \mu \left( B_{2}\right) }%
\int_{B_{1}}\int_{B_{2}}u(\xi )\dif\mu (\eta )\dif\mu (\xi ),
\end{equation*}%
and
\begin{equation*}
u_{r}(y)=\frac{1}{\mu \left( B_{1}\right) \mu \left( B_{2}\right) }%
\int_{B_{1}}\int_{B_{2}}u(\eta )\dif\mu (\eta )\dif\mu (\xi ).
\end{equation*}%
Assume that $\rho _{k+1}\leq 3r< \rho _{k}$, by the H{\"{o}}lder's inequality,
\begin{align*}
\left\vert u_{r}(x)-u_{r}(y)\right\vert ^{p}& =\left( \frac{1}{\mu \left(
B_{1}\right) \mu \left( B_{2}\right) }\int_{B_{1}}\int_{B_{2}}(u(\xi
)-u(\eta ))\dif\mu (\eta )\dif\mu (\xi )\right) ^{p}  \notag \\
& \leq \frac{1}{\mu \left( B_{1}\right) \mu \left( B_{2}\right) }%
\int_{B_{1}}\int_{B_{2}}|u(\xi )-u(\eta )|^{p}\dif\mu (\eta )\dif\mu (\xi )
\notag \\
& \leq C_{1}\psi (r)^{-2}\int_{K}\left[ \int_{B(\xi ,3r)}|u(\xi )-u(\eta
)|^{p}\dif\mu (\eta )\right] \dif\mu (\xi )  \notag \\
& \leq C_{2}\psi (r)^{-1}\phi (r)^{\frac{\beta }{\beta^{\ast }}%
}\sup_{r\in (0,3d(x,y)]}\Phi _{u}^{\beta }(r),
\end{align*}
thus showing \eqref{20-3}.

Next, let $L$ be the set of $\mu$-Lebesgue points of $u$, and fix $x\in L$.
Define $r_{0}=r$, $r_{k}={\rho_k} r$, we have $%
r_{k}+r_{k+1}= r_{k}+{l_{k+1}^{-1}}r_{k}<2r_{k}$ for $k\geq 0$. Similar to the above arguments for \eqref{20-3}, we have for any $u\in B_{p,\infty }^{\beta
}$,
\begin{equation*}
\left\vert u_{r_{k}}(x)-u_{r_{k+1}}(x)\right\vert ^{p}\leq C_{3}\psi
(r_{k})^{-1}\phi (r_{k})^{\frac{\beta }{\beta^{\ast }}}\sup_{r\in
(0,3d(x,y)]}\Phi _{u}^{\beta }(r).
\end{equation*}
Therefore, \begin{align}
 \left\vert u(x)-u_{r}(x)\right\vert & \leq \sum_{k=0}^{\infty }\left\vert
u_{r_{k}}(x)-u_{r_{k+1}}(x)\right\vert  \notag \\
& \leq C_{3}\sum_{k=0}^{\infty }\phi (r_{k})^{\frac{\beta }{p\beta
_{p}^{\ast }}}\psi (r_{k})^{-1/p}\sup_{r\in (0,3d(x,y)]}\left( \Phi
_{u}^{\beta }(r)\right) ^{1/p}  \notag   \\
& \leq C_{4}\phi (r)^{\frac{\beta }{p\beta^{\ast }}}\psi
(r)^{-1/p}\sup_{r\in (0,3d(x,y)]}\left( \Phi _{u}^{\beta }(r)\right) ^{1/p}\ \text{(similar to \eqref{e.est1})}.\label{206}
\end{align}%
Similar inequality holds for $\left\vert u(y)-u_{r}(y)\right\vert $. Combining \eqref{20-3} and \eqref{206}, we have
\begin{align}
|u(x)-u(y)|& \leq |u(x)-u_{r}(x)|+|u_{r}(x)-u_{r}(y)|+|u_{r}(y)-u(y)|  \notag
\\
& \leq C_{5}\phi (r)^{\frac{\beta }{p\beta^{\ast }}}\psi
(r)^{-1/p}\sup_{r\in (0,3d(x,y)]}\left( \Phi _{u}^{\beta }(r)\right) ^{1/p}
\notag \\
& \leq C_{5}\phi (r)^{\frac{\beta }{p\beta^{\ast }}}\psi
(r)^{-1/p}[u]_{B_{p,\infty }^{\beta }}  \label{205}
\end{align}%
for all $x,$ $y\in L$.

Finally, as $\phi (r)^{\frac{\beta }{\beta^{\ast }}}\psi
(r)^{-1}\rightarrow 0$ as $r\rightarrow0$ by \eqref{d1} with $\delta=0$, we can
use the standard procedure as in \cite[Lemma 2.1]{GYZ23} to obtain a
continuous version $\widetilde{u}\in C(K)$ for any $u\in B_{p,\infty
}^{\beta }$ and the desired inequality \eqref{MS_eq}.
\end{proof}

\begin{proposition}
There exists $C>1$ such that for any $\beta\in[0,\infty)$ and any $u\in L^{p}(K,\mu)$,
\begin{equation}
C^{-1}\sum_{n=0}^{\infty }\Phi_{u}^{\beta}(\rho_{n})\leq \lbrack u]_{B_{p,p}^{\beta }}^{p}\leq C\sum_{n=0}^{\infty }\Phi_{u}^{\beta}(\rho_{n})
. \label{40}
\end{equation}
In particular, $B_{p,p }^{\beta }$ is continuously embedded into $B_{p,\infty }^{\beta }$, namely, there exists $C>0$ such that $\lbrack u]_{B_{p,\infty}^{\beta }}^{p}\leq C\lbrack u]_{B_{p,p}^{\beta }}^{p}$ for all $u\in L^{p}(K,\mu)$.
\end{proposition}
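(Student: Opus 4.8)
The plan is to compare the integral defining $[u]_{B_{p,p}^{\beta}}^{p}$ with the series $\sum_{n}\Phi_{u}^{\beta}(\rho_{n})$ by a dyadic-type slicing of the radial variable, and then to treat the top scale $r=\rho_{0}=\diam(K)=2$ separately. First I would split
\[
[u]_{B_{p,p}^{\beta}}^{p}=\int_{0}^{\rho_{0}}\Phi_{u}^{\beta}(r)\frac{\dif r}{r}=\sum_{n=0}^{\infty}\int_{\rho_{n+1}}^{\rho_{n}}\Phi_{u}^{\beta}(r)\frac{\dif r}{r}.
\]
Since $\rho_{n}/\rho_{n+1}=l_{n+1}\in\{a,b\}$, one has $\int_{\rho_{n+1}}^{\rho_{n}}\frac{\dif r}{r}=\log l_{n+1}\in[\log 3,\log(a\vee b)]$, so on each slice the logarithmic length is comparable to $1$. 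Feeding in \eqref{e.comphi}: the upper estimate $\Phi_{u}^{\beta}(r)\leq C\Phi_{u}^{\beta}(\rho_{n})$ gives, after summing over $n\geq 0$, the right-hand inequality of \eqref{40}; the lower estimate $\Phi_{u}^{\beta}(r)\geq C^{-1}\Phi_{u}^{\beta}(\rho_{n+1})$ gives $[u]_{B_{p,p}^{\beta}}^{p}\geq C^{-1}\log 3\sum_{n\geq 1}\Phi_{u}^{\beta}(\rho_{n})$. Thus every term of the series except $n=0$ is immediately controlled in both directions.

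The main obstacle is the missing top term $\Phi_{u}^{\beta}(\rho_{0})$: the lower half of \eqref{e.comphi} always returns the value at the smaller radius $\rho_{n+1}$, and since $\rho_{0}$ has no slice to its right, $\Phi_{u}^{\beta}(\rho_{0})$ is never produced. Note that $\Phi_{u}^{\beta}(\rho_{0})$ is a fixed multiple of the full oscillation $\iint_{K\times K}|u(x)-u(y)|^{p}\dif\mu\dif\mu$, because $B(x,\rho_{0})\supseteq K$ forces $\mu(B(x,\rho_{0}))=1$; so it cannot be bounded by a purely local quantity such as $\Phi_{u}^{\beta}(\rho_{1})$ by scaling alone. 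Instead I would show that the single slice $(\rho_{1},\rho_{0}]$ already controls it, namely
\[
\Phi_{u}^{\beta}(\rho_{0})\leq C\int_{\rho_{1}}^{\rho_{0}}\Phi_{u}^{\beta}(r)\frac{\dif r}{r}\leq C[u]_{B_{p,p}^{\beta}}^{p}.
\]
On $(\rho_{1},\rho_{0}]$ both $\phi$ and $\psi$ are constant (equal to their values at $\rho_{0}$), and by Proposition \ref{l.meas} $\mu(B(x,r))\asymp 1$ there, so $\Phi_{u}^{\beta}(r)=\phi(\rho_{0})^{-\beta/\beta_{p}^{*}}I(r)$ with $I(r)\asymp J(r):=\iint_{\{d(x,y)<r\}}|u(x)-u(y)|^{p}\dif\mu\dif\mu$; in particular the $\beta$-dependent prefactor is common to every $r$ in this slice and to $r=\rho_{0}$, $J$ is nondecreasing, and $I(\rho_{0})=J(\rho_{0})$. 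Hence the reduction is to the purely metric-measure quantity $J$.

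The crux is then a quantitative ``no concentration at the diameter'' estimate: I claim there is $\epsilon_{0}\in(0,\rho_{0}-\rho_{1})$, independent of $u$ and of $\beta$, with $J(r)\geq\tfrac12 J(\rho_{0})$ for all $r\in[\rho_{0}-\epsilon_{0},\rho_{0}]$. Granting this, $\int_{\rho_{1}}^{\rho_{0}}J(r)\frac{\dif r}{r}\geq\tfrac12 J(\rho_{0})\log\frac{\rho_{0}}{\rho_{0}-\epsilon_{0}}$, which gives the displayed bound after restoring the common prefactor. To prove the claim I estimate the tail $J(\rho_{0})-J(r)=\iint_{\{d(x,y)\geq r\}}|u(x)-u(y)|^{p}\dif\mu\dif\mu$. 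Replacing $u$ by $u-\bar u$ with $\bar u=\fint_{K}u\dif\mu$ leaves all differences unchanged, and $|u(x)-u(y)|^{p}\leq 2^{p-1}(|u(x)-\bar u|^{p}+|u(y)-\bar u|^{p})$ together with the symmetry in $x,y$ yields
\[
J(\rho_{0})-J(r)\leq 2^{p}\Big(\sup_{x\in K}\mu\{y:d(x,y)\geq r\}\Big)\lVert u-\bar u\rVert_{L^{p}(K,\mu)}^{p}.
\]
By Jensen's inequality $\lVert u-\bar u\rVert_{L^{p}}^{p}\leq J(\rho_{0})$, so it suffices that $G(r):=\sup_{x}\mu\{y:d(x,y)\geq r\}\to0$ as $r\uparrow 2$, and then one picks $\epsilon_{0}$ with $2^{p}G(\rho_{0}-\epsilon_{0})\leq\tfrac12$. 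The decay of $G$ is an elementary planar fact: the only pairs of $K\subseteq K_{0}$ at distance $2$ are the two antipodal corner pairs, a pair at distance $\geq 2-\delta$ forces both points into $O(\sqrt\delta)$-neighbourhoods of antipodal corners, and such neighbourhoods have $\mu$-measure at most $c_{4}\psi(O(\sqrt\delta))\to0$ by Proposition \ref{l.meas}. This establishes \eqref{40}.

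For the ``in particular'' embedding I would combine \eqref{40} with the trivial bound $\sup_{n}\Phi_{u}^{\beta}(\rho_{n})\leq\sum_{n}\Phi_{u}^{\beta}(\rho_{n})$ and the comparison $\sup_{r}\Phi_{u}^{\beta}(r)\asymp\sup_{n}\Phi_{u}^{\beta}(\rho_{n})$ (again from \eqref{e.comphi}, since every $r$ lies in a slice on which $\Phi_{u}^{\beta}(r)\leq C\Phi_{u}^{\beta}(\rho_{n})$); this chain gives $[u]_{B_{p,\infty}^{\beta}}^{p}\leq C[u]_{B_{p,p}^{\beta}}^{p}$. I expect the radial slicing to be entirely routine; the genuine work is the uniform tail estimate $G(r)\to0$ and its use to recover the endpoint term $\Phi_{u}^{\beta}(\rho_{0})$, which is precisely the part where the absence of a slice past the diameter must be compensated by geometry of the measure rather than by \eqref{e.comphi}.
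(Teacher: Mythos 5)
Your proof is correct, and while the routine slicing part coincides with the paper's, your treatment of the crucial step is genuinely different. The paper performs exactly your decomposition of $[u]_{B_{p,p}^{\beta}}^{p}$ into the slices $(\rho_{n+1},\rho_{n}]$ and applies \eqref{e.comphi} to obtain the two-sided bounds \eqref{41}--\eqref{42}, and, like you, it is then left with the missing top term $\Phi_{u}^{\beta}(\rho_{0})$. At that point the paper invokes the chain condition of $K$ and adapts \cite[Corollary 2.2]{Yang.2018.PA} to produce, for each $n\geq 1$, a constant $C(n)$ with
\begin{equation*}
\Phi_{u}^{\beta}(\rho_{n})\geq C(n)^{-1}\int_{K}\int_{K}|u(x)-u(y)|^{p}\dif\mu(y)\dif\mu(x)\geq C(n)^{-1}\Phi_{u}^{\beta}(\rho_{0}),
\end{equation*}
so the $n=0$ term is absorbed into the $n\geq 1$ sum via \eqref{42}. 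You instead bound $\Phi_{u}^{\beta}(\rho_{0})$ by the integral over the single top slice $(\rho_{1},\rho_{0}]$, through the no-concentration estimate $J(r)\geq\tfrac{1}{2}J(\rho_{0})$ (in your notation) for $r$ near $2$, proved by centering at the mean, Jensen's inequality, and the planar fact that a near-diametral pair of $K_{0}$ must lie in small neighbourhoods of antipodal corners, whose $\mu$-measure is small by Proposition \ref{l.meas} (non-atomicity of $\mu$ would already suffice). Both routes are sound. The paper's chaining argument is the more portable one --- it works on any space satisfying the chain condition and shows that the global oscillation is dominated by the averaged energy at every scale --- but it is cited rather than proved. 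Your argument is self-contained and elementary, trades the chain condition for the concrete geometry of the square and its corners, and has the extra merit that the $\beta$-dependent prefactor $\phi(\rho_{0})^{-\beta/\beta_{p}^{\ast}}$ is common to both sides of your key comparison, making that step manifestly uniform in $\beta$; for the remaining $n\geq 1$ terms both you and the paper rely on the lower half of \eqref{e.comphi}, whose constant (as naturally proved) is uniform only for $\beta$ in a bounded range such as $[0,\beta_{p}^{\ast}]$ --- a caveat the two proofs share, given that the proposition asserts a single $C$ for all $\beta\in[0,\infty)$. Two harmless imprecisions in your write-up: $B(x,\rho_{0})$ need not contain $K$ (it can miss the corner antipodal to $x$), though $\mu(B(x,\rho_{0}))=1$ still holds because singletons are $\mu$-null; and a pair at distance at least $2-\delta$ in fact lies within $O(\delta)$, not merely $O(\sqrt{\delta})$, of an antipodal corner pair --- either rate makes your $G(r)$ tend to $0$.
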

\begin{proof}
Splitting the integral domain $(0,2]$ into $(\rho _{n+1},\rho
_{n}]\ (n\geq 0$), we have by \eqref{e.comphi} that
\begin{align}
\lbrack u]_{B_{p,p}^{\beta }}^{p} &=\int_{0}^{2}\Phi _{u}^{\beta }(r)\frac{%
\dif r}{r}=\sum_{n=0}^{\infty }\int_{\rho _{n+1}}^{\rho _{n}}\Phi
_{u}^{\beta }(r)\frac{\dif r}{r}  \notag \\
&\leq C \sum_{n=0}^{\infty }\int_{\rho _{n+1}}^{\rho _{n}}\Phi_{u}^{\beta}(\rho_{n})  \frac{\dif r}{r}  \leq C \log\left({\sup\limits_{n\geq 1}l_n}\right)\sum_{n=0}^{\infty }\Phi_{u}^{\beta}(\rho_{n})  \label{41}
\end{align}
and
\begin{align}
\lbrack u]_{B_{p,p}^{\beta }}^{p} =\sum_{n=0}^{\infty }\int_{\rho
_{n+1}}^{\rho _{n}}\Phi _{u}^{\beta }(r)\frac{\dif r}{r} \geq C^{-1}\sum_{n=0}^{\infty }\int_{\rho _{n+1}}^{\rho _{n}}\Phi_{u}^{\beta}(\rho_{n+1}) \frac{\dif r}{r} \geq C^{-1}\log\left({\inf\limits_{n\geq 1}l_n}\right)\sum_{n=1}^{\infty }\Phi_{u}^{\beta}(\rho_{n}).  \label{42}
\end{align}

Since $K$ satisfies the chain condition (see \cite[Definition 3.4]{GHL03}), we can use the argument in \cite[%
Corollary 2.2]{Yan18} to obtain that, there
exists some constant $C(n)>0$ depending on $n\geq 1$ such that%
\begin{align*}
\Phi_{u}^{\beta}(\rho_{n}) \geq C(n)^{-1}\int_{K}\int_{K}|u(x)-u(y)|^{p}\dif\mu (y)\dif\mu (x)
= C(n)^{-1}\Phi_{u}^{\beta}(\rho_{0}) .
\end{align*}
It follows by \eqref{42} that
\begin{equation}
\lbrack u]_{B_{p,p}^{\beta }}^{p}\geq c\sum_{n=0}^{\infty }\Phi_{u}^{\beta}(\rho_{n})
\label{43}
\end{equation}%
for some constant $c>0$. We obtain \eqref{40} by \eqref{41} and \eqref{43}. By \eqref{40} and \eqref{e.comphi}, we see that there exists $C>0$ such that $\lbrack u]_{B_{p,\infty}^{\beta }}^{p}\leq C\lbrack u]_{B_{p,p}^{\beta }}^{p}$ and consequently $B_{p,p}^{\beta }\hookrightarrow B_{p,\infty}^{\beta}$.
\end{proof}

Combining Lemma \ref{lemmaMR} and the above continuous embedding $B_{p,p }^{\beta }\hookrightarrow B_{p,\infty}^{\beta}$, we immediately derive the following Morrey-Sobolev inequality for $B_{p,p }^{\beta }$.
\begin{corollary}
\label{prop2}For $\beta\in (\epsilon_{p}\beta^{\ast},\infty)$ and $u\in
B_{p,p }^{\beta }$, there exists a continuous version $\widetilde{u}\in
C(K)$ such that
\begin{equation*}
|\widetilde{u}(x)-\widetilde{u}(y)|^p \leq C\phi (d(x,y))^{\frac{\beta }{\beta^{\ast }}}\psi (d(x,y))^{-1}[u]_{B_{p,p}^{\beta }}^p
\end{equation*}%
for all $x,$ $y\in K$, where $C$ is a positive constant.
\end{corollary}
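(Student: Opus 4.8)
The plan is to chain together two results already established just above the corollary, so the argument is essentially a one-line deduction dressed up with the bookkeeping about which seminorm appears on the right-hand side. First I would fix $\beta\in(\epsilon_{p}\beta_{p}^{\ast},\infty)$ and take an arbitrary $u\in B_{p,p}^{\beta}$. The preceding Proposition records the continuous embedding $B_{p,p}^{\beta}\hookrightarrow B_{p,\infty}^{\beta}$, that is, there is a constant $C_{0}>0$ (independent of $u$) with $[u]_{B_{p,\infty}^{\beta}}^{p}\leq C_{0}[u]_{B_{p,p}^{\beta}}^{p}$. In particular $u\in B_{p,\infty}^{\beta}$, so $u$ lies in the hypothesis class of Lemma~\ref{lemmaMR}.

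Next I would invoke Lemma~\ref{lemmaMR} directly. Since $\beta>\epsilon_{p}\beta_{p}^{\ast}$, that lemma furnishes a continuous version $\widetilde{u}\in C(K)$ with $\widetilde{u}=u$ $\mu$-almost everywhere on $K$ and, for a constant $C_{1}>0$,
\begin{equation*}
|\widetilde{u}(x)-\widetilde{u}(y)|^{p}\leq C_{1}\,\phi(d(x,y))^{\frac{\beta}{\beta_{p}^{\ast}}}\psi(d(x,y))^{-1}[u]_{B_{p,\infty}^{\beta}}^{p},\qquad x,y\in K.
\end{equation*}
Because the continuous version agreeing with $u$ $\mu$-a.e.\ is unique (two continuous functions equal $\mu$-a.e.\ coincide, as $\supp\mu=K$), this $\widetilde{u}$ is the object we want, and no separate construction is needed for the $B_{p,p}^{\beta}$ statement. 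Substituting the embedding bound $[u]_{B_{p,\infty}^{\beta}}^{p}\leq C_{0}[u]_{B_{p,p}^{\beta}}^{p}$ into the displayed inequality yields
\begin{equation*}
|\widetilde{u}(x)-\widetilde{u}(y)|^{p}\leq C_{0}C_{1}\,\phi(d(x,y))^{\frac{\beta}{\beta_{p}^{\ast}}}\psi(d(x,y))^{-1}[u]_{B_{p,p}^{\beta}}^{p},\qquad x,y\in K,
\end{equation*}
which is exactly the claimed estimate with $C:=C_{0}C_{1}$.

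There is no genuine obstacle here: the analytic substance—the iterated averaging over balls $B(x,r_{k})$, the geometric-series summation controlled by \eqref{e.est1}, and the passage from Lebesgue points to a continuous representative via \eqref{d1}—has all been carried out in the proof of Lemma~\ref{lemmaMR}, while the comparison of the two Besov seminorms is supplied by the embedding Proposition via \eqref{40} and \eqref{e.comphi}. The only point deserving a word of care is the identification of representatives: one should remark that the $\widetilde{u}$ produced for $u\in B_{p,\infty}^{\beta}$ is the same continuous function serving $u\in B_{p,p}^{\beta}$, precisely because continuity plus $\mu$-a.e.\ agreement pins it down uniquely. With that observation in place, the corollary follows immediately, as the text announces.
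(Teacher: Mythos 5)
Your proposal is correct and follows exactly the paper's intended argument: the paper derives Corollary \ref{prop2} in one line by combining Lemma \ref{lemmaMR} with the continuous embedding $B_{p,p}^{\beta}\hookrightarrow B_{p,\infty}^{\beta}$ (i.e.\ $[u]_{B_{p,\infty}^{\beta}}^{p}\leq C[u]_{B_{p,p}^{\beta}}^{p}$) from the preceding proposition, which is precisely your chain of deductions. Your added remark that the continuous representative is unique because $\supp\mu=K$ is a harmless refinement of the same argument.
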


\begin{remark}
In view of Lemma \ref{lemmaMR} and Corollary \ref{prop2}, we always regard $B_{p,\infty}^{\beta}$ and $B_{p,p}^{\beta}$ as subsets of $C(K)$ whenever $\beta\in (\epsilon_{p}\beta^{\ast},\infty)$. That is, we represent every function $u$ in $B_{p,\infty}^{\beta},
B_{p,p}^{\beta}$ by its continuous version. In particular, for such $u$, the energies in Definitions \ref{penergy_1} and \ref{defE} are well-defined.
\end{remark}

\subsection{Norm equivalences and critical Besov exponent}\label{subs.noeq}
For any positive integer $m$, let $\mu _{m}$ be the Borel measure on $V_{m}$ given by
\begin{equation}\label{e.mu_n}
\mu _{m}:=\frac{1}{\# V_{m}}\sum_{a\in V_{m}}\delta _{a}.
\end{equation}
Technically, we will use the discrete measures $\mu_{m}$ to approximate $\mu$ and \textit{convert the estimates of the integrations in \eqref{e.belip} to the estimates of discrete sums}. To be precise, denote the \emph{ball-energy}
\begin{equation}
I_{m,n}(u):=\int_{K}\int_{B(x,\rho_{n})}|u(x)-u(y)|^{p}\dif \mu _{m}(y)\dif %
\mu _{m}(x).  \label{201}
\end{equation}
Since $\mu _{m}$ weak $\ast$-converges to $\mu $ when $m\rightarrow\infty$, we have the weak $\ast$-convergence
of $\mu _{m}\times \mu _{m}$ to $\mu\times \mu$. For any $u\in C(K)$, the
set of discontinuity points of $(x,y)\mapsto\mathds{1}_{B(x,\rho_{n})}|u(x)-u(y)|^{p} $ is $\mu\times \mu$-null. {By an argument similar to \cite[Remark 1]%
{GL20}, we have that for any $\beta\in (\epsilon_{p}\beta^{\ast},\infty)$ and
any $u\in C(K)$, the following limit exists
\begin{equation} \label{mu_m}
I_{\infty ,n}(u):=\lim_{m\rightarrow \infty }I_{m,n}(u)=\int_{K}\int_{B(x,\rho
_{n})}|u(x)-u(y)|^{p}\dif \mu(y)\dif \mu(x).
\end{equation}}
{
Indeed, the weak $\ast$-convergence
of $\mu _{m}\times \mu _{m}$ to $\mu\times \mu$ implies that $\int_{K\times K}f\dif (\mu_{m}\times \mu_{m})$ converges to $\int_{K\times K}f\dif (\mu \times \mu )$ for any measurable function $f$ as long as the set of discontinuous points of $f$ have $\mu$-measure $0$. For each $n\in\mathbb{N}$, we define \begin{equation}
f_{n}(x,y):=\mathds{1}_{B(x,\rho_{n})}|u(x)-u(y)|^{p},\ \forall x,y\in K.
\end{equation}
By Proposition \ref{l.meas} we know that $\liminf_{r\to 0}\frac{\log \mu(B(x,r))}{\log r}=:\alpha^{\prime}>1$ and therefore the Hausdorff dimension of any measurable set with positive $\mu$-measure is at least $\alpha^{\prime}>1$ (see \cite[Proposition 2.3]{Fal97}). Since every $\partial B(x,r)\subset \mathbb{R}^{2}$ has Hausdorff dimension no greater than $1$, we know that $\mu(\partial B(x,r))=0$ for all $x\in K$ and all $r>0$. Since $u$ is continuous, we know that the set of discontinuous points of $f_{n}$ has $\mu\times \mu$-measure $0$, showing \eqref{mu_m}.
}

With this notion, we  see that
\begin{equation}
\Phi_{u}^{\beta}(r)\asymp \phi (\rho _{n})^{-\frac{\beta }{\beta
_{p}^{\ast }}}\psi (\rho _{n})^{-1}I_{\infty ,n}(u)\ \  \text{for}\ \rho_{n+1}<r\leq\rho_{n}.  \label{In2}
\end{equation}

The following two lemmas compares the energies in Definitions \ref{def[} and \ref{defE}.
\begin{lemma}\label{lem6.2}
For any $\beta\in (\epsilon_{p}\beta^{\ast},\infty)$, there exists $C>0$ such that
\begin{equation}
I_{\infty ,n}(u)\leq C\phi (\rho _{n})^{\frac{\beta }{\beta
_{p}^{\ast }}}\psi (\rho _{n})\sup_{k\geq n}\mathcal{E}_{k}^{\beta }(u)\ \text{ for all $u\in C(K)$}.
\label{eq6.3}
\end{equation}
In other words,
\begin{equation}
\Phi_{u}^{\beta}(\rho_{n})\leq C\sup_{k\geq n}\mathcal{E}_{k}^{\beta }(u)\ \text{ for all $u\in C(K)$}.
\label{eq6.3+}
\end{equation}
\end{lemma}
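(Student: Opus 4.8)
The plan is to establish the first inequality \eqref{eq6.3}; the equivalent form \eqref{eq6.3+} then follows immediately from the comparison \eqref{In2} together with $\mu(B(x,\rho_n))\asymp\psi(\rho_n)$ from Proposition \ref{l.meas}. We may assume $M:=\sup_{k\ge n}\mathcal{E}_k^{\beta}(u)<\infty$, since otherwise there is nothing to prove. The overall strategy is a multiscale chaining argument: control the oscillation of $u$ across balls of radius $\rho_n$ by telescoping along the cell tree down to arbitrarily fine scales, charge each increment to the discrete edge-energies $\mathcal{E}_{k}^{\beta}$, and finally collapse the resulting geometric series. The standing restriction $\beta>\epsilon_p\beta_p^{\ast}$ is inherited from the identification \eqref{mu_m}--\eqref{In2}; the chaining estimate itself will use only that the scale ratios $t_{l_{k+1}}=\phi(\rho_k)/\phi(\rho_{k+1})$ are bounded, which holds under condition \eqref{e.con.a}.

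First I would localize. By the geometry used in the proof of Proposition \ref{l.meas}, any ball $B(x,\rho_n)$ is contained in a level-$((n-2)\vee 0)$ cell and meets only boundedly many of them. Hence, writing $v_0^w$ for a fixed anchor vertex of the cell $K_w$ ($w\in W_{n-2}$), the inequality $|u(x)-u(y)|^p\le 2^{p-1}(|u(x)-u(v_0^w)|^p+|u(y)-u(v_0^w)|^p)$ applied to pairs $x,y$ in a common cell, combined with $\mu(B(x,\rho_n))\asymp\psi(\rho_n)$ and Fubini, reduces \eqref{eq6.3} to
\[
\sum_{w\in W_{n-2}}\int_{K_w}\bigl|u(x)-u(v_0^w)\bigr|^p\,\dif\mu(x)\le C\,\phi(\rho_n)^{\beta/\beta_p^{\ast}}M .
\]
For $x\in K_w$ let $v^{(k)}(x)$ be the anchor vertex of the level-$k$ cell containing $x$ (so $v^{(n-2)}(x)=v_0^w$); then $v^{(k)}(x)\to x$, whence $u(v^{(k)}(x))\to u(x)$ by continuity, and $u(x)-u(v_0^w)=\sum_{k\ge n-2}\bigl(u(v^{(k+1)}(x))-u(v^{(k)}(x))\bigr)$. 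Since $v^{(k)}(x)$ and $v^{(k+1)}(x)$ are vertices of $V_{k+1}$ lying in the single level-$k$ cell $K_{[w^x]_k}$, they are joined by a path in $(V_{k+1},E_{k+1})$ of length at most $Cl_{k+1}$ (bounded under \eqref{e.con.a}); the convexity inequality used in the proof of Proposition \ref{VE-mono} then bounds the $p$-th power of the $k$-th increment by $l_{k+1}^{p-1}$ times the sum of $|u(\cdot)-u(\cdot)|^p$ over the level-$(k+1)$ edges inside that cell.

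I would then apply Jensen's inequality with geometric weights $\lambda_k=\phi(\rho_k)^{s}$ for a small $s\in(0,\beta/((p-1)\beta_p^{\ast}))$ to pass from $(\sum_k\cdots)^p$ to $\sum_k\lambda_k^{1-p}(\cdots)^p$, integrate in $x$ using $\mu(K_v)=\psi(\rho_k)$ from \eqref{e.defmu}, and sum over $w$. Because every level-$(k+1)$ edge lies in exactly one level-$k$ cell, summing the within-cell edge-energies over all cells recovers the global edge-sum, which by \eqref{En} equals $\phi(\rho_{k+1})^{\beta/\beta_p^{\ast}}\psi(\rho_{k+1})^{-1}\mathcal{E}_{k+1}^{\beta}(u)$. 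Using $\psi(\rho_k)/\psi(\rho_{k+1})\asymp l_{k+1}$ and $t_{l_{k+1}}=\phi(\rho_k)/\phi(\rho_{k+1})$, the contribution of scale $k$ works out to $\asymp\phi(\rho_k)^{s(1-p)+\beta/\beta_p^{\ast}}M'$, where $M'=\sup_{k\ge n-1}\mathcal{E}_k^{\beta}(u)$. Since $\phi(\rho_{k+1})/\phi(\rho_k)=t_{l_{k+1}}^{-1}\le (t_a\wedge t_b)^{-1}<1$, this is a geometric series dominated by its leading term at $k=n-2$ (in the spirit of the estimates \eqref{e.est1}--\eqref{e.est2} of Proposition \ref{condB_p1}), and the exponents are arranged so that the product of the two Jensen factors telescopes exactly to $\phi(\rho_{n-2})^{\beta/\beta_p^{\ast}}\asymp\phi(\rho_n)^{\beta/\beta_p^{\ast}}$ (the ratio being bounded because $l_k\in\{a,b\}$). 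Finally, the two coarsest energies $\mathcal{E}_{n-2}^{\beta}(u),\mathcal{E}_{n-1}^{\beta}(u)$ lurking in $M'$ are reabsorbed into $\sup_{k\ge n}\mathcal{E}_k^{\beta}(u)$ by one further use of the convexity inequality, which gives $\mathcal{E}_{k-1}^{\beta}(u)\le C\mathcal{E}_k^{\beta}(u)$ with $C$ bounded under \eqref{e.con.a}; thus $M'\le CM$ and \eqref{eq6.3} follows.

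The step I expect to be the main obstacle is the organization of the telescoping together with the choice of weights: one must charge each scale-$k$ increment to precisely the level-$(k+1)$ edge-energy of the correct cell, and then calibrate $\lambda_k$ so that the geometric series collapses to the single factor $\phi(\rho_n)^{\beta/\beta_p^{\ast}}$ with no loss in the exponent. The two supporting geometric inputs --- the bounded overlap of coarse cells meeting a ball (so the localization costs only a constant) and the bound $Cl_{k+1}$ on the in-cell geodesic length (so the convexity inequality produces the correct power of $l_{k+1}$) --- are the delicate features special to the Vicsek gradient structure, and getting their interplay right is where the real work lies.
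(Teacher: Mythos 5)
Your argument is correct in substance and reaches \eqref{eq6.3}, but by a genuinely different route than the paper --- and in fact a stronger one. The paper anchors each pair $x,y$ with $d(x,y)\le\rho_n$ at a common level-$n$ vertex of two touching level-$n$ cells, works with the discretizations $I_{m,n}$ (letting $m\to\infty$ at the end), chains vertex-to-vertex along nested cells, applies H\"older with weights $\psi(\rho_n)^{-1}\psi(\rho_k)$, counts how many pairs use a given level-$(k+1)$ edge, and closes with $\sum_{k\ge n}\phi(\rho_k)^{\beta/\beta_p^{\ast}}\psi(\rho_k)^{-1}\le C\phi(\rho_n)^{\beta/\beta_p^{\ast}}\psi(\rho_n)^{-1}$, i.e.\ \eqref{e.est1} with $\delta=0$. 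That final summation --- not the identification \eqref{mu_m}--\eqref{In2}, which holds for every $u\in C(K)$ with no condition on $\beta$ --- is precisely where $\beta>\epsilon_p\beta_p^{\ast}$ enters, so your side remark about the provenance of the restriction misreads the paper's proof, though it does not affect yours. Your scheme differs in two ways: you charge each telescoping increment to the whole level-$(k+1)$ edge-energy of the ambient level-$k$ cell (so integrating in $x$ weights each cell by $\mu(K_v)=\psi(\rho_k)$ and no multiplicity counting is needed), and you take H\"older weights $\lambda_k=\phi(\rho_k)^{s}$ with $0<s<\beta/((p-1)\beta_p^{\ast})$, so the closing series $\sum_k\phi(\rho_k)^{s(1-p)+\beta/\beta_p^{\ast}}$ converges geometrically for \emph{every} $\beta>0$ and the prefactor $\bigl(\sum_k\lambda_k\bigr)^{p-1}\asymp\phi(\rho_{n-2})^{s(p-1)}$ cancels exactly; your bookkeeping ($l_{k+1}^{p-1}\psi(\rho_k)\psi(\rho_{k+1})^{-1}\asymp t_{l_{k+1}}=\phi(\rho_k)/\phi(\rho_{k+1})$) is right, as is the reabsorption step $\mathcal{E}_{k-1}^{\beta}(u)\le t_{l_k}^{1-\beta/\beta_p^{\ast}}\mathcal{E}_{k}^{\beta}(u)$ via the convexity inequality of Proposition \ref{VE-mono}. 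The upshot: your weighting proves \eqref{eq6.3} on the full range $\beta\in(0,\infty)$, the threshold $\epsilon_p\beta_p^{\ast}$ being genuinely needed only for the reverse bound (Lemma \ref{lem6.3}), whereas the paper's weights are the more standard choice and keep that threshold visible. Two loose ends you should repair: first, pairs $x,y$ with $d(x,y)\le\rho_n$ need not lie in a common level-$(n-2)$ cell --- they may sit in two cells meeting at a corner --- so such pairs must be anchored at the shared vertex (exactly the paper's device); this costs nothing, since your chaining works with any of the boundedly many vertices of $K_w$ in place of $v_0^w$ and each cell has boundedly many neighbours. Second, $B(x,\rho_n)$ is not in general contained in a single level-$(n-2)$ cell (take $x$ a shared corner); what you actually use, and what is true in the proof of Proposition \ref{l.meas}, is bounded overlap.
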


\begin{proof}
We first estimate $I_{m,n}(u)$ for all integers $m>n\geq 0$. For any $x,y\in K$
with $d(x,y)\leq \rho _{n}$ and $x\in K_{w}$ for some $w\in W_{n}$, due to
the geometry of $K$, there exists a word $\widetilde{w}\in W_{n}$ (not necessarily
distinct from $w$) such that $y\in K_{\widetilde{w}}$ and $K_{\widetilde{w}}\cap
K_{w}\neq \emptyset $. Therefore,
\begin{align*}
I_{m,n}(u)&\leq  \sum_{\substack{w,\widetilde{w}\in W_{n}\\ K_{\widetilde{w}}\cap
K_{w}\neq \emptyset}}\int_{K_{w}}\int_{K_{\widetilde{w}%
}}|u(x)-u(y)|^{p}\dif\mu _{m}(y)\dif\mu _{m}(x) \\
&= \sum_{\substack{w,\widetilde{w}\in W_{n}\\ K_{\widetilde{w}}\cap
K_{w}\neq \emptyset}}\sum_{x\in K_{w}\cap V_{m}}\sum_{y\in K_{\widetilde{%
w}}\cap V_{m}}\frac{1}{\# V_{m}^{2}}|u(x)-u(y)|^{p}.
\end{align*}%
For every pair $(w,\widetilde{w})\in W_{n}\times W_{n}$ with $K_{\widetilde{w}}\cap
K_{w}\neq \emptyset$, there exists a
common vertex $z\in K_{w}\cap K_{\widetilde{w}}\cap V_n$ (as in the proof of Proposition \ref{p.coding}). Moreover, since $%
|u(x)-u(y)|^{p}\leq 2^{p-1}\left(|u(x)-u(z)|^{p}+|u(z)-u(y)|^{p}\right)$, we have
\begin{align*}
I_{m,n}(u)&\leq  2^{p-1}\sum_{w,\widetilde{w}\in W_{n}}\sum_{x\in K_{w}\cap
V_{m}}\sum_{y\in K_{\widetilde{w}}\cap V_{m}}\frac{1}{\# V_{m}^{2}}\left(
|u(x)-u(z)|^{p}+|u(z)-u(y)|^{p}\right) \\
&\leq  C_{1}\sum_{w\in W_n}\sum_{x\in K_{w}\cap V_{m}}\sum_{z\in K_{w}\cap V_n}\frac{%
\# K_{\widetilde{w}}\cap V_{m}}{\# V_{m}^{2}}|u(x)-u(z)|^{p} \\
&\leq  C_{1}\psi (\rho _{n})\psi (\rho _{m})\sum_{w\in W_n}\sum_{x\in K_{w}\cap
V_{m}}\sum_{z\in K_{w}\cap V_n}|u(x)-u(z)|^{p},
\end{align*}%
where we have used $\# V_{m}\asymp \psi (\rho _{m})^{-1}$ and $\# K_{\widetilde{w}}\cap V_{m}\asymp \psi (\rho _{m})^{-1}\psi (\rho _{n})$ in the third line.

Then we estimate $|u(x)-u(z)|^{p}$. For every $w\in W_{n}$, $x\in K_{w}\cap V_{m}$ and $z\in
K_{w}\cap V_n$, we pick (and fix) a decreasing sequence of cells $%
\{K_{w_{k}}\}_{k=n}^{m}$ such that $w_{k}\in W_k$ with $z\in K_{w_{n}}\cap
V_{n} $, $x\in K_{w_{m}}\cap V_{m}$. Then we obtain a sequence of vertices $%
\{z=x_{n},x_{n+1},\cdots ,x_{m}=x\}$ such that
\begin{equation}
 x_{k}\in K_{w_{k}}\cap V_{k} \ \text{for}\ \ k=n,\cdots ,m. \label{ds}
\end{equation}
By H\"{o}lder's inequality,
\begin{align*}
|u(z)-u(x)|^{p}&\leq \left( \sum_{k=n}^{m-1}\left( \psi (\rho
_{n})^{-1}\psi (\rho _{k})\right) ^{q/p}\right) ^{p/q}\left(
\sum_{k=n}^{m-1}\psi (\rho _{n})\psi (\rho
_{k})^{-1}|u(x_{k})-u(x_{k+1})|^{p}\right) \\
&\leq  C_{2}\sum_{k=n}^{m-1}\psi (\rho _{n})\psi (\rho
_{k})^{-1}|u(x_{k})-u(x_{k+1})|^{p}.
\end{align*}%
Note that the cardinality of $(x,z)\in (K_{w}\cap V_{m})\times (K_{w}\cap V_n)$
with $(s,t)=(x_{k},x_{k+1})$ is no greater than $C^{\prime }\psi (\rho
_{k})\psi (\rho _{m})^{-1}$ for some $C^{\prime }>0$, since the cardinality of $%
K_{w}\cap V_n$ is uniformly bounded and the cardinality of $K_{w}\cap V_{m}$ is
equivalent to the total number of level-$m$ cells located in the level-$k$ cells containing $x_{k}$. Hence,
\begin{align}
I_{m,n}(u)& \leq C_{3}\psi (\rho _{n})\psi (\rho
_{m})\sum_{w\in W_n}\sum_{\substack{x\in K_{w}\cap V_{m}\\
z\in K_{w}\cap V_n}}\sum_{k=n}^{m-1}\psi (\rho _{n})\psi (\rho
_{k})^{-1}|u(x_{k})-u(x_{k+1})|^{p}  \notag \\
& \leq C_{3}\psi (\rho _{n})\psi (\rho
_{m})\sum_{w\in W_n}\sum_{k=n}^{m-1}\sum_{\QATOP{w^{\prime }\in W_k}{K_{w^{\prime
}}\subset K_{w}}}\sum_{\substack{ (x,z)\in (K_{w}\cap V_{m})\times (K_{w}\cap V_n) \\
(s,t)\in (K_{w^{\prime }}\cap V_{k})\times (K_{w^{\prime
}}\cap V_{k+1})}}\psi (\rho _{n})\psi (\rho _{k})^{-1}|u(s)-u(t)|^{p}  \notag
\\
& \leq C_{4}\psi (\rho _{n})\psi (\rho
_{m})\sum_{w\in W_n}\sum_{k=n}^{m-1}\sum_{\QATOP{w^{\prime }\in W_k}{K_{w^{\prime
}}\subset K_{w}}}\sum_{s,t\in K_{w^{\prime }}\cap V_{k+1}}\psi (\rho
_{m})^{-1}\psi (\rho _{n})|u(s)-u(t)|^{p}  \notag \\
& =C_{4}\psi (\rho _{n})^{2}\sum_{k=n}^{m-1}\sum_{w\in W_n}\sum_{\QATOP{%
w^{\prime }\in W_k}{K_{w^{\prime }}\subset K_{w}}}\sum_{s,t\in K_{w^{\prime
}}\cap V_{k+1}}|u(s)-u(t)|^{p}  \notag \\
& \leq C_{5}\psi (\rho _{n})^{2}\sum_{k=n}^{m-1}\sum_{\substack{s,t\in V_{k+1}\\s\sim
t}}|u(s)-u(t)|^{p}  \notag \\
& \leq 2C_{5}\psi (\rho _{n})^{2}\sum_{k=n}^{m}\phi (\rho _{k})^{\frac{\beta
}{\beta^{\ast }}}\psi (\rho _{k})^{-1}\sup_{k\geq n}\mathcal{E}%
_{k}^{\beta }(u)\text{ \ (by \eqref{En})},  \label{lem1.1}
\end{align}%
where in the second inequality we have used the fact that $(s,t)=(x_{k},x_{k+1})\in (K_{w^{\prime }}\cap V_{k})\times (K_{w^{\prime
}}\cap V_{k+1})$ with $w^{\prime }=w_k$ by \eqref{ds}, and in the third inequality we use $(K_{w^{\prime }}\cap V_{k})\subset (K_{w^{\prime
}}\cap V_{k+1})$. Therefore, we obtain by \eqref{lem1.1} and \eqref{e.est1} with $\delta =0$ that
\begin{equation*}
I_{m,n}(u)\leq C_{6}\psi (\rho _{n})^{2}\phi (\rho _{n})^{\frac{\beta }{%
\beta^{\ast }}}\psi ^{-1}(\rho _{n})\sup_{k\geq n}\mathcal{E}%
_{k}^{\beta }(u)=C_{6}\phi (\rho _{n})^{\frac{\beta }{\beta^{\ast }}%
}\psi (\rho _{n})\sup_{k\geq n}\mathcal{E}_{k}^{\beta }(u).
\end{equation*}%
Letting $m\rightarrow \infty $, we complete the proof by \eqref{mu_m}.
\end{proof}

Denote the \emph{ring-energy} by
\begin{equation*}
I_{n}(u):=\int_{K}\int_{\{\rho_{n+1}\leq d(x,y)<\rho _{n}\}}|u(x)-u(y)|^{p}%
\dif \mu(y)\dif \mu(x).
\end{equation*}%

\begin{lemma}
\label{lem6.3} For $\beta\in (\epsilon_{p}\beta^{\ast},\infty)$ and $u\in C(K)$, we have
\begin{equation}
\mathcal{E}_{n}^{\beta }(u)\leq C\sup_{k\geq n}\Phi _{u}^{\beta }(\rho
_{k}).  \label{eq6.4}
\end{equation}
\end{lemma}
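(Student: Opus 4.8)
The plan is to compare the discrete energy $\mathcal{E}_n^\beta(u)$ in \eqref{En} with the continuous ball-energies $I_{\infty,k}(u)$ (equivalently with $\Phi_u^\beta(\rho_k)$ via \eqref{In2}) by replacing each vertex value $u(x)$ with a local average. For $x\in V_n$ set $u_{\rho_n/4}(x):=\fint_{B(x,\rho_n/4)}u\dif\mu$, and for adjacent $x\sim y$ (so that $d(x,y)=\rho_n/2$) write
\[
|u(x)-u(y)|^p \le 3^{p-1}\Big(|u(x)-u_{\rho_n/4}(x)|^p + |u_{\rho_n/4}(x)-u_{\rho_n/4}(y)|^p + |u_{\rho_n/4}(y)-u(y)|^p\Big).
\]
I would estimate the ``middle'' averaged difference and the two ``tail'' oscillation terms separately, sum over the bounded-degree edge set $E_n$, and multiply by the normalising factor $\phi(\rho_n)^{-\beta/\beta_p^\ast}\psi(\rho_n)$. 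The radius $\rho_n/4$ is chosen so that the middle term stays at scale $\rho_n$ rather than spilling to the coarser level $n-1$.

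For the middle term, Jensen's inequality gives $|u_{\rho_n/4}(x)-u_{\rho_n/4}(y)|^p\le\fint_{B(x,\rho_n/4)}\fint_{B(y,\rho_n/4)}|u(\xi)-u(\eta)|^p\dif\mu(\eta)\dif\mu(\xi)$; since there $d(\xi,\eta)\le\rho_n$, one has $B(y,\rho_n/4)\subset B(\xi,\rho_n)$, and using $\mu(B(\cdot,\rho_n/4))\asymp\psi(\rho_n)$ (Proposition \ref{l.meas}) together with the bounded overlap of $\{B(x,\rho_n/4)\}_{x\in V_n}$, summing over edges collapses to $C\psi(\rho_n)^{-2}I_{\infty,n}(u)$. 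By \eqref{In2} this is $\asymp C\psi(\rho_n)^{-1}\phi(\rho_n)^{\beta/\beta_p^\ast}\Phi_u^\beta(\rho_n)$, so after multiplying by $\phi(\rho_n)^{-\beta/\beta_p^\ast}\psi(\rho_n)$ we obtain exactly $C\Phi_u^\beta(\rho_n)\le C\sup_{k\ge n}\Phi_u^\beta(\rho_k)$.

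For each tail term I would telescope over scales: with $r_k:=\rho_k/4$ $(k\ge n)$ and $u(x)=\lim_k u_{r_k}(x)$ by continuity, $u(x)-u_{\rho_n/4}(x)=\sum_{k\ge n}(u_{r_{k+1}}(x)-u_{r_k}(x))$, where as above each increment obeys $|u_{r_{k+1}}(x)-u_{r_k}(x)|^p\le C\psi(\rho_k)^{-2}J_k(x)$ with $J_k(x):=\int_{B(x,r_k)}\int_{B(\xi,\rho_k)}|u(\xi)-u(\eta)|^p$. The essential step is to raise the telescoping sum to the $p$-th power by a \emph{weighted} Hölder inequality over $k$ (as in the proof of Lemma \ref{lem6.2}), with weights $\lambda_k:=\rho_k^{\delta/p}$ and $q=p/(p-1)$, and only \emph{then} sum over $x\in V_n$, so that bounded overlap yields $\sum_{x\in V_n}J_k(x)\le CI_{\infty,k}(u)$. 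Choosing $\delta\in\big(0,(\tfrac{\beta}{\beta_p^\ast}\beta_a^{(p)}-\alpha_a)\wedge(\tfrac{\beta}{\beta_p^\ast}\beta_b^{(p)}-\alpha_b)\big)$ — a nonempty range precisely because $\beta>\epsilon_p\beta_p^\ast$ by Proposition \ref{condB_p1}\ref{lb.est1} — makes $(\sum_{k\ge n}\lambda_k^q)^{p/q}\asymp\rho_n^\delta$ a convergent geometric factor, while $\sum_{k\ge n}\lambda_k^{-p}\psi(\rho_k)^{-1}\phi(\rho_k)^{\beta/\beta_p^\ast}=\sum_{k\ge n}\rho_k^{-\delta}\psi(\rho_k)^{-1}\phi(\rho_k)^{\beta/\beta_p^\ast}\le C\rho_n^{-\delta}\psi(\rho_n)^{-1}\phi(\rho_n)^{\beta/\beta_p^\ast}$ by \eqref{e.est1}. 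Combined with $I_{\infty,k}(u)\le C\phi(\rho_k)^{\beta/\beta_p^\ast}\psi(\rho_k)\sup_{j\ge n}\Phi_u^\beta(\rho_j)$ (again \eqref{In2}), these give $\sum_{x\in V_n}|u(x)-u_{\rho_n/4}(x)|^p\le C\phi(\rho_n)^{\beta/\beta_p^\ast}\psi(\rho_n)^{-1}\sup_{k\ge n}\Phi_u^\beta(\rho_k)$, and the normalising factor cancels as before, completing \eqref{eq6.4}.

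The hard part is exactly this tail estimate. A naive use of the pointwise Morrey--Sobolev inequality (Lemma \ref{lemmaMR}) bounds each $|u(x)-u_{\rho_n/4}(x)|^p$ by $\sup_r\Phi_u^\beta(r)$ and, summed over the $\asymp\psi(\rho_n)^{-1}$ vertices of $V_n$, loses a divergent factor $\psi(\rho_n)^{-1}$. The remedy is genuine $L^p$-cancellation: one must sum the local ball-energies $J_k(x)$ over $x$ \emph{before} passing to suprema, exploiting bounded overlap so that $\sum_{x}J_k(x)\le CI_{\infty,k}(u)$, and pair this with a scale-weighted Hölder inequality whose exponent $\delta$ is tuned to the geometric decay \eqref{e.est1}; the admissibility of such $\delta$ is, once more, the hypothesis $\beta>\epsilon_p\beta_p^\ast$.
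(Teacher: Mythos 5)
Your proposal is correct and follows essentially the same route as the paper's own proof: a chaining/telescoping argument across geometric scales, the weighted H\"older inequality with an exponent $\delta$ in the range \eqref{delta} (nonempty exactly because $\beta>\epsilon_{p}\beta_{p}^{\ast}$), the geometric-series estimate \eqref{e.est1}, and the conversion \eqref{In2} back to $\Phi_{u}^{\beta}$. The only real difference is one of implementation: you chain metric-ball averages $u_{\rho_{k}/4}(x)$ centered at each vertex, so that disjointness of the balls $\{B(x,\rho_{k}/4)\}_{x\in V_{n}}$ gives the overlap bound and continuity of $u$ handles the telescoping limit, whereas the paper chains averages over a nested sequence of cells $K_{w_{k}}$ and disposes of the endpoint term via the oscillation estimate $J_{1}(n,m)\rightarrow 0$; both implementations are sound.
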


\begin{proof}
For $s,t\in V_{n}$ with $s\sim t$, there exists some $w\in W_n$ such that $s, t \in K_w\cap V_n$. Note that for any $x \in K_w$,
\begin{equation*}
  |u(s) - u(t) |^p \leq 2^{p - 1} (| u(s) - u(x)
|^p + | u(x) - u(t) |^p).
\end{equation*}
Integrating with respect to $x$
and dividing by $\mu (K_w)$, we have
\begin{align}
 \sum_{\substack{s,t\in V_{n}\\ s\sim t}}|u(s)-u(t)|^{p}&=\sum_{w\in W_n}\sum_{ s, t \in K_w\cap V_n} |u (s) - u (t) |^p  \notag \\
&\leq 2^{p-1} \sum_{w\in W_n}\sum_{ s, t \in K_w\cap V_n} \fint_{K_{w}}|u(s)-u(x)|^{p}+|u(x)-u(t)|^{p}\dif\mu (x) \notag \\
&\leq C_{1}\sum_{w\in W_n}\sum_{ s \in K_w\cap V_n}%
\fint_{K_{w}}|u(s)-u(x)|^{p}\dif\mu (x).  \label{lem3.0}
\end{align}%
For every $m\geq n+1$ and $s\in K_w\cap V_n$ with $w\in W_n$, we can find (and fix) a decreasing sequence of
cells $\{K_{w_{k}}\}_{k=n}^{m}$
such that $s\in \cap_{k=n}^{m}K_{w_{k}}$ with $w_{n}=w$, $w_{k}\in W_k$. Let
\begin{equation}
\delta \in \left( 0,{\inf\limits_{n\geq 1}\left( \frac{\beta }{\beta^{\ast }}\beta_{l_n}^{(p)}-\alpha _{l_n}\right)} \right),  \label{delta1}
\end{equation}%
{where $\beta_{l_n}^{(p)}$ is given in \eqref{bln}.} By H\"{o}lder's inequality, we have for all $x_{k}\in K_{w_{k}}$ that
\begin{align}
& |u(s)-u(x_{n})|^{p}  \notag \\
\leq\ & 2^{p-1}|u(s)-u(x_{m})|^{p}+2^{p-1}\left( \sum_{k=n}^{m-1}\left( \rho
_{n}^{-\delta }\rho _{k}^{\delta }\right) ^{q/p}\right) ^{p/q}\left(
\sum_{k=n}^{m-1}\rho _{n}^{\delta }\rho _{k}^{-\delta
}|u(x_{k})-u(x_{k+1})|^{p}\right)  \notag \\
\leq\ & 2^{p-1}|u(s)-u(x_{m})|^{p}+C_{2}\left( \sum_{k=n}^{m-1}\rho
_{n}^{\delta }\rho _{k}^{-\delta }|u(x_{k})-u(x_{k+1})|^{p}\right) .
\label{lem2.5}
\end{align}%
Integrating \eqref{lem2.5} with respect to $x_{k}\in K_{w_{k}}$ and dividing
by $\mu (K_{w_{k}})$ for all $n\leq k\leq m$ successively, then combining with \eqref{lem3.0}, we have for $m\geq n+1$ that
\begin{align}\label{lem3.2}
 \sum_{\substack{s,t\in V_{n}\\ s\sim t}}|u(s)-u(t)|^{p}&\leq C_{3}\bigg(\sum_{w\in W_n}\sum_{ s \in K_w\cap V_n}\fint_{K_{w_{m}}}|u(s)-u(x_{m})|^{p}\dif\mu (x_{m})   \notag \\
&\quad +\sum_{w\in W_n}\sum_{ s \in K_w\cap V_n}\sum_{k=n}^{m-1}\rho _{n}^{\delta }\rho _{k}^{-\delta }\fint_{K_{w_{k}}}%
\fint_{K_{w_{k+1}}}|u(x_{k})-u(x_{k+1})|^{p}\dif\mu (x_{k+1})\dif\mu (x_{k})\bigg)\notag\\
&:=C_{3}\left(J_{1}(n,m)+J_{2}(n,m)\right).
\end{align}

For $s, x_{m}\in K_{w_{m}}$,
\begin{align*}
|u(s)-u(x_{m})| \leq \osc_{K_{w_{m}}}u,
\end{align*}%
which implies
\begin{align}
J_{1}(n,m)&=\sum_{w\in W_n}\sum_{ s \in K_w\cap V_n}\frac{1}{\mu (K_{w_{m}})}%
\int_{K_{w_{m}}}|u(s)-u(x_{m})|^{p}\dif\mu (x_{m})  \notag \\
 &\leq  |V_n| \left(\osc_{K_{w_{m}}}u\right)^{p}
\rightarrow   0 \text{ as $m\rightarrow \infty $},  \label{lem3.3}
\end{align}
since $u$ is uniformly continuous on the compact set $K$.

On the other hand, for all $x_{k}\in K_{w_{k}},x_{k+1}\in K_{w_{k+1}}$, we
have $|x_{k}-x_{k+1}|\leq \rho _{k}$, thus
\begin{align}
J_{2}(n,m)&=\sum_{w\in W_n}\sum_{ s \in K_w\cap V_n}\sum_{k=n}^{m-1}\frac{\rho _{n}^{\delta }\rho
_{k}^{-\delta }}{\mu (K_{w_{k}})\mu (K_{w_{k+1}})}\int_{K_{w_{k}}}%
\int_{K_{w_{k+1}}}|u(x_{k})-u(x_{k+1})|^{p}\dif\mu (x_{k+1})\dif\mu (x_{k})
\notag \\
&\leq C_{4}\sum_{k=n}^{\infty}\rho _{n}^{\delta }\rho _{k}^{-\delta }\psi (\rho
_{k})^{-2}\int_{K}\int_{B(x,\rho _{k})}|u(x)-u(y)|^{p}\dif\mu (y)\dif\mu (x)
\label{E4} \\
&\leq  C_{4}\sum_{k=0}^{\infty }\rho _{n}^{\delta }\rho _{n+k}^{-\delta
}\psi (\rho _{n+k})^{-2}\cdot \sum_{l=k}^{\infty }I_{n+l}(u)
=C_{4}\sum_{l=0}^{\infty }\left( \sum_{k=0}^{l}\rho _{n}^{\delta }\rho
_{n+k}^{-\delta }\psi (\rho _{n+k})^{-2}\right) I_{n+l}(u)  \notag \\
&\leq C_{5}\sum_{k=0}^{\infty }\rho
_{n}^{\delta }\rho _{n+k}^{-\delta }\psi (\rho _{n+k})^{-2}I_{n+k}(u),
\label{eq4.34}
\end{align}
where in the last inequality we have used the fact that%
\begin{align*}
\sum_{k=0}^{l}\frac{\rho _{n+k}^{-\delta }\psi (\rho _{n+k})^{-2}}{\rho
_{n+l}^{-\delta }\psi (\rho _{n+l})^{-2}} &=\sum_{k=0}^{l}\left( \frac{\rho
_{n+l}}{\rho _{n+k}}\right) ^{\delta }\left( \frac{\psi (\rho _{n+l})}{\psi
(\rho _{n+k})}\right) ^{2} \\
&\leq \sum_{k=0}^{l}\left({\inf\limits_{n\geq 1}l_n}\right)^{-\delta (l-k)}\left(2{\inf\limits_{n\geq 1}l_n}-1\right)^{-2(l-k)}<%
\left(1-\left({\inf\limits_{n\geq 1}l_n}\right)^{-(\delta +2)}\right)^{-1}.
\end{align*}

By the definition of $\mathcal{E}_{n}^{\beta }(u)$ and \eqref{lem3.2}, we have for $m\geq n+1$ that
\begin{align}
\mathcal{E}_{n}^{\beta }(u)&=\frac{1}{2}\phi (\rho _{n})^{-\frac{\beta }{\beta
_{p}^{\ast }}}\psi (\rho _{n})\sum_{x,y\in V_{n},x\sim y}|u(x)-u(y)|^{p}
\notag \\
&\leq C_{3}\phi (\rho _{n})^{-\frac{\beta }{\beta^{\ast }}}\psi (\rho
_{n})\left(J_{1}(n,m)+J_{2}(n,m)\right).
\label{E3}
\end{align}%
Letting $m\rightarrow\infty$, we see by \eqref{lem3.3} that
\begin{align}
\mathcal{E}_{n}^{\beta }(u)&\leq C_{3}\phi (\rho _{n})^{-\frac{\beta }{\beta^{\ast }}}\psi (\rho
_{n})\sup_{m\geq n+1}J_{2}(n,m)\label{E2}
\\ &\leq C_{6}\phi (\rho _{n})^{-\frac{\beta }{\beta^{\ast }}}\psi  (\rho_{n})\sum_{k=0}^{\infty }\rho _{n}^{\delta }\rho _{n+k}^{-\delta }\psi (\rho
_{n+k})^{-2}I_{n+k}(u) \text{ (by \eqref{eq4.34}) } \label{e.E9} \\
&\leq C_{7}\sum_{k=0}^{\infty }\left( \frac{\phi (\rho _{n})}{\phi (\rho
_{n+k})}\right) ^{-\frac{\beta }{\beta^{\ast }}}\left( \frac{\psi (\rho
_{n})}{\psi (\rho _{n+k})}\right) \left( \frac{\rho _{n}}{\rho _{n+k}}%
\right) ^{\delta }\sup_{k\geq n}\Phi _{u}^{\beta }(\rho _{k})  \notag \\
&\leq C_{8}\sup_{k\geq n}\Phi _{u}^{\beta }(\rho _{k})\text{ \ (by
\eqref{e.est1})},
\end{align}
where the third inequality used the fact that $I_{k+n}(u)\leq I_{\infty,k+n}(u)$ and \eqref{In2}. The proof is complete.
\end{proof}

The following lemma shows the equivalence between $[\cdot]_{B_{p,p}^{\beta }}^{p}$ and $\mathcal{E}_{p,p}^{\beta}$.
\begin{lemma}
\label{lem6-2} If $\beta\in (\epsilon_{p}\beta^{\ast},\infty)$, then there
exists $C>0$ such that for all $u\in C(K)$,
\begin{equation}
C^{-1}[u]_{B_{p,p}^{\beta }}^{p}\leq \mathcal{E}_{p,p}^{\beta }(u)\leq
C[u]_{B_{p,p}^{\beta }}^{p}.  \label{eq6.23}
\end{equation}
\end{lemma}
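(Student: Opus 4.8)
The plan is to reduce the claimed equivalence to the single comparison $\mathcal{E}_{p,p}^{\beta}(u)\asymp\sum_{n=0}^{\infty}\Phi_u^{\beta}(\rho_n)$, since the Proposition yielding \eqref{40} already gives $[u]_{B_{p,p}^{\beta}}^p\asymp\sum_{n=0}^{\infty}\Phi_u^{\beta}(\rho_n)$. The crucial observation is that the sup-type estimates \eqref{eq6.3+} and \eqref{eq6.4} of Lemmas \ref{lem6.2} and \ref{lem6.3} are too lossy to be summed over $n$; instead I will return to the finer per-level estimates produced inside their proofs (in particular \eqref{e.E9} and the line preceding \eqref{lem1.1}), and control the resulting double sums by Fubini together with the one-sided geometric summation estimate \eqref{e.est2} of Proposition \ref{condB_p1}(1). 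Throughout, the hypothesis $\beta\in(\epsilon_p\beta_p^{\ast},\infty)$ is exactly what permits a choice of $\delta$ as in \eqref{delta} and makes all geometric ratios strictly less than $1$.

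For the upper bound $\mathcal{E}_{p,p}^{\beta}(u)\le C[u]_{B_{p,p}^{\beta}}^p$, I start from the estimate \eqref{e.E9} in the proof of Lemma \ref{lem6.3},
\begin{equation*}
\mathcal{E}_n^{\beta}(u)\le C\,\phi(\rho_n)^{-\beta/\beta_p^{\ast}}\psi(\rho_n)\sum_{k=0}^{\infty}\rho_n^{\delta}\rho_{n+k}^{-\delta}\psi(\rho_{n+k})^{-2}I_{n+k}(u),
\end{equation*}
bound the ring-energy by the ball-energy, $I_{n+k}(u)\le I_{\infty,n+k}(u)$, and use \eqref{In2} in the form $I_{\infty,m}(u)\asymp\phi(\rho_m)^{\beta/\beta_p^{\ast}}\psi(\rho_m)\Phi_u^{\beta}(\rho_m)$. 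Re-indexing by $m=n+k$, summing over $n\ge0$, and interchanging the order of summation rewrites $\sum_n\mathcal{E}_n^{\beta}(u)$ as $C\sum_m\Phi_u^{\beta}(\rho_m)$ times the inner factor $\phi(\rho_m)^{\beta/\beta_p^{\ast}}\psi(\rho_m)^{-1}\rho_m^{-\delta}\sum_{n=0}^{m}\phi(\rho_n)^{-\beta/\beta_p^{\ast}}\psi(\rho_n)\rho_n^{\delta}$, which is uniformly bounded by \eqref{e.est2}. Hence $\mathcal{E}_{p,p}^{\beta}(u)\le C\sum_m\Phi_u^{\beta}(\rho_m)\le C[u]_{B_{p,p}^{\beta}}^p$.

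For the reverse bound, I note that the inequality preceding \eqref{lem1.1} in the proof of Lemma \ref{lem6.2}, namely $I_{m,n}(u)\le C\psi(\rho_n)^2\sum_{k=n}^{m-1}\sum_{s\sim t\in V_{k+1}}|u(s)-u(t)|^p$, rewritten through the definition \eqref{En} and passed to the limit $m\to\infty$ via \eqref{mu_m}, yields
\begin{equation*}
I_{\infty,n}(u)\le C\,\psi(\rho_n)^2\sum_{k=n+1}^{\infty}\phi(\rho_k)^{\beta/\beta_p^{\ast}}\psi(\rho_k)^{-1}\mathcal{E}_k^{\beta}(u).
\end{equation*}
Combining with $\Phi_u^{\beta}(\rho_n)\asymp\phi(\rho_n)^{-\beta/\beta_p^{\ast}}\psi(\rho_n)^{-1}I_{\infty,n}(u)$ from \eqref{In2}, summing over $n$, and interchanging summation again gives $\sum_n\Phi_u^{\beta}(\rho_n)\le C\sum_k\phi(\rho_k)^{\beta/\beta_p^{\ast}}\psi(\rho_k)^{-1}\mathcal{E}_k^{\beta}(u)\sum_{n=0}^{k-1}\phi(\rho_n)^{-\beta/\beta_p^{\ast}}\psi(\rho_n)$; applying \eqref{e.est2} with $\delta=0$ bounds the inner sum by $C\phi(\rho_k)^{-\beta/\beta_p^{\ast}}\psi(\rho_k)$, collapsing the whole expression to $C\sum_k\mathcal{E}_k^{\beta}(u)=C\mathcal{E}_{p,p}^{\beta}(u)$. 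With \eqref{40} this proves $[u]_{B_{p,p}^{\beta}}^p\le C\mathcal{E}_{p,p}^{\beta}(u)$.

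The main obstacle will be the bookkeeping of the interchanged double sums and checking that the geometric tails are summable in the correct one-sided direction; this is precisely where the admissible range of $\delta$ in Proposition \ref{condB_p1}(1), and hence the standing assumption $\beta>\epsilon_p\beta_p^{\ast}$, is used to guarantee both \eqref{e.est2} and the convergence of every series encountered above.
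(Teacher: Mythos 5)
Your proposal is correct and follows essentially the same route as the paper's own proof: both directions rest on the per-level estimates established inside the proofs of Lemmas \ref{lem6.2} and \ref{lem6.3} (you start from \eqref{e.E9} and the line preceding \eqref{lem1.1}, the paper from \eqref{E2}/\eqref{E4} and that same line), followed by interchanging the nonnegative double sums, applying \eqref{e.est2}, and concluding with \eqref{40} and \eqref{In2}. The only cosmetic differences are that you bound ring-energies by ball-energies before invoking \eqref{In2} (undoing the step by which \eqref{eq4.34} was derived from \eqref{E4}), and that you pass to the limit $m\rightarrow\infty$ via \eqref{mu_m} before summing in $n$, whereas the paper sums the truncated quantities $I_{n+l,n}$ first and invokes Fatou's lemma at the end.
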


\begin{proof}
Using the fifth line in \eqref{lem1.1}, for $l\in \mathbb{Z}^{+}$, we
have
\begin{align}
& \sum_{n=0}^{\infty }\phi (\rho _{n})^{-\frac{\beta }{\beta^{\ast }}%
}\psi (\rho _{n})^{-1}I_{n+l,n}(u)  \notag \\
\leq\ & C_{1}\sum_{n=0}^{\infty }\phi (\rho _{n})^{-\frac{\beta }{\beta
^{\ast }}}\psi (\rho _{n})\sum_{k=n}^{n+l}\sum_{\substack{x,y\in V_{k}\\ x\sim y}}|u(x)-u(y)|^{p}  \notag \\
=\ & C_{1}\sum_{k=0}^{\infty }\sum_{n=0}^{k}\phi (\rho _{n})^{-\frac{\beta }{%
\beta^{\ast }}}\psi (\rho _{n})\sum_{\substack{x,y\in V_{k}\\ x\sim y}}|u(x)-u(y)|^{p}  \notag \\
\leq\ & C_{2}\sum_{k=0}^{\infty }\phi (\rho _{k})^{-\frac{\beta }{\beta^{\ast }}}\psi (\rho _{k})\sum_{\substack{x,y\in V_{k}\\x\sim y}}|u(x)-u(y)|^{p}\text{
(by using \eqref{e.est2} with }\delta =0 ) \\
=\ &2C_{2}\mathcal{E}_{p,p}^{\beta}(u).\label{43-1}
\end{align}%
Letting $l\rightarrow \infty $ and applying Fatou's lemma in \eqref{43-1},
the left-hand inequality of \eqref{eq6.23} then follows using \eqref{43}.

On the other hand, fix $\delta $ as required in \eqref{delta1}, then
\begin{align*}
 \mathcal{E}_{p,p}^{\beta }(u)&=\sum_{n=0}^{\infty }\mathcal{E}_{n}^{\beta
}(u) \leq C_{3}\sum_{n=0}^{\infty }\phi (\rho _{n})^{-\frac{\beta }{\beta^{\ast }}}\psi (\rho_{n})\sup_{m\geq n+1}J_{2}(n,m) \text{ (by \eqref{E2})}\\
& \leq C_{3}\sum_{n=0}^{\infty }\phi (\rho _{n})^{-\frac{\beta }{\beta
_{p}^{\ast }}}\psi (\rho _{n})\sum_{k=n}^{\infty}\rho _{n}^{\delta }\rho
_{k}^{-\delta }\psi (\rho _{k})^{-2}\int_{K}\int_{B(x,\rho
_{k})}|u(x)-u(y)|^{p}\dif\mu (y)\dif\mu (x) \text{ (by \eqref{E4})}\\
& \leq C_{4}\sum_{k=0}^{\infty }\left(\sum_{n=0}^{k}\frac{\phi(\rho
_{n})^{-\beta/\beta^{\ast }} \psi(\rho_{n})\rho_{n}^{\delta}}{\phi(\rho
_{k})^{-\beta/\beta^{\ast }} \psi(\rho_{k})\rho_{k}^{\delta}}\right)
\Phi_{u}^{\beta}(\rho_{k}) \text{ (by \eqref{In2})} \\
& \leq C_{5}\sum_{k=0}^{\infty }\Phi_{u}^{\beta}(\rho_{k})  \leq C_{5}[u]_{B_{p,p}^{\beta }}^{p}\text{ (by \eqref{e.est2} and \eqref{41}),}
\end{align*}%
showing the right-hand inequality of \eqref{eq6.23}.
\end{proof}

\begin{proof}[Proof of Theorem \ref{thm3} (1) and (2)]

\begin{enumerate}[label=\textup{(\arabic*)}]
\item[]\mbox{}\\
\item[(1)]Taking limsup and sup (of $n$) on both sides of \eqref{eq6.3+} and
\eqref{eq6.4} respectively, we have
	\begin{equation}
\limsup_{n\rightarrow \infty }\mathcal{E}_{n}^{\beta}(u)\asymp
\limsup_{n\rightarrow \infty }\Phi _{u}^{\beta}(\rho_{n})\text{ and } \mathcal{E}%
_{p,\infty }^{\beta}(u)=\sup_{n\rightarrow \infty }\mathcal{E}_{n}^{\beta}(u)\asymp \sup_{n\rightarrow \infty }\Phi _{u}^{\beta}(\rho_{n})=\lbrack u]_{B_{p,\infty}^{\beta}}^{p}.
\label{eq6.2}
\end{equation}
The assertion then follows by noting Lemma \ref{lem6-2} and \eqref{E5}.
\item[(2)]Given any $\beta >\beta^{\ast }$ and any $u\in B_{p,\infty}^{\beta }$, we first have
\[\sup_{r\in(0,2]}\Phi _{u}^{\beta^{\ast }}(r)=\sup_{r\in(0,2]} \phi(r)^{\frac{\beta}{\beta^{\ast}}-1}\Phi _{u}^{\beta}(r)\leq C\sup_{r\in(0,2]} \Phi _{u}^{\beta}(r)=C[u]^p_{B_{p,\infty}^{\beta }},\]
which implies $u\in B_{p,\infty}^{\beta^{\ast}}$. Since $\beta^{\ast }\in (\epsilon_{p}\beta^{\ast},\infty)$, it follows from \eqref{E5}, \eqref{eq6.2} and \eqref{e.comphi} that
\begin{equation}
\sup_{r\in(0,2]}\Phi _{u}^{\beta^{\ast }}(r)\asymp \sup_{n\geq 0}\Phi _{u}^{\beta^{\ast }}(\rho _{n})\asymp \sup_{n\rightarrow \infty }\mathcal{E}_{n}^{\beta^{\ast }}(u)=\limsup_{n\rightarrow \infty }\mathcal{E}_{n}^{\beta^{\ast }}(u)\asymp \limsup_{n\rightarrow \infty }\Phi _{u}^{\beta^{\ast }}(\rho _{n})\asymp \limsup_{r\rightarrow0}\Phi _{u}^{\beta^{\ast }}(r).
\label{e.coppp}
\end{equation}
Consequently,
\begin{align}
\sup_{r\in(0,2]}\Phi _{u}^{\beta^{\ast }}(r)\leq C\limsup_{r\rightarrow 0}\Phi _{u}^{\beta^{\ast }}(r)&=C\limsup_{r\rightarrow0}\phi(r)^{\frac{\beta}{\beta^{\ast}}-1}\Phi _{u}^{\beta}(r)\\ &\leq C\sup_{r\in(0,2]}\Phi _{u}^{\beta}(r)\cdot\limsup_{r\rightarrow0}\phi(r)^{\frac{\beta}{\beta^{\ast}}-1}\\ &=C [u]^p_{B_{p,\infty}^{\beta }}\cdot\limsup_{r\rightarrow0}\phi(r)^{\frac{\beta}{\beta^{\ast}}-1}=0.
\end{align}
Therefore $\Phi _{u}^{\beta^{\ast }}(r)\equiv0$ for all $r>0$ which implies that $u$ is constant by its continuity. On the other hand, Lemma \ref{l.affine} shows that $\mathcal{F}_{p}=B_{p,\infty }^{\beta^{\ast }}$ contains all piecewise affine functions, which can be chosen to be non-constant, proving \eqref{e.critical}.

\end{enumerate}
\end{proof}

\subsection{Weak monotonicity property and BBM convergence}\label{subs.wm}
We need the following key lemma that gives the upper bound of semi-norms by the lower limit of $\Phi _{u}^{\beta^{\ast }}(r)$.

\begin{lemma}
\label{thm_3}There exists $C>0$ such that for all $u\in \mathcal{F}_{p}= B_{p,\infty}^{\beta^{\ast}}$,
\begin{align}
\mathcal{E}_{p}(u)=\mathcal{E}_{p,\infty}^{\beta^{\ast}}(u)=\lim_{n\rightarrow \infty }\mathcal{E}_{n}^{\beta^{\ast } }(u)\leq
C\liminf_{n\rightarrow \infty }\Phi _{u}^{\beta^{\ast } }(\rho_{n}).
\label{limphi2}
\end{align}
\end{lemma}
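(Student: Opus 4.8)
The three quantities on the left coincide with no work: by \eqref{E5} we have $\mathcal{E}_{p}(u)=\mathcal{E}_{p,\infty}^{\beta_{p}^{\ast}}(u)=\sup_{n\ge0}\mathcal{E}_{n}^{\beta_{p}^{\ast}}(u)$, and since $\mathcal{E}_{n}^{\beta_{p}^{\ast}}(u)=\mathcal{E}_{p,n;K}(u)$ is non-decreasing in $n$ by Proposition \ref{VE-mono}, this supremum is a limit. Writing $A:=\mathcal{E}_{p}(u)$ and $\Phi_{n}:=\Phi_{u}^{\beta_{p}^{\ast}}(\rho_{n})$, the whole statement thus reduces to the single inequality $A\le C\liminf_{n\to\infty}\Phi_{n}$.

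The plan is to first rerun the proof of Lemma \ref{lem6.3} at the critical exponent $\beta=\beta_{p}^{\ast}$, but keeping the individual terms $\Phi_{n+k}$ inside the sum rather than bounding them by their supremum. Since $\beta/\beta_{p}^{\ast}=1$ here, the weight in \eqref{e.E9} collapses: using $\phi(\rho_{m})=\rho_{m}^{p-1}\psi(\rho_{m})$ one checks that $\bigl(\tfrac{\phi(\rho_{n+k})}{\phi(\rho_{n})}\bigr)^{\beta/\beta_{p}^{\ast}}\tfrac{\psi(\rho_{n})}{\psi(\rho_{n+k})}\bigl(\tfrac{\rho_{n+k}}{\rho_{n}}\bigr)^{-\delta}=\bigl(\tfrac{\rho_{n+k}}{\rho_{n}}\bigr)^{p-1-\delta}$. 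Fixing $\delta\in(0,p-1)$ (the $\beta=\beta_{p}^{\ast}$ instance of the admissible range \eqref{delta}) and setting $\gamma:=(a\wedge b)^{-(p-1-\delta)}\in(0,1)$, this gives, for every $n\ge0$,
\[
\mathcal{E}_{n}^{\beta_{p}^{\ast}}(u)\le C\sum_{k=0}^{\infty}\Bigl(\tfrac{\rho_{n+k}}{\rho_{n}}\Bigr)^{p-1-\delta}\Phi_{n+k}\le C\sum_{k=0}^{\infty}\gamma^{k}\Phi_{n+k}.
\]

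The heart of the matter is to upgrade this geometric sum — which a priori only bounds $A$ by $\limsup_{n}\Phi_{n}$, exactly as in Lemma \ref{lem6.3} — into a bound by $\liminf_{n}\Phi_{n}$. For this I would invoke two auxiliary facts: iterating the doubling estimate \eqref{e.comphi} yields $\Phi_{n+k}\le C_{\Phi}^{\,k}\Phi_{n}$ for all $k\ge0$, while Lemma \ref{lem6.2} in the form \eqref{eq6.3+} (at $\beta_{p}^{\ast}$) supplies the uniform ceiling $M:=\sup_{n}\Phi_{n}\le CA$. Substituting $\Phi_{n+k}\le\min\{C_{\Phi}^{\,k}\Phi_{n},\,M\}$ into the geometric sum and splitting at the crossover index $k_{0}\approx\log_{C_{\Phi}}(M/\Phi_{n})$, both partial sums are of order $\gamma^{k_{0}}M$, and a short computation gives the interpolation bound
\[
\sum_{k=0}^{\infty}\gamma^{k}\Phi_{n+k}\le C\,\Phi_{n}^{\,1-\theta}M^{\theta},\qquad \theta:=\max\Bigl\{0,\tfrac{\log(\gamma C_{\Phi})}{\log C_{\Phi}}\Bigr\}\in[0,1).
\]
Thus $\mathcal{E}_{n}^{\beta_{p}^{\ast}}(u)\le C\,\Phi_{n}^{\,1-\theta}M^{\theta}$ for every $n$. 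Letting $n\to\infty$ along a subsequence realising $\liminf_{n}\Phi_{n}$, using $\mathcal{E}_{n}^{\beta_{p}^{\ast}}(u)\to A$ and $M\le CA$, we obtain $A\le C(CA)^{\theta}(\liminf_{n}\Phi_{n})^{1-\theta}$; solving for $A$ gives $A\le C'\liminf_{n}\Phi_{n}$, which is \eqref{limphi2}.

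I expect this final interpolation step to be the main obstacle. The naive bound loses control because the geometric sum can be dominated by large values of $\Phi$ at scales finer than $\rho_{n}$, which only returns the (larger) $\limsup$. What rescues the estimate is that the one-sided doubling $\Phi_{n+k}\le C_{\Phi}^{\,k}\Phi_{n}$ limits how fast $\Phi$ can climb back toward its supremum after dipping near its lower limit, while the geometric weight $\gamma^{k}$ damps that climb; the balance of the two effects is precisely what forces a small value of $\Phi$ at scale $\rho_{n}$ to pull down the discrete energy. The only quantitative input required is $\theta<1$, equivalently $\gamma<1$, i.e. $\delta<p-1$, which holds automatically at the critical exponent.
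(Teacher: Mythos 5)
Your proposal is correct, and the equalities on the left are handled exactly as in the paper (via \eqref{E5} and \eqref{ve1}); but your mechanism for passing from a sum over all finer scales to the \emph{lower} limit is genuinely different from the paper's. Both arguments start from the same place, namely \eqref{e.E9} of Lemma \ref{lem6.3} specialized to $\beta=\beta_{p}^{\ast}$, where (after invoking \eqref{In2}) the weight collapses to $(\rho_{n+k}/\rho_{n})^{p-1-\delta}\leq\gamma^{k}$, and both use Lemma \ref{lem6.2} together with the monotone convergence of $\mathcal{E}_{n}^{\beta_{p}^{\ast}}(u)$ to produce a uniform ceiling controlled by $A:=\mathcal{E}_{p}(u)<\infty$. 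The paper then splits the sum at a fixed large index $L$: the tail $k>L$ is bounded by $C(a\wedge b)^{-(p-1-\delta)(L+1)}A$ and absorbed \emph{additively} into the left-hand side, while the head $k\leq L$ is handled not by any doubling of $\Phi$ but by the exact observation that the ring-energies $I_{n}(u),\dots,I_{n+L}(u)$ live on disjoint annuli inside $\{d(x,y)<\rho_{n}\}$, so that $\sum_{k=0}^{L}I_{n+k}(u)\leq I_{\infty,n}(u)\asymp\phi(\rho_{n})\psi(\rho_{n})\Phi_{u}^{\beta_{p}^{\ast}}(\rho_{n})$ --- a single term at scale $n$, through which the $\liminf$ passes untouched. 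You instead keep all scales, dominate $\Phi_{n+k}\leq\min\{C_{\Phi}^{k}\Phi_{n},M\}$ using the one-sided doubling coming from \eqref{e.comphi} together with the ceiling $M\leq CA$ from \eqref{eq6.3+}, and derive the multiplicative interpolation $\sum_{k}\gamma^{k}\Phi_{n+k}\leq C\Phi_{n}^{1-\theta}M^{\theta}$ with $\theta<1$, after which the self-referential inequality $A\leq C A^{\theta}\left(\liminf_{n}\Phi_{n}\right)^{1-\theta}$ is solved using $0<A<\infty$ (the cases $A=0$ or $\liminf_{n}\Phi_{n}=0$ being trivial). Your route trades the paper's disjointness-of-annuli/additive-absorption trick for a doubling/multiplicative-absorption one; it is arguably more portable to settings where no ring decomposition is available, at the price of worse constants $C^{(1+\theta)/(1-\theta)}$ and of needing $\theta<1$ strictly.

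One small point to patch: your exponent formula $\theta=\max\{0,\log(\gamma C_{\Phi})/\log C_{\Phi}\}$ breaks down exactly at $\gamma C_{\Phi}=1$, where the head of your split sum produces $\Phi_{n}\bigl(1+\log_{C_{\Phi}}(M/\Phi_{n})\bigr)$ rather than $C\Phi_{n}$, so the claimed bound fails by a logarithmic factor. This is harmless: either perturb $\delta$ within $(0,p-1)$ so that $\gamma C_{\Phi}\neq 1$, or bound the logarithm by $C_{\epsilon}(M/\Phi_{n})^{\epsilon}$ and run the argument with $\theta=\epsilon\in(0,1)$; either fix preserves $\theta<1$ and leaves the conclusion \eqref{limphi2} unchanged.
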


\begin{proof}
The proof is based on the monotonicity of discrete $p$-energy norms.
We first have
\begin{align}
I_{n}(u)&\leq I_{\infty,n}(u)\\
& \leq C_{1}\phi (\rho _{n})\psi (\rho _{n})\sup_{k\geq n}\mathcal{E}_{k}^{\beta^{\ast }}(u)  \text{ (by \eqref{eq6.3})}\notag \\
& =C_{1}\phi (\rho _{n})\psi (\rho _{n})\lim_{n\rightarrow \infty }%
\mathcal{E}_{n}^{\beta^{\ast }}(u) \text{ (by \eqref{ve1})}.  \label{I_n}
\end{align}%
Since $\beta^{\ast }\in (\epsilon_{p}\beta^{\ast},\infty)$ and $\beta_{l}^{(p)}-\alpha_{l}=p-1$, we fix $%
\delta \in (0,p-1)$ as in \eqref{delta} with $\beta=\beta^{\ast }$. Using the fact that $\phi(\rho_{n})\psi(\rho_{n})^{-1}=\rho_{n}^{p-1}$
and \eqref{I_n}, we see that for any $L\geq1$,
\begin{align*}
\sum_{k=L+1}^{\infty }\psi (\rho _{n+k})^{-2}\rho _{n+k}^{-\delta
}I_{n+k}(u)
&\leq C_{1}\sum_{k=L+1}^{\infty }\phi (\rho _{n+k})\psi (\rho _{n+k})^{-1}\rho
_{n+k}^{-\delta }\lim_{n\rightarrow \infty }\mathcal{E}_{n}^{\beta
^{\ast }}(u) \\
&= C_{1}\sum_{k=L+1}^{\infty }\rho
_{n+k}^{p-1-\delta }\lim_{n\rightarrow \infty }\mathcal{E}_{n}^{\beta
^{\ast }}(u)\leq C_{2}\rho
_{n+L+1}^{p-1-\delta }\lim_{n\rightarrow \infty }\mathcal{E}_{n}^{\beta
^{\ast }}(u)\text{,}
\end{align*}
where in the last inequality we have used \eqref{e.est1} with $%
\beta =\beta^{\ast }$. Therefore,%
\begin{align}
\rho _{n}^{1-p+\delta}\sum_{k=L+1}^{\infty }\psi (\rho
_{n+k})^{-2}\rho _{n+k}^{-\delta }I_{n+k}(u)
&\leq C_{2}\left( \frac{\rho _{n+L+1}}{\rho _{n}}\right) ^{p-1-\delta
}\liminf_{n\rightarrow \infty }\mathcal{E}_{n}^{\beta^{\ast }}(u)
\notag \\
&\leq C_{2}({\inf\limits_{n\geq 1}l_n})^{(-p+1+\delta )(L+1)}\liminf_{n\rightarrow \infty }%
\mathcal{E}_{n}^{\beta^{\ast }}(u).  \label{In1}
\end{align}

We know by \eqref{E2} that,
\begin{align*}
\mathcal{E}_{n}^{\beta^{\ast }}(u)&\leq  C_{3}\phi
(\rho _{n})^{-1}\psi (\rho _{n})\rho _{n}^{\delta}\sum_{k=0}^{\infty }\psi (\rho _{n+k})^{-2}\rho _{n+k}^{-\delta }I_{k+n}(u) \\
 &= C_{3}\rho _{n}^{1-p+\delta}\left(\sum_{k=0}^{L}+\sum_{k=L+1}^{\infty}\right)\psi (\rho _{n+k})^{-2}\rho _{n+k}^{-\delta }I_{k+n}(u)\\
& \leq C_{3}\rho _{n}^{1-p+\delta}\sum_{k=0}^{L}\psi (\rho
_{n+k})^{-2}\rho _{n+k}^{-\delta
}I_{n+k}(u)+C_{3}C_{2}({\inf\limits_{n\geq 1}l_n})^{(-p+1+\delta )(L+1)}\liminf_{n\rightarrow
\infty }\mathcal{E}_{n}^{\beta^{\ast }}(u),
\end{align*}%
where \eqref{In1} is used in the last inequality. Taking $%
\liminf_{n\rightarrow \infty }$ on the right-hand side above, we have
\begin{equation*}
C_{4}\liminf_{n\rightarrow \infty }\mathcal{E}_{n}^{\beta^{\ast
}}(u)\leq \liminf_{n\rightarrow \infty }\rho _{n}^{1-p+\delta}\sum_{k=0}^{L}\psi (\rho
_{n+k})^{-2}\rho _{n+k}^{-\delta
}I_{n+k}(u),
\end{equation*}%
where
\begin{equation*}
C_{4}=\frac{1}{C_{3}}-C_{2}\left({\inf\limits_{n\geq 1}l_n}\right)^{-(p-1-\delta )(L+1)}.
\end{equation*}%
Fix $L$ large enough such that $C_{4}>0$, then for every $0\leq k\leq L$, \[ \left(\frac{\psi (\rho _{n})}{\psi (\rho_{n+k})}\right) ^{2}\left( \frac{\rho _{n}}{\rho _{n+k}}\right)^{\delta}\leq \left(\frac{\psi (\rho _{n})}{\psi (\rho_{n+L})}\right) ^{2}\left( \frac{\rho _{n}}{\rho _{n+L}}\right)^{\delta}\leq 4^{L}\left({\sup\limits_{n\geq 1}l_n}\right)^{(2+\delta)L}\]
and therefore
\begin{align*}
C_{4}\liminf_{n\rightarrow \infty }\mathcal{E}_{n}^{\beta^{\ast
}}(u)& \leq \liminf_{n\rightarrow \infty }\left(\frac{\rho _{n}^{1-p}}{\psi (\rho _{n})^{2}}\sum_{k=0}^{L}\left( \frac{\psi (\rho _{n})}{\psi (\rho
_{n+k})}\right) ^{2}\left( \frac{\rho _{n}}{\rho _{n+k}}\right) ^{\delta}I_{n+k}(u)\right) \\
& \leq C_{5}\liminf_{n\rightarrow \infty }\phi (\rho _{n})^{-1}\psi (\rho
_{n})^{-1}\sum_{k=0}^{L}I_{n+k}(u)\\ & \leq C_{5}\liminf_{n\rightarrow \infty }\phi (\rho _{n})^{-1}\psi (\rho_{n})^{-1} I_{\infty,n}(u) \\
& \leq C_{6}\liminf_{n\rightarrow \infty
}\Phi _{u}^{\beta^{\ast }}(\rho _{n}) \ \ (\text{by}\ \eqref{In2}).
\end{align*}%
Thus \eqref{limphi2} holds with $C=C_{6}/C_{4}$.
\end{proof}

We are now ready to prove the weak monotonicity property and BBM convergence.
\begin{proof}[Proof of Theorem \ref{thm3} (3) and (4)]
\begin{enumerate}[label=\textup{(\arabic*)}]
\item[]\mbox{}\\
\item[(3)] For any $u\in B_{p,\infty}^{\beta^{\ast}}=\mathcal{F}_{p}$, we have
\begin{align}
\sup_{r\in (0,2]}\Phi _{u}^{\beta^{\ast }}(r)&\leq C_{1}\sup_{n\geq 0 }\mathcal{E%
}_{n}^{\beta^{\ast }}(u)~(\text{by}~\eqref{e.coppp})\\
&= C_{1}\liminf_{n\rightarrow \infty }\mathcal{E}_{n}^{\beta^{\ast }}(u)~(\text{%
	by}~\eqref{ve1}) \\
&\leq C_{2}\liminf_{r\rightarrow 0}\Phi _{u}^{\beta^{\ast }}(r)~(\text{by}~%
\eqref{limphi2}).
\end{align}
\item[(4)] We first show \eqref{e.bbmdis}. Fix a function $u \in B_{p,\infty}^{\beta^{\ast}}=\mathcal{F}_p$. By \eqref{condB_p3}, for any $\beta<\beta^{\ast }$,
\begin{align*}
(\beta^{\ast }-\beta )\mathcal{E}_{p,p}^{\beta }(u)&=(\beta^{\ast }-\beta ) \sum_{n=0}^{\infty }\phi
(\rho _{n})^{1-\frac{\beta }{\beta^{\ast }}}\mathcal{E}_{n}^{\beta^{\ast}}(u)\\  &\leq(\beta^{\ast }-\beta ) \sum_{n=0}^{\infty }\phi (\rho
_{n})^{1-\frac{\beta }{\beta^{\ast }}}\mathcal{E}_{p,\infty}^{\beta^{\ast}}(u) \\
&\leq  \beta^{\ast }\left( 1-\frac{\beta }{\beta^{\ast }}\right) \left(1-\left({\inf\limits_{n\geq 1}t_{l_n}}\right)^{- 1+\frac{\beta }{\beta^{\ast }}}\right)^{-1}\mathcal{E}_{p,\infty}^{\beta^{\ast}}(u),
\end{align*}%
where {$t_{l}=l^{p-1}(2l-1)$}. Therefore,
\begin{equation}
\limsup_{\beta \uparrow \beta^{\ast }}(\beta^{\ast }-\beta )%
\mathcal{E}_{p,p}^{\beta }(u)\leq \frac{\beta^{\ast }}{\log \left({\inf\limits_{n\geq 1}t_{l_n}}\right)}\mathcal{E}_{p,\infty }^{\beta^{\ast }}(u).  \label{eq5.4}
\end{equation}

On the other hand, for any $A<\mathcal{E}_{p,\infty}^{\beta^{\ast}}(u)$, there exists $N\geq 1$ such
that $\mathcal{E}_{n}^{\beta^{\ast}}(u)>A$ for all $n>N$. Similarly, by \eqref{condB_p3} again,
\begin{align*}
(\beta^{\ast }-\beta )  \sum_{n=0}^{\infty }\phi
(\rho _{n})^{1-\frac{\beta }{\beta^{\ast }}}\mathcal{E}_{n}^{\beta^{\ast}}(u) & >(\beta^{\ast }-\beta ) \sum_{n=N+1}^{\infty }\phi (\rho _{n})^{1-%
\frac{\beta }{\beta^{\ast }}}A \\
& \geq(\beta^{\ast }-\beta ) \left({%
1-\left({\sup\limits_{n\geq 1}t_{l_n}}\right)^{\frac{\beta }{\beta^{\ast }}-1}}\right)^{-1}\phi (\rho
_{N+1})^{1-\frac{\beta }{\beta^{\ast }}}A \\
& \geq \beta^{\ast }\left( 1-\frac{\beta }{\beta^{\ast }}\right) \frac{\left({\sup\limits_{n\geq 1}t_{l_n}}\right)^{(N+1)\left( \frac{\beta }{\beta^{\ast }}-1\right) }}{%
1-\left({\sup\limits_{n\geq 1}t_{l_n}}\right)^{\frac{\beta }{\beta^{\ast }}-1}}A.
\end{align*}%
Therefore,
\begin{equation*}
\liminf_{\beta \uparrow \beta^{\ast }}(\beta^{\ast }-\beta )%
\mathcal{E}_{p,p}^{\beta}(u)\geq \frac{\beta^{\ast }}{\log
\left({\sup\limits_{n\geq 1}t_{l_n}}\right)}A,
\end{equation*}%
for any $A<\mathcal{E}_{p,\infty}^{\beta^{\ast}}(u)$. Letting $A\rightarrow \mathcal{E}_{p,\infty}^{\beta^{\ast}}(u)$, we obtain
\begin{equation}
\liminf_{\beta \uparrow \beta^{\ast }}(\beta^{\ast }-\beta )%
\mathcal{E}_{p,p}^{\beta }(u)\geq \frac{\beta^{\ast }}{\log \left({\sup\limits_{n\geq 1}t_{l_n}}\right)}\mathcal{E}_{p,\infty }^{\beta^{\ast }}(u).  \label{eq5.6}
\end{equation}%
Then \eqref{e.bbmdis} follows by combining \eqref{eq5.6} with \eqref{eq5.4}. The convergence in \eqref{e.bbmbes} is a consequence of \eqref{e.bbmdis} and Theorem \ref{thm3} (1).
\end{enumerate}
\end{proof}
\begin{remark}
If we define the \textit{Korevaar-Schoen} norms by
\begin{align}
\|u\|_{\mathrm{KS}_{p,\infty }^{\beta}}:=\limsup_{r\rightarrow 0 }\Phi _{u}^{\beta}(r),
\end{align}
then its corresponding BBM convergence, that is, \eqref{e.bbmbes} with $\|\cdot\|_{\mathrm{KS}_{p,\infty }^{\beta^{\ast }}}$ in place of $\|\cdot\|_{{B}_{p,\infty }^{\beta^{\ast }}}$, also holds by \eqref{eq6.2} and \eqref{E5}.
\end{remark}

\section{Further discussions}\label{s.Discuss}

{We discuss some possible extensions of our results and assume that $\bm{l}=(l_{n})_{n\geq1}$ satisfies $\sup\limits_{n\geq1}l_{n}<\infty$.}

{
In a very recent paper \cite{Yan25b}, Yang shows that the \emph{$p$-resistance estimate} is equivalent to the conjunction of \emph{$p$-Poincar\'e inequality} and \emph{$p$-cutoff Sobolev inequality} under the so-called \emph{slow volume regular condition}. Yang also verifies the $2$-cutoff Sobolev inequality in \cite{Yan25a} for standard self-similar Vicsek sets. These conditions are also satisfied for scale-irregular Vicsek sets. To see this, we first construct a strictly increasing version of $\psi$, termed $\widetilde{\psi}$, as follows:
\begin{equation}
\widetilde{\psi}(r):=
\begin{dcases}
\frac{\psi(\rho_n) - \psi(\rho_{n+1})}{\rho_n - \rho_{n+1}} \cdot \left(r - \rho_{n+1}\right) + \psi(\rho_{n+1}), &\text{for}\ \rho_{n+1} < r \leq \rho_n \ (n\geq 0),\\
\qquad\qquad\qquad\frac{1}{2} r, &\text{for}\ r \geq 2;
\end{dcases}
\end{equation}
and we define
\begin{equation}
\widetilde{\phi}(r) = r^{p-1} \widetilde{\psi}(r).
\end{equation}
It is easy to check that $\widetilde{\psi}$ and $\widetilde{\phi}$ are strictly increasing, thus their inverses $\widetilde{\psi}^{-1}$ and $\widetilde{\phi}^{-1}$ exist. By a direct computation, we see that for all $0<r\leq 2$,
\begin{align}
 (2\sup_{n}l_n-1)^{-1} \psi(r) &\leq \widetilde{\psi}(r)\leq \psi(r), \\
 (\sup_{n}l_n)^{-p+1} (2\sup_{n}l_n-1)^{-1} \phi(r) &\leq \widetilde{\phi}(r)\leq \phi(r).
\end{align}
By Proposition \ref{l.meas}-(2) and the fact that
\begin{equation}
  \frac{\widetilde{\psi}(R)}{\widetilde{\psi}(r)}=\left(\frac{r}{R}\right)^{p-1} \frac{\widetilde{\phi}(R)}{\widetilde{\phi}(r)}\ \text{for any $0<r\leq R<2$},
\end{equation}
we know that the slow volume regular condition $\mathrm{SVR}(\widetilde{\psi},\widetilde{\phi})$ in \cite{Yan25b} holds. By Proposition
\ref{p.Res}, the resistance estimate $\mathrm{R}(\widetilde{\psi},\widetilde{\phi})$ in \cite{Yan25b} holds. An application of \cite[Theorem 2.3]{Yan25b} immediately gives
\begin{proposition}\label{p.PICS}
Assume that $\bm{l}=(l_{n})_{n\geq1}$ satisfies $\sup\limits_{n\geq1}l_{n}<\infty$. Then for any $p\in(1,\infty)$, the $p$-Poincar\'e inequality $\mathrm{PI}(\widetilde{\phi})$ and the $p$-cutoff Sobolev inequality $\mathrm{CS}(\widetilde{\phi})$ hold for $(\mathcal{E}_{p},\mathcal{F}_{p})$ on the scale-irregular Vicsek set $K^{\bm{l}}$.
\end{proposition}
}

{
In the special case $p = 2$, by combining Proposition \ref{p.PICS} with \cite[Theorem 1.2]{GHL15}, or by combining Proposition \ref{p.Res} with \cite[Theorem 3.1]{Kum04} (see also \cite[Theorem 15.10]{Kig12}), we have the sub-Gaussian type heat kernel estimate for the strongly local, regular Dirichlet form $(\mathcal{E}_{2}, \mathcal{F}_{2})$ on $L^{2}(K^{\bm{l}}, \mu)$, or equivalently, for the associated \emph{Hunt process}, which is a diffusion on $K^{\bm{l}}$. More precisely, there exist constants $C, c, c', \delta > 0$ and a jointly continuous heat kernel $p_{t}(x, y) $ satisfying the two-sided estimate
\begin{equation}\label{e.HKE}
\frac{c^{\prime}}{\mu\left(B(x,\widetilde{\phi}^{-1}(t))\right)}\mathds{1}_{\left\{d(x,y)\leq\delta\widetilde{\phi}^{-1}(t)\right\}}\leq p_{t}(x,y)\leq \frac{C}{\mu\left(B(x,\widetilde{\phi}^{-1}(t))\right)}\exp\left(-\frac{1}{2}t\Phi\left(c\frac{d(x,y)}{t}\right)\right),
\end{equation}
for any $x,y\in K^{\bm{l}}$ and any $t>0$, where $ \Phi(s):=\sup_{r>0}\left\{\dfrac{s}{r}-\dfrac{1}{\widetilde{\phi}(r)}\right\}$.
}

{Let us sketch some other possible extensions of some recent related works.
\begin{enumerate}[label=\textup{({\arabic*})},align=left,leftmargin=*,topsep=5pt,parsep=0pt,itemsep=2pt]
\item (\emph{Heat-kernel based $p$-energy norms}) By \cite[Lemma 1.3.4]{FOT11}, it is known that \begin{equation}
\mathcal{E}_{2}(u)=\lim_{t\to0^{+}}\frac{1}{2t}\int_{K^{\bm{l}}}\int_{K^{\bm{l}}} |u(x)-u(y)|^{2} p_{t}(x, y)\dif\mu(y)\dif\mu(x).
\end{equation} One can define an alternative $p$-energy functional $\widetilde{\mathcal{E}}_{p}^{\beta}$ based on the heat kernel associated with $(\mathcal{E}_{2}, \mathcal{F}_{2})$, given formally by
\begin{equation}\label{e.HKpenergy}
\widetilde{\mathcal{E}}_{p}^{\beta}(u):=\sup_{t\in(0,2^{p-1})}\frac{1}{t^{\beta/\beta^{\ast}}} \int_{K^{\bm{l}}}\int_{K^{\bm{l}}} |u(x)-u(y)|^{p} p_{t}(x, y)\dif\mu(y)\dif\mu(x)
\end{equation}
with domain consisting of all continuous functions $u$ for which $\widetilde{\mathcal{E}}_{p}^{\beta}(u) < \infty$. Using the cake-layer decomposition
\begin{equation}
K^{\bm{l}}=\bigcup_{n=1}^\infty \left(B\left(x,2^n\widetilde{\phi}^{-1}(t)\right)\setminus B\left(x,2^{n-1}\widetilde{\phi}^{-1}(t)\right)\right)\cup B\left(x,\widetilde{\phi}^{-1}(t)\right)
\end{equation} and the two-sided heat kernel bounds \eqref{e.HKE}, one can show that $\widetilde{\mathcal{E}}_{p}^{\beta}(u)\asymp \lbrack u]_{B_{p,\infty}^{\beta}}^{p}$. Thus $\widetilde{\mathcal{E}}_{p}^{\beta^*}$ is equivalent to the energy $\mathcal{E}_{p}$ introduced in Theorem \ref{t.energy} by Theorem \ref{thm3}-(1), in the sense that the two functionals have the same domain and comparable energy norms.
\item (\emph{Gradient estimates and Hodge structure}) Recall the operator $\partial$ in Proposition \ref{p.grad}. Based on the two-sided bounds of heat kernel \eqref{e.HKE}, we expect that, on scale-irregular Vicsek sets, gradient estimates for the associated heat kernel and the corresponding Hodge structure can be developed by extending the approach in \cite{BC24}.
\item (\emph{$p$-Laplacian and PDEs}) Moreover, as in \cite[Lemma 3.9]{BC24}, the ($2$-)Laplacian (or generator) $\Delta_{2}$ for $(\mathcal{E}_{2},\mathcal{F}_{2})$ can be expressed by $\partial$ and its formal adjoint $\partial^{*}$. For the nonlinear case $p\neq2$, the Fréchet differentiability of $\mathcal{E}_{p}$ (see \cite[Theorem 3.7]{KS25} and Remark \ref{r.Clarkson}) ensures that
\begin{equation}
\mathcal{E}_{p}(f;g):=\frac{1}{p}\frac{\dif}{\dif t}\mathcal{E}(f+tg)\bigg|_{t=0}\in\mathbb{R} \ \text{exists for all $f, g \in \mathcal{F}_p$}.
\end{equation}
This naturally suggests a definition of the \emph{$p$-Laplacian operator} $\Delta_{p}$ on scale-irregular Vicsek sets, following the framework of Strichartz and Wong \cite{SW04}. It is an intriguing question whether $\Delta_{p}$ can also be expressed in terms of by $\partial$ and $\partial^{*}$. Furthermore, a corresponding \emph{variational principle} for
$p$-energies could be established in analogy with \cite[Theorem 3.1]{SW04}. In particular, the presence of a gradient operator $\partial$ is expected to give finer analysis of $p$-harmonic functions and regularity properties of solutions to nonlinear PDEs. These investigations could be pursued within the function spaces established in Theorem \ref{t.energy}, Definition~\ref{def[}, and Definition~\ref{defE}, using methods adapted from the theory of nonlinear elliptic PDEs under the fractal setting.
\end{enumerate}
}

{
\textbf{Acknowledgement:}
The authors are very grateful to the anonymous referee for many helpful comments.
}

\section*{Declarations}
\begin{itemize}
\item \textbf{Funding:} The authors were supported by National Natural Science Foundation of China (12271282). Jin Gao was also supported by Zhejiang Provincial Natural Science Foundation of China (LQN25A010019). Zhenyu Yu was also supported by the Natural Science Foundation of Hunan Province, China (No.2025JJ60039).
\item \textbf{Conflict of interest:} The authors declare that they have no conflict of interest.
\item \textbf{Data availability:} Data sharing is not applicable to this article as no datasets were generated or analysed during
the current study. Our manuscript has no associated data.
\end{itemize}


\end{document}